\theoremstyle{plain}
\newtheorem{prop}{Proposition}[section]
\newtheorem*{thrm*}{Theorem}
\newtheorem{claim}{Claim}
\newtheorem{coro}[prop]{Corollary}
\newtheorem{lemm}[prop]{Lemma}
\newtheorem{conj}[prop]{Conjecture}
\theoremstyle{definition}
\newtheorem{defi}[prop]{Definition}
\newtheorem{nota}[prop]{Notation}
\newtheorem{exam}[prop]{Example}
\newtheorem{rema}[prop]{Remark}
\newtheorem*{rema*}{Remark}
\numberwithin{equation}{section}
\renewcommand\a{\alpha}
\renewcommand\aa{a}%
\newcommand\AAA[1]{A_{#1}}
\newcommand\AAAA{{\boldsymbol{A}}}
\newcommand\aab{\overline{a}}
\newcommand\aam{a-1}
\newcommand\aap{a+1}
\newcommand\Abs[1]{\big\vert#1\big\vert}%
\newcommand\act{\mathbin{\scriptscriptstyle{\bullet}}}%
\newcommand\add[2]{\mathrm{add}_{#1}(#2)}%
\newcommand\Add{\mathbb{A}}%
\newcommand\Ass[1]{K_{#1}}%
\renewcommand\b{\beta}
\newcommand\bb{b}%
\newcommand\bbb{\overline{b}}
\newcommand\bbm{b-1}
\newcommand\bbp{b+1}
\newcommand\card{\mathtt{\#}}
\newcommand\Card[2]{\mathtt{\#}_{_{\!\scriptstyle#1}}\!#2}
\newcommand\cc{c}
\newcommand\CC{C}
\newcommand\ccb{\overline{c}}
\newcommand\ccp{c+1}
\newcommand\cha[1]{w_{#1}}
\newcommand\coll[1]{\mathrm{coll}_{#1}}
\newcommand\cov[1]{\vartriangleleft_{#1}}
\newcommand\cove[1]{\trianglelefteq_{#1}}
\newcommand\ccov[1]{\mathrel{\vartriangleright^{\!*}_{#1}}}
\newcommand\ccove[1]{\mathrel{\trianglerighteq^{\!*}_{#1}}}
\renewcommand{\d}{\delta}%
\newcommand\dd{d}
\newcommand\ddb{\overline{d}}
\newcommand\diam{d}
\newcommand\dist{\mathrm{dist}}
\newcommand\Dist[1]{#1\text{-}\mathrm{dist}}
\newcommand\dom[1]{\tau(#1)}
\newcommand{\e}{\varepsilon}
\newcommand{\ea}{{\scriptstyle\varnothing}}%
\newcommand\ee{e}
\newcommand\eep{e+1}
\newcommand\ellb{\overline{\ell}}
\newcommand\et{\emptyset}
\newcommand\ff{f}
\newcommand\ffm{f-1}
\newcommand\g{\gamma}
\newcommand\gcp{\mathbin{\scriptstyle\wedge}}
\let\ge=\geqslant
\renewcommand\gg{g}
\newcommand\gse{\mathord{\geqslant}}%
\newcommand\gs{\mathord{>}}%
\newcommand\hh{h}
\newcommand\hR{\mathrm{h}_{\scriptscriptstyle R}}
\newcommand\ie{\textit{i.e.}}
\newcommand\ii{i}
\newcommand\II{I}
\newcommand\iim{{i-1}}
\newcommand\iip{{i+1}}
\newcommand\ince{\subseteq}
\newcommand\inv{^{-1}}
\newcommand\jj{j}
\newcommand\JJ{J}
\newcommand\jjm{{j-1}}
\newcommand\kk{k}
\newcommand\KK{K}
\newcommand\kkp{{k+1}}
\newcommand\Lab[1]{\mathrm{Lab}(#1)}
\let\le=\leqslant
\newcommand\Ldots{...\,}
\renewcommand{\lg}[1]{\vert#1\vert}
\newcommand\ls{\mathord{<}}%
\newcommand\lse{\mathord{\leqslant}}%
\newcommand\minus{\!-\!}
\newcommand\mm{m}
\newcommand\mmm{m-1}
\newcommand\mmp{m+1}
\newcommand\name[2]{\nu(#1,#2)}%
\newcommand\Name[3]{\nu_{#1}(#2,#3)}%
\newcommand\Nat{\mathbb{N}}
\newcommand\nn{n}
\newcommand\NN{N}
\newcommand\nnb{{\overline\nn}}
\newcommand\nnm{{\nn-1}}
\newcommand{\nno}{{n-1}}
\newcommand\nnp{{n+1}}
\newcommand\op{\mathord{{}^{\scriptstyle\wedge}}}
\newcommand\Part[3]{[\![#2,#3]\!]_{#1}}
\newcommand\plus{\!+\!}
\newcommand\pp{p}
\newcommand\ppm{{p-1}}
\newcommand\ppp{{p+1}}
\newcommand\pppp{{p+2}}
\newcommand\prefe{\mathrel{\raisebox{0.2ex}{$\scriptstyle\sqsubseteq$}}}
\newcommand\qq{q}
\def\Qm(#1, #2, #3, #4){(\,#1\,, \,#2\,, \,#3\,, \,#4\,)^-}
\def\Qp(#1, #2, #3, #4){(\,#1\,, \,#2\,, \,#3\,, \,#4\,)^+}
\def\Qpm(#1, #2, #3, #4){(\,#1\,, \,#2\,, \,#3\,, \,#4\,)^\pm}
\newcommand\qqm{{q-1}}
\newcommand\qqp{{q+1}}
\newcommand\qqpp{{q+2}}
\def\resp{\emph{resp.}~}
\newcommand\rr{r}
\newcommand\rrp{{r+1}}
\newcommand\RRR{\mathbb{R}}
\newcommand\sh{\mathrm{sh}}
\newcommand\size[1]{\vert#1\vert}
\def\Sp#1{\langle#1\rangle}
\renewcommand\ss{s}
\newcommand\ssp{{s+1}}
\newcommand\sub[2]{#2_{(#1)}}
\newcommand\tar[1]{\tau'(#1)}
\newcommand\Th[2]{\Phi(#1,#2)}
\newcommand\tI{\mathrm{I}}
\newcommand\tIm{\tI^-}
\newcommand\tIp{\tI^+}
\newcommand\tIpm{\tI^\pm}
\newcommand\tII{\mathrm{I\!I}}
\newcommand\tIIm{\tII^-}
\newcommand\tIIp{\tII^+}
\newcommand\tIIpm{\tII^\pm}
\newcommand\tIII{\mathrm{I\!I\!I}}
\newcommand\tIIIm{\tIII^-}
\newcommand\tIIIp{\tIII^+}
\newcommand\tIIIpm{\tIII^\pm}
\newcommand\tIV{\mathrm{I\!V\!}}
\newcommand\tIVp{\tIV^+}
\newcommand\tV{\mathrm{V\!}}
\newcommand\tVm{\tV^-}
\newcommand\tVp{\tV^+}
\newcommand\tVpm{\tV^\pm}
\newcommand\tVI{\mathrm{V\!I}}
\newcommand\tVIm{\tVI^-}
\newcommand\tVIp{\tVI^+}
\newcommand\tVIpm{\tVI^\pm}
\newcommand\TT{T}
\newcommand\TTb{\overline{\TT}}
\newcommand\TTs{T_*}
\newcommand\TTsb{\overline{\TT}_*}
\newcommand\TTT{T'}
\newcommand\TTTb{\overline{\TT}'}
\newcommand\ut{{\scriptstyle\bullet}}
\def\VR(#1,#2){\vrule width0pt height#1mm depth#2mm}
\newcommand\vv{v}
\newcommand\ww{w}
\newcommand\xx{x}
\newcommand\yy{y}
\newcommand\zz{z}
\begin{document}

\hfill{\tiny 2009-01}

\author{Patrick DEHORNOY}
\address{Laboratoire de Math\'ematiques Nicolas Oresme,
Universit\'e de Caen, 14032 Caen, France}
\email{dehornoy@math.unicaen.fr}
\urladdr{//www.math.unicaen.fr/\!\hbox{$\sim$}dehornoy}

\title{On the rotation distance between binary trees}

\keywords{binary tree, rotation distance, triangulations, flip, Thompson's group}

\subjclass{05C12, 20F38, 52B20}

\begin{abstract}
We develop combinatorial methods for computing the rotation
distance between binary trees, \ie, equivalently, the flip distance between
triangulations of a polygon. As an application, we prove that, for each~$\nn$, there exist
size~$\nn$ trees at distance $2\nn - O(\sqrt{\nn})$.
\end{abstract}

\maketitle

If $\TT, \TTT$ are finite binary rooted trees, one says that $\TTT$ is
obtained from~$\TT$ by one rotation if $\TTT$ coincides with~$\TT$
except in the neighbourhood of some inner node where the branching
patterns respectively are 
$$\VR(6,5)
\begin{picture}(11,0)(0,4)
\put(0.5,0){\includegraphics{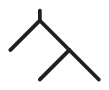}}
\put(4,9.2){.}
\put(4,10){.}
\put(4,10.8){.}
\put(0,3){.}
\put(-0.5,2.5){.}
\put(-1,2){.}
\put(4,-0.4){.}
\put(4,-1.2){.}
\put(4,-2){.}
\put(11,0){.}
\put(11.5,-0.5){.}
\put(12,-1){.}
\end{picture}
\mbox{\qquad and \qquad}
\begin{picture}(15,0)(0,4)
\put(0.5,0){\includegraphics{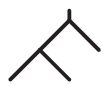}}
\put(7,9.2){.}
\put(7,10){.}
\put(7,10.8){.}
\put(0,0){.}
\put(-0.5,-0.5){.}
\put(-1,-1){.}
\put(7,-0.4){.}
\put(7,-1.2){.}
\put(7,-2){.}
\put(11,3){.}
\put(11.5,2.5){.}
\put(12,2){.}
\end{picture}.$$
Under the standard correspondence between trees and bracketed
expressions, a rotation corresponds to moving a pair of brackets
using the associativity law. If two trees~$\TT, \TTT$
have the same size (number of inner nodes), one can always
transform~$\TT$ to~$\TTT$ using finitely many rotations. The
\emph{rotation  distance} $\dist(\TT, \TTT)$ is the minimal number of
rotations needed in the transformation, and $\dd(\nn)$ will denote the
maximum of~$\dist(\TT, \TTT)$ for~$\TT, \TTT$ of size~$\nn$. Then
$\dd(\nn)$ is the diameter of the
$\nn$th associahedron, the graph~$\KK_\nn$ whose vertices are
size~$\nn$ trees and where $\TT$ and~$\TTT$ are adjacent if and only if
$\dist(\TT, \TTT)$ is one.

There exists a one-to-one correspondence between  size~$\nn$ trees
and triangulations of an $(\nn+2)$-gon. Under this
correspondence, a rotation in a tree translates into a \emph{flip} of the
associated triangulation, \ie, the operation of exchanging
diagonals in the quadrilateral formed by two adjacent triangles. So
$\dd(\nn)$ is also the maximal flip distance between two triangulations of
an $(\nn+2)$-gon.

In~\cite{STT}, using a simple counting argument, D.\,Sleator, R.\,Tarjan,
and W.\,Thurston prove the inequality $\dd(\nn) \le 2\nn-6$ for $\nn \ge
11$ and, using an argument of hyperbolic geometry, they prove $\dd(\nn)
\ge 2\nn-6$ for $\nn \ge \NN$, where $\NN$ is some ineffective
(large) integer. A brute force argument gives $\dd(\nn)
= 2\nn-6$ for $11 \le \nn \le 19$. It is natural to
conjecture $\dd(\nn) = 2\nn - 6$ for $\nn \ge 11$,
and to predict the existence of a combinatorial proof.
After~\cite{STT}, various related questions have been
addressed~\cite{Luc, Rog, HOS, HuN}, or~\cite{BoH} for a general
survey, but it seems that no real progress has been made on the above
conjecture. 

The aim of this paper is to develop combinatorial methods for addressing
the problem and, more specifically, for proving lower bounds on the
rotation distance between two trees. At the moment, we have no complete
determination of the value of~$\dd(\nn)$, but we establish a lower bound
in~$2\nn - O(\sqrt\nn)$ that is valid for each~$\nn$.

\begin{thrm*}
For $\nn = 2\mm^2$, we have $\dd(\nn) \ge 2\nn - 2\sqrt{2\nn} + 1$.
\end{thrm*}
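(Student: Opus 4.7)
The plan is to exhibit, for $\nn = 2\mm^2$, an explicit pair of size-$\nn$ trees $\TT, \TTT$ and a purely combinatorial argument showing $\dist(\TT, \TTT) \ge 4\mm^2 - 4\mm + 1 = (2\mm-1)^2$. The whole difficulty lies in the lower bound, which must be accurate up to an additive error of $O(\sqrt{\nn})$ and therefore essentially match the upper bound $2\nn - 6$ of \cite{STT}.

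First I would construct the witness pair. The scale $\sqrt{\nn}$ strongly hints at a two-dimensional block structure, so I would take $\TT$ and $\TTT$ to be trees whose associated triangulations of the $(\nn+2)$-gon partition naturally into $\mm$ "bands" of roughly $\mm$ triangles each, the two triangulations being rotated or reflected versions of one another so that their bands are transverse. The choice $\nn = 2\mm^2$ is precisely what is needed for $\mm$ bands of $\mm$ fan triangles to fit; with a suitable interlocking arrangement the two triangulations share no diagonal, and the trivial flip lower bound $\card(\mathrm{diag}(\TT)\,\triangle\,\mathrm{diag}(\TTT))/2$ already equals $\nn - 1$, which is of the correct order but is only half of what we want.

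The heart of the matter is a combinatorial potential $\varphi(\TT, \TTT)$ that refines this trivial count, gains the missing factor of two, and satisfies $\varphi(\TT, \TTT') \ge \varphi(\TT, \TTT) - 1$ whenever $\TTT, \TTT'$ are at rotation distance one. The natural candidate attaches to each diagonal $\d$ of $\TTT$ a weight measuring how "incompatible" $\d$ is with the triangulation~$\TT$, for instance the number of diagonals of~$\TT$ that $\d$ crosses, or the depth of $\d$ relative to a distinguished right spine; the quantity $\hR$ reserved in the preamble is presumably designed to play this role. The key technical lemma is that a single flip changes $\varphi$ by at most one, proved by case analysis on the two triangles of the flipped quadrilateral and their interaction with the target triangulation~$\TTT$.

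Once $\varphi$ is in hand, the final step is evaluation: compute $\varphi(\TT, \TTT)$ on the explicit witness, expecting a decomposition into $\mm$ band contributions of size $\approx 2\mm$, for a total of $2\mm^2 - O(\mm) = 2\nn - O(\sqrt{\nn})$. The main obstacle I anticipate is precisely the monotonicity lemma: a single flip may simultaneously create several diagonals of~$\TTT$, so the weights have to be tuned so that the net contribution of those creations cannot exceed one, and a poorly chosen weight collapses the bound down to $\nn$. The shortfall $-2\sqrt{2\nn}+1$ from the conjectural $2\nn - 6$ almost certainly reflects the interactions near the boundaries between adjacent bands, where $\varphi$ is not additive and a handful of diagonals in each band must be conceded.
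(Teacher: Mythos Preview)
Your proposal has a genuine gap: the entire argument rests on the existence of a potential~$\varphi$ that is $1$-Lipschitz under rotations and evaluates to $2\nn - O(\sqrt\nn)$ on the witness pair, but you never define~$\varphi$. The candidates you mention---crossing counts between the two sets of diagonals, or depth with respect to a distinguished spine---do \emph{not} satisfy the monotonicity lemma: a single flip can change the total crossing number by an amount proportional to~$\nn$, and the quantity~$\hR$ in the paper is merely the length of the rightmost branch, used only for the elementary bound $\dd(\nn)\ge\nn-1$. You correctly identify this monotonicity as the crux, but you offer no mechanism for overcoming it; as stated, the plan is a hope that a suitable potential exists, not a proof.

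The paper proceeds along entirely different lines and never constructs a global $1$-Lipschitz potential. The witness pair is the ``$2\mm$-comb'' $\TT = \Sp{(1^\pp 0^\pp)^\mm}$ against the zigzag $\TTT = \Sp{0^\pp(10)^{(\mm-1)\pp}1^\pp}$, and one sets $\pp = \mm$ at the end. The lower bound $\dist(\TT,\TTT)\ge 4\mm\pp - 3\mm - \pp + 1$ is obtained by induction on~$\mm$ via a \emph{collapsing} operation: erasing a block of $2\pp$ consecutive leaf labels projects $(\TT,\TTT)$ onto the corresponding pair for~$\mm-1$, and the inductive step amounts to showing that any path from~$\TT$ to~$\TTT$ contains at least $4\pp-3$ base pairs that become trivial under this projection. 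That count is established not by a potential but by a detailed structural analysis: one introduces a ``covering'' relation $\ii\cov\TT\jj$ between leaf labels, records how each rotation changes it, and identifies eleven explicit families of base pairs (distinguished by constraints on their four leaf-name parameters) that every path is forced to contain. The $O(\sqrt\nn)$ loss comes from the $-3\mm-\pp$ term, i.e.\ a constant deficit per inductive step, not from boundary interactions between bands of a triangulation.
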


We shall develop two approaches, which correspond to two different ways
of specifying a rotation in a tree. The first method takes the
\emph{position} of the subtree that is rotated into account. This viewpoint
naturally leads to introducing a partial action of Thompson's group~$F$
on trees and to expressing the rotation distance between two trees~$\TT,
\TTT$ as the length of the element of~$F$ that maps~$\TT$ to~$\TTT$
with respect to a certain family of generators. This approach is very natural
and it easily leads to a lower bound in $\frac32\nn +
O(1)$ for~$\dd(\nn)$. However, due to the lack of control on the
geometry of the group~$F$, it seems difficult to obtain higher lower
bounds in this way.

The second approach takes \emph{names}, rather than positions, into
account: names are given to the leaves of the trees, and one specifies a
rotation using the names of certain leaves that characterize the considered
rotation. This approach leads to partitioning the
associahedron~$\KK_\nn$ into regions separated by sort of
discriminant curves. Then, one proves that two trees~$\TT,
\TTT$ are at distance at least~$\ell$ by showing that any path
from~$\TT$ to~$\TTT$ through~$\KK_\nn$ necessarily intersects at
least~$\ell$ pairwise distinct discriminant curves. Progressively refining
the approach finally leads to the lower bound $2\nn - O(\sqrt\nn)$. No
obstruction a priori forbids to continue up to $2\nn-6$ but a few more
technical ingredients will probably be needed.

The paper is organized as follows. After setting the framework in
Section~\ref{S:Rotations}, we develop the approach based on
positions in Section~\ref{S:Positions}, and use it to deduce lower bounds
for~$\dd(\nn)$ that lie in~$\nn + O(1)$, and then in $\frac32\nn +
O(1)$. Section~\ref{S:Names} presents the approach based on names,
introducing the so-called covering relation, a convenient
way of describing the shape of a tree in terms of the names attributed to its
leaves. This leads to a new proof for a lower bound in $\frac32\nn + O(1)$.
In Section~\ref{S:Collapsing}, we introduce collapsing, an operation that
consists in erasing some leaves in a tree, and use it to improve the
previous bound to $\frac53\nn + O(1)$. Finally, in Section~\ref{S:Sqrt},
applying the same method in a more tricky way, we establish the 
$2\nn - O(\sqrt\nn)$ lower bound.

We use~$\Nat$ for the set of all nonnegative integers.

\section{Trees, rotations, and triangulations}
\label{S:Rotations}

\subsection{Trees}
\label{S:Trees}

All trees we consider are finite, binary, rooted, and ordered (for each
inner node: the associated left and right subtrees are identified). We
denote by~$\ut$, the tree consisting of a single vertex. If $\TT_0,
\TT_1$ are trees, $\TT_0 \op \TT_1$ is the tree whose left subtree
is~$\TT_0$ and right subtree is~$\TT_1$.  The \emph{size}~$\size\TT$
of a tree~$\TT$ is the number of symbols~$\op$ in the (unique)
expression of~$\TT$ in terms of~$\ut$ and~$\op$. Thus $\ut \op ((\ut
\op \ut) \op \ut)$ is a typical tree, usually displayed  as 
\begin{equation}
\label{E:Tree}
\VR(6,2)\begin{picture}(11,0)(0,4)
\put(0,0){\includegraphics{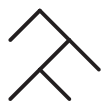}}
\end{picture}. 
\end{equation}
Its decomposition comprises three carets, so its size is~$3$.

Certain special trees will play a significant role, namely those such that, for
each inner node, only one of the associated subtrees may have a
positive size. Such a tree is completely determined by a
sequence of~$0$'s and~$1$'s, called its \emph{spine}.

\begin{defi} (See Figure~\ref{F:Spine}.)
For~$\a$ a finite sequence of~$0$'s and~$1$'s, the \emph{thin tree} with \emph{spine}~$\a$,
denoted~$\Sp\a$, is recursively defined by the rules
\begin{equation}
\label{E:Spine}
\Sp{\ea} = \bullet, \
\Sp0 = \Sp1 = \ut\op\ut, \
\text{and}\quad
\Sp{0\a} = \Sp\a \op \ut, \ 
\Sp{1\a} = \ut\op\Sp\a 
\text{\ for $\a \not= \ea$}.
\end{equation}
\end{defi}

For instance, the tree of~\eqref{E:Tree} is thin, with spines~$100$
and~$101$. Defining the spine so that it is not unique may appear
surprising, but, in this way,  $\Sp\a$ has size~$\nn$ for $\a$ of
length~$\nn$, which will make statements simpler.

\begin{figure}[htb]
\begin{picture}(100,32)(-6,0)
\put(0,-0.5){\includegraphics{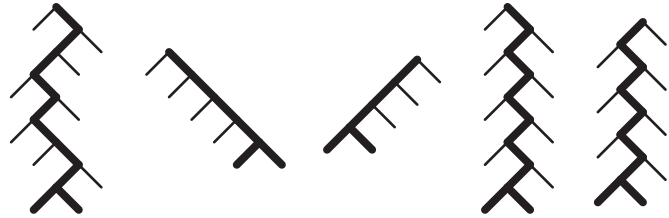}}
\end{picture}
\caption{\sf\smaller Thin trees, with their spines in bold: from left to
right,  $\Sp{100101100}$, which is also $\Sp{100101101}$, the
right comb of size~$5$, the left comb of size~$4$, the right zigzag of
size~$9$, the left zigzag of size~$8$.}
\label{F:Spine}
\end{figure}

Some particular families of thin trees will often appear in the sequel,
namely

- \emph{right combs}~$\Sp{111...}$ and their counterparts \emph{left
combs}~$\Sp{000...}$,

- \emph{right zigzags}~$\Sp{101010...}$ and \emph{left 
zigzags}~$\Sp{010101...}$---see Figure~\ref{F:Spine}.

\subsection{Rotations}
\label{S:Rotating}

For each vertex~$\vv$ (inner node or leaf) in a tree~$\TT$, there
exists a unique subtree of~$\TT$ with root in~$\vv$---see
\eqref{E:Subtree} below for a more formal definition. This subtree
will be called the \emph{$\vv$-subtree} of~$\TT$. For instance, if $\vv$ is
the root of~$\TT$, then the $\vv$-subtree of~$\TT$ is $\TT$~itself. Then
we can define the rotations mentioned in the introduction as
follows.

\begin{defi}
\label{D:Rotation}
(See Figure~\ref{F:Rotation}.) 
If $\TT, \TTT$ are trees, we say that $\TTT$ is obtained from~$\TT$ by a
\emph{positive rotation}, or, equivalently, that $(\TT, \TTT)$ is a
\emph{positive base pair}, if there exists an inner node~$\vv$ of~$\TT$
such that this $\vv$-subtree of~$\TT$ has the form $\TT_1 \op (\TT_2
\op
\TT_3)$ and $\TTT$ is obtained from~$\TT$ by replacing the
$\vv$-subtree with $ (\TT_1 \op \TT_2) \op \TT_3$. In this case, we say
that $(\TTT, \TT)$ is a \emph{negative} base pair.
\end{defi}

\begin{figure}[htb]
\begin{picture}(75,33)(0,-1)
\put(0,-0.5){\includegraphics{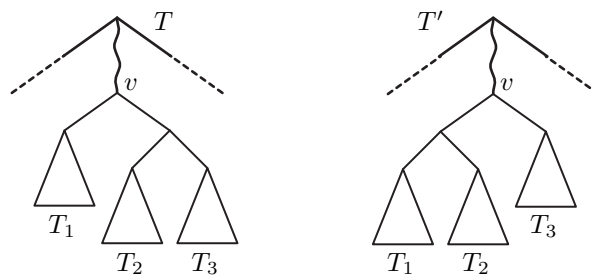}}
\put(20,31){$\TT$}
\put(16,23){$\vv$}
\put(6,4){$\TT_1$}
\put(15,-1){$\TT_2$}
\put(25,-1){$\TT_3$}

\put(55,31){$\TTT$}
\put(66,23){$\vv$}
\put(51,-1){$\TT_1$}
\put(61,-1){$\TT_2$}
\put(70,4){$\TT_3$}
\end{picture}
\caption{\sf\smaller A base pair: $\TTT$ is obtained from~$\TT$ by
rotating some subtree of~$\TT$ that can be expressed as $\TT_1
\op (\TT_2 \op \TT_3)$ to the corresponding $(\TT_1 \op \TT_2) \op
\TT_3$ (positive rotation)---or vice versa (negative rotation).}
\label{F:Rotation}
\end{figure}

By construction, rotations preserve the size of a tree. Conversely, it is easy
to see that, if $\TT$ and~$\TTT$ are trees with the same size, 
there exist a finite sequence of rotations that transforms~$\TT$
into~$\TTT$---see for instance Remark~\ref{R:Path} below---so a natural
notion of distance appears.

\begin{defi}
If $\TT, \TTT$ are equal size trees, the \emph{rotation distance}
between~$\TT$ and~$\TTT$, denoted~$\dist(\TT, \TTT)$, is the minimal
number of rotations needed to transform~$\TT$ into~$\TTT$. For $\nn
\ge 1$, we define $\dd(\nn)$ to be the maximum of $\dist(\TT, \TTT)$
for $\TT, \TTT$ of size~$\nn$.
\end{defi}

By definition, we have $\dist(\TT, \TTT) = 1$ if and only if $(\TT,
\TTT)$ is a base pair. As proved in~\cite[Lemma~2]{STT}, the
inequality $\dist(\TT, \TTT) \le 2\nn-6$ holds for all size~$\nn$
trees~$\TT, \TTT$ for~$\nn > 10$, and, therefore, we
have\footnote{contrary to~\cite{STT}, where notation changes from
Section~2.3, we stick to the convention that $\nn$ (and not $\nn-2$)
denotes the size of the reference trees} $\dd(\nn) \le 2\nn-6$ for $\nn
> 10$.

\subsection{Triangulations}

As explained in~\cite{STT}, there exists a one-to-one correspondence
between the triangulations of an $(\nn+2)$-gon and size~$\nn$ trees:
having chosen a distinguished edge, one encodes a triangulation by
the dual graph, a tree that becomes rooted once a distinguished
edge has been fixed (see Figure~\ref{F:Coding}).
Then performing one rotation corresponds to performing one flip in the
associated triangulations, this meaning that some pattern
\VR(9,7)\begin{picture}(15,0)(0,6)
\put(0,0){\includegraphics[scale=0.7]{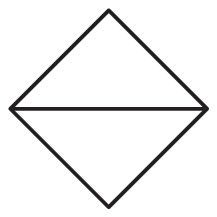}}
\end{picture}
is replaced with
\VR(9,7)\begin{picture}(15,0)(0,6)
\put(0,0){\includegraphics[scale=0.7]{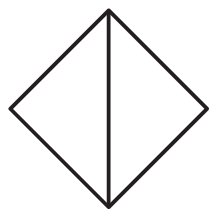}}
\end{picture},\ie, the diagonals are exchanged in the quadrilateral
made by two adjacent triangles. So the rotation
distance between two trees of size~$\nn$ is also the flip distance between
the corresponding triangulations of an $(\nn+2)$-gon, and the
number~$\dd(\nn)$ is the maximal flip distance between two
triangulations of an $(\nn+2)$-gon.

\begin{figure}[htb]
\begin{picture}(95,19)(0,0)
\put(0,0){\includegraphics[scale=1]{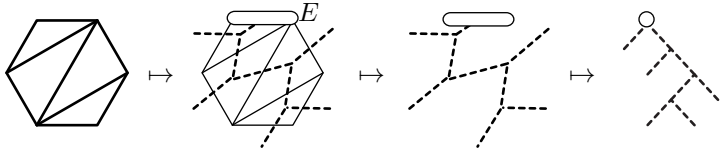}}
\put(40,17){$E$}
\put(20,9){$\mapsto$}
\put(48,9){$\mapsto$}
\put(76,9){$\mapsto$}
\end{picture}
\caption{\sf\smaller Coding a triangulation of an $(\nn+2)$-gon by a
size~$\nn$ tree: choose a distinguished edge~$E$, and hang the graph
dual to the triangulation under the vertex corresponding to~$E$.}
\label{F:Coding}
\end{figure}

\subsection{Associahedra}
\label{S:Associahedron}

For each~$\nn$, we have  a binary relation on size~$\nn$
trees, namely being at rotation distance~$1$. It is natural
to introduce the graph of this relation.

\begin{defi}
(See Figure~\ref{F:K4Double}.)
For $\nn \ge 1$, the \emph{associahedron}~$\Ass\nn$ is the (unoriented)
graph  whose vertices are size~$\nn$ trees and whose edges
are base pairs.
\end{defi}

The number of vertices of~$\Ass\nn$ is $\frac1{2\nn}{2\nn \choose
\nn}$, the $\nn$th Catalan number, and every vertex in~$\Ass\nn$ has
degree~$\nno$. The fact that any two trees of the same size are connected
by a sequence of rotations means that $\Ass\nn$ is a connected graph.
For all size~$\nn$ trees~$\TT, \TTT$, the number~$\dist(\TT, \TTT)$ is
the edge-distance between~$\TT$ and~$\TTT$ in~$\Ass\nn$, and the
number~$\dd(\nn)$ is the diameter of~$\Ass\nn$.

The name ``associahedron'' stems from the fact that, when we
decompose trees as iterated $\op$-products, performing a rotation
at~$\vv$ means applying the associativity law $\xx \op (\yy \op
\zz) = (\xx \op \yy) \op \zz$ to the $\vv$-subtree.

\begin{figure}[htb]
\begin{picture}(119,55)(0,0)
\put(0,0){\includegraphics[scale=0.7]{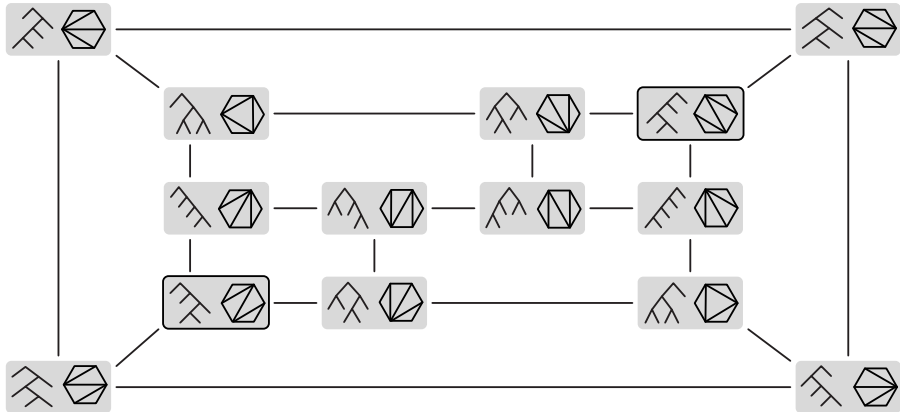}}
\end{picture}
\caption{\sf\smaller The associahedron~$K_4$: the vertices are the
fourteen trees of size~$4$, or, equivalently, the fourteen triangulations of
a hexagon, and the edges connect trees lying at rotation
distance~$1$ or, equivalently, triangulations lying at flip
distance~$1$. The diameter~$\dd(4)$
of~$\Ass4$ is~$4$. The thin trees $\Sp{1100}$ and~$\Sp{0011}$
(framed) are typical examples of size~$4$ trees at distance~$4$.}
\label{F:K4Double}
\end{figure}

Taking the sign of rotations into account, \ie, distinguishing whether
associativity is applied from $\xx \op (\yy \op \zz)$ to $(\xx \op \yy)
\op \zz$, or in the other direction, amounts to orienting the edges of
associahedra, as shown in Figure~\ref{F:K4-2} below. This orientation
defines a partial ordering on~$\Ass\nn$, which admits the right
comb~$\Sp{1^\nn}$ as a minimum and the left comb~$\Sp{0^\nn}$ as a
maximum. This partial ordering is known to be a lattice, the Tamari
lattice~\cite{Tam, HuT, Gey, Sun}. Let us mention that alternative lattice
orderings on~$\Ass\nn$ are constructed in~\cite{Rea} and~\cite{Kra}.

\section{Using positions}
\label{S:Positions}

Hereafter we address the problem of establishing lower bound for the
rotation distance $\dist(\TT, \TTT)$ between two trees of size~$\nn$, \ie,
to prove that any path from~$\TT$ to~$\TTT$ through~$\Ass\nn$
has length at least~$\ell$ for some~$\ell$. Any such result immediately
implies a lower bound for the diameter of the corresponding
associahedron. 

To this end, we have to analyze the rotations that lead
from~$\TT$ to~$\TTT$ and, for that, we need a way to specify a rotation
precisely. Exactly as in the case of permutations and their decompositions
into transpositions (instances of commutativity), we can specify a rotation
(\ie, an instance of associativity) by taking into account either the position
where the rotation occurs, or the names of the elements that are rotated. In
this section, we develop the first approach, based on positions.

\subsection{The address of a rotation}
\label{S:Address}

The position of a vertex~$\vv$ in a tree~$\TT$ can be unambiguously
specified using a finite  sequence of~$0$'s and~$1$'s that describes the
path from the root of~$\TT$ to~$\vv$, using~$0$ for forking to the left
and~$1$ for forking to the right. Such a sequence will be called a (binary)
\emph{address}. The set of all binary addresses will be denoted
by~$\Add$. The address of the root is the \emph{empty} sequence,
denoted~$\ea$. So, for instance, the addresses of the three inner nodes
of the tree of~\eqref{E:Tree} are $\ea, 1, 10$, whereas the addresses of
its four leaves are $0, 100, 101$, and~$11$:
$$\VR(6,4)\begin{picture}(14,0)(0,4)
\put(0,0){\includegraphics{Tree.eps}}
\put(4,10.5){$\scriptstyle\ea$}
\put(-1,6.5){$\scriptstyle0$}
\put(7.8,7){$\scriptstyle1$}
\put(0,3.5){$\scriptstyle10$}
\put(10.5,3.5){$\scriptstyle11$}
\put(-1.5,-1.5){$\scriptstyle100$}
\put(5,-1.5){$\scriptstyle101$}
\end{picture}. $$

We deduce a natural indexation of subtrees by  addresses. For
$\TT$ a tree and $\a$ a sufficiently short binary address, the
\emph{$\a$th subtree of~$\TT$},  denoted~$\sub\a\TT$, is  the
subtree of~$\TT$ whose root is the vertex that has
address~$\a$. Formally,
$\sub\a\TT$ is recursively defined by the rules
\begin{equation}
\label{E:Subtree}
\sub\ea\TT = \TT
\text{\ for every~$\TT$, \quad and \quad} 
\begin{cases}
\sub{0\a}\TT = \sub\a{{\TT_0}}\\
\sub{1\a}\TT = \sub\a{{\TT_1}}
\end{cases}
\text{\ for $\TT = \TT_0 \op \TT_1$}.
\end{equation}
Note that $\sub\a\TT$ exists and has positive size if and only if $\a$ is
the address of an inner node of~$\TT$, and it exists and has size~$0$
if and only if $\a$ is the address of a leaf of~$\TT$.

With such an indexation, we naturally attach an address with each
rotation.

\begin{defi}
We say that a positive base pair~$(\TT, \TTT)$ has
\emph{address}~$\a^+$ if $\a$ is the address in~$\TT$ (and in~$\TTT$)
of the root of the subtree that is rotated between~$\TT$ and~$\TTT$. The
address of the symmetric pair~$(\TTT,
\TT)$ is declared to be~$\a^-$.
\end{defi}

For instance, in the positive base pair
\begin{equation}
\label{E:Pair}
\VR(11,9)\begin{picture}(34,0)(0,10)
\put(0,0){\includegraphics{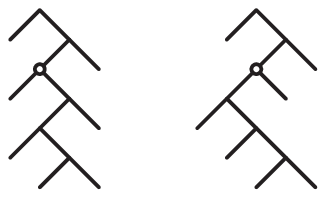}}
\put(0,13){$\scriptstyle10$}
\put(22,13){$\scriptstyle10$}
\put(6.5,19){$\TT$}
\put(21,19){$\TTT$}
\end{picture},
\end{equation}
the rotation involves the subtrees of~$\TT$ and~$\TTT$ whose roots have
address~$10$, hence the address of~$(\TT, \TTT)$ is declared to
be~$10^+$. See Figure~\ref{F:K4-2} for more examples.

\begin{figure}[htb]
\begin{picture}(88,63)(0,0)
\def\AAA#1{{#1}^+}
\put(-1,-1){\includegraphics[scale=0.8]{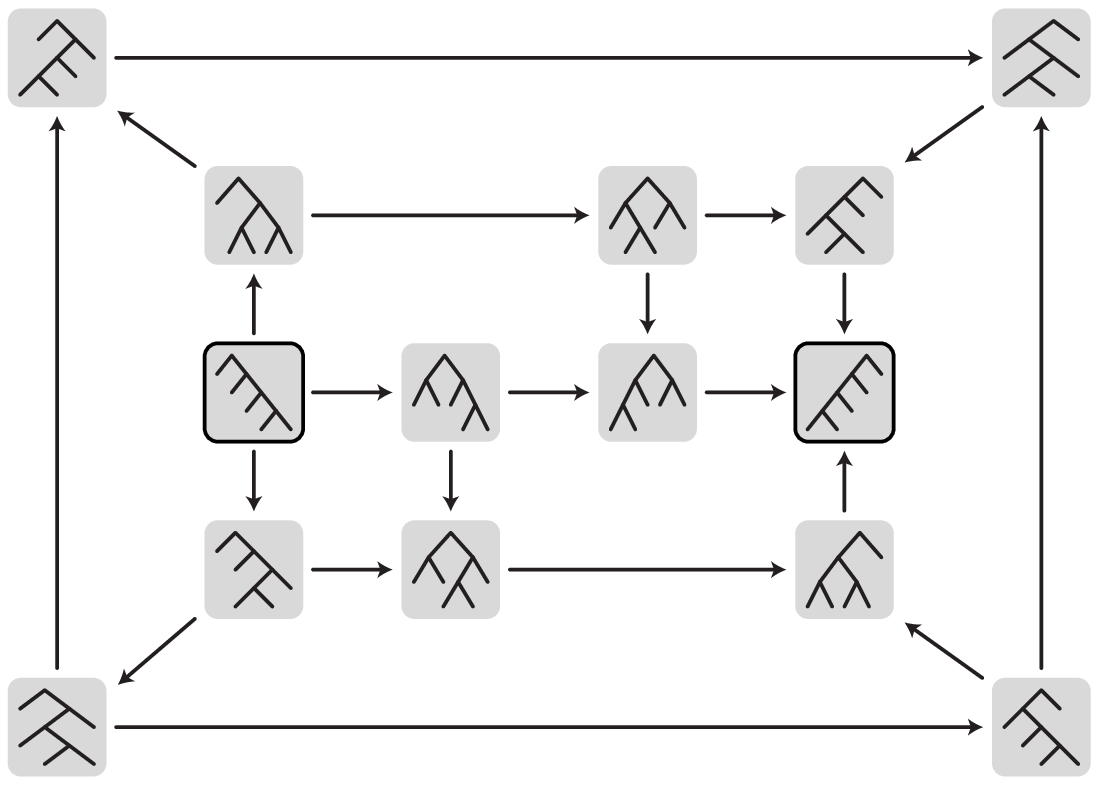}}
\put(42,5){$\AAA\ea$}
\put(42,33){$\AAA\ea$}
\put(42,60){$\AAA\ea$}
\put(26,33){$\AAA\ea$}
\put(58,33){$\AAA\ea$}
\put(57,47){$\AAA\ea$}
\put(33,47){$\AAA\ea$}
\put(26,19){$\AAA\ea$}
\put(49,19){$\AAA\ea$}
\put(20.5,37.5){$\AAA1$}
\put(52.5,37.5){$\AAA0$}
\put(69,37.5){$\AAA{00}$}
\put(20.5,23.5){$\AAA{11}$}
\put(36.5,23.5){$\AAA1$}
\put(69,23.5){$\AAA0$}
\put(4.5,31){$\AAA{10}$}
\put(84.5,31){$\AAA{01}$}
\put(13,7){$\AAA1$}
\put(72,7){$\AAA0$}
\put(12,53){$\AAA1$}
\put(73,53){$\AAA0$}
\end{picture}
\caption{\sf\smaller Orienting the edges of the associahedron~$\Ass4$
yields the Tamari lattice, in which the right comb $\Sp{1111}$ is minimal,
and the left comb $\Sp{0000}$ is maximal. Then taking positions into
account provides for each edge a label that is a binary address.}
\label{F:K4-2}
\end{figure}

The idea we shall develop in the sequel is to obtain lower bounds
$\dist(\TT, \TTT) \ge \ell$ by proving that at least $\ell$~addresses of
some prescribed type necessarily occur in any sequence of base pairs
connecting~$\TT$ to~$\TTT$, \ie, in any path from~$\TT$ to~$\TTT$
through the corresponding associahedron.

\subsection{Connection with Thompson's group~$F$}

To implement the above idea, it is convenient to view rotations as a partial
action of Thompson's group~$F$ on trees.

We recall from~\cite{CFP} that Thompson's group~$F$ consists of all
increasing piecewise linear self-homeomorphisms of~$[0,1]$ with dyadic
slopes and discontinuities of the derivative at dyadic points. There is a
simple correspondence between the elements of~$F$ and pairs of trees
of equal size. 

\begin{defi}
For $\pp \le \qq$ in~$\RRR$ and $\TT$ a size~$\nn$ tree, we
recursively define a partition~$\Part\TT\pp\qq$ of the real
interval~$[\pp,
\qq]$ into
$\nn+1$ adjacent intervals by
\begin{equation}
\Part\ut\pp\qq = \{[\pp, \qq]\}, 
\mbox{\quad and \quad}
\Part\TT\pp\qq = \Part{\TT_0}\pp{\frac{\pp+\qq}2}
\cup \Part{\TT_1}{\frac{\pp+\qq}2}\qq.
\end{equation}
Then, for $\TT, \TTT$ trees with equal sizes, we define~$\Th\TT{\TTT}$
to be the element of~$F$ that homothetically maps the $\ii$th interval
of~$\Part\TT01$ to the $\ii$th interval of~$\Part\TTT01$ for each~$\ii$.
\end{defi}

\begin{exam}
\label{X:Thompson}
Assume $\TT = \Sp{11}$ and $\TTT = \Sp{00}$. The partitions of~$[0,1]$
respectively associated with~$\TT$ and~$\TTT$ are
$$\Part\TT01 = \{[0,\frac12], [\frac12, \frac34], [\frac34, 1]\}
\mbox{, and }
\Part\TTT01 = \{[0,\frac14], [\frac14, \frac12], [\frac12, 1]\}.$$
Therefore, $\Th\TT\TTT$ is the element of Thompson's group~$F$
whose graph is
\begin{equation}
\VR(15,13)\begin{picture}(25,12)(0,12)
\put(0,0){\includegraphics{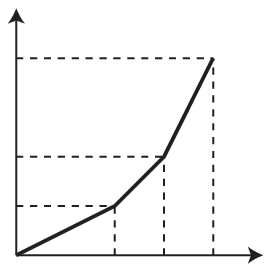}}
\end{picture}
\end{equation}
\ie, the element denoted~$A$ in~\cite{CFP} (or $\xx_0$ in most recent
references).
\end{exam}

Infinitely many different pairs of trees represent a given element of~$F$:
if $\TT$ and $\TTT$ are obtained from~$\TT_0$ and~$\TTT_0$
respectively by replacing the $\kk$th leaf with a caret, then the
subdivisions~$\Part\TT01$ and~$\Part\TTT01$ are obtained
from~$\Part{\TT_0}01$ and~$\Part{\TTT_0}01$ by subdividing the
$\kk$th intervals, and we have $\Th\TT\TTT = \Th{\TT_0}{\TTT_0}$.
However, for each element~$\gg$ of~$F$, there exists a unique
\emph{reduced} pair of trees~$(\dom\gg, \tar\gg)$ satisfying
$\Th{\dom\gg}{\tar\gg} = \gg$, where $(\TT, \TTT)$ is called reduced if
it is obtained from no other pair $(\TT_0, \TTT_0)$ as above
\cite[Section~2]{CFP}.

We can then introduce a partial action of the group~$F$ on trees as
follows.

\begin{defi}
\label{D:Action}
For $\TT$ a tree and $\gg$ an element of the group~$F$, we define $\TT
\act \gg$ to be the unique tree~$\TTT$ satisfying $\Th\TT\TTT = \gg$, if
if exists.
\end{defi}

This is a partial action: $\TT \act \gg$ need not be defined for
all~$\TT$ and~$\gg$. By construction, $\TT \act \gg$ exists if and only if
the partition~$\Part\TT01$ refines the partition~$\Part{\dom\gg}01$, \ie,
equivalently, if and only if the tree~$\dom\gg$ is included in~$\TT$.
However, for each pair of trees of equal size $(\TT, \TTT)$, there exists an
element~$\gg$ satisfying $\TT \act \gg = \TTT$, namely~$\Th\TT\TTT$,
and the rules for an action on the right are obeyed: if $\TT \act
\gg$ and $(\TT \act \gg) \act \hh$ are defined, then $\TT \act (\gg\hh)$
is defined and it is equal to $(\TT \act
\gg) \act \hh$---we assume that the product in~$F$ corresponds to
reverse composition: $\gg\hh$ means ``$\gg$, then~$\hh$''. For our
purpose, the main point is the following direct consequence of
Definition~\ref{D:Action}.

\begin{lemm}
\label{L:Free}
The partial action of~$F$ on trees is free: if $\TT \act \gg$ and
$\TT \act \gg'$ are defined and equal, then $\gg = \gg'$ holds.
\end{lemm}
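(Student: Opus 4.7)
The plan is to observe that the statement is essentially a tautology built into Definition~\ref{D:Action}, and to produce it by simply unpacking that definition. The key is that $\Th{\TT}{\TTT}$ is, for given trees $\TT$ and $\TTT$ of equal size, a single uniquely determined element of~$F$: it is the piecewise linear map that sends, homothetically, the $\ii$th interval of the dyadic partition $\Part\TT01$ to the $\ii$th interval of $\Part\TTT01$. So the map $(\TT, \TTT) \mapsto \Th\TT\TTT$ is a well-defined function into~$F$, and the partial action $\act$ is built precisely to invert it on the second coordinate.

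Concretely, I would proceed as follows. Assume $\TT \act \gg$ and $\TT \act \gg'$ are both defined and equal to the same tree~$\TTT$. By Definition~\ref{D:Action}, the first equality gives $\Th\TT\TTT = \gg$, and the second gives $\Th\TT\TTT = \gg'$. Since $\Th\TT\TTT$ is a single element of~$F$, we conclude $\gg = \gg'$. That is all. One might also verify en passant that $\Th\TT\TTT$ does lie in Thompson's group~$F$ — its slopes are ratios of dyadic lengths and its break-points are dyadic, by an immediate induction on the recursive construction of $\Part\TT{\pp}{\qq}$ — but this was already implicit in the preceding paragraphs.

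I do not anticipate any real obstacle: no non-trivial property of~$F$ is needed, and in particular no uniqueness statement about reduced pairs of trees needs to be invoked, since we are comparing two elements of~$F$ that agree as homeomorphisms of~$[0,1]$. The content of the lemma is therefore conceptual rather than technical: it asserts that an element $\gg\in F$ can be read off unambiguously from its effect $\TT\mapsto \TT\act\gg$ on any single tree~$\TT$ lying in its domain. This freeness will be the crucial feature used later, when lower bounds on the rotation distance $\dist(\TT,\TTT)$ are transferred to lower bounds on the length of the unique element $\Th\TT\TTT \in F$ in a prescribed generating set.
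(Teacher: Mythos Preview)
Your proposal is correct and matches the paper's approach exactly: the paper does not even write out a proof, stating the lemma as ``a direct consequence of Definition~\ref{D:Action}'', and your argument is precisely the unpacking of that definition. There is nothing to add.
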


\subsection{The generators~$\AAA\a$ of the group~$F$}

It is now easy to describe the rotations with address~$\a^\pm$ in terms of
the action of an element of~$F$. 

\begin{defi}
For each address~$\a$ in~$\Add$, we put $\AAA\a
= \Th{\Sp{\a11}}{\Sp{\a00}}$. The family of all elements~$\AAA\a$ is
denoted~$\AAAA$.
\end{defi}

\begin{exam}
The element~$\AAA\ea$ is $\Th{\Sp{11}}{\Sp{00}}$,
hence it is the element~$A$ (or~$x_0$) considered in
Example~\ref{X:Thompson}. More generally, if $\a$ is $1^\ii$, \ie, $\a$
consists of~$1$ repeated
$\ii$~times, $\AAA\a$ is the element usually denoted~$\xx_\ii$, which
corresponds to applying associativity at the $\ii$th position on the right
branch of the tree. Viewed as a function of~$[0,1]$ to itself, $\AAA\a$ is
the identity on~$[0, 1-2^{-\ii}]$ and its graph on $[1- 2^{-\ii}, 1]$ is that
of~$\xx_0$ contracted by a factor~$2^{\ii}$.
\end{exam}

By definition, $\AAA\a$ corresponds to applying associativity at 
position~$\a$, and, therefore, we have the
following equivalence, whose verification is straightforward.

\begin{lemm}
For all trees $\TT, \TTT$ with the same size, the following
are equivalent:

$(i)$ $(\TT, \TTT)$ is a base pair with address~$\a^\pm$;

$(ii)$ $\TTT = \TT \act \AAA\a^{\pm1}$ holds.
\end{lemm}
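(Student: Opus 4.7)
The plan is to read everything off the definitions of $\Th\cdot\cdot$, $\act$, and~$\AAA\a$. Since $\AAA\a\inv = \Th{\Sp{\a00}}{\Sp{\a11}}$, and since $(\TT,\TTT)$ is a negative base pair with address~$\a^-$ if and only if $(\TTT,\TT)$ is a positive base pair with address~$\a^+$, the case of address~$\a^-$ reduces to the case of~$\a^+$. Moreover, by Lemma~\ref{L:Free} and the definition of the partial action, the identity $\TTT = \TT \act \AAA\a$ is equivalent to $\Th\TT\TTT = \AAA\a$. It therefore suffices to prove that $\sub\a\TT$ has the form $\TT_1\op(\TT_2\op\TT_3)$ and $\TTT$ is obtained from~$\TT$ by replacing this subtree by $(\TT_1\op\TT_2)\op\TT_3$, if and only if $\Th\TT\TTT = \AAA\a$.

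The key ingredient is a localization statement for~$\Part\TT01$: to each address~$\a$ one associates dyadic reals $\pp_\a \le \qq_\a$ in $[0,1]$, depending only on~$\a$, such that for any tree~$\TT$ with a vertex of address~$\a$, the restriction of~$\Part\TT01$ to~$[\pp_\a,\qq_\a]$ equals $\Part{\sub\a\TT}{\pp_\a}{\qq_\a}$, while its restriction to~$[0,\pp_\a]\cup[\qq_\a,1]$ depends only on the skeleton of~$\TT$ outside the $\a$-subtree. This follows by a routine induction on~$|\a|$ from the recursive clause $\Part\TT\pp\qq = \Part{\TT_0}\pp{(\pp+\qq)/2} \cup \Part{\TT_1}{(\pp+\qq)/2}\qq$.

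Granted this lemma, the forward direction is immediate. If $\TT$ and~$\TTT$ agree outside their $\a$-subtrees and these subtrees are $\TT_1\op(\TT_2\op\TT_3)$ and $(\TT_1\op\TT_2)\op\TT_3$ respectively, then $\Th\TT\TTT$ is the identity on~$[0,\pp_\a]\cup[\qq_\a,1]$ and, on~$[\pp_\a,\qq_\a]$, it is the piecewise linear map matching the three sub-intervals of~$[\pp_\a,\qq_\a]$ associated with~$\TT_1,\TT_2,\TT_3$ in left-to-right order. Applying the same analysis to~$(\Sp{\a11},\Sp{\a00})$ produces the same piecewise linear map, so $\Th\TT\TTT = \Th{\Sp{\a11}}{\Sp{\a00}} = \AAA\a$. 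Conversely, if $\Th\TT\TTT = \AAA\a$, then $(\TT,\TTT)$ represents~$\AAA\a$, hence is obtained from the reduced pair $(\Sp{\a11},\Sp{\a00})$ by replacing each leaf by a common subtree. The three leaves of $\Sp{\a11}$ lying inside address~$\a$ are at positions $\a0,\a10,\a11$, while those of~$\Sp{\a00}$ are at $\a00,\a01,\a1$; both triples are consecutive in the global left-to-right order, and they occupy the same three slots in both trees. Replacing them by the same trees $\TT_1,\TT_2,\TT_3$ forces $\sub\a\TT = \TT_1\op(\TT_2\op\TT_3)$ and $\sub\a\TTT = (\TT_1\op\TT_2)\op\TT_3$, while the leaf-replacements along the spine make $\TT$ and~$\TTT$ coincide outside address~$\a$.

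The main technical point is the localization lemma for~$\Part\TT01$; once it is granted, both directions are formal. The only subtlety in the converse is the left-to-right bookkeeping that matches the three ``inner'' leaves of~$\Sp{\a11}$ with those of~$\Sp{\a00}$ in the correct order, so that the triple $(\TT_1,\TT_2,\TT_3)$ slots into the rotation pattern of Definition~\ref{D:Rotation} at address~$\a$.
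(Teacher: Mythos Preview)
Your argument is correct and is precisely the kind of verification the paper has in mind when it declares the equivalence ``straightforward'': the paper gives no proof at all beyond the remark that $A_\alpha$ is, by definition, the element of~$F$ corresponding to associativity at position~$\alpha$. Your localization lemma for~$\Part\TT01$ and the use of the reduced-pair description of elements of~$F$ are exactly the standard ingredients one unpacks to make this precise. Two small remarks: the appeal to Lemma~\ref{L:Free} is not actually needed, since $\TTT = \TT\act\AAA\a$ is \emph{by definition} equivalent to $\Th\TT\TTT = \AAA\a$; and in the converse you implicitly use that $(\Sp{\a11},\Sp{\a00})$ is the reduced pair for~$\AAA\a$, which is true but deserves a one-line check (no pair of consecutive leaves is simultaneously a sibling pair in both trees).
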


It is well-known that the group~$F$ is generated by the
elements~$\AAA\ea$ and~$\AAA1$, hence, a fortiori, by the whole
family~$\AAAA$. For~$\gg$ in~$F$, we denote by~$\ell_{\!\AAAA}(\gg)$
the length of~$\gg$ with respect to~$\AAAA$, \ie, the length of the
shortest expression of~$\gg$ as a product of letters~$\AAA\a$
and~$\AAA\a\inv$. Then the connection between the rotation distance
and the length function~$\ell_\AAAA$ is very simple.

\begin{prop}
For all trees $\TT, \TTT$ with the same size, we
have
\begin{equation}
\label{E:Distance}
\dist(\TT, \TTT) = \ell_{\!\AAAA}(\Th\TT{\TTT}).
\end{equation}
\end{prop}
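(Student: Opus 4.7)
The plan is to establish \eqref{E:Distance} by proving the two opposite inequalities.

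The easy direction, $\ell_{\!\AAAA}(\Th\TT\TTT) \le \dist(\TT, \TTT)$, is a direct translation of a rotation path into a word over~$\AAAA$. Starting from a rotation sequence $\TT = \TT_0, \TT_1, \ldots, \TT_k = \TTT$ of minimal length $k = \dist(\TT, \TTT)$, the preceding lemma attaches to each base pair $(\TT_i, \TT_{i+1})$ a letter $s_{i+1} \in \AAAA \cup \AAAA^{-1}$ with $\TT_{i+1} = \TT_i \act s_{i+1}$. Iterating the composition rule of the partial action then gives $\TTT = \TT \act (s_1 \cdots s_k)$, while by definition of~$\Th\TT\TTT$ we also have $\TTT = \TT \act \Th\TT\TTT$; freeness (Lemma~\ref{L:Free}) forces $\Th\TT\TTT = s_1 \cdots s_k$ in~$F$, exhibiting the sought factorization of length~$k$.

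For the converse inequality $\dist(\TT, \TTT) \le \ell_{\!\AAAA}(\Th\TT\TTT)$, the plan is to reverse the construction. Given a minimum-length factorization $\gg := \Th\TT\TTT = s_1 \cdots s_k$ with $s_i \in \AAAA \cup \AAAA^{-1}$ and $k = \ell_{\!\AAAA}(\gg)$, one would like to set $\TT_0 := \TT$ and $\TT_{i+1} := \TT_i \act s_{i+1}$; once every partial action is well defined, the composition rule combined with Lemma~\ref{L:Free} forces $\TT_k = \TTT$, while the preceding lemma identifies each consecutive pair as a base pair, so $\TT_0, \ldots, \TT_k$ is a rotation path of length~$k$ and the inequality follows.

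The main obstacle is to guarantee that every intermediate action $\TT_i \act s_{i+1}$ is defined: although the product $s_1 \cdots s_k$ acts on~$\TT$ with value~$\TTT$, the partial action is not automatically compatible with prefixes, and $\dom{s_1}$ may well be strictly larger than $\dom\gg$. The plan to handle this is to enlarge~$\TT$ and~$\TTT$ by grafting identical carets at corresponding leaves, an operation that preserves the element $\Th\TT\TTT$ of~$F$ and, since the grafted carets can be carried along as passive passengers in any rotation sequence, also preserves the rotation distance. Iterating the grafting sufficiently makes every domain $\dom{s_1 \cdots s_i}$ fit inside the enlarged tree, so the inductive construction above produces a rotation path of length~$k$ in the enlarged associahedron, which transfers back to give $\dist(\TT, \TTT) \le k$ for the original~$\TT, \TTT$.
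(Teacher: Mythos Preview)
Your overall strategy matches the paper's exactly: convert a geodesic rotation path into an $\AAAA$-word (giving $\ell_{\!\AAAA} \le \dist$), and for the converse take a shortest $\AAAA$-word for $\Th\TT\TTT$, enlarge $\TT, \TTT$ so that all intermediate partial actions are defined, obtain a rotation path of that length between the enlarged trees $\widehat\TT, \widehat\TTT$, and then transfer the bound back to $\TT, \TTT$.

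The gap is in that last transfer. What you need is $\dist(\TT, \TTT) \le \dist(\widehat\TT, \widehat\TTT)$: a rotation path between the enlarged trees must descend to one no longer between the originals. Your ``passive passenger'' argument establishes precisely the \emph{opposite} inequality $\dist(\widehat\TT, \widehat\TTT) \le \dist(\TT, \TTT)$: any rotation sequence on $\TT, \TTT$ lifts, carrying the grafted carets along. But an arbitrary rotation sequence on $\widehat\TT, \widehat\TTT$ need not leave the grafted carets intact---a rotation can perfectly well have its pivot inside a grafted subtree, or straddle its boundary---so there is no automatic descent. The paper closes this gap by a forward reference to the collapsing operation of Section~\ref{S:Collapsing}: erasing the grafted leaves projects each base pair $(\widehat\TT_r, \widehat\TT_{r+1})$ either to a base pair or to a trivial pair (Lemma~\ref{L:CollPair}), producing a rotation path from $\TT$ to $\TTT$ of length at most~$k$. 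That projection is the missing ingredient; once you supply it, your argument coincides with the paper's.
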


\begin{proof}
Assume that $(\TT_0, ..., \TT_\ell)$ is path from~$\TT$
to~$\TTT$ in~$\Ass{\size\TT}$, \ie, we have $\TT_0 =\nobreak \TT$,
$\TT_\ell =
\TTT$, and
$(\TT_\rr, \TT_\rrp)$ is a base pair for each~$\rr$. Let
$\a_\rr^{\ee_\rr}$ be the address of ~$(\TT_\rr, \TT_\rrp)$. By
construction, we have
$$\TT \act (\AAA{\a_1}^{\ee_1} ... \AAA{\a_\ell}^{\ee_\ell}) = \TTT,$$
hence $\Th\TT\TTT = \AAA{\a_1}^{\ee_1} ... \AAA{\a_\ell}^{\ee_\ell}$
by Lemma~\ref{L:Free}, and, therefore, $\ell_{\!\AAAA}(\Th\TT{\TTT})
\le \ell$.

Conversely, assume that $\AAA{\a_1}^{\ee_1} ...
\AAA{\a_\ell}^{\ee_\ell}$ is an expression of the element~$\Th\TT\TTT$
of~$F$ in terms of the generators~$\AAA\a$. It need not be true that
$\TT \act (\AAA{\a_1}^{\ee_1} ... \AAA{\a_\ell}^{\ee_\ell})$ is defined,
but we can always find an extension~$\widehat\TT$ of~$\TT$ (a tree
obtained from~$\TT$ by adding more carets) such that 
$\widehat\TT \act (\AAA{\a_1}^{\ee_1} ... \AAA{\a_\ell}^{\ee_\ell})$ is
defined and equal to some extension~$\widehat\TTT$ of~$\TTT$. Hence
we have $\dist(\widehat\TT, \widehat\TTT) \le \ell$. Now, anticipating
on Section~\ref{S:Collapsing}, there exists a set~$\II$ such that
$\TT$ and $\TTT$ are obtained from~$\widehat\TT$ and
$\widehat\TTT$ respectively by collapsing the labels of~$\II$. As will
follow from Lemma~\ref{L:CollPair}, this implies $\dist(\TT, \TTT) \le
\dist(\widehat\TT, \widehat\TTT) \le \ell$.
\end{proof}

\subsection{Presentation of~$F$ in terms of the generators~$\AAA\a$}

We are thus left with the question of determining the length of an element
of the group~$F$ in terms of the generators~$\AAA\a$. Formally, this
problem is closed to similar length problems for which solutions
are known. In~\cite{For} and~\cite{BeB}, explicit combinatorial methods
for computing the length of an element of~$F$ with respect to the
generating family~$\{\xx_0, \xx_1\}$, \ie, $\{\AAA\ea,
\AAA1\}$ with the current notation, are given. Similarly, the unique
normal form of~\cite[Theorem~2.5]{CFP} is geodesic with respect to the
generating family~$\{\xx_\ii \mid \ii \ge 0\}$ and, therefore, there exists
an explicit combinatorial method for computing the length with respect to
that generating family, \ie, with respect to $\{\AAA\ea, \AAA1, \AAA{11},
...\}$. Thus, one might expect a similar method
for computing the length with respect to the family $\AAAA$ of
all~$\AAA\a$'s. Unfortunately, no such method is known at the
moment, and we can only obtain coarse inequalities.

The first step is to determine a presentation of the group~$F$ in terms of
the (redundant) family~$\AAAA$. As we know a
presentation of~$F$ from the~$\xx_\ii$'s, it is sufficient to add the
definitions of the elements~$\AAA\a$ for~$\a$ not a power of~$1$.
Actually, a much more symmetric presentation exists.

If $\a, \b$ are binary addresses, we say that $\a$ is a \emph{prefix} of~$\b$, denoted $\a \prefe \b$, if $\b = \a\g$
holds for some address~$\g$.

\begin{lemm}
[{\cite[Prop. 4]{Dfg} or \cite[Prop. 2.13]{Dhb}}]
\label{L:Pres}
In terms of~$\AAAA$, the group~$F$ is presented by the
following relations, where $\a, \b$ range over~$\Add$,
\begin{align}
\label{E:Pentagon}
\AAA\a^2 
&= \AAA{\a1} \cdot \AAA\a \cdot \AAA{\a0},\qquad\qquad\\
\label{E:Rel2}
\AAA{\a0\b} \cdot \AAA\a
&= \AAA\a \cdot \AAA{\a00\b},\\
\AAA{\a10\b} \cdot \AAA\a
&= \AAA\a \cdot \AAA{\a01\b},\\
\AAA{\a11\b} \cdot \AAA\a
&= \AAA\a \cdot \AAA{\a1\b},\\
\label{E:Rel5}
\AAA\b \cdot \AAA\a
&= \AAA\a \cdot \AAA\b
\rlap{\mbox{\quad if neither $\a \prefe \b$ nor $\b \prefe \a$
holds.}}
\end{align}
\end{lemm}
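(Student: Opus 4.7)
My plan divides into two parts: first verify that each relation holds in~$F$, then show the given relations suffice to present~$F$.

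For the first part, I would use the partial action on trees together with the freeness statement of Lemma~\ref{L:Free}: two elements of~$F$ coincide as soon as they have the same effect on a single tree on which both are defined. For the pentagon relation~\eqref{E:Pentagon}, take any~$\TT$ whose $\a$-subtree has shape $\TT_1 \op (\TT_2 \op (\TT_3 \op \TT_4))$; a direct case analysis shows that both $\TT \act \AAA\a^2$ and $\TT \act (\AAA{\a1} \AAA\a \AAA{\a0})$ produce the tree obtained by replacing that subtree with~$((\TT_1 \op \TT_2) \op \TT_3) \op \TT_4$. For the three ``push'' relations \eqref{E:Rel2}--(\eqref{E:Rel2}{+}2), the point is to track how a rotation at address~$\a$ relocates the root of a subtree sitting at~$\a\gamma$: depending on whether $\gamma$ begins with $0$, $10$, or~$11$, that root ends up at $\a 00\gamma'$, $\a 01\gamma''$, or $\a 1\gamma'''$ respectively, and each of the three relations simply records this bijective correspondence between positions in the pre- and post-rotation trees. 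Finally, the commutation relation~\eqref{E:Rel5} holds because when neither of~$\a, \b$ is a prefix of the other, the $\a$- and $\b$-subtrees are disjoint, so the corresponding rotations manipulate independent portions of the tree.

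For the second part, I would proceed by Tietze transformations starting from the Cannon--Floyd--Parry presentation $F = \langle \xx_0, \xx_1, \xx_2, \ldots \mid \xx_\jj \xx_\ii = \xx_\ii \xx_{\jj-1}\ \text{for}\ \jj \ge \ii+2 \rangle$ (rewritten in the paper's reverse-composition convention), under the identification $\xx_\ii = \AAA{1^\ii}$. The CFP commutation relation is exactly the special case of the third push relation obtained by taking $\a = 1^\ii$ and $\b = 1^\kk$; this derives CFP from our system. Conversely, using relations~\eqref{E:Rel2}--\eqref{E:Rel5} one can, by induction on address length, rewrite any $\AAA\a$ with $\a$ not a power of~$1$ as a word in the $\xx_\ii$'s: writing $\a = 1^{\ii_1} 0 \a'$ and pushing the address across an appropriate $\xx_\ii$ via~\eqref{E:Rel2}, one strictly shortens the non-$1$ part, eventually producing an expression of the form $w \cdot \xx_\kk \cdot w\inv$ for some word~$w$ in the $\xx$'s. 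Adjoining these expressions as definitional equalities for the new generators to the CFP presentation and then eliminating them by Tietze transformations, one recovers exactly the system \eqref{E:Pentagon}--\eqref{E:Rel5}, giving the reverse derivation.

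The main obstacle is the bookkeeping in the Tietze equivalence: one has to check systematically that, for \emph{all} addresses $\a, \b$, each instance of \eqref{E:Pentagon}--\eqref{E:Rel5} can be derived from CFP via the chosen definitions, and conversely. A clean approach, already carried out in~\cite{Dfg, Dhb}, is to establish a normal form for words in the $\AAA\a$'s consisting of a sequence of $\AAA\a$'s followed by a sequence of $\AAA\b\inv$'s with the addresses listed in a fixed order, show that any word can be reduced to such a form using only the relations \eqref{E:Pentagon}--\eqref{E:Rel5}, and identify this normal form with the CFP normal form; uniqueness then follows from the fact that the relations hold in~$F$ combined with the uniqueness of the CFP normal form.
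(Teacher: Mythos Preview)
The paper does not prove this lemma at all: it is stated with a citation to~\cite{Dfg} and~\cite{Dhb} and no argument is given. So there is no ``paper's own proof'' to compare against; the result is imported wholesale.

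Your sketch is a reasonable outline of how such a proof goes, and is broadly in the spirit of the cited references. The verification that the relations hold (your first part) is correct and well explained; using Lemma~\ref{L:Free} to reduce each relation to a single tree computation is exactly the right move. For the second part, the Tietze strategy is sound in principle, but your inductive reduction of a general~$\AAA\a$ to a word in the~$\xx_\ii$'s is too vague to stand as written: relation~\eqref{E:Rel2} in the direction you invoke \emph{lengthens} the address (it sends $\a0\b$ to $\a00\b$ after conjugation), so ``strictly shortens the non-$1$ part'' is not literally what happens, and one needs a more careful induction scheme (for instance on the number of~$0$'s, or on a suitable weight). You implicitly acknowledge this by deferring to the normal-form argument of~\cite{Dfg, Dhb} at the end, which is fair, but then your write-up is really a pointer to those references rather than an independent proof---which, to be clear, is exactly what the paper itself does.
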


Relations~\eqref{E:Pentagon} are MacLane--Stasheff pentagon relations,
whereas \eqref{E:Rel2}--\eqref{E:Rel5} are quasi-commutation relations
with an easy geometric meaning.

For a given pair of trees~$(\TT, \TTT)$, it is not difficult to find an
expression of~$\Th\TT{\TTT}$ by an $\AAA\a$-word, \ie, a word in
the letters~$\AAA\a^{\pm1}$---see
Remark~\ref{R:Path} below. In general, nothing guarantees that this
expression~$\ww$ is geodesic, and we can only deduce an upper bound
$\dist(\TT, \TTT) \le \lg\ww$. However, we can also obtain lower bounds
by using invariants of the relations of Lemma~\ref{L:Pres}.

\begin{lemm}
\label{L:Invariant}
For~$\ww$ an $\AAA\a$-word, let $\II(\ww)$ be the algebraic sum of
the exponents of the letters~$\AAA\a$ with $\a$ containing no~$0$. Then
$\II$ is invariant under the relations of Lemma~\ref{L:Pres}.
\end{lemm}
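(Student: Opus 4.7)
The plan is to verify directly that each of the defining relations of Lemma~\ref{L:Pres} preserves~$\II$. Since $\II$ is by construction additive on the letters of a word, everything reduces to the indicator $c(\g) \in \{0,1\}$ equal to~$1$ exactly when the address $\g$ contains no~$0$ (with the convention $c(\ea) = 1$). For $\ww = \AAA{\a_1}^{\ee_1} \cdots \AAA{\a_m}^{\ee_m}$ one has $\II(\ww) = \sum_\ii \ee_\ii \, c(\a_\ii)$, and it suffices to check that the two sides of each relation yield the same $c$-weighted sum of exponents.

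The driving observation is the multiplicativity
\begin{equation*}
c(\g \d) = c(\g) \, c(\d),
\end{equation*}
which holds because a concatenation of two binary strings contains no~$0$ iff each of its factors does. In particular $c(\a 0) = 0$, $c(\a 1) = c(\a)$, and any address in which a~$0$ literally appears contributes~$0$.

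With this in hand the five verifications are immediate. For the pentagon~\eqref{E:Pentagon}, the left side contributes $2\,c(\a)$ and the right side contributes $c(\a 1) + c(\a) + c(\a 0) = c(\a) + c(\a) + 0 = 2\,c(\a)$. For the two quasi-commutations~\eqref{E:Rel2} and $\AAA{\a 10 \b} \cdot \AAA\a = \AAA\a \cdot \AAA{\a 01 \b}$, every letter other than~$\AAA\a$ has an address in which a~$0$ occurs, so both sides evaluate to $c(\a)$. For $\AAA{\a 11 \b} \cdot \AAA\a = \AAA\a \cdot \AAA{\a 1 \b}$, multiplicativity gives $c(\a 11 \b) = c(\a)\,c(\b) = c(\a 1 \b)$, so both sides evaluate to $c(\a) + c(\a)\,c(\b)$. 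Finally, the commutation~\eqref{E:Rel5} is trivial since the two sides contain the same letters.

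No step presents a genuine obstacle; the only point worth checking is that the boundary cases $\a = \ea$ and $\b = \ea$ cause no trouble, which is automatic from the convention $c(\ea) = 1$ together with the multiplicativity rule.
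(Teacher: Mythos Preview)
Your proof is correct and follows the same approach as the paper's own proof, namely a direct inspection of each relation; the paper merely sketches the pentagon case and notes the free-group relations, whereas you organize the check more cleanly via the indicator $c(\g)$ and its multiplicativity $c(\g\d) = c(\g)c(\d)$.
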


\begin{proof}
A simple inspection. For instance, if $\a$ contains no~$0$, there are two
letters of the form~$\AAA{1^\ii}$ on both sides of~\eqref{E:Pentagon},
whereas, if $\a$ contains at least one~$0$, there are no such letter on
either side. Also notice that the invariance property holds for the implicit
free group relations
$\AAA\a \cdot \AAA\a\inv = 1$ and $\AAA\a\inv \cdot \AAA\a = 1$.
\end{proof}

\begin{prop}
\label{P:Geodesic}
An $\AAA\a$-word that only contains letters~$\AAA{1^\ii}$ with
positive exponents is geodesic.
\end{prop}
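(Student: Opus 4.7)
The plan is to exploit the invariant $\II$ from Lemma~\ref{L:Invariant} to squeeze the length of any competing expression from below. The key observation is that $\II$ assigns to each letter a value in $\{-1,0,+1\}$: a letter $\AAA\a^{\pm 1}$ contributes $\pm 1$ when $\a$ is a power of~$1$ (including $\a=\ea$), and $0$ otherwise. Hence for an arbitrary $\AAA\a$-word $\ww'$ one has the trivial bound $|\II(\ww')| \le \lg{\ww'}$.

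Now let $\ww$ be an $\AAA\a$-word of the hypothesized form, so every letter of $\ww$ is of the shape $\AAA{1^\ii}$ with exponent $+1$. Each letter then contributes exactly $+1$ to $\II$, so $\II(\ww) = \lg\ww$. Consider any other $\AAA\a$-word $\ww'$ representing the same element of $F$ as $\ww$. Since the relations of Lemma~\ref{L:Pres} together with the free group relations $\AAA\a \cdot \AAA\a\inv = 1 = \AAA\a\inv \cdot \AAA\a$ form a presentation of $F$ with respect to~$\AAAA$, the word $\ww'$ is obtained from $\ww$ by finitely many applications of these relations. By Lemma~\ref{L:Invariant}, each such move preserves~$\II$, so $\II(\ww') = \II(\ww) = \lg\ww$.

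Combining the two displayed facts yields $\lg\ww = \II(\ww') \le |\II(\ww')| \le \lg{\ww'}$, establishing that no shorter expression of the element exists, i.e.\ that $\ww$ is geodesic.

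There is no real obstacle here; the work has already been done in proving the invariance of $\II$ in Lemma~\ref{L:Invariant}. The only point that deserves a sentence of care is the reminder that Lemma~\ref{L:Pres} genuinely provides a \emph{presentation} of $F$ on the generators $\AAAA$, so that equality in $F$ of the elements represented by $\ww$ and $\ww'$ really does imply a finite chain of relation moves between the two words, legitimizing the use of Lemma~\ref{L:Invariant}.
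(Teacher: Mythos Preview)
Your proof is correct and follows essentially the same route as the paper: use the invariant $\II$ of Lemma~\ref{L:Invariant}, observe $\II(\ww)=\lg\ww$ for the special word, and bound $\II(\ww')\le\lg{\ww'}$ for any equivalent word. The only cosmetic difference is your detour through $|\II(\ww')|$, which is harmless but unnecessary since $\II(\ww')=\lg\ww\ge 0$ already.
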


\begin{proof}
Using $\lg\ww$ for the length (number of letters) of a word~$\ww$, we
have $\II(\ww) \le \lg\ww$ for every $\AAA\a$-word~$\ww$, by
definition of~$\II$. Now, if $\ww$ contains only letters~$\AAA{1^\ii}$ with
positive exponents, then we have $\II(\ww) = \lg\ww$. By
Lemma~\ref{L:Invariant}, we deduce 
\linebreak
$\lg\ww = \II(\ww) = \II(\ww') \le
\lg{\ww'}$ for each
$\AAA\a$-word that is equivalent to~$\ww$ under the relations of
Lemma~\ref{L:Pres}.
\end{proof}

\begin{rema}
By contrast, it is \emph{not} true that a positive $\AAA\a$-word, \ie, an
$\AAA\a$-word in which all letters have positive exponents (no
$\AAA\a\inv$) need to be geodesic, or even quasi-geodesic: for $\pp \ge
1$, let us write $\AAA\a^{(\pp)}$ for $\AAA{\a1^{\pp-1}}
\AAA{\a1^{\pp-2}} ... \AAA{\a1} \AAA\a$. Then one can check that, for
each~$\pp$, the positive $\AAA\a$-word
$$\AAA\ea^{(\pp)} 
\AAA{01}^{(\pp-1)} 
\AAA{0101}^{(\pp-2)} 
\ ... \ 
\AAA{(01)^{\pp-2}}^{(2)}
\AAA{(01)^{\pp-1}}^{(1)},$$
which has length $\pp(\pp+1)/2$, is equivalent to the $\AAA\a$-word
$$\AAA\ea^{(\pp-1)} \AAA\ea \AAA0\inv \AAA{01} \AAA{010}\inv \ ... \
\AAA{(01)^{\pp-2}} \AAA{(01)^{\pp-2}0}\inv
\AAA{(01)^{\pp-1}},$$
which has length~$3\pp-2$. In other words, the
submonoid of~$F$ generated by the elements~$\AAA\a$ is not
quasi-isometrically embedded in the group~$F$.
\end{rema}

As an application, we can determine the distance from a right comb
to any tree.

\begin{prop}
\label{P:Comb}
For each tree~$\TT$ of size~$\nn$, we have
\begin{equation}
\label{E:Comb}
\dist(\Sp{1^\nn}, \TT) = \nn - \hR(\TT),
\end{equation}
where $\hR$ denotes the length of the rightmost branch.
\end{prop}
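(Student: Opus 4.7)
The plan is to establish the equality by constructing, by induction on $\nn - \hR(\TT)$, an $\AAA\a$-word $\ww$ that expresses $\Th{\Sp{1^\nn}}\TT$, has length $\nn - \hR(\TT)$, and only uses letters of the form $\AAA{1^\ii}$ with positive exponent. Once such a word is produced, Proposition~\ref{P:Geodesic} guarantees that $\ww$ is geodesic, and the identity $\dist(\TT, \TTT) = \ell_{\!\AAAA}(\Th\TT\TTT)$ of the preceding proposition then yields $\dist(\Sp{1^\nn}, \TT) = \lg\ww = \nn - \hR(\TT)$, establishing both inequalities at once.

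The base case $\nn - \hR(\TT) = 0$ is immediate, for then all $\nn$ inner nodes of $\TT$ lie on the rightmost branch, forcing $\TT = \Sp{1^\nn}$ and $\ww$ to be empty. For the inductive step, assuming $\hR(\TT) < \nn$, I would exhibit a tree $\TT'$ and an integer $\kk$ with $0 \le \kk \le \hR(\TT) - 1$ such that $\TT' \act \AAA{1^\kk} = \TT$ and $\hR(\TT') = \hR(\TT) + 1$. The hypothesis $\hR(\TT) < \nn$ forces some left subtree hanging off the rightmost branch of $\TT$ to be non-trivial; choosing any such index $\kk$, I write the left subtree at position $1^\kk$ as $\TT_1 \op \TT_2$ and call $\TT_3$ the right subtree at $1^\kk$ in $\TT$, so that $\sub{1^\kk}\TT = (\TT_1 \op \TT_2) \op \TT_3$. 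Then $\TT'$ is defined to coincide with $\TT$ outside this subtree and to carry $\TT_1 \op (\TT_2 \op \TT_3)$ at $1^\kk$. By construction $(\TT', \TT)$ is a positive base pair with address $1^\kk$, and direct inspection of the two local patterns shows that the rightmost branch of $\TT'$ is one node longer than that of $\TT$: in $\TT'$ the right child at position $1^\kkp$ is the inner node $\TT_2 \op \TT_3$, whereas in $\TT$ it is the root of $\TT_3$. The induction hypothesis then supplies a word $\ww'$ of the prescribed form, of length $\nn - \hR(\TT) - 1$, expressing $\Th{\Sp{1^\nn}}{\TT'}$, and $\ww = \ww' \cdot \AAA{1^\kk}$ is the required word.

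The one delicate point I anticipate is guaranteeing at each inductive step that the rotation can be chosen with an address of the form $1^\kk$ (a power of~$1$), since this is exactly the condition needed to invoke Proposition~\ref{P:Geodesic} at the end. This is ensured because whenever $\TT \neq \Sp{1^\nn}$, at least one left subtree attached to the rightmost branch is non-trivial, and any such position yields a legal positive rotation back to an ``earlier'' tree whose rightmost branch is one unit longer. As an alternative route for the lower bound, one could bypass Proposition~\ref{P:Geodesic} and use the invariant $\II$ of Lemma~\ref{L:Invariant} directly: the word $\ww$ constructed above satisfies $\II(\ww) = \lg\ww = \nn - \hR(\TT)$, so any word equivalent to $\ww$ has length at least $\nn - \hR(\TT)$, giving the lower bound at once.
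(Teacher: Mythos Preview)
Your proposal is correct and follows essentially the same strategy as the paper: construct an $\AAAA$-word of length $\nn - \hR(\TT)$ consisting solely of positive letters~$\AAA{1^\ii}$, then invoke Proposition~\ref{P:Geodesic} to conclude it is geodesic. The only cosmetic difference is that the paper builds this word by structural recursion on the decomposition $\TT = \TT_0 \op \TT_1$ (via an explicit formula involving a shift homomorphism), whereas you build it by induction on $\nn - \hR(\TT)$, peeling off one rotation at a time; both routes produce a suitable word and the core argument is identical.
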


\begin{proof}
Using $\sh_1$ for the word homomorphism that maps~$\AAA\a$
to~$\AAA{1\a}$ for each~$\a$, we recursively define an
$\AAA\a$-word~$\cha\TT$ by
\begin{equation}
\label{E:Char}
\cha\TT = 
\begin{cases}
\e \mbox{ (the empty word)}
&\mbox{for $\TT = \ut$},\\
\cha{\TT_0} \cdot \AAA{1^{\hR(\TT_0)-1}} ... \AAA1 \AAA\ea \cdot
\sh_1(\cha{\TT_1})
&\mbox{for $\TT = \TT_0 \op \TT_1$}.
\end{cases}
\end{equation}
An easy induction shows that, for each size~$\nn$ tree~$\TT$, the length
of~$\cha\TT$ is $\nn - \hR(\TT)$, and that we have
$\Sp{1^\nn} \act \cha\TT = \TT$, \ie, $\cha\TT$ provides a
distinguished way to go from the right comb~$\Sp{1^\nn}$ to~$\TT$.

Now, we see on~\eqref{E:Char} that $\cha\TT$ exclusively consists
of letters~$\AAA{1^\ii}$ with a positive exponent. Hence, by
Proposition~\ref{P:Geodesic}, $\cha\TT$ is geodesic, and we
deduce 
$$\dist(\Sp{1^\nn}, \TT)  = \lg{\cha\TT} =  \nn - \hR(\TT).
\eqno{\square}$$
\def\qed{\relax}
\end{proof}

\begin{coro}
\label{C:Comb}
For each~$\nn$, we have $\dd(\nn) \ge \nn-1$.
\end{coro}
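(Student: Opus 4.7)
The plan is to apply Proposition~\ref{P:Comb} to a tree whose rightmost branch is as short as possible, since the equality $\dist(\Sp{1^\nn}, \TT) = \nn - \hR(\TT)$ converts a lower bound on $\dd(\nn)$ directly into an upper bound problem for $\hR(\TT)$. The smallest possible value of $\hR(\TT)$ for a tree of size $\nn \ge 1$ is~$1$, achieved when the right subtree of the root is a single leaf~$\ut$. The natural candidate is therefore the left comb $\TT = \Sp{0^\nn}$.

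First I would verify from the defining recursion $\Sp{0\a} = \Sp\a \op \ut$ that, for $\nn \ge 1$, the tree $\Sp{0^\nn}$ decomposes as $\Sp{0^{\nno}} \op \ut$, so its right subtree is the single vertex $\ut$ and consequently $\hR(\Sp{0^\nn}) = 1$. Substituting $\TT = \Sp{0^\nn}$ into~\eqref{E:Comb} then gives
\begin{equation*}
\dist(\Sp{1^\nn}, \Sp{0^\nn}) = \nn - \hR(\Sp{0^\nn}) = \nn - 1.
\end{equation*}
Since $\dd(\nn)$ is by definition the maximum of $\dist(\TT, \TTT)$ over all pairs of size-$\nn$ trees, this single pair already witnesses $\dd(\nn) \ge \nn - 1$, which is the claim.

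There is no serious obstacle here: the corollary is an immediate specialisation of Proposition~\ref{P:Comb} to the extremal tree. The only minor point worth stating explicitly is the identification of $\hR(\Sp{0^\nn})$, which is a one-line computation from the recursive definition of thin trees; the heavy lifting—namely the geodesicity of $\cha\TT$ via Proposition~\ref{P:Geodesic}—has already been done in the proof of Proposition~\ref{P:Comb}.
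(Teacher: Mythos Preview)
Your argument is correct and matches the paper's own proof essentially verbatim: apply \eqref{E:Comb} to the left comb~$\Sp{0^\nn}$, observe that $\hR(\Sp{0^\nn}) = 1$, and conclude $\dist(\Sp{1^\nn}, \Sp{0^\nn}) = \nn - 1$. The paper even remarks, as you do, that any size~$\nn$ tree with right height~$1$ would serve equally well.
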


\begin{proof}
Applying~\eqref{E:Comb} when $\TT$ is the left comb~$\Sp{0^\nn}$---or
any size~$\nn$ term with right height~$1$---gives $\dist(\Sp{1^\nn},
\TT) = \nn-1$.
\end{proof}

As can be expected, other proofs of the previous modest result can be
given, for instance by counting left-oriented edges in the trees: our
purpose in stating Proposition~\ref{P:Comb} is just to illustrate the
general principle of using Thompson's group~$F$ and its presentation
from the $\AAA\a$'s.

\begin{rema}
\label{R:Path}
The proof of Proposition~\ref{P:Comb} implies that, for each
pair~$(\TT, \TTT)$ of size~$\nn$ trees, the $\AAA\a$-word $\cha\TT\inv
\, \cha{\TTT}$ is an explicit expression of~$\Th\TT{\TTT}$ of length at
most $2\nn-2$. It corresponds to a distinguished path from~$\TT$
to~$\TTT$ via the right comb~$\Sp{1^\nn}$ in the
associahedron~$\KK_\nn$.
\end{rema}

\subsection{Addresses of leaves}
\label{S:AddLeaves}

We turn to another way of using addresses to prove lower bounds on the
rotation distance, namely analyzing the way the addresses of the leaves are
modified in rotations.

\begin{defi}
For~$\TT$ a tree and $1 \le \ii \le \size\TT+1$, we denote
by~$\add\TT\ii$ the address of the $\ii$th leaf of~$\TT$ in the
left-to-right enumeration of leaves.
\end{defi}

Equivalently, we can attribute labels~$1$ to~$\nnp$ to the
leaves of each size~$\nn$ tree~$\TT$ enumerated from left to right and,
then,
$\add\TT\ii$ is the address where the label~$\ii$ occurs in~$\TT$. For
instance, if
$\TT$ is the (thin) tree of~\eqref{E:Tree}, then the labelling of the leaves
of~$\TT$ is
$$\VR(6,5)\begin{picture}(13,0)(0,4)
\put(0,0){\includegraphics{Tree.eps}}
\put(-1,5){$\scriptstyle1$}
\put(10.5,2){$\scriptstyle4$}
\put(-0.5,-1.5){$\scriptstyle2$}
\put(7,-1.5){$\scriptstyle3$}
\end{picture},$$
and we find $\add\TT1 = 0$,  $\add\TT2 = 100$, $\add\TT3 = 101$, and
$\add\TT4 = 11$.

The idea now is that, for each~$\ii$, we can follow the
parameter~$\add\TT\ii$ when rotations are applied.

\begin{lemm}
\label{L:Addresses}
Assume that $(\TT, \TTT)$ is a base pair and $1 \le
\ii \le \size\TT+1$ holds. Then $\add\TTT\ii$ is equal
to~$\add\TT\ii$ or it is obtained from~$\add\TT\ii$ by one of the
following transformations, hereafter called \emph{special}:
adding or removing one~$0$, adding or removing one~$1$, replacing some
subword~$10$ with~$01$ or vice versa.
\end{lemm}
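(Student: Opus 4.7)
The plan is a routine case analysis on where the $\ii$th leaf sits relative to the rotated subtree. Since the statement is symmetric with respect to negative rotations (these are inverses of positive ones), I would first reduce to the case where $(\TT, \TTT)$ is a \emph{positive} base pair, noting that any special transformation applied in reverse is again special.

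So assume $(\TT, \TTT)$ has address $\a^+$, meaning the $\a$-subtree of~$\TT$ has the form $\TT_1 \op (\TT_2 \op \TT_3)$ and is replaced by $(\TT_1 \op \TT_2) \op \TT_3$ in~$\TTT$. The first key observation I need is that rotation preserves the left-to-right order of leaves: indeed, reading the leaves of $\TT_1 \op (\TT_2 \op \TT_3)$ left to right yields the leaves of~$\TT_1$, then~$\TT_2$, then~$\TT_3$, and the same holds for $(\TT_1 \op \TT_2) \op \TT_3$; outside the $\a$-subtree the trees agree, so the $\ii$th leaf of~$\TT$ and the $\ii$th leaf of~$\TTT$ are ``the same'' leaf in this sense.

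Now I split into cases according to the relationship between $\a$ and $\add\TT\ii$. If $\a$ is not a prefix of~$\add\TT\ii$, then the $\ii$th leaf lies outside the rotated subtree, and $\add\TTT\ii = \add\TT\ii$. If $\a \prefe \add\TT\ii$, write $\add\TT\ii = \a\b$ for some nonempty address~$\b$, and distinguish three subcases according to which of $\TT_1, \TT_2, \TT_3$ contains the leaf:
\begin{itemize}
\item leaf in $\TT_1$: then $\b = 0\g$, and in~$\TTT$ the address becomes $\a 00 \g$, so one~$0$ is added;
\item leaf in $\TT_2$: then $\b = 10\g$, and in~$\TTT$ the address becomes $\a 01 \g$, so a factor~$10$ is replaced by~$01$;
\item leaf in $\TT_3$: then $\b = 11\g$, and in~$\TTT$ the address becomes $\a 1 \g$, so one~$1$ is removed.
\end{itemize}
Each of these is a special transformation, which concludes the positive case; passing to negative base pairs gives the remaining three special transformations (removing a~$0$, swapping $01 \to 10$, adding a~$1$).

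There is no real obstacle here; the only point requiring care is the preservation of leaf indexing, which, as noted, follows from the fact that both sides of the rotation pattern list the leaves of $\TT_1, \TT_2, \TT_3$ in that same left-to-right order.
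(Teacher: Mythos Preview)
Your proof is correct and follows essentially the same case analysis as the paper: reduce to a positive base pair with address~$\a^+$, then split according to whether $\a$ is a prefix of~$\add\TT\ii$ and, if so, whether the leaf lies in~$\TT_1$, $\TT_2$, or~$\TT_3$, yielding exactly the transformations $\a0\g \mapsto \a00\g$, $\a10\g \mapsto \a01\g$, $\a11\g \mapsto \a1\g$. Your explicit remark that rotation preserves the left-to-right order of leaves is a useful clarification that the paper leaves implicit.
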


\begin{proof}
Assume that the address of~$(\TT, \TTT)$
is~$\a^+$ and $\add\TT\ii = \g$ holds. Then four cases may occur. 
If $\a$ is not a prefix of~$\g$, then we have $\add\TTT\ii = \g$.
Otherwise, exactly one of $\a0 \prefe \g$, $\a10 \prefe \g$, or $\a11
\prefe \g$ holds. If $\a0 \prefe \g$ holds, say $\g = \a0\b$, then we have
$\add\TTT\ii = \a00\b$, obtained from~$\g$ by adding one~$0$. If
$\a10 \prefe \g$ holds, say $\g = \a0\b$, then we have
$\add\TTT\ii = \a01\b$, obtained from~$\g$ by replacing one~$10$
with~$01$. Finally, if $\a11 \prefe \g$ holds, say $\g = \a11\b$, then we
have $\add\TTT\ii = \a1\b$, obtained from~$\g$ by removing one~$1$. 
The results are symmetric for a negative base pair.
\end{proof}

So Lemma~\ref{L:Addresses} shows that the parameters~$\add\TT\ii$
cannot change too fast, which can be easily translated into lower bounds
on the rotation distance. For $\a, \g$ two binary addresses, we denote
by~$\Card\a\g$ the number of occurrences of~$\a$ in~$\g$.

\begin{lemm}
\label{L:Address}
For~$\g, \g'$ in~$\Add$, define
$$\d(\g, \g') = 
\Abs{\Card0{\g'} - \Card0\g} 
+ \Abs{\Card1{\g'} - \Card1\g}
+ \Abs{\Card{10}{\g'} - \Card{10}\g}.$$
Then, for all trees~$\TT, \TTT$ of equal size, we have
\begin{equation}
\label{E:AddDist}
\dist(\TT, \TTT) \ge \max_{1 \le \ii \le \size\TT}
\d(\add\TT\ii, \add\TTT\ii).
\end{equation}
\end{lemm}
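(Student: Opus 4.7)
The plan is to bound, for each fixed~$\ii$, the quantity $\d(\add\TT\ii, \add\TTT\ii)$ by the rotation distance $\dist(\TT,\TTT)$; the lemma then follows by taking a maximum over~$\ii$.

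Given any sequence $\TT = \TT_0, \TT_1, \ldots, \TT_\ell = \TTT$ of base pairs realizing $\dist(\TT,\TTT) = \ell$, I would track the sequence of addresses $\g_r := \add{\TT_r}\ii$. Applying the usual triangle inequality to each of the three summands in the definition of~$\d$ shows that $\d$ is a semi-metric on~$\Add$. Hence it suffices to prove $\d(\g_r,\g_{r+1}) \le 1$ for every~$r$, for then telescoping gives $\d(\g_0,\g_\ell) \le \ell$, which is the desired inequality.

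By Lemma~\ref{L:Addresses}, $\g_{r+1}$ either equals $\g_r$ or is obtained from it by one of the five special transformations (adding or removing a~$0$, adding or removing a~$1$, or swapping a subword $10$ with~$01$). For each of these I would carry out a short local case analysis on the characters immediately surrounding the modified position. For an insertion or deletion of a digit, exactly one of the counts $\Card0$, $\Card1$ changes by one and contributes~$1$ to~$\d$; the key observation is that, under the special transformations of Lemma~\ref{L:Addresses}, the inserted (or removed) digit is always adjacent to another copy of itself (for instance, the transformation $\a 0 \b \mapsto \a 0 0 \b$ inserts the new~$0$ directly before an existing~$0$, and similarly for $\a 11\b \mapsto \a 1 \b$), so no $10$-pattern is created or destroyed and $\Card{10}$ is unchanged. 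For the swap $10 \leftrightarrow 01$, the counts $\Card0$ and~$\Card1$ are unchanged, and inspecting only the four characters bordering the swapped subword shows that the net change in~$\Card{10}$ lies in~$\{-1,0,1\}$.

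The main obstacle is precisely this case analysis for~$\Card{10}$, which is sensitive to the digits immediately adjacent to the modification point and requires careful bookkeeping of which $10$-patterns straddle the boundary of the local change. Once this verification is in place, summing $\d(\g_r,\g_{r+1}) \le 1$ over the $\ell$ steps of the geodesic path and invoking the semi-metric property yields $\d(\add\TT\ii,\add\TTT\ii) \le \ell = \dist(\TT,\TTT)$, completing the proof after maximizing over~$\ii$.
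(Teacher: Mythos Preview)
Your proposal is correct and follows essentially the same approach as the paper's proof: both track the address $\add{\TT_r}\ii$ along a geodesic and use Lemma~\ref{L:Addresses} to show that each special transformation changes $\d$ by at most one. Your write-up is in fact slightly more explicit than the paper's, since you spell out why $\Card{10}$ is unaffected by the insertions and deletions (namely, the inserted or removed digit is always adjacent to a copy of itself, as one sees from the proof of Lemma~\ref{L:Addresses} rather than its statement alone).
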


\begin{proof}
Lemma~\ref{L:Addresses} shows that, for each~$\ii$, one rotation
changes by at most one the value of $\d(\add\TT\ii, \add\TTT\ii)$.
Indeed, adding or removing one~$0$ can change the value of
$\Abs{\Card0{\add\TTT\ii} - \Card0{\add\TT\ii}}$ by one, but it
changes neither $\Abs{\Card1{\add\TTT\ii} - \Card1{\add\TT\ii}}$
nor $\Abs{\Card{10}{\add\TTT\ii} - \Card{10}{\add\TT\ii}}$.
Similarly, exchanging~$10$ and~$01$ once can change the value of
$\Abs{\Card{10}{\add\TTT\ii} - \Card{10}{\add\TT\ii}}$ by one, but
it changes neither $\Abs{\Card0{\add\TTT\ii} - \Card0{\add\TT\ii}}$
nor $\Abs{\Card1{\add\TTT\ii} - \Card1{\add\TT\ii}}$. Thus, if
$\d(\add\TT\ii, \add\TTT\ii)$ is~$\ell$, then at least~$\ell$ rotations
are needed to transform~$\TT$ into~$\TTT$.
\end{proof}

\begin{prop}
\label{P:Lower32}
For $\TT = (\TT_1 \op \ut) \op \TT_2$, with $\size{\TT_1} =
\size{\TT_2} = \ppm$, and $\TTT =\nobreak \Sp{(10)^\pp}$, we have
\begin{equation}
\label{E:Lower32}
\dist(\TT, \TTT) \ge 3\pp-2.
\end{equation}
\end{prop}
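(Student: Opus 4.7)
The plan is to apply Lemma~\ref{L:Address} directly, by identifying a single leaf whose address in $\TT$ and whose address in $\TTT$ are far apart in the $\d$-metric. The obvious candidate is the isolated $\ut$ sitting in the middle of $\TT$, namely the one coming from the $\TT_1 \op \ut$ factor: in $\TT$ it has address~$01$, and it is the $(\pp\plus1)$st leaf in the left-to-right enumeration (the leaves of~$\TT_1$ account for the first $\pp$ positions). So I would first record that $\add\TT{\pp\plus1} = 01$.

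Next I need to compute $\add\TTT{\pp\plus1}$ for $\TTT = \Sp{(10)^\pp}$. I would use the recursive decomposition
\[
\Sp{(10)^\pp} = \ut \op \bigl(\Sp{(10)^{\pp-1}} \op \ut\bigr),
\]
which splits the leaves of~$\TTT$ into leaf~$1$ at address~$0$, the leaves of $\Sp{(10)^{\pp-1}}$ shifted by the prefix~$10$, and the final leaf at address~$11$. An easy induction on~$\pp$ then shows that the $(\pp\plus1)$st leaf of $\Sp{(10)^\pp}$ has address exactly $(10)^\pp$: the $(\pp\plus1)$st leaf of~$\TTT$ is the $\pp$th leaf of $\Sp{(10)^{\pp-1}}$ prefixed with~$10$, and by induction the latter is $(10)^{\pp-1}$.

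With these two addresses in hand, I just plug into the definition of~$\d$ from Lemma~\ref{L:Address}. With $\g = 01$ and $\g' = (10)^\pp$, we have $\Card0\g = \Card1\g = 1$, $\Card{10}\g = 0$, and $\Card0{\g'} = \Card1{\g'} = \Card{10}{\g'} = \pp$, so
\[
\d\bigl(01,\,(10)^\pp\bigr) = (\pp\minus1) + (\pp\minus1) + \pp = 3\pp \minus 2.
\]
Applying \eqref{E:AddDist} with $\ii = \pp\plus1$ yields $\dist(\TT, \TTT) \ge 3\pp-2$, which is the desired bound.

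I do not expect any real obstacle: the only point requiring care is the induction identifying leaf $\pp\plus1$ of $\Sp{(10)^\pp}$ as $(10)^\pp$, and this is immediate once the recursion $\Sp{(10)^\pp} = \ut \op (\Sp{(10)^{\pp-1}} \op \ut)$ is unfolded. The strength of the bound comes entirely from the fact that the isolated middle leaf~$01$ must travel to the very bottom of the zigzag, paying one unit each for adding $\pp\minus1$ extra~$0$'s, adding $\pp\minus1$ extra~$1$'s, and creating $\pp$ new $10$-patterns.
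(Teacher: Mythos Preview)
Your proposal is correct and follows exactly the same approach as the paper's own proof: identify that $\add\TT{\pp+1} = 01$ and $\add\TTT{\pp+1} = (10)^\pp$, compute $\d(01,(10)^\pp) = 3\pp-2$, and invoke~\eqref{E:AddDist}. The only difference is that you spell out the inductive verification of $\add\TTT{\pp+1} = (10)^\pp$ via the decomposition $\Sp{(10)^\pp} = \ut \op (\Sp{(10)^{\pp-1}} \op \ut)$, which the paper leaves implicit.
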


\begin{proof}
Both $\TT$ and $\TTT$ have size~$2\pp$. By construction, we have
$$\add\TT\ppp = 01
\mbox{\quad and \quad}
\add\TTT\ppp = (10)^\pp.$$
We have $\Card0{01} =  \Card1{01} = 1$, 
$\Card{10}{01} = 0$, and  
$\Card0{(10)^\pp} =  \Card1{(10)^\pp} =  \Card{10}{(10)^\pp} = \pp$,
whence
$\d(01, (10)^\pp) =3\pp-2$, and
\eqref{E:Lower32} follows from~\eqref{E:AddDist}.
\end{proof}

\begin{coro}
For each~$\nn$, we have $\dd(\nn) \ge \frac32\nn- \frac52$.
\end{coro}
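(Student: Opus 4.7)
The plan is to split on the parity of~$\nn$ and to apply Proposition~\ref{P:Lower32}, or a slight asymmetric variant thereof, in each case. For even $\nn = 2\pp$ with $\pp \ge 1$, Proposition~\ref{P:Lower32} already exhibits trees of size~$\nn$ at rotation distance at least $3\pp - 2 = \frac{3}{2}\nn - 2 \ge \frac{3}{2}\nn - \frac{5}{2}$, settling this case.

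For odd $\nn = 2\pp + 1$ with $\pp \ge 1$, I would mimic the proof of Proposition~\ref{P:Lower32} while breaking the symmetry between~$\TT_1$ and~$\TT_2$. Fix arbitrary trees $\TT_1, \TT_2$ of respective sizes $\pp$ and $\pp - 1$ and set $\TT = (\TT_1 \op \ut) \op \TT_2$, which has size~$\nn$ and contains its central~$\ut$ as the $(\pp+2)$nd leaf, so $\add\TT{\pp+2} = 01$. For the second tree I would take the right zigzag $\TTT = \Sp{(10)^\pp \cdot 1}$, of spine length~$\nn$ and hence of size~$\nn$. The key step is to verify that $\add\TTT{\pp+2} = (10)^\pp \cdot 1$, which I expect by induction on~$\pp$ from the decomposition $\Sp{(10)^{\pp+1} \cdot 1} = \ut \op (\Sp{(10)^\pp \cdot 1} \op \ut)$: this yields the recurrence $\add{\Sp{(10)^{\pp+1} \cdot 1}}{\pp+3} = 10 \cdot \add{\Sp{(10)^\pp \cdot 1}}{\pp+2}$, with the base case $\add{\Sp{101}}{3} = 101$ immediate by inspection.

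Once both leaf addresses are pinned down, Lemma~\ref{L:Address} finishes the argument: the word $(10)^\pp \cdot 1$ contains $\pp$ zeros, $\pp + 1$ ones, and $\pp$ occurrences of the pattern~$10$, whereas $01$ contains $1$, $1$, and~$0$ respectively, so
\[\d(01,\, (10)^\pp \cdot 1) = (\pp - 1) + \pp + \pp = 3\pp - 1 = \frac{3}{2}\nn - \frac{5}{2}.\]
This gives $\dist(\TT, \TTT) \ge 3\pp - 1$, hence $\dd(\nn) \ge \frac{3}{2}\nn - \frac{5}{2}$ in the odd case as well, and the boundary cases $\nn \in \{1, 2\}$ are immediate. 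The only nontrivial piece of the plan is the inductive computation of the zigzag leaf address, and that should be routine bookkeeping rather than a real obstacle.
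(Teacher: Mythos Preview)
Your proposal is correct and follows the same line as the paper: apply Proposition~\ref{P:Lower32} for even~$\nn$, and carry out the analogous argument for odd~$\nn$ that the paper only alludes to with ``a similar argument gives $\dd(\nn) \ge \frac32\nn - \frac52$ for $\nn$ odd''. You have simply made explicit the asymmetric variant (sizes $\pp$ and $\pp-1$, target $\Sp{(10)^\pp1}$) and the leaf-address computation that the paper leaves to the reader.
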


\begin{proof}
Proposition~\ref{P:Lower32} gives $\dd(\nn) \ge
\frac32\nn -2$ for $\nn$ even. A similar argument gives $\dd(\nn) \ge
\frac32\nn- \frac52$ for $\nn$~odd.
\end{proof}

\subsection{Addresses of leaves (continued)}

The method can be refined to obtain a more precise evaluation of the
number of rotations needed to transform an address into a special given
address, typically one of the form~$1^\pp 0^\qq$. For $\g$ a binary
address, we denote by
$\pi(\g)$ the unique path in~$\Nat^2$ that starts on~$\Nat\times\{0\}$,
finishes on~$\{0\}\times\Nat$ and contains one edge~$(0,-1)$ for
each~$1$ in~$\g$ and one edge~$(1,0)$ for each~$0$ in~$\g$, following
the order of letters in~$\g$; see Figure~\ref{F:AddressBis}. Then we have
the following result.

\begin{lemm}
\label{L:AddressBis}
(See Figure~\ref{F:AddressBis}.) For $\g$ satisfying $\Card1\g
\le\nobreak \pp$ and $\Card0\g \le \qq$, put 
\begin{equation}
\label{E:AddressBis}
\ff(\pp, \qq, \g) = (\pp - \Card1\g) + (\qq - \Card0\g) + N(\g) + D(\g),
\end{equation}
where $N(\g)$ denotes the number of squares lying
below~$\pi(\g)$ and adjacent to a coordinate axis, and
$D(\g)$ is the distance in the $\Nat^2$-grid from $(1, 1)$ to the
region above~$\pi(\g)$. Then, for all trees~$\TT, \TTT$ satisfying
$\add\TT\ii = 1^\pp0^\qq$, $\Card1{\add\TTT\ii} \le \pp$, and
$\Card0{\add\TTT\ii} \le \qq$, we have
\begin{equation}
\dist(\TT, \TTT) \ge \ff(\pp, \qq, \add\TTT\ii)
\end{equation}
\end{lemm}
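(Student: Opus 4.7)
The plan is to treat $\ff(\pp, \qq, \cdot)$ as a Lipschitz-$1$ potential on binary addresses with respect to the six special transformations of Lemma~\ref{L:Addresses}, and then telescope along a geodesic, just as in the proof of Lemma~\ref{L:Address}. The base value is easy: for $\g = 1^\pp 0^\qq$ one has $\Card1\g = \pp$ and $\Card0\g = \qq$, so the first two summands vanish; the staircase $\pi(1^\pp 0^\qq)$ consists of $\pp$ consecutive down-edges followed by $\qq$ right-edges and thus hugs the two coordinate axes, which leaves no square below $\pi(\g)$ adjacent to an axis (so $N = 0$) and puts $(1,1)$ itself in the region above the path (so $D = 0$). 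Hence $\ff(\pp, \qq, 1^\pp 0^\qq) = 0$.

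The technical heart of the argument is to show that $|\ff(\pp, \qq, \g') - \ff(\pp, \qq, \g)| \le 1$ whenever $\g'$ is obtained from $\g$ by one special transformation and both $\g, \g'$ satisfy the count bounds. \emph{Swap $10 \leftrightarrow 01$:} the counts $\Card0\g$ and $\Card1\g$ are preserved, and $\pi$ is modified at a single inner corner, replacing a ``down-right'' kink by a ``right-down'' one (or vice versa) and moving exactly one unit square from one side of $\pi$ to the other. A case analysis according to whether that square touches an axis (possibly shifting $N$ by~$1$) and whether it lies on a shortest grid-path from $(1,1)$ to the above-region (possibly shifting $D$ by~$1$) shows that the net change of $N + D$ stays within $\pm 1$. \emph{Insertion or deletion of a single~$0$ or~$1$:} the corresponding term $(\qq - \Card0\g)$ or $(\pp - \Card1\g)$ changes by $\pm 1$, and I claim that inserting a right-step (respectively down-step) into $\pi$ shifts $N + D$ by at most~$1$ in the opposite direction; this again reduces to a short case analysis on whether the new step enlarges a row of below-squares adjacent to an axis or opens up a new cell of the above-region.

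Once the Lipschitz estimate is in hand, I take a geodesic sequence $\TT = \TT_0, \TT_1, \Ldots, \TT_\ell = \TTT$ with $\ell = \dist(\TT, \TTT)$. Lemma~\ref{L:Addresses} makes each $\add{\TT_{r+1}}\ii$ either equal to $\add{\TT_r}\ii$ or obtained from it by one special transformation, so summing the Lipschitz bounds yields $\ff(\pp, \qq, \add\TTT\ii) \le \ff(\pp, \qq, \add\TT\ii) + \ell = \ell$ by the base value. A minor point to address is that intermediate addresses might temporarily violate the count bounds; one can either truncate the geodesic to its first maximal in-bounds segment, or extend $\ff$ by replacing $\pp - \Card1\g$ and $\qq - \Card0\g$ with their positive parts, for which the same Lipschitz property still holds. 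The main obstacle is evidently the case analysis in the middle paragraph: neither $N$ nor $D$ is Lipschitz on its own, and only the specific sum $N + D$ is, which is exactly why the formula for $\ff$ is engineered in this way; verifying the estimate demands a picture-level inspection of how the staircase path and the distinguished corner near $(1,1)$ evolve under each of the six local modifications.
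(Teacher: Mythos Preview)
Your proposal is correct and follows the same route as the paper's own (sketch) proof: both arguments rest on showing that $\ff(\pp,\qq,\cdot)$ is $1$-Lipschitz with respect to the special transformations of Lemma~\ref{L:Addresses}, then telescoping along a geodesic; neither you nor the paper carries out the case analysis in full. Your write-up is in fact more careful than the paper's, since you compute the base value $\ff(\pp,\qq,1^\pp0^\qq)=0$ explicitly and flag the out-of-bounds issue for intermediate addresses together with a viable fix.
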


\begin{figure}[htb]
\begin{picture}(41,26)(0,0)
\put(-1,-1){\includegraphics[scale=1]{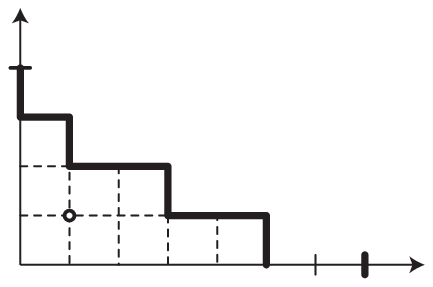}}
\put(-3,20){$\pp$}
\put(35,3){$\qq$}
\put(17,9){$\pi(\g)$}
\end{picture}
\caption{\sf\smaller Application of Lemma~\ref{L:AddressBis} to $\g =
101001001$ with $\pp = 4$ and $\qq = 7$: we find
$\pp - \Card1\g = 4-4 = 0$, $\qq - \Card0\g = 7 - 5 = 2$, the number of
squares below~$\pi(\g)$ touching one of the axes is~$7$, and the
distance from~$(1,1)$ to the region above~$\pi(\g)$ is~$1$, leading to
$\ff(4, 7, \g) = 0 + 2 + 7 + 1 = 10$. Hence, if
$\TT$ and $\TTT$ are trees in which, for some~$\ii$, the
$\ii$th leaf has address~$1^4 0^7$ in~$\TT$ and~$\g$
in~$\TTT$, we have $\dist(\TT, \TTT) \ge\nobreak 10$.}
\label{F:AddressBis}
\end{figure}

\begin{proof}[Proof (sketch)]
As in Lemma~\ref{L:Address}, one checks that $\ff(\pp, \qq, \g)$
decreases by at most one when a special transformation is applied
to~$\g$. By Lemma~\ref{L:Addresses}, this implies that $\ff(\pp, \qq,
\add\TTT\ii)$ decreases by at most one when a rotation is applied
to~$\TTT$. 
\end{proof}

\begin{prop}
\label{P:Bicomb}
For $\TT = \Sp{1^\pp0^\qq}$, $\TTT = \Sp{0^\qq1^\pp}$ with~$\pp,
\qq
\ge 1$, we have
\begin{equation}
\label{E:Bicomb}
\dist(\TT, \TTT) \ge \pp + \qq + \min(\pp, \qq) - 2.
\end{equation}
\end{prop}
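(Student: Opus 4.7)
The plan is to apply Lemma~\ref{L:AddressBis} to the $(\pp{+}1)$th leaf of the two trees. First I would unwind the recursive definition of~$\Sp{\cdot}$: in $\TT = \Sp{1^\pp 0^\qq}$ the subtree at address~$1^\pp$ is the left comb~$\Sp{0^\qq}$, and its leftmost leaf (which is also the $(\pp+1)$th leaf of~$\TT$ overall, since the preceding ones sit at addresses $0, 10, \ldots, 1^{\pp-1}0$) has address $\add\TT{\pp+1} = 1^\pp 0^\qq$. Symmetrically, in $\TTT = \Sp{0^\qq 1^\pp}$ the subtree at address~$0^\qq$ is the right comb~$\Sp{1^\pp}$, whose rightmost leaf, counted as the $(\pp+1)$th leaf of~$\TTT$, has address $\add\TTT{\pp+1} = 0^\qq 1^\pp$. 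Setting $\g = 0^\qq 1^\pp$, one has $\Card1\g = \pp$ and $\Card0\g = \qq$, so the hypotheses of Lemma~\ref{L:AddressBis} are satisfied and the lemma yields $\dist(\TT, \TTT) \ge \ff(\pp, \qq, \g)$.

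It remains to compute the four terms that make up $\ff(\pp, \qq, \g)$. The first two, $\pp - \Card1\g$ and $\qq - \Card0\g$, are both zero. For the geometric quantities, I would draw~$\pi(\g)$: since $\g$ consists of $\qq$ zeros followed by $\pp$ ones, the path runs along the top edge of the rectangle $[0,\qq] \times [0,\pp]$ from $(0,\pp)$ to $(\qq,\pp)$, then down the right edge to $(\qq, 0)$. Every unit square of the rectangle therefore lies below~$\pi(\g)$; the squares adjacent to a coordinate axis make up the bottom row ($\qq$ squares) together with the leftmost column ($\pp$ squares), overlapping in the corner square, so $N(\g) = \pp + \qq - 1$. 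For~$D(\g)$, the lattice points of~$\pi(\g)$ closest to $(1,1)$ are $(1,\pp)$ along the top and $(\qq,1)$ along the right, both at grid distance $\min(\pp,\qq) - 1$, so $D(\g) = \min(\pp,\qq) - 1$. Summing the four contributions gives $\ff(\pp,\qq,\g) = 0 + 0 + (\pp+\qq-1) + (\min(\pp,\qq)-1) = \pp + \qq + \min(\pp,\qq) - 2$, which is exactly the stated bound.

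The only mild subtlety is keeping the leaf-indexing straight in the two trees; once that is done, the computation of~$N$ and~$D$ is a transparent reading of the ``staircase'' formed by $\pi(0^\qq 1^\pp)$ hugging two sides of the rectangle $[0,\qq]\times[0,\pp]$, and no real obstacle remains.
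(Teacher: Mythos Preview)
Your proof is correct and follows exactly the paper's own approach: apply Lemma~\ref{L:AddressBis} with $\ii=\pp+1$, using $\add\TT{\pp+1}=1^\pp0^\qq$ and $\add\TTT{\pp+1}=0^\qq1^\pp$, and then evaluate $\ff(\pp,\qq,0^\qq1^\pp)$. The only imprecision is the phrase ``both at grid distance $\min(\pp,\qq)-1$'': the two candidate points are at distances $\pp-1$ and $\qq-1$ respectively, and it is their minimum that equals $\min(\pp,\qq)-1$.
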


\begin{proof}
As Figure~\ref{F:Bicomb} will show, we have $\add\TT\ppp =
1^\pp0^\qq$ and
$\add\TTT\ppp =\nobreak 0^\qq1^\pp$, so applying
Lemma~\ref{L:AddressBis} gives the lower  bound $\dist(\TT, \TTT) \ge
\pp + \qq + \min(\pp, \qq) - 2$. 
\end{proof}

(It is then easy to check that \eqref{E:Bicomb} is an equality by finding an
explicit path from~$\TT$ to~$\TTT$ involving the expected number of
rotations.) So we reobtain for~$\dd(\nn)$ a lower
bound~$\frac32\nn -2$ for even~$\nn$, and $\frac32\nn - \frac52$ for
odd~$\nn$. More refinements are possible, but approaches that only take 
one address at a time into account are unlikely to go beyond~$\frac32\nn
+ O(1)$.

\section{Using names}
\label{S:Names}

As the previous approach based on positions leads to limited
results only,  we now develop an alternative approach based on
\emph{names}. This is exactly similar to investigating a permutation not
in terms of the positions of the elements that are permuted, but in terms of
the names of the elements that have been permuted. Here we shall
associate with every base pair a name~$(\aa, \bb, \cc, \dd)^\pm$
consisting of four numbers and a sign. The principle for using names will
be the same as with positions: we prove $\dist(\TT, \TTT) \ge \ell$ by
showing that any path from~$\TT$ to~$\TTT$ through the
associahedron~$\Ass{\size\TT}$ must visit at lest $\ell$~edges whose
names satisfy some specific constraints. Here the constraints will involve
the so-called covering relation, a binary relation that connects
the leaves of the tree, taking the form ``the
$\ii$th leaf is covered by the $\jj$th leaf in~$\TT$'', denoted $\ii \cov\TT
\jj$. The basic observation is that, if the $\ii$th leaf is not covered by
the $\jj$th leaf in~$\TT$, but is covered in~$\TTT$, then any sequence of
rotations from~$\TT$ to~$\TTT$ must contain a base pair whose name
has a certain form, namely $(\aa, \bb, \cc, \dd)^+$ with $\cc = \jj$ and
$\aa \le \ii < \bb$. The rest of the paper consists in exploiting this
principle in more and more sophisticated ways.

\subsection{The name of a base pair}
\label{S:Name}

As in Section~\ref{S:AddLeaves}, we attach labels to the leaves of
a tree. For each label~$\ii$ occurring in a tree~$\TT$, we
denote by~$\add\TT\ii$ the address where~$\ii$ occurs
in~$\TT$. The only difference is that, in view of 
Section~\ref{S:Collapsing}, we shall not necessarily assume that the labels
used for a size~$\nn$ tree are $1$ to~$\nn+1$. However we
always assume that the labels increase  from left to right. So, for instance, 
$$\VR(6,4)\begin{picture}(13,0)(0,4)
\put(0,0){\includegraphics{Tree.eps}}
\put(-1,5){$\scriptstyle2$}
\put(-0.5,-1.5){$\scriptstyle5$}
\put(7,-1.5){$\scriptstyle6$}
\put(10.5,2){$\scriptstyle9$}
\end{picture}$$
is considered to be a legal labeling and, if $\TT$ is the above tree,
we would write $\add\TT2 = 0$ and $\add\TT6 = 101$. Formally, this
amounts to hereafter considering \emph{labeled} trees. However, every
unlabeled tree is identified with the labeled tree  where labels
are $1, \Ldots, \size\TT+1$.

Switching to labeled trees does not change anything to rotations and the
derived notions. If $\TT$ and~$\TTT$ are labeled trees, we say that
$(\TT, \TTT)$ is a base pair if $(\TTb, \TTTb)$ is a base pair, where
$\TTb$ and~$\TTTb$ the unlabeled trees underlying~$\TT$
and~$\TTT$, and, in addition, the same labels occur in~$\TT$
and~$\TTT$ (necessarily in the same order by our convention that labels
increase from left to right). As the associativity law does not change the
order of variables, this definition preserves the connection between
rotation and associativity.

We now attach to each base pair (of labeled trees) a name
that specifies the rotation in terms of the labels of leaves.

\begin{defi}
\label{D:Name}
(See Figures~\ref{F:Name} and~\ref{F:K4Names} below.) Assume that
$(\TT,
\TTT)$ is a positive base  pair. Let~$\a^+$ be the address of~$(\TT,
\TTT)$. Then, the
\emph{name} of $(\TT, \TTT)$, denoted  $\name\TT\TTT$, is defined to
be $(\aa, \bb, \cc, \dd)^+$, where

\quad $\aa$ is the unique label in~$\TT$ that satisfies $\add\TT\aa =
\a0^\pp$ for some~$\pp$,

\quad $\bb$ is the unique label in~$\TT$ that satisfies $\add\TT\bb =
\a10^\pp$ for some~$\pp$,

\quad $\cc$ is the unique label in~$\TT$ that satisfies $\add\TT\cc =
\a101^\pp$ for some~$\pp$,

\quad $\dd$ is the unique label in~$\TT$ that satisfies $\add\TT\dd =
\a1^\pp$ for some~$\pp$,

\noindent
In this case, the name of~$(\TTT, \TT)$ is defined to be $(\aa, \bb, \cc,
\dd)^-$. For $\kk = 1, ..., 4$, the
$\kk$th entry in~$\name\TT\TTT$ is denoted~$\Name\kk\TT\TTT$.
\end{defi} 

\begin{figure}[htb]
\begin{picture}(70,33)
\put(0,1.5){\includegraphics{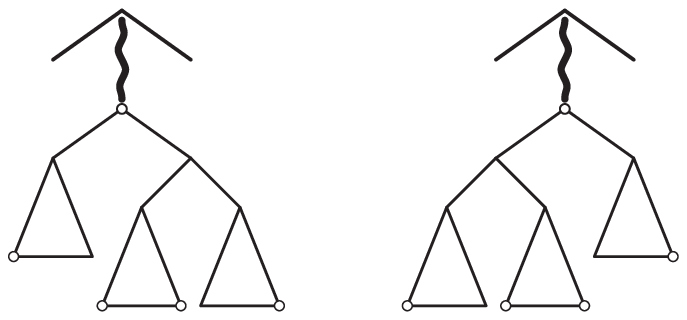}}
\put(19,30){$\TT$}
\put(64,30){$\TTT$}
\put(14,23){$\a$}
\put(59,23){$\a$}
\put(0,4){$\aa$}
\put(9.5,-1){$\bb$}
\put(17.5,-1){$\cc$}
\put(27,-1){$\dd$}
\put(40,-1){$\aa$}
\put(50.5,-1){$\bb$}
\put(58.5,-1){$\cc$}
\put(67.5,4){$\dd$}
\end{picture}
\caption{\sf\smaller Name of a base pair of trees:  $\Name1\TT\TTT$ and
$\Name4\TT\TTT$, \ie, $\aa$ and $\dd$, are the names of the extremal
leaves in the subtree involved in the rotation, whereas
$\Name2\TT\TTT$ and $\Name3\TT\TTT$, \ie, $\bb$ and $\cc$, are the
names of the  extremal leaves in the (nested) subtree that is actually moved
in the rotation.}
\label{F:Name}
\end{figure}

\begin{exam}
\label{X:Name}
Let us consider the pair of~\eqref{E:Pair} again, namely
\begin{equation}
\label{E:PairBis}
\VR(11,11)\begin{picture}(34,0)(0,10)
\put(0,0){\includegraphics{Pair.eps}}
\put(6.5,19){$\TT$}
\put(21,19){$\TTT$}
\put(-1,14){$\scriptstyle1$}
\put(-1,8){$\scriptstyle2$}
\put(-1,2){$\scriptstyle3$}
\put(2,-1){$\scriptstyle4$}
\put(11,-1){$\scriptstyle5$}
\put(11,5){$\scriptstyle6$}
\put(11,11){$\scriptstyle7$}
\put(21,14){$\scriptstyle1$}
\put(18,5){$\scriptstyle2$}
\put(21,2){$\scriptstyle3$}
\put(24,-1){$\scriptstyle4$}
\put(32,-1){$\scriptstyle5$}
\put(29,8){$\scriptstyle6$}
\put(32,11){$\scriptstyle7$}
\end{picture},
\end{equation}
With the default labels, the name of~$(\TT,
\TTT)$ is $(2, 3, 5, 6)^+$: indeed, $2$ and~$6$ are the extreme labels of
the subtree~$\sub{10}\TT$ involved in the rotation, whereas $3$
and~$5$ are the extreme labels in the subtree~$\sub{1010}\TT$ that is
actually moved. 
\end{exam}

By definition, when $\TT$ is given, the address of a base
pair~$(\TT, \TTT)$ determines its name. The converse is true as well: if
$(\TT, \TTT)$ has name~$(\aa, \bb, \cc, \dd)^\pm$, then the address
of~$(\TT, \TTT)$ is~$\a^\pm$, where $\a$ is the longest common prefix
of~$\add\TT\aa$ and~$\add\TT\dd$, hereafter denoted $\add\TT\aa
\gcp \add\TT\dd$. This address is also~$\add\TTT\aa
\gcp \add\TTT\dd$.

\subsection{The covering relation}
\label{S:Covering}

The main tool that will enable us to use names to establish
lower bounds for the rotation distance is a binary relation that provides a
description of the shape of a tree in terms of the names of its leaves. 

\begin{defi}
\label{D:Covering}
(See Figure~\ref{F:Covering}.) If $\ii < \jj$ are labels in~$\TT$, we say that
$\ii$ is \emph{covered} by~$\jj$ in~$\TT$, denoted $\ii \cov\TT \jj$, if
there exists a subtree~$\TTT$ of~$\TT$ such that $\ii$ is a non-final label
in~$\TTT$ and $\jj$ is the final label in~$\TTT$.
\end{defi}

\begin{figure}[htb]
\begin{picture}(91,32)(0,0)
\put(0,0){\includegraphics{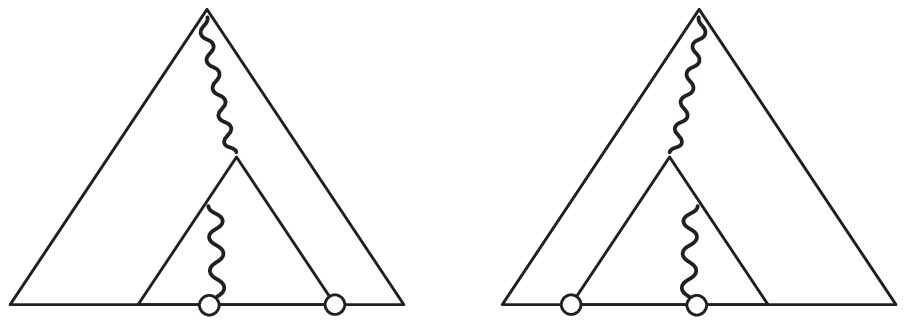}}
\put(25,29){$\TT$}
\put(25,17){$\TTT$}
\put(20,17){$\g$}
\put(16,12){$\g0$}
\put(20,-2){$\ii$}
\put(33,-2){$\jj$}
\put(25,-2){$\cov\TT$}
\put(75,29){$\TT$}
\put(70,17){$\TTT$}
\put(65,17){$\g$}
\put(72,12){$\g1$}
\put(57,-2){$\ii$}
\put(70,-2){$\jj$}
\put(62,-2){$\ccov\TT$}
\end{picture}
\caption{\sf\smaller The covering and co-covering relations: $\ii$ is covered by~$\jj$
in~$\TT$ if there exists a subtree~$\TTT$ such that $\ii$ is a non-final
label in~$\TTT$, whereas is the last (rightmost) label in~$\TTT$. Symmetrically, $\ii$
co-covers~$\jj$ in~$\TT$ if there exists a subtree~$\TTT$ such that $\ii$ is a the initial
(leftmost) label in~$\TTT$, and $\jj$ is a non-initial label in~$\TTT$.}
\label{F:Covering}
\end{figure}

The covering relation~$\cov\TT$ provides a complete description of the
tree~$\TT$.

\begin{prop}
Every labeled tree~$\TT$ is determined by its
covering relation~$\cov\TT$.
\end{prop}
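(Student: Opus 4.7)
The plan is to prove this by induction on the size~$\nn$ of~$\TT$, reconstructing~$\TT$ explicitly from~$\cov\TT$ at each step. The case $\nn = 0$ is trivial (a single leaf carrying the unique label in~$\II$). So assume $\nn \ge 1$ and write $\TT = \TT_0 \op \TT_1$; let~$\II$ be the label set of~$\TT$, $\ii_m = \max\II$, and $\ii_k$ the rightmost label of~$\TT_0$, so that $\TT_0$ and~$\TT_1$ carry the labels $\{\ii \in \II : \ii \le \ii_k\}$ and $\{\ii \in \II : \ii > \ii_k\}$ respectively. The whole task reduces to recovering the integer~$\ii_k$ from~$\cov\TT$.

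The key preliminary observation is this: for each label~$\jj$, put $C(\jj) := \{\ii : \ii \cov\TT \jj\}$. The subtrees of~$\TT$ whose rightmost leaf is~$\jj$ are precisely those rooted at a node reached from leaf~$\jj$ by walking up along right-child edges (i.e., as long as the current node is a right child of its parent), so they form a chain under inclusion with a unique maximal element, call it~$S_\jj$. Directly from Definition~\ref{D:Covering}, $C(\jj)$ consists exactly of the non-final labels of~$S_\jj$, whence $C(\jj) \cup \{\jj\}$ equals the label set of~$S_\jj$ and is in particular an interval of~$\II$.

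I then claim that~$\ii_k$ is characterized as the largest element of $\II \setminus \{\ii_m\}$ such that $C(\jj) \cup \{\jj\}$ equals the initial segment $\{\ii \in \II : \ii \le \jj\}$. Indeed, $C(\jj) \cup \{\jj\}$ is an initial segment of~$\II$ iff $S_\jj$ contains the leftmost leaf of~$\TT$, iff the root of~$S_\jj$ is an ancestor of that leftmost leaf, iff it lies on the left spine of~$\TT$; walking down that spine from the top, the successive rightmost leaves of the corresponding subtrees are $\ii_m$, $\ii_k$, then the rightmost leaves of successively deeper left subtrees of~$\TT_0$, which form a strictly decreasing sequence. So the maximum candidate strictly below~$\ii_m$ is exactly~$\ii_k$.

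Once~$\ii_k$ is recovered from~$\cov\TT$, the label sets $\II_L$ and $\II_R$ of~$\TT_0$ and~$\TT_1$ are determined. It is then immediate that the restrictions of~$\cov\TT$ to pairs in~$\II_L$ and to pairs in~$\II_R$ coincide with $\cov{\TT_0}$ and $\cov{\TT_1}$: any witnessing subtree whose labels all lie in~$\II_L$ cannot be~$\TT$ itself, hence must be contained in~$\TT_0$, and symmetrically for~$\II_R$; conversely every subtree of~$\TT_0$ or of~$\TT_1$ is a subtree of~$\TT$. Applying the induction hypothesis to~$\TT_0$ and~$\TT_1$ reconstructs them, and hence~$\TT$. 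I expect the only genuinely delicate step to be the identification of~$\ii_k$ via the initial-segment property together with the listing of candidates along the left spine; the remaining ingredients are direct unwindings of Definition~\ref{D:Covering} and of the recursive structure of subtrees.
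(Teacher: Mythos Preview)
Your proof is correct and follows essentially the same strategy as the paper's: both use induction on the size of~$\TT$, recover the split point~$\TT = \TT_0 \op \TT_1$ by identifying the rightmost label of~$\TT_0$ as the largest non-maximal label~$\jj$ such that the minimum label is covered by~$\jj$ (your ``initial segment'' condition on $C(\jj)\cup\{\jj\}$ is equivalent to this), and then recurse. Your write-up is a bit more explicit than the paper's---in particular you spell out why the restrictions of~$\cov\TT$ to the two label sets coincide with~$\cov{\TT_0}$ and~$\cov{\TT_1}$, which the paper leaves implicit---but the argument is the same.
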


\begin{proof}
Assume first that $\TT$ has the default labeling, \ie, the labels are $1$ to
$\size\TT + 1$. By construction, $\size\TT+1$ is the largest label
in~$\TT$,  and every label~$\lse\size\TT$ is covered by~$\size\TT + 1$,
so $\cov\TT$ determines~$\size\TT$. For an induction, it suffices now to
show that, for $\TT = \TT_0 \op \TT_1$, the relation~$\cov\TT$
determines the size~$\size{\TT_0}$ of~$\TT_0$. Now $1$ is covered
in~$\TT$ by~$\size{\TT_0}+1$, the final label in~$\TT_0$, but it is not
covered by~$\size{\TT_0}+2$, nor is it either covered by any
integer~$\lse\size\TT$. So we have $\size{\TT_0} = \max\{\jj \le
\size\TT  \mid 1 \cov\TT \jj \}$.

The argument is similar for an arbitrary labeled tree, which is determined
by its shape plus the family of its labels.
\end{proof}

According to the notation of~\eqref{E:Subtree}, each subtree of a tree is
specified by a binary address. Introducing the address of
the subtree~$\TTT$ involved in Definition~\ref{D:Covering} 
gives the following rewording of the definition. We recall
that, if $\a$ and~$\b$ are binary addresses, $\a \prefe
\b$ means that $\a$ is a prefix of~$\b$, \ie, $\b$ consists of~$\a$
possibly followed by additional~$0$'s and~$1$'s.

\begin{lemm}
\label{L:CovAddress}
If $\ii, \jj$ are labels in~$\TT$, then $\ii$ is \emph{covered} by~$\jj$ in~$\TT$ if and only if there exists a binary
address~$\g$ satisfying
\begin{equation}
\label{E:Covering} 
\g0 \prefe \add\TT\ii
\text{\quad and \quad}
\add\TT\jj = \g 1^\pp\text{\ for some $\pp \ge 1$.}
\end{equation}
\end{lemm}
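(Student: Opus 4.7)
The plan is a direct unpacking of both implications by tracking the rightmost branch of the candidate subtree, using the basic fact that inside any subtree the address of the rightmost (\emph{i.e.}, final) leaf is a string of $1$'s and nothing else.

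For the forward direction, assume $\ii \cov\TT \jj$ witnessed by a subtree~$\TTT$ of~$\TT$. Let $\g_0$ be the address of~$\TTT$ in~$\TT$, so that $\TTT = \sub{\g_0}\TT$. Since $\TTT$ contains two distinct labels $\ii$ and~$\jj$, it has positive size; as $\jj$ is the final label of~$\TTT$, the address of~$\jj$ inside~$\TTT$ is $1^\qq$ for some $\qq \ge 1$, and therefore $\add\TT\jj = \g_0 1^\qq$. Now $\ii$ is non-final in~$\TTT$, so its address inside~$\TTT$, which is a prefix-extension of~$\g_0$, is not of the form $1^\rr$. Let $\kk \ge 0$ be the largest integer such that $\add\TT\ii$ begins with $\g_0 1^\kk$; by choice of~$\kk$ the next letter must be a~$0$, so $\add\TT\ii = \g_0 1^\kk 0 \b'$ for some~$\b'$. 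Since $\g_0 1^\kk 0$ is the address of a node of~$\TT$, the prefix $\g_0 1^\kk$ is an inner node of~$\TT$ lying on the rightmost branch of~$\TTT$, forcing $\kk \le \qq-1$. Set $\g = \g_0 1^\kk$; then $\g 0 \prefe \add\TT\ii$ and $\add\TT\jj = \g 1^{\qq-\kk}$ with $\qq-\kk \ge 1$, which gives the conclusion.

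For the converse, assume $\g$ satisfies $\g0 \prefe \add\TT\ii$ and $\add\TT\jj = \g 1^\pp$ with $\pp \ge 1$. The second condition ensures that $\g$ is the address of an inner node of~$\TT$, so $\TTT := \sub\g\TT$ is a well-defined subtree of positive size. Inside~$\TTT$, the address of~$\jj$ is $1^\pp$; since $\TTT$ has positive size and the rightmost leaf of any tree is the unique leaf whose address is a string of $1$'s, $\jj$ is the final label of~$\TTT$. On the other hand, the address of~$\ii$ inside~$\TTT$ begins with~$0$, so $\ii$ lies in the left subtree of~$\TTT$, hence strictly to the left of the rightmost leaf; in particular $\ii$ is a non-final label of~$\TTT$. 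Thus $\ii \cov\TT \jj$, as required.

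The only subtle step is the choice of $\kk$ in the forward direction: one must shift the original witness subtree down along the rightmost branch until the path to~$\ii$ genuinely diverges by a left-fork, so as to exhibit the required prefix $\g 0$ of $\add\TT\ii$. Once this shift is made, both implications reduce to the tautology \emph{``rightmost leaf = all-$1$'s address''} together with Lemma/Definition~\eqref{E:Subtree} identifying the labels of a subtree with those whose addresses extend the subtree's address.
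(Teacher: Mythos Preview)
Your proof is correct and follows the natural approach; the paper itself gives no proof at all, simply introducing the lemma as ``the following rewording of the definition'' obtained by naming the address of the witnessing subtree. Your observation that one may need to slide the witness down the rightmost branch (your choice of~$\kk$) is a genuine detail the paper's one-line justification glosses over: the definition only requires~$\ii$ to be non-final in~$\TTT$, not to lie in the left subtree of~$\TTT$, so the address~$\g_0$ of the original witness need not itself satisfy $\g_0 0 \prefe \add\TT\ii$.
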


For future reference, we mention some simple properties of the covering
relation.

\begin{lemm}
\label{L:CovInterval}
$(i)$ The set of labels covered by~$\jj$ in~$\TT$ is a (possibly empty) interval ending in~$\jjm$: if $\ii \cov\TT
\jj$ and $\ii \le \ii' < \jj$ hold, then so does $\ii' \cov\TT \jj$.

$(ii)$ The relation~$\cov\TT$ is transitive.
\end{lemm}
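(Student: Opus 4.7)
The plan is to prove both parts directly from Definition~\ref{D:Covering}, using Lemma~\ref{L:CovAddress} to translate covering into a condition on binary addresses whenever that is convenient. The key underlying fact is that, because labels increase from left to right, the set of labels occurring in any fixed subtree of~$\TT$ forms a contiguous interval of integers ending with the final label of that subtree.

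For~(i), I would exhibit a single witnessing subtree that works uniformly for all the required~$\ii'$. Picking the subtree~$\TTT$ guaranteed by $\ii \cov\TT \jj$, the label~$\ii$ is non-final in~$\TTT$ and~$\jj$ is its final label. Because the labels of~$\TTT$ form an interval ending with~$\jj$ and containing~$\ii$, every integer~$\ii'$ with $\ii \le \ii' < \jj$ also lies in this interval and is non-final, so the same~$\TTT$ witnesses $\ii' \cov\TT \jj$.

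For~(ii), I would pass to the address formulation. Given $\ii \cov\TT \jj$ and $\jj \cov\TT \kk$, Lemma~\ref{L:CovAddress} supplies addresses~$\g_1, \g_2$ and integers $\pp, \qq \ge 1$ with $\g_1 0 \prefe \add\TT\ii$, $\add\TT\jj = \g_1 1^\pp$, $\g_2 0 \prefe \add\TT\jj$, and $\add\TT\kk = \g_2 1^\qq$. The crux is to show $\g_2 0 \prefe \g_1$: the address~$\g_2 0$ ends in~$0$, whereas every position of~$\g_1 1^\pp$ strictly past~$\lg{\g_1}$ carries a~$1$, so the prefix $\g_2 0$ of~$\g_1 1^\pp$ cannot extend beyond~$\g_1$. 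Therefore $\g_2 0 \prefe \g_1 \prefe \g_1 0 \prefe \add\TT\ii$, and combined with $\add\TT\kk = \g_2 1^\qq$, Lemma~\ref{L:CovAddress} applied with $\g = \g_2$ yields $\ii \cov\TT \kk$.

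I do not expect any genuine obstacle here; the only mildly delicate moment is the positional observation used in~(ii), which bypasses what would otherwise require a case split on $\lg{\g_1}$ versus $\lg{\g_2 0}$. No induction is needed, and an equally clean alternative for~(ii) proceeds purely in terms of subtrees: once one observes that $\g_2 \prefe \g_1$, the subtree~$\TTT_1$ witnessing $\ii \cov\TT \jj$ is contained in the subtree~$\TTT_2$ witnessing $\jj \cov\TT \kk$, and then $\ii$ is automatically a non-final label of~$\TTT_2$.
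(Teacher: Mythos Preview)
Your proof is correct. Part~$(i)$ is essentially the paper's argument: a single witnessing subtree suffices because its labels form an interval ending in~$\jj$. For part~$(ii)$, the paper argues purely in terms of subtrees---the two witnessing subtrees share the leaf labeled~$\jj$, hence cannot be disjoint, hence one contains the other---whereas your main argument goes through addresses via Lemma~\ref{L:CovAddress} and the positional observation that $\g_2 0$, ending in~$0$, cannot reach into the~$1^\pp$ tail of~$\g_1 1^\pp$. Both routes are short and valid; the address argument is a touch more computational but makes the inclusion $\g_2 0 \prefe \g_1$ explicit, while the paper's subtree argument is more geometric and avoids any index bookkeeping. You in fact sketch the paper's approach yourself at the end as an alternative, so there is no real divergence.
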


\begin{proof}
Point $(i)$ is clear from the definition: the elements covered by~$\jj$ in~$\TT$ are the non-final labels in the
maximal subtree~$\TTT$ of~$\TT$ that admits~$\jj$ as its final label: this means that $\TTT$ is either~$\TT$ itself,
or it is some subtree~$\sub\g\TT$ such that $\g$ ends with~$0$.

For~$(ii)$, we observe that, if $\TTT$ is a subtree of~$\TT$ witnessing for $\jj \cov\TT \kk$, and $\TTT'$ is a
subtree witnessing for $\ii \cov\TT \jj$, then the subtrees~$\TTT$ and~$\TTT'$ cannot be disjoint since $\jj$ is the
label of a leaf that belongs to both of them, and, then, $\TTT'$ must be included in~$\TTT$, so
each non-final label in~$\TTT'$ is a non-final label in~$\TTT$ as well. 
\end{proof}

\subsection{The co-covering relation}
\label{S:CoCovering}

The definition of covering gives  a distinguished role to the right side.
Of course, there is a symmetric version involving the left side.

\begin{defi}
\label{D:CoCovering}
(See Figure~\ref{F:Covering}.)
If $\ii < \jj$ are labels in~$\TT$, we say that $\ii$ \emph{co-covers}~$\jj$
in~$\TT$, denoted $\ii \ccov\TT \jj$, if, for some subtree~$\TTT$
of~$\TT$, the integer~$\ii$ is the initial label in~$\TTT$, and $\jj$ is a
non-initial label in~$\TTT$.
\end{defi}

To avoid confusion, we shall always state the covering and co-covering
properties for increasing pairs of labels $\ii < \jj$, thus saying ``$\ii$ is
covered by~$\jj$'' rather than ``$\jj$ covers~$\ii$'', and ``$\ii$
co-covers~$\jj$'' rather than ``$\jj$ is co-covered by~$\ii$''.  The
counterpart of~\eqref{E:Covering} is that $\ii$ co-covers~$\jj$ in~$\TT$ if
and only if there exists~$\g$ satisfying
\begin{equation}
\label{E:CoCovering} 
\add\TT\ii = \g 0^\pp\text{\ for some $\pp \ge 1$}
\text{\quad and \quad}
\g1 \prefe \add\TT\jj.
\end{equation}
The counterparts of Lemmas~\ref{L:CovAddress} and~\ref{L:CovInterval}
are obviously true.

More interesting are the relations that connect covering and co-covering.

\begin{lemm}
\label{L:CovShift}
Assume $\ii < \kk \le \jj < \ell$. Then the relations $\ii \cov\TT \jj$ and $\kk \ccov\TT \ell$ exclude each
other.
\end{lemm}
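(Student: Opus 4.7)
The plan is to argue by contradiction and reduce to the basic combinatorial geometry of subtrees. Suppose both $\ii \cov\TT \jj$ and $\kk \ccov\TT \ell$ hold. By Definition~\ref{D:Covering} and Definition~\ref{D:CoCovering}, there exist addresses $\g, \g'$ such that $\sub\g\TT$ contains $\ii$ as a non-final label and $\jj$ as its final (rightmost, hence maximum) label, while $\sub{\g'}\TT$ has $\kk$ as its initial (leftmost, hence minimum) label and contains $\ell$ as a non-initial label. Equivalently, by Lemma~\ref{L:CovAddress} and its co-covering analogue, $\g 0 \prefe \add\TT\ii$, $\add\TT\jj = \g 1^\pp$, $\add\TT\kk = \g' 0^{\pp'}$, and $\g' 1 \prefe \add\TT\ell$.

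The key observation is the standard property of ordered rooted trees: any two subtrees of~$\TT$ are either disjoint or nested, and in either case the leaves of each subtree form a contiguous block in the left-to-right enumeration. At the address level, this is the trichotomy that either $\g \prefe \g'$, or $\g' \prefe \g$, or neither (and the subtrees are then disjoint). I would simply apply this trichotomy to $\sub\g\TT$ and $\sub{\g'}\TT$.

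In each case the conclusion is immediate. If the two subtrees are disjoint, then either every label in~$\sub\g\TT$ is smaller than every label in~$\sub{\g'}\TT$, in which case $\jj < \kk$ contradicting $\kk \le \jj$, or the converse, in which case $\ell < \ii$ contradicting $\ii < \ell$. If $\sub{\g'}\TT \subseteq \sub\g\TT$, then $\ell$ is a label of~$\sub\g\TT$, so $\ell \le \jj$ since $\jj$ is the maximum label of~$\sub\g\TT$, contradicting $\jj < \ell$. Symmetrically, if $\sub\g\TT \subseteq \sub{\g'}\TT$, then $\ii$ is a label of~$\sub{\g'}\TT$, so $\kk \le \ii$ since $\kk$ is the minimum label of~$\sub{\g'}\TT$, contradicting $\ii < \kk$.

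There is really no serious obstacle here; the entire argument is a careful bookkeeping of the interval of labels spanned by a subtree, together with the disjoint-or-nested trichotomy. The only thing to be slightly careful about is recording correctly, from the defining addresses, that $\jj$ is indeed the \emph{maximum} label of $\sub\g\TT$ (forced by $\add\TT\jj = \g 1^\pp$) and that $\kk$ is the \emph{minimum} label of $\sub{\g'}\TT$ (forced by $\add\TT\kk = \g' 0^{\pp'}$); once those are in hand, the three cases dispatch mechanically.
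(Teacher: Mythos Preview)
Your proof is correct and follows essentially the same approach as the paper: both arguments invoke the disjoint-or-nested trichotomy for the two witnessing subtrees and derive a contradiction in each case from the extremal positions of~$\jj$ (rightmost in its subtree) and~$\kk$ (leftmost in its subtree). Your case split is slightly more symmetric in presentation, while the paper phrases the conclusion as establishing $\ell \le \jj$ and dismisses the case $\sub\g\TT \subseteq \sub{\g'}\TT$ in one clause, but the substance is identical.
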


\begin{proof}
Assume $\ii < \kk \le \jj$, $\ii \cov\TT \jj$ and $\kk \ccov\TT \ell$. We claim that $\ell \le \jj$
holds. Indeed, let $\sub\a\TT$ be a subtree of~$\TT$ witnessing for $\ii \cov\TT \jj$, and $\sub\b\TT$ be a subtree
witnessing for $\kk \ccov\TT \ell$. According to~\eqref{E:Covering} and~\eqref{E:CoCovering}, we have
$$\a0 \prefe \add\II\ii, \quad 
\add\TT\jj = \a1^\pp, \quad
\add\TT\kk = \b0^\qq, \quad\text{and} \quad
\b1 \prefe \add\TT\ell$$
for some~$\pp, \qq \ge 1$. If $\a$ lies on the left of~$\b$, then $\a1^\pp$ lies on the left of~$\b0^\qq$ as well,
contradicting the hypothesis $\kk \le \jj$. If $\a$ lies on the right of~$\b$, then
$\add\TT\ii$ lies on the right of~$\b0^\qq$ as well, contradicting the
hypothesis $\ii < \kk$. So $\a$ and $\b$ must be comparable, \ie, one of
the subtrees~$\sub\a\TT, \sub\b\TT$ is included in the other. The only
case that makes $\kk \le \jj$ possible is $\sub\b\TT$ being strictly
included in~$\sub\a\TT$, in which case we have $\ii < \kk \le \ell \le \jj$.
\end{proof}

\begin{lemm}
\label{L:CCover}
Assume $\aa \cov\TT \bb$.
Then the following are equivalent:

$(ii)$ we have $\add\TT\aa = \g 0 1^\pp$ for some~$\pp \ge 0$,
where $\g$ is~$\add\TT\aa \gcp \add\TT\bb$;

$(ii)$ $\aap \ccov\TT \bb$ holds; 

$(iii)$ $\aa \cov\TT \ii$ fails for each~$\ii$ in~$[\aap, \bbm]$.
\end{lemm}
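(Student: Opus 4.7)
The plan is to run the cycle (i) $\Rightarrow$ (ii) $\Rightarrow$ (iii) $\Rightarrow$ (i), combining Lemma~\ref{L:CovAddress}, its co-covering analogue~\eqref{E:CoCovering}, and the exclusion principle of Lemma~\ref{L:CovShift}. Throughout I fix $\g = \add\TT\aa \gcp \add\TT\bb$; applying Lemma~\ref{L:CovAddress} to the standing hypothesis $\aa \cov\TT \bb$ yields $\g 0 \prefe \add\TT\aa$ and $\add\TT\bb = \g 1^{\pp'}$ for some $\pp' \ge 1$, so $\bb$ is the rightmost leaf of $\sub{\g}{\TT}$ (and of $\sub{\g1}{\TT}$ when $\pp' \ge 2$).

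For (i) $\Rightarrow$ (ii), assuming $\add\TT\aa = \g 0 1^\pp$ makes $\aa$ the rightmost leaf of $\sub{\g0}{\TT}$, so the immediate successor $\aap$ is the leftmost leaf of $\sub{\g1}{\TT}$. Then $\sub{\g1}{\TT}$ has $\aap$ as its initial label and $\bb$ as its final label; provided $\aap < \bb$, this subtree witnesses $\aap \ccov\TT \bb$ via~\eqref{E:CoCovering}. The boundary case $\aap = \bb$ is benign, since (i) and (iii) still hold (the interval $[\aap, \bbm]$ is then empty).

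For (ii) $\Rightarrow$ (iii), the relation $\aap \ccov\TT \bb$ forces $\aap < \bb$, and any $\ii \in [\aap, \bbm]$ then satisfies $\aa < \aap \le \ii < \bb$. Lemma~\ref{L:CovShift}, applied with the lemma's roles $(\ii, \kk, \jj, \ell) = (\aa, \aap, \ii, \bb)$, forbids $\aa \cov\TT \ii$ from holding jointly with $\aap \ccov\TT \bb$, giving (iii).

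The genuinely constructive step (iii) $\Rightarrow$ (i) is the main obstacle. Write $\add\TT\aa = \g 0 \sigma$ and assume for contradiction that $\sigma$ is not a power of~$1$, so $\sigma = 1^j 0 \tau$ for some $j \ge 0$ and some suffix $\tau$. The vertex at address $\g 0 1^j$ is then inner with a non-trivial right subtree $\sub{\g 0 1^{j+1}}{\TT}$; let $\ii$ label its rightmost leaf, so that $\add\TT\ii = \g 0 1^{j+1+\qq}$ for some $\qq \ge 0$. The inequality $\aa < \ii$ holds because $\aa$ descends from the left child of the $\g 0 1^j$-node while $\ii$ sits in its right subtree, and $\ii < \bb$ holds because $\ii$ lies in $\sub{\g0}{\TT}$ whereas $\bb$ lies in $\sub{\g1}{\TT}$; hence $\ii \in [\aap, \bbm]$. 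Finally, Lemma~\ref{L:CovAddress} applied with witness prefix $\g 0 1^j$ gives $\aa \cov\TT \ii$, contradicting (iii), so $\sigma$ must be a power of $1$ after all.
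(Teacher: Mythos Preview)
Your proof is correct and follows essentially the same route as the paper's: the contrapositive argument for (iii) $\Rightarrow$ (i) via the address decomposition $\g 0 1^j 0\tau$ and the resulting intermediate label is exactly the paper's construction of the label~$\cc$ with address $\g 0 1^r$. The only organizational difference is that the paper establishes (i) $\Rightarrow$ (iii) and (ii) $\Rightarrow$ (i) directly rather than closing the cycle through (ii) $\Rightarrow$ (iii) via Lemma~\ref{L:CovShift}, but that lemma encapsulates precisely the same exclusion argument.
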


\begin{proof}
(See Figure~\ref{F:CCover}.)
By Lemma~Ê\ref{L:CovAddress}, the hypothesis implies that
$\add\TT\bb$ is~$\g1^\qq$ for some positive~$\qq$. Assume~$(i)$.
Then
$\add\TT{\aa+1}$ must be of the form~$\g 1 0^\rr$ for some
positive~$\rr$ and, therefore, $\aa+1$ co-covers~$\bb$ in~$\TT$, \ie,
$(ii)$ holds. On the other hand, $\aa$ can be covered by no label
between~$\aa+1$ and~$\bb-1$, so $(iii)$ holds. 

Conversely, we have $\g \prefe \add\TT{\aa+1} \gcp
\add\TT\bb$  under the hypotheses. Hence, if $(ii)$ holds, then
$\add\TT{\aa+1}$ must be of the form~$\g 1 0^\rr$, in which case
$\add\TT\aa$ must be of the form~$\g 0 1^\pp$ for some
positive~$\pp$. Hence, by the counterpart of
Lemma~\ref{L:CovAddress}, $(ii)$ holds.

Finally, if $(i)$ fails, the address of~$\aa$ in~$\TT$ must be of the form
$\g 0 1^\pp 0 \g'$ for some nonnegative~$\pp$ and some~$\g'$. But,
then, there exists a label~$\cc > \aa$ whose address has the form~$\g 0
1^\rr$, and we have $\aa \cov\TT \cc$, so $(iii)$ fails.
\end{proof}

\begin{figure}[htb]
\begin{picture}(70,26)(0,-1)
\put(0,0){\includegraphics{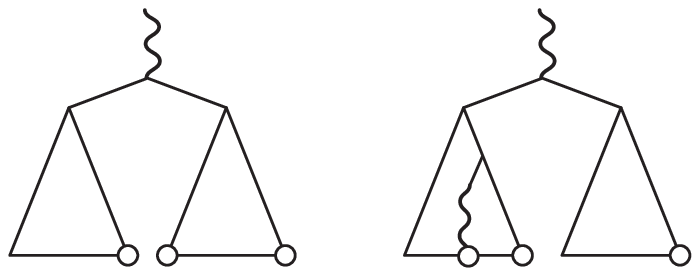}}
\put(10.5,-3){$\aa$}
\put(13,-3){$\aa{+}1$}
\put(28,-3){$\bb$}
\put(16,20){$\g$}
\put(56,20){$\g$}
\put(46,-3){$\aa$}
\put(52,-3){$\cc$}
\put(68,-3){$\bb$}
\end{picture}
\caption{\sf\smaller Proof of Lemma~\ref{L:CCover}: on the left, the case 
when $(i)$, $(ii)$, $(iii)$ hold; on the right, the case when they fail.}
\label{F:CCover}
\end{figure}

\subsection{The Key Lemma}
\label{S:Principle}

We arrive at the main point, namely analyzing the influence of
rotations on the covering and co-covering relations. The result is
simple: a positive rotation creates some covering and deletes some
co-covering, a negative rotation does the contrary. The nice
point is that there exists a close relation between the name of a
rotation and the covering or co-covering pairs it creates or deletes. This
will directly lead to lower bounds on the rotation distance: as one rotation
only changes the covering and co-covering relations by a small, well
controlled amount, if two trees~$\TT,
\TTT$ have very different covering and co-covering relations, many
rotations are needed to transform~$\TT$ into~$\TTT$.

First, we note for further reference the following straighforward facts,
whose verification should be obvious from Figure~\ref{F:Name}. We
naturally use $\ii \cove\TT \jj$ for ``$\ii \cov\TT \jj$ or $\ii = \jj$'', and
similarly for $\ii \ccove\TT \jj$.

\begin{lemm}
\label{L:NameCov}
If $(\TT, \TTT)$ is a base pair with name $(\aa, \bb, \cc,
\dd)^\pm$, we have

$(i)$ $\aa \ccove\TT \ii$ and $\ii \cove\TT \dd$ for each label~$\ii$
of~$\TT$ lying in~$[\aa, \dd]$;

$(ii)$ $\aa \ccove\TT \ii$ and $\ii \cove\TT \bb-1$ for each label~$\ii$
of~$\TT$ lying in~$[\aa, \bb-1]$;

$(iii)$ $\bb \ccove\TT \ii$ and $\ii \cove\TT \cc$ for each label~$\ii$
of~$\TT$ lying in~$[\bb, \cc]$;

$(iv)$ $\cc+1 \ccove\TT \ii$ and $\ii \cove\TT \dd$ for each label~$\ii$
of~$\TT$ lying in~$[\cc+1, \dd]$.
\end{lemm}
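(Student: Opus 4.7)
The plan is to verify each of the four assertions by exhibiting one explicit subtree of~$\TT$ that simultaneously witnesses both a co-covering claim (leftmost label) and a covering claim (rightmost label), then reading off the conclusions from the address characterizations \eqref{E:Covering} and~\eqref{E:CoCovering}. The four assertions correspond to the four natural subtrees of~$\TT$ involved in the rotation, namely the pivot~$\sub\a\TT$ and its three ``leaves''~$\TT_1, \TT_2, \TT_3$, and in each case the relevant pair of labels sits at the two extreme leaves.

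First I would handle the positive case, where $(\TT, \TTT)$ has name $(\aa, \bb, \cc, \dd)^+$ and address~$\a^+$. By Definition~\ref{D:Rotation}, $\sub\a\TT$ has the form $\TT_1 \op (\TT_2 \op \TT_3)$, so $\TT_1 = \sub{\a0}\TT$, $\TT_2 = \sub{\a10}\TT$, and $\TT_3 = \sub{\a11}\TT$. The address formulas $\a0^\pp$, $\a10^\pp$, $\a101^\pp$, $\a1^\pp$ of Definition~\ref{D:Name} identify $\aa, \bb, \cc, \dd$ as, respectively, the leftmost leaf of~$\TT_1$, the leftmost leaf of~$\TT_2$, the rightmost leaf of~$\TT_2$, and the rightmost leaf of~$\TT_3$. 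Since labels increase from left to right, the label sets of~$\TT_1, \TT_2, \TT_3$ are the consecutive intervals $[\aa, \bbm]$, $[\bb, \cc]$, $[\cc+1, \dd]$, whose union is the label set $[\aa, \dd]$ of~$\sub\a\TT$. The four assertions now read off mechanically: (i)~use $\sub\a\TT$, whose leftmost label is~$\aa$ and whose rightmost label is~$\dd$; (ii)~use $\sub{\a0}\TT = \TT_1$, with extremal labels $\aa, \bbm$; (iii)~use $\sub{\a10}\TT = \TT_2$, with extremal labels $\bb, \cc$; (iv)~use $\sub{\a11}\TT = \TT_3$, with extremal labels $\cc+1, \dd$. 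In each case, a non-extremal label~$\ii$ is a non-initial label of the witnessing subtree whose leftmost leaf is the relevant label, giving the strict $\ccov\TT$ relation via~\eqref{E:CoCovering}, and a non-final label of the same subtree whose rightmost leaf is the other relevant label, giving $\cov\TT$ via~\eqref{E:Covering}; the endpoint equalities account for the weak versions~$\ccove\TT$ and~$\cove\TT$.

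For a negative base pair $(\TT, \TTT)$ with name $(\aa, \bb, \cc, \dd)^-$, Definition~\ref{D:Name} says that $(\TTT, \TT)$ is the corresponding positive pair, so now $\sub\a\TT$ has the form $(\TT_1 \op \TT_2) \op \TT_3$ and the three witnessing subtrees shift one symbol inward, becoming $\sub{\a00}\TT = \TT_1$, $\sub{\a01}\TT = \TT_2$, and $\sub{\a1}\TT = \TT_3$, while $\sub\a\TT$ itself still witnesses~(i). The extremal labels of these three subtrees are unchanged — $\aa$ and~$\bbm$ for~$\TT_1$, $\bb$ and~$\cc$ for~$\TT_2$, $\cc+1$ and~$\dd$ for~$\TT_3$ — so the same verification goes through verbatim. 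The only bookkeeping issue is keeping the two shapes of~$\sub\a\TT$ straight; once one draws them side by side as in Figure~\ref{F:Rotation} and reads the addresses from Definition~\ref{D:Name}, the proof is essentially pictorial and presents no real obstacle.
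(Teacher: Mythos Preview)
Your proposal is correct and matches the paper's approach: the paper states this lemma without proof, calling it a ``straightforward fact whose verification should be obvious from Figure~\ref{F:Name}'', and you have simply written out that pictorial verification explicitly by exhibiting the four witnessing subtrees $\sub\a\TT$, $\TT_1$, $\TT_2$, $\TT_3$ and reading off their extremal labels.
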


The roles of~$\TT$ and~$\TTT$ in Lemma~\ref{L:NameCov} are
symmetric, so all covering results stated for~$\TT$ also hold for~$\TTT$.

Here comes the main point, namely the way covering and anticovering
change in a base pair.

\begin{lemm}
\label{L:CovChange}
If $(\TT, \TTT)$ is a base pair with name $(\aa, \bb, \cc,
\dd)^+$, then, for all labels~$\ii, \jj$ occurring in~$\TT$, 

$(i)$ $\ii \cov\TTT \jj$ holds if and only if we have either $\ii \cov\TT \jj$, or $\aa \le \ii <
\bb$ and $\jj = \cc$; 

$(ii)$ $\ii \ccov\TT \jj$ holds if and only if we have either $\ii \ccov\TTT \jj$, or $\ii = \bb$ and $\cc < \jj
\le \dd$.
\end{lemm}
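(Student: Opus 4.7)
The plan is to isolate the single change produced by the rotation in the family of leaf-sets of subtrees. Writing $\TT_1, \TT_2, \TT_3$ for the trees involved so that the $\a$-subtree of $\TT$ is $\TT_1 \op (\TT_2 \op \TT_3)$ and that of $\TTT$ is $(\TT_1 \op \TT_2) \op \TT_3$, Definition~\ref{D:Name} identifies their leaves as the consecutive intervals $[\aa, \bbm]$, $[\bb, \cc]$, $[\ccp, \dd]$, respectively. Since rotation is local to the $\a$-subtree, subtrees rooted outside (including at proper prefixes of $\a$) have the same leaf-sets in $\TT$ and $\TTT$. Inside, both trees contain $\TT_1, \TT_2, \TT_3$ and the whole $\a$-subtree (leaf-set $[\aa, \dd]$) as subtrees; the only ``middle'' subtree differs, namely $\TT$ contains $\TT_2 \op \TT_3$ (leaf-set $[\bb, \dd]$) while $\TTT$ contains $\TT_1 \op \TT_2$ (leaf-set $[\aa, \cc]$). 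Each leaf-set is realized at most once (its lowest common ancestor), so the family of subtree leaf-sets of $\TTT$ is obtained from that of $\TT$ by removing $[\bb, \dd]$ and adding $[\aa, \cc]$.

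For part~(i), $\cov\TT$ and $\cov\TTT$ can only disagree on pairs witnessed exclusively by one of these two exceptional subtrees. The $\TT$-only subtree $[\bb, \dd]$ witnesses $\ii \cov\TT \dd$ for $\ii \in [\bb, \ddm]$, but each such pair is also witnessed in $\TTT$ by the $\a$-subtree of leaf-set $[\aa, \dd]$, so no covering is lost. The $\TTT$-only subtree $[\aa, \cc]$ witnesses $\ii \cov\TTT \cc$ for $\ii \in [\aa, \ccm]$. For $\ii \in [\bb, \ccm]$, the pair is already witnessed in $\TT$ by $\TT_2$. For $\ii \in [\aa, \bbm]$, I then verify that $\ii \cov\TT \cc$ fails: any subtree of $\TT$ with final leaf $\cc$ must be a subtree of $\TT_2$, because any subtree of $\TT$ strictly containing $\TT_2$ extends past $\cc$ on the right via $\TT_3$ (which sits immediately to the right of $\TT_2$ under $\a 1$ in $\TT$), giving a final leaf strictly greater than $\cc$. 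Subtrees of $\TT_2$ have leaf-sets inside $[\bb, \cc]$, excluding $\ii < \bb$.

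Part~(ii) is the mirror argument. The $\TTT$-only subtree $[\aa, \cc]$ witnesses $\aa \ccov\TTT \jj$ for $\jj \in [\aap, \cc]$, but all these pairs are already witnessed in $\TT$ by the $\a$-subtree $[\aa, \dd]$, so no co-covering is gained in $\TTT$. The $\TT$-only subtree $[\bb, \dd]$ witnesses $\bb \ccov\TT \jj$ for $\jj \in [\bbp, \dd]$; for $\jj \in [\bbp, \cc]$ the pair is still witnessed by $\TT_2$ in $\TTT$, while for $\jj \in [\ccp, \dd]$ one checks that $\bb \ccov\TTT \jj$ fails. This uses the dual observation that the only subtrees of $\TTT$ with $\bb$ as initial leaf are $\TT_2$ and its internal subtrees (any subtree of $\TTT$ strictly containing $\TT_2$ extends past $\bb$ on the left via $\TT_1$, which sits immediately to the left of $\TT_2$ under $\a 0$ in $\TTT$), whose leaf-sets stay within $[\bb, \cc]$.

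The main obstacle is the negative step inside each part, namely proving that no other subtree witnesses the critical pair: $\ii \cov\TT \cc$ for $\ii < \bb$ in (i), and $\bb \ccov\TTT \jj$ for $\jj > \cc$ in (ii). Both reduce to the same geometric fact, that $\TT_2$ sits between $\TT_1$ and $\TT_3$ in both trees, so any ancestor subtree strictly containing $\TT_2$ extends past its rightmost leaf $\cc$ in $\TT$ (through $\TT_3$) and past its leftmost leaf $\bb$ in $\TTT$ (through $\TT_1$). Once these two checks are in hand, both equivalences drop out from the leaf-set characterization of the first paragraph.
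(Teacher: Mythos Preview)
Your argument is correct. The key observation---that the family of subtree leaf-sets changes by exactly one element, losing $[\bb,\dd]$ and gaining $[\aa,\cc]$---is sound, and your verification that the potentially lost covering pairs (final leaf~$\dd$) are all re-witnessed by the $\a$-subtree $[\aa,\dd]$, while the genuinely new ones (final leaf~$\cc$, initial leaf in $[\aa,\bbm]$) have no witness in~$\TT$, is carefully done. The negative step is handled properly: any subtree of~$\TT$ containing the leaf~$\cc$ either lies inside~$\TT_2$ or contains $\TT_2 \op \TT_3$, hence has final leaf at least~$\dd$.

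The paper's own proof reaches the same conclusion by a direct case analysis on the positions of~$\ii$ and~$\jj$ relative to the intervals $[\aa,\bbm]$, $[\bb,\cc]$, $[\ccp,\dd]$, reading off each case from Figure~\ref{F:Name}. Your route is a mild reorganisation: rather than walking through all position pairs, you first isolate the single structural change (one leaf-set swapped for another) and then argue only about the pairs that could possibly be affected. This buys a cleaner separation between the ``which subtrees change'' step and the ``which witnesses survive'' step, and makes the negative check (no alternative witness for $\ii \cov\TT \cc$ with $\ii < \bb$) stand out explicitly, whereas the paper leaves that point to the figure. The underlying content is the same; your version is just more self-contained.
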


\begin{proof}
$(i)$ For~$\ii < \aa$, we have $\ii \not\cov\TT \jj$ and $\ii \not\cov\TTT \jj$ for~$\jj < \dd$, and
$\ii \cov\TT \jj \Longleftrightarrow\ii \cov\TTT \jj$ for~$\jj \ge \dd$. Similarly, we have $\ii
\cov\TT \jj \Longleftrightarrow\ii \cov\TTT \jj$ for $\dd < \ii < \jj$. So the point is to consider the
pairs~$(\ii, \jj)$ with $\aa \le \ii < \jj \le \dd$. As is clear from Figure~\ref{F:Name}, nothing changes
from~$\TT$ to~$\TTT$ when $\ii$ and~$\jj$ both are in $[\aa, \bb-1]$, or in $[\bb, \cc]$, or in $[\cc+1, \dd]$.
For the cases when $\ii$ lies in one of the intervals and $\jj$ in another,
one sees on the figure that the only case when $\cov\TT$ and $\cov\TTT$
disagree is when $\ii$ lies in~$[\aa, \bb-1]$, thus corresponding to a leaf
in the left subtree (the one with address~$\a0$), and $\jj$ is corresponds
to the rightmost leaf in the central tree, which has address~$\a10$
in~$\TT$ and~$\a01$ in~$\TTT$. In this case, we have $\ii \not\cov\TT
\jj$ and $\ii \cov\TTT \jj$.

The case of~$(ii)$ is symmetric, exchanging the roles of left and right and
the inequalities.
\end{proof}

So, if $(\TT, \TTT)$ is a positive base pair, the only difference between
the relations~$\cov\TT$ and~$\cov\TTT$ are that $\Name3\TT\TTT$
covers more elements in~$\TTT$ than in~$\TT$. Symmetrically, the
relations~$\ccov\TT$ and $\ccov\TTT$ are equally close, the only
difference being that $\Name2\TT\TTT$ co-covers less elements in~$\TTT$
than in~$\TT$. This leads us to a criterion for recognizing that
certain types of base pairs inevitably occur on any path connecting two
trees. 

\begin{nota}
In the sequel, we say ``pair $(..., ..., ..., ...)^\pm$'' for 
``base pair with name $(..., ..., ..., ...)^\pm$''. Also, we use abbreviated
notation such as $\Qp(\lse\ii, \gs\ii, \jj, ...)$ to refer to any pair
$\Qp(\aa, \bb, \cc, \dd)$ satisfying $\aa \le \ii$, $\bb > \ii$, and $\cc =
\bb$.
\end{nota}

\begin{lemm}[{\bf Key Lemma}]
\label{L:KeyLemma1}
Assume that the trees $\TT, \TTT$ satisfy
\begin{equation}
\label{E:Principle}
\ii \not\cov\TT \jj
\text{\quad and \quad}
\ii \cov\TTT \jj.
\end{equation}
Then each sequence of rotations from~$\TT$ to~$\TTT$ contains a pair~$\Qp(\lse\ii, \gs\ii, \jj, ...)$.
\end{lemm}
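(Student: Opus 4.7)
The plan is to track the predicate ``$\ii$ is covered by $\jj$'' along any rotation sequence from $\TT$ to $\TTT$ and to invoke Lemma~\ref{L:CovChange}(i) at the step where this predicate flips from false to true. The covering relation can only change in the very specific way described by that lemma, so pinpointing the creating step will immediately pin down the name of the corresponding base pair.

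First I would fix an arbitrary rotation sequence $\TT = \TT_0, \TT_1, \ldots, \TT_\ell = \TTT$ realising the given path. By hypothesis, $\ii \not\cov{\TT_0} \jj$ and $\ii \cov{\TT_\ell} \jj$, so there must exist an index~$r < \ell$ with $\ii \not\cov{\TT_r} \jj$ and $\ii \cov{\TT_{r+1}} \jj$. It will suffice to show that this one base pair $(\TT_r, \TT_{r+1})$ has the announced shape, since any sequence of rotations from~$\TT$ to~$\TTT$ will then contain at least one such pair.

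Next I would argue that the pair $(\TT_r, \TT_{r+1})$ must be positive. If it were negative with name $(\aa, \bb, \cc, \dd)^-$, then $(\TT_{r+1}, \TT_r)$ would be a positive pair with the same unsigned name, and Lemma~\ref{L:CovChange}(i) applied to it would give: $\ii \cov{\TT_r} \jj$ holds iff $\ii \cov{\TT_{r+1}} \jj$ or ($\aa \le \ii < \bb$ and $\jj = \cc$). Since $\ii \cov{\TT_{r+1}} \jj$ is assumed, the first clause would already force $\ii \cov{\TT_r} \jj$, contradicting the choice of~$r$.

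Finally, writing $(\aa, \bb, \cc, \dd)^+$ for the name of the positive base pair $(\TT_r, \TT_{r+1})$, Lemma~\ref{L:CovChange}(i) yields that $\ii \cov{\TT_{r+1}} \jj$ entails either $\ii \cov{\TT_r} \jj$ (excluded by the choice of~$r$) or $\aa \le \ii < \bb$ together with $\jj = \cc$. The latter conditions are exactly the shorthand $\Qp(\lse\ii, \gs\ii, \jj, ...)$. There is essentially no obstacle: Lemma~\ref{L:CovChange} does all the real work, and the Key Lemma is the almost tautological observation that a new covering cannot appear along a rotation path without being introduced at some specific step, whose name Lemma~\ref{L:CovChange}(i) then forces into the prescribed form.
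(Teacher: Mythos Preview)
Your proof is correct and follows essentially the same route as the paper's: pick a step along the path where $\ii \cov{} \jj$ switches from false to true, then invoke Lemma~\ref{L:CovChange}(i) to read off the constraints on the name. The only difference is that you spell out explicitly why the base pair at that step must be positive, whereas the paper absorbs this into the single sentence ``By Lemma~\ref{L:CovChange}(i), this can occur only if\ldots'' and jumps straight to the $\Qp$ form.
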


\begin{proof}
Let $(\TT_0, \Ldots, \TT_\ell)$ be a path from~$\TT$ to~$\TTT$
in~$\Ass{\size\TT}$, \ie, assume $\TT_0 = \TT$, $\TT_\ell = \TTT$, and
$(\TT_\rr, \TT_\rrp)$ is a base pair for each~$\rr$.
By~\eqref{E:Principle}, we have
$\ii \not\cov{\TT_0} \jj$ and $\ii \cov{\TT_\ell} \jj$, so there must exist an integer~$\rr$ satisfying
\begin{equation}
\label{E:Principle2}
\ii \not\cov{\TT_\rr} \jj
\text{\quad and \quad}
\ii \cov{\TT_\rrp} \jj.
\end{equation}
By Lemma~\ref{L:CovChange}$(i)$, this can occurs only if we have
$$\Name1{\TT_\rr}{\TT_\rrp} \le \ii < \Name2{\TT_\rr}{\TT_\rrp}
\text{\quad and \quad}
\Name3{\TT_\rr}{\TT_\rrp} = \jj.
$$
In other words, the pair~$(\TT_\rr, \TT_\rrp)$ has the form~$\Qp(\lse\ii, \gs\ii, \jj, ...)$.
\end{proof}

Thus the covering relations partition the associahedra into regions.
What Lem\-ma~\ref{L:KeyLemma1} says is that one cannot go from one
region to another one without crossing the border, which corresponds to
base pairs with a certain type of name, see Figure~\ref{F:K4Names}. 

\begin{figure}[htb]
\begin{picture}(88,67)(0,0)
\put(-1,-1){\includegraphics[scale=0.8]{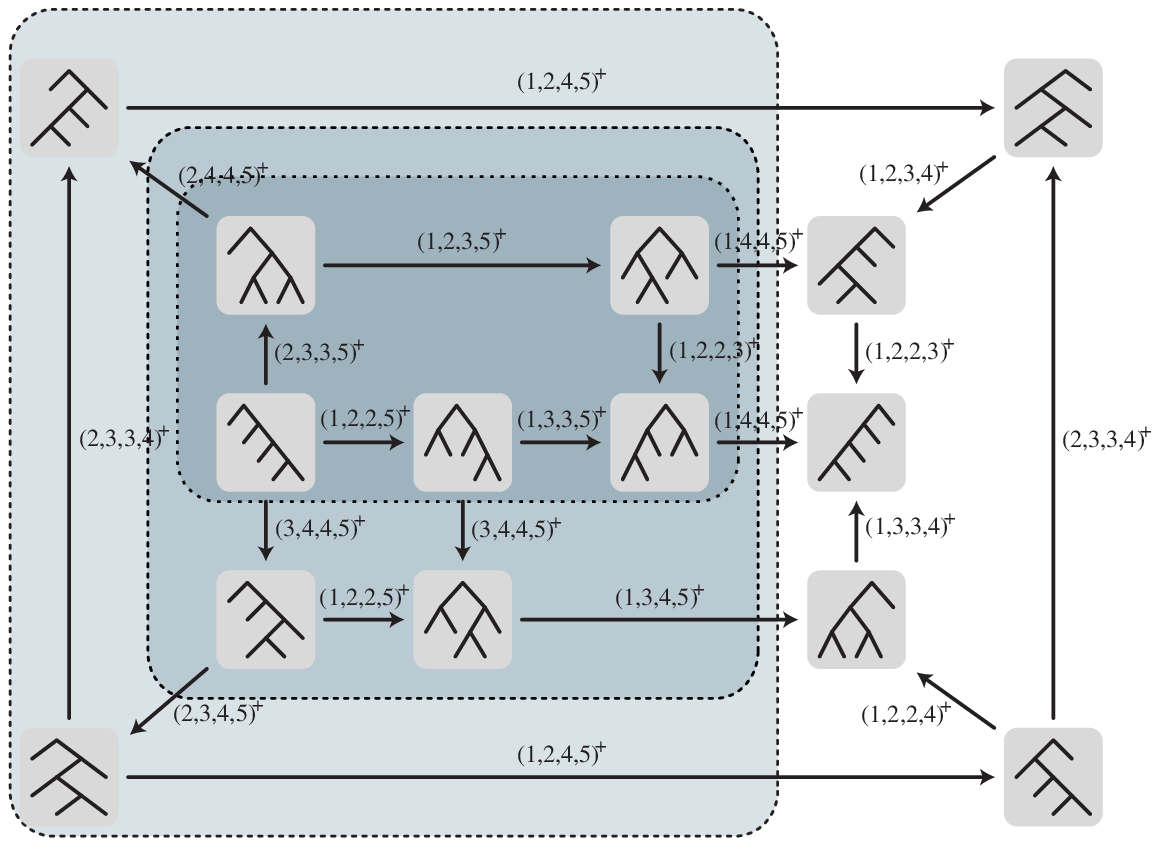}}
\put(62,64){$\scriptstyle 1 \cov{} 4$}
\put(45,64){$\scriptstyle 1 \not\cov{} 4 \, \&\,  2 \cov{} 4$}
\put(43,54){$\scriptstyle 2 \not\cov{} 4 \, \&\,  3 \cov{} 4$}
\put(50,50){$\scriptstyle 3 \not\cov{} 4$}
\end{picture}
\caption{\sf\smaller Names of edges and covering relation in the
associahedron~$\Ass4$. The regions correspond to the various
possibilities for covering by~$4$. As
$\ii
\cov{}4$ implies
$\jj \cov{} 4$ for $\ii < \jj < 4$, there are four regions corresponding to $1
\cov{} 4$, to $1 \not\cov{} 4$ and $2
\cov{} 4$, to $2 \not\cov{} 4$ and $3 \cov{} 4$, and to $3 \not\cov{} 4$.
Lemma~\ref{L:KeyLemma1} says for instance that the only way to leave the
region $3 \not\cov{} 4$ is to cross a pair named $\Qp(..., 4, 4, ...)$.}
\label{F:K4Names}
\end{figure}

Of course, we have a symmetric statement involving the relation~$\ccov{}$.

\begin{lemm}
\label{L:KeyLemma2}
Assume that the trees $\TT, \TTT$ satisfy
\begin{gather}
\label{E:PrincipleBis}
\ii \ccov\TT \jj
\text{\quad and \quad}
\ii \not\ccov\TTT \jj,\\
\label{E:PrincipleTer}
\text{or \quad}
\ii \ccov\TT \jj
\text{\quad and \quad}
\iim \cov\TTT \jjm.
\end{gather}
Then each sequence of rotations from~$\TT$ to~$\TTT$ contains a
pair~$\Qp(..., \ii, \ls\jj,
\gse\jj)$.
\end{lemm}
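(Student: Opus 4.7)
The plan is to mirror the proof of Lemma~\ref{L:KeyLemma1}, substituting Lemma~\ref{L:CovChange}(ii) for part~$(i)$: whereas a positive rotation can only \emph{create} coverings, part~$(ii)$ states that, in a positive base pair with name $(\aa,\bb,\cc,\dd)^+$, the co-coverings of~$\TT$ that are deleted in~$\TTT$ are exactly the pairs $(\ii,\jj)$ satisfying $\ii=\bb$ and $\cc<\jj\le\dd$; these are precisely the names $\Qp(..., \ii, \ls\jj, \gse\jj)$. The two alternatives of the hypothesis will be handled in turn, with the second reduced to the first by Lemma~\ref{L:CovShift}.

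For the first alternative, let $(\TT_0,\ldots,\TT_\ell)$ be any path from $\TT=\TT_0$ to $\TTT=\TT_\ell$ in~$\Ass{\size\TT}$. Since $\ii \ccov{\TT_0} \jj$ and $\ii \not\ccov{\TT_\ell} \jj$, there exists some~$\rr$ with $\ii \ccov{\TT_\rr} \jj$ and $\ii \not\ccov{\TT_\rrp} \jj$. This step cannot be a negative edge, because Lemma~\ref{L:CovChange}(ii) read in reverse shows that a negative rotation only \emph{creates} co-coverings, never deletes them. Thus $(\TT_\rr,\TT_\rrp)$ is a positive base pair, and writing its name as $(\aa,\bb,\cc,\dd)^+$, the only way Lemma~\ref{L:CovChange}(ii) allows $\ii \ccov{\TT_\rr} \jj$ together with $\ii \not\ccov{\TT_\rrp} \jj$ is to have $\bb=\ii$ and $\cc<\jj\le\dd$. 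This is precisely the shape $\Qp(..., \ii, \ls\jj, \gse\jj)$.

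For the second alternative, the reduction to the first is a direct use of Lemma~\ref{L:CovShift} applied inside~$\TTT$: with the substitution $(\iim,\ii,\jjm,\jj)$ for its variables $(\ii,\kk,\jj,\ell)$, the required inequalities $\iim<\ii\le\jjm<\jj$ all hold (using only $\ii<\jj$, which is automatic since co-covering requires distinct labels), so $\iim\cov\TTT\jjm$ and $\ii\ccov\TTT\jj$ cannot both hold. Since the former is part of the hypothesis, we deduce $\ii\not\ccov\TTT\jj$, and combined with the given $\ii\ccov\TT\jj$ we are reduced to the first case. The argument is essentially the left--right mirror of the proof of Lemma~\ref{L:KeyLemma1} and presents no real obstacle; the only point to keep an eye on is that co-coverings are created, not destroyed, by a negative rotation, which is exactly what localizes the critical step~$\rr$ to a positive edge.
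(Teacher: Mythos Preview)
Your proof is correct and follows essentially the same approach as the paper: the first alternative is the left--right mirror of Lemma~\ref{L:KeyLemma1} via Lemma~\ref{L:CovChange}(ii), and the second alternative is reduced to the first using Lemma~\ref{L:CovShift}. Your version is more explicit than the paper's (which simply invokes symmetry for the first part), but the argument is the same.
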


\begin{proof}
The statement for~\eqref{E:PrincipleBis} is the exact conterpart of Lemma~\ref{E:Principle} when left and right are
exchanged. As for~\eqref{E:PrincipleTer}, Lemma~\ref{L:CovShift} says that $\iim \cov\TTT \jjm$ implies
$\ii \not\ccov\TTT \jj$, and, therefore, the hypotheses~\eqref{E:PrincipleTer} imply the
hypotheses~\eqref{E:PrincipleBis}.
\end{proof}

\subsection{Refinements}
\label{S:Refinements}

More precise criteria will be needed in the sequel, and we shall now establish some refinements of
Lemmas~\ref{L:KeyLemma1} and~\ref{L:KeyLemma2}. All are based on
these basic results, but, in addition, they exploit the geometric properties
of the relations~$\cov{}$ and~$\ccov{}$. The crucial advantage of the 
criteria below is that they provide stronger constraints for the
parameters of the involved pairs: for instance, Lemma~\ref{L:PPrinciple}
specifies two of the four parameters completely (``$\iip$ and $\jj$''),
whereas Lemma~\ref{L:KeyLemma1} only gives one exact value (``$\jj$''),
and not more than an inequality for another one (``$\gs\ii$''). The price
to pay for the improvement is a strengthtening of the hypotheses and,
chiefly, a disjunction in the conclusion.

\begin{lemm}
\label{L:PPrinciple}
Assume that the trees $\TT, \TTT$ satisfy
\begin{equation}
\label{E:PPrinciple}
\ii \not\cov\TT \jj, 
\quad
\ii \cov\TTT \jj,
\text{\quad and \quad}
\iip \ccove\TTT \jj.
\end{equation}
Then each sequence of rotations from~$\TT$ to~$\TTT$ contains a
pair~$\Qp(..., \iip, \jj, ...)$, or a pair~$\Qm(..., \iip, ..., \jj)$.
\end{lemm}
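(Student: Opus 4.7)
The plan is to locate, along any path $\TT_0 = \TT, \TT_1, \Ldots, \TT_\ell = \TTT$ in the associahedron, the first step at which both $\ii \cov{} \jj$ and $\iip \ccov{} \jj$ hold simultaneously, and to analyze the rotation responsible for that transition. Write $P(\TT_m)$ for the conjunction ``$\ii \cov{\TT_m} \jj$ and $\iip \ccov{\TT_m} \jj$''. Then $P(\TT_0)$ fails (by $\ii \not\cov\TT \jj$) and $P(\TT_\ell)$ holds, since $\ii \cov\TTT \jj$ and, as $\iip < \jj$, the hypothesis $\iip \ccove\TTT \jj$ upgrades to $\iip \ccov\TTT \jj$. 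Let $\rr$ be smallest with $P(\TT_{\rr+1})$ true; then $P(\TT_\rr)$ fails. By Lemma~\ref{L:CovChange}, a positive rotation creates coverings but no co-covering, and a negative rotation creates co-coverings but no covering, so a single rotation cannot switch on both conjuncts at once. Hence exactly one of them is switched on at step~$\rr$, leading to two cases.

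Case~(a): $\ii \not\cov{\TT_\rr} \jj$ and $\iip \ccov{\TT_\rr} \jj$. Then $(\TT_\rr, \TT_{\rr+1})$ is positive with name $(\aa, \bb, \jj, \dd)^+$, $\aa \le \ii < \bb$. I claim $\bb = \iip$, producing a pair $\Qp(..., \iip, \jj, ...)$. Suppose instead $\bb > \iip$. In the left-associated tree $\TT_{\rr+1}$, the label $\bb - 1$ is the rightmost leaf of the subtree $\TT_1$, which also contains $\ii$, with $\ii < \bb - 1$; the lowest common ancestor of $\ii$ and $\bb - 1$ lies inside $\TT_1$ and has $\bb - 1$ as the rightmost leaf of its right subtree, so Lemma~\ref{L:CovAddress} gives $\ii \cov{\TT_{\rr+1}} (\bb - 1)$. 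But $\bb - 1 \in [\iip, \jj - 1]$, and $\iip \ccov{\TT_{\rr+1}} \jj$ (which holds since positive rotations only destroy co-coverings of the form ``$\iip = \bb$'', not our case) combined with Lemma~\ref{L:CCover}, $(ii) \Rightarrow (iii)$, forces $\ii \not\cov{\TT_{\rr+1}} \kk$ for every $\kk \in [\iip, \jj - 1]$, a contradiction.

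Case~(b): $\ii \cov{\TT_\rr} \jj$ and $\iip \not\ccov{\TT_\rr} \jj$. Then $(\TT_\rr, \TT_{\rr+1})$ is negative with name $(\aa, \iip, \cc, \dd)^-$, $\cc < \jj \le \dd$. I claim $\dd = \jj$, producing a pair $\Qm(..., \iip, ..., \jj)$. In $\TT_\rr$ the involved subtree is the left-associated $(\TT_1 \op \TT_2) \op \TT_3$, with $\ii$ a leaf of $\TT_1$ (since $\ii < \iip = \bb$) and $\jj$ a leaf of $\TT_3$ (since $\cc < \jj \le \dd$); the lowest common ancestor of $\ii$ and $\jj$ in $\TT_\rr$ is therefore the root of the involved subtree, whose right subtree is exactly $\TT_3$. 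By Lemma~\ref{L:CovAddress}, $\ii \cov{\TT_\rr} \jj$ forces $\jj$ to be the rightmost leaf of $\TT_3$, \ie, $\dd = \jj$. The main care-point is the short address computation establishing $\ii \cov{\TT_{\rr+1}} (\bb - 1)$ in Case~(a); the remainder is a direct application of Lemmas~\ref{L:CovChange}, \ref{L:CovAddress}, and~\ref{L:CCover}.
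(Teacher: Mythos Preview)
Your argument is essentially correct and takes a genuinely different route from the paper. There is one small gap: you assert ``as $\iip < \jj$'' without justification, but the hypotheses only give $\ii < \jj$, so the case $\jj = \iip$ must be handled separately. This case is easy: by Lemma~\ref{L:KeyLemma1} there is a pair $\Qp(\lse\ii, \gs\ii, \jj, ...)$, and since the second entry~$\bb$ satisfies $\ii < \bb \le \jj = \iip$, necessarily $\bb = \iip$, yielding a pair $\Qp(..., \iip, \jj, ...)$. With this patch, both cases~(a) and~(b) go through: in~(a), Lemma~\ref{L:NameCov}$(ii)$ gives $\ii \cov{\TT_{\rrp}} \bbm$, and Lemma~\ref{L:CCover} (with $\ii \cov{\TT_{\rrp}} \jj$ and $\iip \ccov{\TT_{\rrp}} \jj$ from $P(\TT_{\rrp})$) forbids this for $\bbm \in [\iip, \jjm]$; in~(b), the address analysis forcing $\dd = \jj$ is correct.

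The paper instead derives Lemma~\ref{L:PPrinciple} as a corollary of the stronger Lemma~\ref{L:PPPrinciple} by specializing $\kk = \iip$, and proves the latter by a different mechanism: it takes the \emph{last} step~$\rr$ at which $\ii \cov{} \jj$ is created, and if the resulting~$\bb$ is too large, it searches forward for a later negative step using the maximality of~$\rr$. Your key observation---that one rotation cannot simultaneously create a covering and a co-covering, so the conjunction $P$ can only be switched on one conjunct at a time---is a cleaner device for this particular lemma and avoids the forward search entirely. The paper's detour, on the other hand, yields Lemma~\ref{L:PPPrinciple} with its extra parameter~$\kk$, which is genuinely needed later (e.g.\ in the proofs of Lemma~\ref{L:Tricomb} and Claim~\ref{C:5}); your direct argument does not obviously generalize to that three-parameter version.
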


Here we shall prove Lemma~\ref{L:PPrinciple} directly---this can be done
as a good exercise---but rather derive it from a more elaborate statement.
The new refinement consists in getting constraints on three parameters at a
time.

\begin{lemm}
\label{L:PPPrinciple}
Assume $\ii < \kk < \jj$ and the trees $\TT, \TTT$ satisfy
\begin{equation}
\label{E:PPPrinciple}
\ii \not\cov\TT \jj, 
\quad
\ii \cov\TTT \jj,
\text{\quad and \quad}
\iip \ccove\TTT \jj.
\end{equation}
Then each sequence of rotations from~$\TT$ to~$\TTT$ contains 
a pair~$\Qp(\lse\ii, \ii\ls...\lse\kk, \jj, ...)$, or a pair~$\Qm(\lse\ii, \ii\ls...\lse\kk, \gse\kk, \jj)$.
\end{lemm}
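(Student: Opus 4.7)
We begin by applying Lemma~\ref{L:KeyLemma1} to the hypotheses $\ii\not\cov\TT\jj$ and $\ii\cov\TTT\jj$: the path contains at least one positive pair $\Qp(\lse\ii, \gs\ii, \jj, ...)$. Let $r$ be the \emph{last} such step, with name $(\aa_0,\bb_0,\jj,\dd_0)^+$, so that $\aa_0\le\ii<\bb_0$; by maximality of $r$, we have $\ii\cov{\TT_{r'}}\jj$ for every $r'>r$. If $\bb_0\le\kk$, then the pair at step $r$ is already a positive target $\Qp(\lse\ii,\ii\ls...\lse\kk,\jj,...)$ and we are done. So we henceforth assume $\bb_0>\kk$ and look for a negative target on the subpath from $\TT_{r+1}$ to $\TTT$.

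The main device is the integer-valued invariant $\mu=\mu(\ii,\jj)$, defined as the leftmost label of the right child of $\mathrm{LCA}(\ii,\jj)$, the smallest subtree of the current tree containing both labels $\ii$ and $\jj$. In $\TT_{r+1}$, the subtree at $\a_0$ is $(\TT_1\op\TT_2)\op\TT_3$ with $\ii\in\TT_1$ and $\jj\in\TT_2$, whence $\mathrm{LCA}(\ii,\jj)=\TT_1\op\TT_2$ and $\mu(\TT_{r+1})=\bb_0>\kk$. In $\TTT$, the covering $\ii\cov\TTT\jj$ forces $\mathrm{LCA}(\ii,\jj)$ to have rightmost label $\jj$; the hypothesis $\iip\ccov\TTT\jj$ then forces its left child's rightmost label to be exactly $\ii$ (otherwise $\ii$ and $\iip$ would both lie in that left child, and any subtree witnessing $\iip\ccov\TTT\jj$ would have to contain the whole $\mathrm{LCA}$, contradicting that its leftmost label is $\iip$). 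Hence $\mu(\TTT)=\iip\le\kk$, so the integer $\mu$ must strictly decrease across $\kk$ at some step $r^c$ on the subpath.

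Next, a case analysis on the position of a rotation's address $\a$ relative to the current $\mathrm{LCA}(\ii,\jj)$ shows that $\mu$ changes only when $\a$ coincides with $\mathrm{LCA}(\ii,\jj)$, and in exactly two configurations: a positive rotation with $\ii\in\TT_1$ and $\jj\in\TT_3$ sends $\mu$ from $\bb$ to $\cc+1$ (strict increase), whereas a negative rotation with $\ii\in\TT_1$ and $\jj\in\TT_3$ sends $\mu$ from $\cc+1$ to $\bb$ (strict decrease). Consequently, the step $r^c$ must be a negative rotation at the current $\mathrm{LCA}$ with name $(\aa_1,\bb_1,\cc_1,\dd_1)^-$ where $\aa_1\le\ii$, $\ii<\bb_1$, and $\cc_1<\jj\le\dd_1$, augmented by the crossing conditions $\bb_1\le\kk$ (new $\mu$) and $\cc_1\ge\kk$ (old $\mu=\cc_1+1>\kk$).

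Finally, since $r^c>r$ gives $\ii\cov{\TT_{r^c}}\jj$, and since $\ii\cov\TT\jj$ is equivalent to $\mathrm{LCA}(\ii,\jj)$ having $\jj$ as rightmost label, the subtree $\mathrm{LCA}(\ii,\jj)$ in $\TT_{r^c}$ has $\jj$ as its rightmost; hence the rotation at that address has fourth name parameter $\dd_1=\jj$. Combining everything, the rotation at step $r^c$ has name $(\aa_1,\bb_1,\cc_1,\jj)^-$ with $\aa_1\le\ii$, $\ii<\bb_1\le\kk$, and $\cc_1\ge\kk$, which is exactly the required negative target $\Qm(\lse\ii,\ii\ls...\lse\kk,\gse\kk,\jj)$. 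The main obstacle is verifying that rotations whose address is not the current $\mathrm{LCA}$ leave $\mu$ unchanged; this requires handling cases where the rotation acts strictly above, strictly below, or unrelated to the $\mathrm{LCA}$, and relies on the fact that each such rotation preserves either the $\mathrm{LCA}$'s children as subtrees or the label sets of those children.
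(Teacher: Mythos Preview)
Your proof is correct, and shares its opening with the paper's argument: both take the \emph{last} index~$r$ at which $i$ becomes covered by~$j$, dispose of the case $b_0\le k$, and then hunt for the required negative pair on the tail $(\TT_{r+1},\dots,\TT_\ell)$. From that point on, however, the two arguments diverge.

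The paper never introduces your invariant~$\mu$. Instead it tracks the Boolean property ``$\exists x\in[k,j-1]:\ i\cov{}x$'' along the path, using Lemma~\ref{L:NameCov} to show it holds at~$\TT_r$ and Lemma~\ref{L:CCover} (via the hypothesis $i{+}1\ccove{\TTT}j$) to show it fails at~$\TT_\ell$; the transition step~$s$ is then analyzed with Lemmas~\ref{L:CovChange}, \ref{L:CovInterval}, and~\ref{L:NameCov} to pin down all four name-parameters of the negative pair. Your approach replaces this chain of covering-lemmas by a single integer invariant $\mu=\text{(leftmost label of the right child of }\mathrm{LCA}(i,j)\text{)}$ and a direct case analysis on how a rotation can move~$\mu$. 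The two devices are closely related --- once $i\cov{}j$ holds, your condition $\mu>k$ is exactly the paper's ``$\exists x\in[k,j-1]:\ i\cov{}x$'' --- but your packaging is more geometric and self-contained, at the price of the rotation-vs-LCA case analysis you flag at the end (which is routine: the cases $\alpha$ incomparable to, strictly below, or strictly above the LCA address, plus the two ``same-address'' subcases where $j\in T_2$ or $i\in T_2$, all leave $\mu$ fixed). The paper's route reuses the infrastructure already built in Sections~\ref{S:Covering}--\ref{S:Refinements}; yours would stand on its own without those lemmas.
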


\begin{proof}[Proof of Lemma~\ref{L:PPrinciple} from Lemma~\ref{L:PPPrinciple}]
Assume first $\jj > \iip$. Put $\kk = \iip$. Then $\ii <  \bb \le\kk$ implies 
$\bb = \iip$. So Lemma~\ref{L:PPPrinciple} guarantees that there
is at least one pair named
$$\Qp(\lse\ii, \iip, \jj, ...)
\mbox{\quad or \quad}
\Qm(\lse\ii, \iip, \gse\iip, \jj),$$
which is the expected conclusion. 

Assume now $\jj = \iip$. By hypothesis, we have $\ii \not\cov\TT \jj$ and $\ii \cov\TTT \jj$, so
Lemma~\ref{L:KeyLemma1} gives a pair $\Qp(\lse\ii, \gs\ii, \jj, ...)$, hence necessarily
$\Qp(..., \iip, \iip, ...)$, again of the expected type.
\end{proof}

\begin{proof}[Proof of Lemma~\ref{L:PPPrinciple}]
We begin as in the proof of Lemma~\ref{L:KeyLemma1}.
Let $(\TT_0, \Ldots, \TT_\ell)$ be a path from~$\TT$ to~$\TTT$
in~$\Ass{\size\TT}$. By~\eqref{E:PPPrinciple}, we have
$\ii \not\cov{\TT_0} \jj$ and $\ii \cov{\TT_\ell} \jj$, so there exists a largest integer~$\rr$ satisfying
\begin{equation}
\label{E:PPPrinciple2}
\ii \not\cov{\TT_\rr} \jj
\text{\quad and \quad}
\ii \cov{\TT_\rrp} \jj,
\end{equation}
and, as above, this requires
$\Name1{\TT_\rr}{\TT_\rrp} \le \ii < \Name2{\TT_\rr}{\TT_\rrp}$ and $\Name3{\TT_\rr}{\TT_\rrp} = \jj$, so $(\TT_\rr,
\TT_\rrp)$ is a pair~$\Qp(\lse\ii, \gs\ii, \jj, ...)$. Write
$\bb = \Name2{\TT_\rr}{\TT_\rrp}$. 

If $\bb \le \kk$ holds, $(\TT_\rr, \TT_\rrp)$ is a pair~$\Qp(\lse\ii,
\kk\lse...\ls\ii, \jj, ...)$, and we are done.

So, from now on, we assume $\kk < \bb$.  The hypotheses $\ii < \kk$
implies $\ii < \bbm$, hence $\ii \cov{\TT_\rr} \bbm$ by
Lemma~\ref{L:NameCov}$(i)$. As we have
$\bbm < \bb  \le \Name3{\TT_\rr}{\TT_\rrp} = \jj$, we deduce
\begin{equation}
\label{E:PPPrinciple3}
\exists\xx \in [\kk, \jjm] (\ii \cov{\TT_\rr} \xx)
\end{equation}
On the other hand, by Lemma~\ref{L:CCover}, the hypothesis $\iip \ccove\TTT \jj$ implies
\begin{equation}
\label{E:PPPrinciple4}
\forall\xx \in [\kk, \jjm] (\ii \not\cov{\TT_\ell} \xx).
\end{equation}
Therefore, there must exist~$\ss \ge \rr$ satisfying
\begin{equation}
\label{E:PPPrinciple5}
\exists\xx \in [\kk, \jjm] (\ii \cov{\TT_\ss} \xx)
\text{\quad and \quad}
\forall\xx \in [\kk, \jjm] (\ii \not\cov{\TT_\ssp} \xx).
\end{equation}
Choose such a~$\ss$. Then, for some~$\ee$ in~$[\kk, \jjm]$, we have 
\begin{equation}
\label{E:PPPrinciple6}
\ii \cov{\TT_\ss} \ee
\text{\quad and \quad}
\ii \not\cov{\TT_\ssp} \ee.
\end{equation}
By Lemma~\ref{L:CovChange}$(i)$, the name of~$(\TT_\ss, \TT_\ssp)$ has the form
$\Qm(\lse\ii, \gs\ii, \ee, ...)$, hence, a fortiori, $\Qm(\lse\ii, \gs\ii, \gse\kk, ...)$.

Moreover, we have $\ss \ge \rr$ by construction, so the choice of~$\rr$ implies that $\ii$ is covered by~$\jj$
in~$\TT_\ssp$, and, therefore,  by Lemma~\ref{L:CovInterval}, so is $\eep$  since we have $\ii < \kk \le \ee < \jj$.
On the other hand, Lemma~\ref{L:NameCov}$(iv)$ gives $\eep \ccov{\TT_\ssp} \Name4{\TT_\ss}{\TT_\ssp}$, and
Lemma~\ref{L:CCover} then implies that $\eep$ is covered in~$\TT_\ssp$ by no element smaller than
$\Name4{\TT_\ss}{\TT_\ssp}$. We deduce $\Name4{\TT_\ss}{\TT_\ssp} \le \jj$, and, even, 
$\Name4{\TT_\ss}{\TT_\ssp} \in [\kkp, \jj]$ as we have $\Name4{\TT_\ss}{\TT_\ssp} > \Name3{\TT_\ss}{\TT_\ssp} \ge \kk$.
Now, by Lemma~\ref{L:NameCov}$(i)$, $\ii$ is covered by~$\Name4{\TT_\ss}{\TT_\ssp}$ in~$\TT_\ssp$, 
whereas, by~\eqref{E:PPPrinciple5}, it is covered by no element of~$[\kk,
\jjm]$. It follows that the only possibility is 
$\Name4{\TT_\ss}{\TT_\ssp} = \jj$.

Finally, let $\ff = \Name2{\TT_\ss}{\TT_\ssp}$. We already know that $\ff > \ii$ holds, and we claim that $\ff \le \kk$
holds as well. Indeed, two cases are possible. For $\ff = \iip$, the hypothesis $\ii < \kk$ directly implies $\ff \le
\kk$. For $\ff > \iip$, Lemma~\ref{L:NameCov}$(ii)$ implies that $\ii$ is covered by~$\ffm$ in~$\TT_\ssp$, and,
therefore, \eqref{E:PPPrinciple5} implies that $\ffm$ cannot belong to~$[\kk, \jjm]$. As $\ff \le \ee \le \jjm$ is
true by construction, the only possibility is $\ffm < \kk$, \ie, $\ff \le \kk$. 

So $(\TT_\ss, \TT_\ssp)$ is a pair~$\Qm(\lse\ii, \ii\ls...\lse\kk, \gse\kk, \jj)$, as expected.  
\end{proof}

For future reference, we finally mention the right counterpart of
Lemma~\ref{L:PPrinciple}. It can of course be proved by a direct argument,
or deduced from the right counterpart of Lemma~\ref{L:PPPrinciple} (that
we shall not need here).

\begin{lemm}
\label{L:PPrincipleBis}
Assume that the trees $\TT, \TTT$ satisfy
\begin{equation}
\label{E:PPrinciple}
\ii \ccov\TT \jj, 
\quad
\ii \cove\TT \jjm,
\text{\quad and \quad}
\ii \not\ccov\TTT \jj.
\end{equation}
Then each sequence of rotations from~$\TT$ to~$\TTT$ contains a pair~$\Qp(..., \ii, \jjm,
...)$, or a pair~$\Qm(\ii, ..., \jjm, ...)$.
\end{lemm}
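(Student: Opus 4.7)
The statement is the left-right mirror of Lemma~\ref{L:PPrinciple}, and the cleanest route I would take is to make this symmetry precise. Define an involution $\TT \mapsto \TT^*$ on labeled trees by recursively swapping left and right subtrees while relabeling each leaf~$\ii$ as~$M-\ii+1$, where~$M$ is the largest label occurring in~$\TT$. A direct inspection of Definitions~\ref{D:Covering}, \ref{D:CoCovering}, and~\ref{D:Name} gives three easy facts: $\ii \cov\TT \jj$ holds if and only if $M-\jj+1 \ccov{\TT^*} M-\ii+1$, so the involution exchanges $\cov{}$ and $\ccov{}$; it sends base pairs to base pairs; and it sends a pair of name $(\aa, \bb, \cc, \dd)^\e$ to a pair of name $(M-\dd+1, M-\cc+1, M-\bb+1, M-\aa+1)^{-\e}$.

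Given any path $(\TT_0, \Ldots, \TT_\ell)$ from~$\TT$ to~$\TTT$ in~$\Ass{\size\TT}$, the sequence $(\TT_\ell^*, \TT_{\ell-1}^*, \Ldots, \TT_0^*)$ is a path of the same length from~$\TTT^*$ to~$\TT^*$. Reversing a path flips the sign of every base pair while leaving its quadruple of labels unchanged, so the two sign flips cancel, and each original pair of name $(\aa, \bb, \cc, \dd)^\e$ appears in this reversed mirror path as a pair of name $(M-\dd+1, M-\cc+1, M-\bb+1, M-\aa+1)^\e$. Setting $\ii' = M-\jj+1$ and $\jj' = M-\ii+1$, the identity $M-\jjm+1 = \ii'+1$ holds, and the three hypotheses of the lemma translate through the involution to
\begin{equation*}
\ii' \not\cov{\TTT^*} \jj', \qquad \ii' \cov{\TT^*} \jj', \qquad \ii'+1 \ccove{\TT^*} \jj',
\end{equation*}
which are precisely the hypotheses of Lemma~\ref{L:PPrinciple} for the reversed mirror path, with $\ii'$ and $\jj'$ in the roles of $\ii$ and $\jj$. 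That lemma yields in this path a pair named $\Qp(..., \ii'+1, \jj', ...)$ or $\Qm(..., \ii'+1, ..., \jj')$, and the name-reflection above carries these back, in the original path from~$\TT$ to~$\TTT$, to a pair named $\Qp(..., \ii, \jjm, ...)$ or $\Qm(\ii, ..., \jjm, ...)$, respectively.

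The only genuinely delicate point is the sign bookkeeping: mirroring flips the sign of each base pair, path-reversal flips it again, and it is exactly this cancellation that aligns the two conclusion types produced by Lemma~\ref{L:PPrinciple} with the two asserted here. Alternatively, one could run a direct argument modelled on the proof of Lemma~\ref{L:PPPrinciple}, replacing $\cov{}$ by $\ccov{}$ throughout and reversing signs where appropriate; this is equally feasible but longer.
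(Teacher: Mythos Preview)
Your argument is correct. The paper itself does not give a proof of this lemma: it merely states that the result ``can of course be proved by a direct argument, or deduced from the right counterpart of Lemma~\ref{L:PPPrinciple}''. Your approach is a third route, making the left--right symmetry explicit via the involution~$\TT \mapsto \TT^*$ and transporting Lemma~\ref{L:PPrinciple} through it. I checked the three transport rules you state (exchange of~$\cov{}$ and~$\ccov{}$, preservation of base pairs, and the name transformation $(\aa,\bb,\cc,\dd)^\e \mapsto (M{-}\dd{+}1, M{-}\cc{+}1, M{-}\bb{+}1, M{-}\aa{+}1)^{-\e}$), and they are all correct; the double sign flip from mirroring and path reversal does indeed cancel, so the two conclusion types line up as claimed.

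Compared with the paper's suggested routes, yours is the most economical: the direct argument would rerun the proof of Lemma~\ref{L:PPPrinciple} with all inequalities and sides reversed, while the second route would first state and prove a mirror version of Lemma~\ref{L:PPPrinciple} and then specialize. Your involution avoids reproving anything and makes clear that no new idea is needed. The only small stylistic point is that the involution is not defined elsewhere in the paper, so in a self-contained write-up you would want to record the three transport rules as a short lemma before invoking them.
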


\subsection{Application: reproving a lower bound in $3\nn/2 + O(1)$}
\label{S:Bicomb}

As a first application of the previous results and a warm-up for the sequel, we shall now reprove
Proposition~\ref{P:Bicomb} about the distance of ``bicombs''.

\begin{prop}
\label{P:BicombBis}
For $\TT = \Sp{1^\pp0^\qq}$, $\TTT = \Sp{0^\qq1^\pp}$ with~$\pp, \qq \ge 1$, we have
\begin{equation}
\dist(\TT, \TTT) \ge \pp + \qq + \min(\pp, \qq) - 2.
\end{equation}
\end{prop}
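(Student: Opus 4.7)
The plan is to locate, via the Key Lemma and its refinements, enough distinct required base pairs in any path from $\TT$ to $\TTT$ to force the announced length. First I would compute the address of the central leaf $\pp+1$: it sits at $1^\pp 0^\qq$ in $\TT$ and at $0^\qq 1^\pp$ in $\TTT$. Feeding this into Lemma~\ref{L:CovAddress} and its co-covering counterpart, one reads off the covering changes: in $\TT$, label $\pp+1$ covers no smaller label (its address ends in $0^\qq$, so cannot have the form $\g 1^r$) but co-covers every $j$ in $\{\pp+2, \ldots, \pp+\qq+1\}$ (witnessed by $\g = 1^\pp 0^{\qq-r}$); in $\TTT$, the situation is reversed---$\pp+1$ covers each of the $\pp$ labels $1, \ldots, \pp$ (witnessed by $\g = 0^\qq 1^{i-1}$) and co-covers nothing. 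Hence exactly $\pp$ coverings $(i, \pp+1)$ are newly created, and $\qq$ co-coverings $(\pp+1, j)$ are destroyed, on any path from $\TT$ to $\TTT$.

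Next I would apply Lemma~\ref{L:PPrinciple} to each of the $\pp$ new coverings $(i, \pp+1)$: the auxiliary hypothesis $\iip \ccove\TTT \pp+1$ is automatic since $\iip \le \pp+1$, and the conclusion forces a base pair of shape $\Qp(*, \iip, \pp+1, *)$ or $\Qm(*, \iip, *, \pp+1)$. The $\pp$ pairwise distinct values $\iip = 2, \ldots, \pp+1$ make these base pairs pairwise distinct. Symmetrically, Lemma~\ref{L:PPrincipleBis} applied to each deleted co-covering $(\pp+1, j)$---whose hypothesis $\pp+1 \cove\TT j-1$ holds either by equality when $j = \pp+2$ or because $\pp+1$ is covered by every larger label of the $\Sp{0^\qq}$-block otherwise---yields $\qq$ further base pairs of shape $\Qp(*, \pp+1, j-1, *)$ or $\Qm(\pp+1, *, j-1, *)$, distinguished by their pairwise distinct $\cc$-values $\pp+1, \ldots, \pp+\qq$.

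A pair from the first family and one from the second can share a common name only in the positive/positive case with $\bb = \pp+1 = \cc$ (the negative/negative case is ruled out because it would require $\aa = \pp+1 < \bb \le \pp+1$). So far this already yields at least $\pp + \qq - 1$ distinct required base pairs. The remaining $\min(\pp, \qq) - 1$ pairs needed for the stated bound come from invoking the stronger Lemma~\ref{L:PPPrinciple} with carefully chosen intermediate parameters $\kk$ strictly between $\ii$ and $\jj$ on the same new coverings $(i, \pp+1)$, each application forcing a further distinction in the position of $\bb$ that is not captured by L:PPrinciple alone. The main obstacle will be verifying that these additional L:PPPrinciple outputs are genuinely new base pairs rather than mere re-parameterizations of those already counted: the adversary enjoys a sign choice at every lemma application and will try to collapse overlaps between the three families, so the bookkeeping must be carried out with care, mirroring the path-counting invariant $\ff$ of Lemma~\ref{L:AddressBis} in the language of coverings.
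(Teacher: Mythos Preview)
Your first two families are exactly the paper's types~$\tI$ and~$\tII$: Lemma~\ref{L:PPrinciple} on the $\pp$ newly created coverings $(\ii,\pp+1)$ and Lemma~\ref{L:PPrincipleBis} on the destroyed co-coverings $(\pp+1,\jj)$ give $\pp+\qq-1$ pairwise distinct required pairs, and your overlap analysis (the single possible coincidence $\bb=\cc=\pp+1$) is correct. So far you and the paper agree.

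The gap is in your third step. Invoking Lemma~\ref{L:PPPrinciple} on the \emph{same} coverings $(\ii,\pp+1)$ with a larger intermediate~$\kk$ does not produce new pairs: the conclusion of that lemma is that some pair has $\bb\in(\ii,\kk]$, which is a \emph{weaker} constraint than the $\bb=\ii+1$ you already extracted via Lemma~\ref{L:PPrinciple} (the case $\kk=\ii+1$). A single base pair with $\bb=\ii+1$ simultaneously witnesses every application with the same~$\ii$ and any $\kk\ge\ii+1$, so varying~$\kk$ buys nothing. There is no mechanism here to force $\min(\pp,\qq)-1$ additional distinct pairs; the vague reference to ``mirroring the invariant~$\ff$'' does not supply one.

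The paper obtains the missing $\min(\pp,\qq)-1$ pairs from an independent source: the coverings of label~$1$. In~$\TT$ the label~$1$ is not covered by any $\bb\in[\pp+2,\nn]$, whereas in~$\TTT$ it is. The Key Lemma then yields, for each such~$\bb$, a pair $\Qp(1,\,\cdot\,,\bb,\,\cdot\,)$. Now a case split: either every such pair has $\nu_2\ne\pp+1$ (type~$\tIII$, distinguished from types~$\tI,\tII$ by $\nu_1=1$ and $\nu_3>\pp+1$), giving $\qq-1$ new pairs; or some pair has $\nu_2=\pp+1$, which forces $1\cov{}\pp$ at that moment, and then Lemma~\ref{L:KeyLemma2} applied to the co-coverings $\aa\ccov\TT\pp+1$ produces $\pp-1$ pairs $\Qp(\,\cdot\,,\aa,\lse\pp,\,\cdot\,)$ (type~$\tIV$, distinguished by $\nu_3\le\pp$). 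Either branch yields at least $\min(\pp,\qq)-1$ pairs disjoint from the first two families. The key idea you are missing is to look at covering changes for a label \emph{other than} $\pp+1$.
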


\begin{figure}[htb]
\begin{picture}(67,33)(0,-2)
\put(0,-0.5){\includegraphics{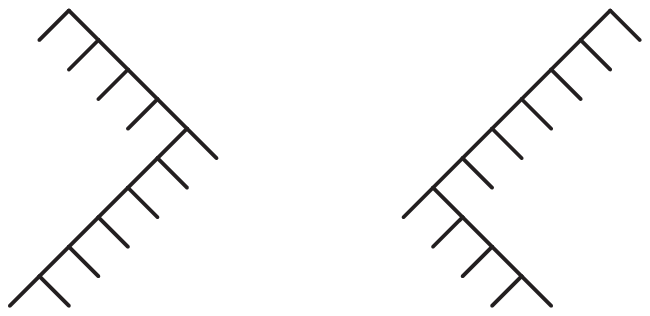}}
\put(13,29){$\TT$}
\put(2,25){$\scriptstyle 1$}
\put(11,16){$\scriptstyle \pp$}
\put(-2,-2){$\scriptstyle \pp\plus1$}
\put(6,-2){$\scriptstyle \pp\plus2$}
\put(18,10){$\scriptstyle \nn\, (=\pp\plus\qq)$}
\put(21,13){$\scriptstyle \nn\plus1$}

\put(54,29){$\TTT$}
\put(39,7){$\scriptstyle 1$}
\put(48,-2){$\scriptstyle \pp$}
\put(56,-2){$\scriptstyle \pp\plus1$}
\put(50,10){$\scriptstyle \pp\plus2$}
\put(62,22){$\scriptstyle \nn$}
\put(65,25){$\scriptstyle \nn\plus1$}
\end{picture}
\caption{\sf\smaller The bicombs of Propositions~\ref{P:Bicomb}
and~\ref{P:BicombBis}---here with
$\pp = 4$ and $\qq = 6$.}
\label{F:Bicomb}
\end{figure}

The method of the proof consists in identifying various families of base
pairs, and to prove, using the results of Sections~\ref{S:Principle}
and~\ref{S:Refinements}, that every sequence of rotations from~$\TT$
to~$\TTT$ contains at least a certain number of pairs of these specific
types.

\begin{proof}
Up to a symmetry, we may assume $\pp \le \qq$. Put $\nn = \pp + \qq$. Let us say that a base pair is
\emph{special}
\begin{tabbing}
\ 
\=- \emph{of type $\tI_\aa$} \hspace{0.0em}
\=if it is $\Qp(..., \aa, \pp\plus1, ...)$ or $\Qm(..., \aa, ..., \pp\plus1)$ 
\=with $2 \le \aa \le \ppp$,\\
\>- \emph{of type $\tII_\aa$} 
\>if it is $\Qp(..., \pp\plus1, \aa, ...)$ or $\Qm(\pp\plus1, ..., \aa, ...)$
\>with $\pp\plus2 \le \aa \le \nn$,\\
\>- \emph{of type $\tIII_\aa$}
\>if it is $\Qp(1, {\not=}\pp\plus1, \aa, ...)$
\>with $\pp\plus2 \le \aa \le \nn$,\\
\>- \emph{of type $\tIV_\aa$}
\>if it is $\Qp(..., \aa, \lse\pp, ...)$
\>with $2 \le \aa \le \pp$.
\end{tabbing}
First we observe that the various types of special pairs are disjoint, \ie, a special pair 
has one type exactly: the four families are disjoint and, inside each family,
the parameter~$\aa$ is uniquely determined.

Let ($\TT_0, ..., \TT_\ell)$ be a path from~$\TT$ to~$\TTT$
in~$\Ass\nn$. First choose~$\aa$ in~$[2, \ppp]$. We see on
Figure~\ref{F:Bicomb} that $\ppp$ covers~$\aam$ in~$\TTT$, but not
in~$\TT$, and, moreover, that $\aa$ co-covers~$\ppp$ in~$\TT$.
Applying Lemma~\ref{L:PPrinciple} with $\ii = \aam$ and~$\jj = \ppp$
guarantees that $(\TT_0, ..., \TT_\ell)$ contains a pair $\Qp(..., \aa,
\ppp, ...)$ or $\Qm(..., \aa, ..., \ppp)$, \ie, a special pair of
type~$\tI_\aa$. Letting $\aa$ vary from~$2$ to~$\ppp$ guarantees that
$(\TT_0, ..., \TT_\ell)$ contains at least $\pp$~special pairs of type~$\tI$.

Similarly, choose~$\bb$ in~$[\pp+2, \nn]$. We see now that $\ppp$ 
co-covers~$\bb$ in~$\TT$, but not in~$\TTT$, and, that, moreover,
$\ppp$ is not covered by~$\bbp$ in~$\TTT$. Applying
Lemma~\ref{L:PPrincipleBis} with $\ii = \ppp$ and~$\jj = \bbp$
guarantees that $(\TT_0, ..., \TT_\ell)$ contains a pair $\Qp(..., \ppp,
\bb, ...)$ or $\Qm(\ppp, ..., \bb, ...)$, \ie, a special pair of
type~$\tII_\bb$. Letting $\bb$ vary from~$\pp+2$ to~$\nn$ guarantees
that $(\TT_0, ..., \TT_\ell)$ contains at least
$\qqm$~special pairs of type~$\tII$.

Consider~$\bb$ in~$[\pp+2, \nn]$ again. Then $1$ is covered by~$\ppp$ in~$\TT$, and not covered
by~$\ppp$ in~$\TTT$. Applying Lemma~\ref{L:KeyLemma1} with~$\ii = 1$ and~$\jj = \bb$ guarantees that $(\TT_0,
..., \TT_\ell)$ contains at least one pair $\Qp(1, \aa, \bb, ...)$ for some~$\aa$ satisfying $2 \le \aa \le \bb$. Here
two cases are possible.

\medskip
{\bf Case 1}. For each~$\bb$ in~$[\pp+2, \nn]$, there is a pair $\Qp(1, \aa, \bb, ...)$ with $\aa \not= \ppp$, or
type~$\tIII_\a$. In this case, letting $\bb$ vary from~$\pp+2$ to~$\nn$
guarantees that $(\TT_0, ..., \TT_\ell)$ contains at least $\qqm$~special
pairs of type~$\tIII$.

\medskip
{\bf Case 2}. There exists~$\bb$ such that $(\TT_0, ..., \TT_\ell)$ contains no special pair of
type~$\tIII_\bb$. Owing to the above observation, this implies that there exists~$\rr$ such that $(\TT_\rr,
\TT_\rrp)$ is $\Qp(1, \ppp, \bb, ...)$. By Lemma~\ref{L:NameCov}$(ii)$, this implies that $1$ is covered by~$\pp$
in~$\TT_\rr$. In this case, we claim that there must exist in $(\TT_0, ..., \TT_\rr)$ a
pair of type~$\tIV_\aa$ for each~$\aa$ in~$[2, \pp]$. Indeed, consider
such an~$\aa$. The hypothesis that $1$ is covered by~$\pp$
in~$\TT_\rr$ implies that $\aam$ too is covered by~$\pp$ in~$\TT_\rr$.
On the other hand, we see that $\aa$ co-covers~$\ppp$ in~$\TT$.
Applying Lemma~\ref{L:KeyLemma2} with~$\ii = \aa$ and~$\jj = \ppp$
guarantees that $(\TT_0, ..., \TT_\rr)$ contains a pair  $\Qp(..., \aa,
\lse\pp,
\gs\pp)$, hence a special pair of type~$\tIV_\aa$. So, in this case,
$(\TT_0, ..., \TT_\ell)$ contains at least $\ppm$~special pairs of
type~$\tIV$.

Summarizing, we conclude that $(\TT_0, ..., \TT_\ell)$ contains at least 
\begin{tabbing}
\hspace{3em}
\=\quad$\pp$\quad
\=special pairs of type~$\tI$,\\
\>$\qqm$ 
\>special pairs of type~$\tII$,\\
and
\>$\qqm$ \>special pairs of type~$\tIII$ or $\ppm$ special pairs of type~$\tIV$,
\end{tabbing}
hence at least $3\pp-2$ special pairs. As these pairs are pairwise distinct, the distance between~$\TT_0$
and~$\TT_\ell$, \ie, between~$\TT$ and~$\TTT$, is at least $3\pp-2$. 
\end{proof}

The previous argument is illustrated in the case of size~$4$
trees in Figure~\ref{F:K4Colours}.

\begin{figure}[htb]
\begin{picture}(88,62)(0,0)
\put(-1,-0.5){\includegraphics[scale=0.8]{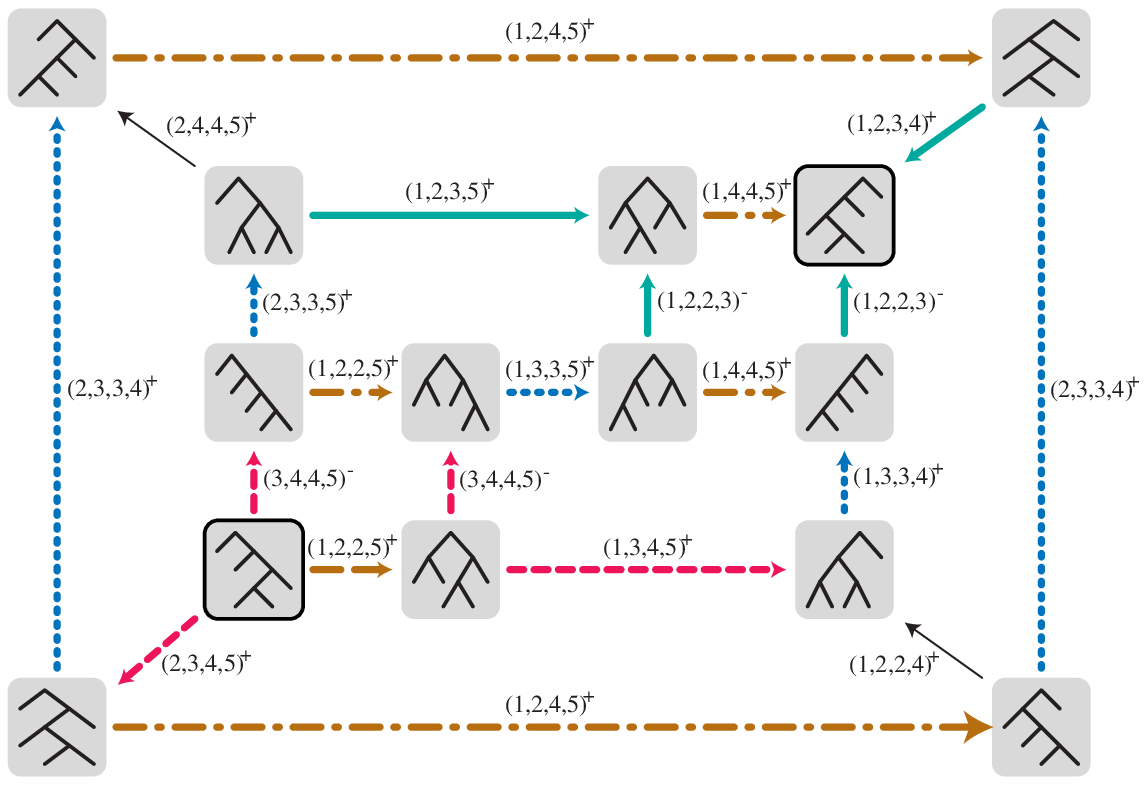}}
\put(42,0){$\tIII_4$}
\put(42,28){$\tI_3$}
\put(42,55){$\tIII_4$}
\put(26,28){$\tIV_2$}
\put(58,28){$\tIII_4$}
\put(58,42){$\tIII_4$}
\put(35,42){$\tI_2$}
\put(26,13){$\tIV_2$}
\put(50,13){$\tII_4$}
\put(15.5,37.5){$\tI_3$}
\put(48,37.5){$\tI_2$}
\put(64,37.5){$\tI_2$}
\put(15.5,23.5){$\tII_4$}
\put(31.5,23.5){$\tII_4$}
\put(64,23.5){$\tI_3$}
\put(-0.5,31){$\tI_3$}
\put(79.5,31){$\tI_3$}
\put(8,12){$\tII_4$}
\put(77,49){$\tI_2$}
\end{picture}
\caption{\sf\smaller Proof of Proposition~\ref{P:BicombBis}. Introducing
special pairs of various types amounts  to colouring the edges of the
associahedron~$\Ass4$. In the current case,  we use four colours, namely
$\tI_2$, $\tI_3$, $\tII_4$, and a common colour for~$\tIII_4$
and~$\tIV_2$, plus a neutral colour for the edges that receive none of the
previous colours, \ie, for non-special pairs. The proof shows that each path 
from~$\Sp{1100}$ to~$\Sp{0011}$ must contain at least one edge of each
of the four colours---as can be checked on the picture---hence the distance
between the trees~$\Sp{1100}$ and~$\Sp{0011}$ (framed squares) is
four.}
\label{F:K4Colours}
\end{figure}

\section{Collapsing}
\label{S:Collapsing}

The previous method is powerful, but the results obtained so far remain
limited. In order to establish stronger results, we now add one more
ingredient called collapsing, which is a certain way of projecting an
associahedron~$\Ass\nn$ onto smaller associahedra~$\Ass{\nn'}$ with
$\nn' < \nn$. The idea is very simple: collapsing a set of labels~$\II$ in a
tree~$\TT$ means erasing all leaves whose labels belong to~$\II$, and
contracting the remaining edges to obtain a well-formed
tree~$\coll\II(\TT)$. Collapsing a set of labels~$\II$ in the
two entries of a base pair~$(\TT,
\TTT)$ yields either a base pair, or twice the same tree, in which case we
naturally say that the pair $(\TT,
\TTT)$ is $\II$-collapsing. This implies that the rotation distance
between~$\TT$ and~$\TTT$ is at least the distance between the
collapsed trees $\coll\II(\TT)$ and~$\coll\II(\TTT)$, plus the minimal
number of inevitable $\II$-collapsing pairs between~$\TT$ and~$\TTT$.
This principle will enable us to inductively determine the
distances~$\dist(\TT_\pp,
\TTT_\pp)$ for trees~$\TT_\pp, \TTT_\pp$ such that
$\TT_\ppm$ is obtained from~$\TT_\pp$ and $\TTT_\ppm$ is obtained
from~$\TTT_\pp$ by collapsing some set of labels (the same for both).

\subsection{Collapsing}

Assume that $\TT$ is a finite binary tree. For $\II \ince \Nat$, we consider the tree obtained from~$\TT$ by
removing all leaves whose labels lie in~$\II$ (if any). In order to include the case when all leaves are removed, we
introduce an empty tree denoted~$\et$, together with the rules
\begin{equation}
\label{E:Empty}
\et \op \TT = \TT, \quad
\TT \op \et = \TT.
\end{equation}
It is then coherent to declare~$\size\et = -1$. Objects that are either a
finite labeled tree or the empty tree will be called \emph{extended trees}.
We denote by~$\Lab\TT$ the family of labels occurring in~$\TT$.

\begin{defi}
For $\II\ince \Nat$ and~$\TT$ an extended tree, the
\emph{$\II$-collapse} of~$\TT$, denoted~$\coll\II(\TT)$, is recursively
defined by
\begin{equation}
\coll\II(\TT) = 
\begin{cases}
\et
&\text{for $\TT = \et$},\\
\et
&\text{for $\size\TT=0$ and $\Lab\TT \ince \II$},\\
\TT
&\text{for $\size\TT=0$ and $\Lab\TT \not\ince \II$},\\
\coll\II(\TT_1) \mbox{\rlap{$\op \coll\II(\TT_2)$ 
\quad for $\TT = \TT_1 \op \TT_2$.}}
\end{cases}
\end{equation}
\end{defi}

\begin{exam}
Assume $\TT = ((1 \op 2) \op (3 \op 4))$. Then we find for instance
$$\coll{\{1\}}(\TT) = 2 \op (3 \op 4), \quad
\coll{\{2, 3\}}(\TT) = 1 \op 4, \quad
\coll{\{1,2, 3,4\}}(\TT) = \et.
$$
\end{exam}

Properties of collapsing are mostly obvious. In particular, it should be 
clear that we always have $\size{\coll\II(\TT)} = \size\TT - \card(\Lab\TT
\cap \II)$. Also, we have the following compatibility with the covering and
co-covering relations.

\begin{lemm}
\label{L:CollCov}
If $\ii$ is covered by (\resp co-covers)~$\jj$ in~$\TT$, and $\ii, \jj$ do
not belong to~$\II$, then $\ii$ is covered by (\resp co-covers)~$\jj$
in~$\coll\II(\TT)$.
\end{lemm}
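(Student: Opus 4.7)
My plan is to argue by structural induction on $\TT$, working with the subtree characterization of Definition~\ref{D:Covering} rather than the address form of Lemma~\ref{L:CovAddress}. The reason is that addresses shift in complicated ways under collapsing, whereas the recursive definition of $\coll\II$ matches the subtree viewpoint transparently. The base case ($\TT = \et$ or $\size\TT = 0$) is vacuous because no covering pair exists.

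For the inductive step I would write $\TT = \TT_0 \op \TT_1$ and split on where $\ii < \jj$ sit. If both labels lie in $\Lab{\TT_0}$, then the key observation is that every label of $\TT_1$ strictly exceeds every label of $\TT_0$, so any subtree of $\TT$ whose final label is $\jj$ must already be contained in $\TT_0$; this gives $\ii \cov{\TT_0} \jj$, and by induction $\ii \cov{\coll\II(\TT_0)} \jj$. This lifts to $\ii \cov{\coll\II(\TT)} \jj$ because $\coll\II(\TT_0)$ is itself a subtree of $\coll\II(\TT) = \coll\II(\TT_0) \op \coll\II(\TT_1)$, even under the convention $\et \op \TT = \TT$ should $\coll\II(\TT_1)$ turn out to be empty. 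The case where both labels belong to $\Lab{\TT_1}$ is symmetric.

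The remaining case $\ii \in \Lab{\TT_0}$, $\jj \in \Lab{\TT_1}$ is the cleanest: the only subtree of $\TT$ containing labels from both sides is $\TT$ itself, so the witnessing subtree must be $\TT$ and $\jj$ must be the rightmost leaf of $\TT_1$, hence of $\TT$. Since $\jj \notin \II$, it remains the rightmost leaf of $\coll\II(\TT)$, and since $\ii \notin \II$ it still appears as a non-final label there; so $\coll\II(\TT)$ itself witnesses the covering. The co-covering statement follows by the mirror argument, swapping final/rightmost with initial/leftmost and left with right. The only part that I expect to need genuine care—the likely obstacle—is the bookkeeping with the empty-tree convention in the recursion for $\coll\II$: one must verify explicitly that, when one of $\coll\II(\TT_0), \coll\II(\TT_1)$ degenerates to $\et$, the witnessing subtree supplied by induction really does embed as an honest subtree of $\coll\II(\TT)$. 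This is routine but the sort of thing easy to get wrong.
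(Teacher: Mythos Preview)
Your argument is correct. The paper actually gives no proof of this lemma at all: it is stated immediately after the remark that ``Properties of collapsing are mostly obvious'' and is left without justification. So there is no ``paper's approach'' to compare with; you are supplying the details the author omitted.

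Your structural induction is sound in each case. In the split case $\ii\in\Lab{\TT_0}$, $\jj\in\Lab{\TT_1}$, your worry about the empty-tree convention in fact cannot arise: since $\ii\notin\II$ and $\jj\notin\II$, both $\coll\II(\TT_0)$ and $\coll\II(\TT_1)$ retain at least one leaf and are therefore nonempty, so $\coll\II(\TT)=\coll\II(\TT_0)\op\coll\II(\TT_1)$ is an honest $\op$-product and $\jj$, being the maximal label of $\TT_1$ and surviving, is indeed its rightmost leaf. The only place the $\et$ convention enters is in the ``both labels on one side'' case, and there (as you note) it causes no trouble: if the other side collapses to~$\et$, then $\coll\II(\TT)$ simply equals the side carrying $\ii$ and~$\jj$, and the induction hypothesis applies directly.
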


\begin{rema}
We took the option not to change the remaining labels when some labels
are collapsed. So, even we start with a tree~$\TT$ in which the labels are
the default ones, namely~$1$ to~$\size\TT+1$, after collapsing we are
likely to obtain a tree~$\TTb$ in which the labels are not $1$
to~$\size\TTb + 1$. That is why it seems preferable to consider general
labeled trees, \ie, to allow jumps in the sequence of labels.
\end{rema}

\subsection{Collapsing a base pair}

For our current purpose, the question is to connect the
rotation distance between two trees and the rotation distance between their
images under collapsing. The point is that the collapsing of a
base pair is either a diagonal pair, \ie, a pair consisting of twice the same
tree, or it is still a base pair, whose name is easily connected with that of
the initial pair.

\begin{lemm}
\label{L:CollPair}
For $\II \ince \Nat$ and $(\TT, \TTT)$ a base pair of name  $\Qp(\aa, \bb,
\cc, \dd)$, 

$(i)$ either we have 
\begin{equation}
\label{E:CollPair}
[\aa, \bbm] \ince \II
\text{\quad or \quad}
[\bb, \cc] \ince \II
\text{\quad or \quad}
[\ccp, \dd] \ince \II,
\end{equation}
and then $\coll\II(\TT)$ and $\coll\II(\TTT)$ coincide,

$(ii)$ or $(\coll\II(\TT), \coll\II(\TTT))$ is a base pair of name $\Qp(\aab, \bbb, \ccb, \ddb)$ with
\break
$\aab = \min([\aa, \bbm] \setminus \II), 
\bbb = \min([\bb, \cc] \setminus \II), 
\ccb = \max([\bb, \cc] \setminus \II), 
\ddb = \max([\ccp, \dd] \setminus \II).$
\end{lemm}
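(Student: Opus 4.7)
The plan is to localize the analysis to the neighbourhood of the rotated subtree. Outside this neighbourhood $\TT$ and $\TTT$ coincide, hence so do their $\II$-collapses; everything reduces to comparing the $\II$-collapses of the $\vv$-subtrees $\TT_1 \op (\TT_2 \op \TT_3)$ and $(\TT_1 \op \TT_2) \op \TT_3$. Writing $S_\kk = \coll\II(\TT_\kk)$ for $\kk = 1, 2, 3$ and unfolding the recursive definition of $\coll\II$, these $\vv$-subtrees collapse, modulo the conventions $\et \op X = X$ and $X \op \et = X$, to $S_1 \op (S_2 \op S_3)$ and $(S_1 \op S_2) \op S_3$ respectively.

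For case~$(i)$, assume for concreteness $[\aa, \bbm] \subseteq \II$. Because the labels of~$\TT_1$ all lie in $[\aa, \bbm]$, this forces $\Lab{\TT_1} \subseteq \II$ and hence $S_1 = \et$. The two $\vv$-subtree collapses then reduce to $\et \op (S_2 \op S_3) = S_2 \op S_3$ and $(\et \op S_2) \op S_3 = S_2 \op S_3$, which coincide. The two remaining subcases $[\bb, \cc] \subseteq \II$ and $[\ccp, \dd] \subseteq \II$ are handled identically: they force $S_2 = \et$, \resp $S_3 = \et$, and the appropriate absorption rule makes both $\vv$-subtree collapses reduce to $S_1 \op S_3$, \resp $S_1 \op S_2$. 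In each situation $\coll\II(\TT) = \coll\II(\TTT)$.

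For case~$(ii)$, assume that none of the three intervals is contained in~$\II$. Then each $S_\kk$ is nonempty, so $S_1 \op (S_2 \op S_3)$ and $(S_1 \op S_2) \op S_3$ are genuine trees differing by a single rotation at the same address~$\a$; consequently $(\coll\II(\TT), \coll\II(\TTT))$ is a positive base pair. To read off its name one returns to Definition~\ref{D:Name}: $\aab$ is the leftmost label of~$S_1$, $\bbb$ the leftmost label of~$S_2$, $\ccb$ the rightmost label of~$S_2$, and $\ddb$ the rightmost label of~$S_3$. Since $\II$-collapsing only discards the leaves whose labels belong to~$\II$ while preserving the left-to-right order of the surviving ones, the extreme labels of~$S_\kk$ are obtained simply by replacing the extreme labels of~$\TT_\kk$ with the smallest, \resp largest, element of $\Lab{\TT_\kk} \setminus \II$; under the convention that the label set of each $\TT_\kk$ fills the relevant integer interval, this yields exactly the formulas announced in the statement.

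The argument is essentially mechanical; the only point deserving care is the systematic use of the absorption rules $\et \op X = X$ and $X \op \et = X$, which is what makes the three subcases of~$(i)$ collapse the pair to a diagonal one while ensuring, in~$(ii)$, that no spurious empty subtree upsets the base-pair structure.
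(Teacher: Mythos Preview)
Your argument is correct and follows essentially the same route as the paper's own proof: localize to the $\a$-subtree, write its collapse as $S_1 \op (S_2 \op S_3)$ versus $(S_1 \op S_2) \op S_3$, and split according to whether some $S_\kk$ is empty. One minor imprecision: in case~$(ii)$ the rotation in the collapsed pair need not sit at the address~$\a$, since collapsing may alter the path from the root to the former $\vv$-node; but this does not affect the conclusion that $(\coll\II(\TT), \coll\II(\TTT))$ is a positive base pair, nor the computation of its name.
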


\begin{proof}
See Figure~\ref{F:Name}. If one of the three subtrees~$\sub{\a0}\TT$,
$\sub{\a10}\TT$, $\sub{\a11}\TT$ completely vanishes, which
happens when at least one of the three inclusions of~\eqref{E:CollPair} is
true, then $\coll\II(\TT)$ and~$\coll\II(\TTT)$ coincide. Otherwise, \ie,
if at least one leaf of each of the above three subtrees remains, we have
\begin{gather*}
\coll\II{(\sub\a\TT)} = 
\coll\II{(\sub{\a0}\TT)} \op (\coll\II{(\sub{\a10}\TT)} \op
\coll\II{(\sub{\a11}\TT)}),\\
\coll\II{(\sub\a\TTT)} = 
(\coll\II{(\sub{\a0}\TT)} \op \coll\II{(\sub{\a10}\TT)}) \op
\coll\II{(\sub{\a11}\TT)},
\end{gather*}
Hence $(\coll\II{(\sub\a\TT)}, \coll\II{(\sub\a\TTT)})$ is a positive base
pair, and so is $(\coll\II(\TT), \coll\II(\TTT))$. The name should then be
clear from the picture
\end{proof}

\begin{defi}
A base pair is called \emph{$\II$-collapsing} if at least one of the three conditions of~\eqref{E:CollPair} is satisfied.
If $\TT, \TTT$ are trees, the \emph{$\II$-distance} between~$\TT$ and~$\TTT$, denoted
$\Dist\II(\TT, \TTT)$, is defined to be the minimal number of $\II$-collapsing steps occurring 
in a sequence of rotations from~$\TT$ to~$\TTT$.
\end{defi}

A direct application of Lemma~\ref{L:CollPair} is the following useful relation:

\begin{lemm}
\label{L:DistColl}
For all trees~$\TT, \TTT$ and all sets~$\II$, we have
\begin{equation}
\label{E:DistColl}
\dist(\TT, \TTT) \ge \dist(\coll\II(\TT), \coll\II(\TTT)) + \Dist\II(\TT, \TTT).
\end{equation}
\end{lemm}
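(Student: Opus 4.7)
The plan is to apply $\coll\II$ pointwise to a geodesic sequence of rotations and exploit the clean dichotomy provided by Lemma~\ref{L:CollPair}. Concretely, fix any minimal sequence $\TT = \TT_0, \TT_1, \ldots, \TT_\ell = \TTT$ of base pairs realizing $\ell = \dist(\TT, \TTT)$. Since rotations preserve the set of leaf labels, each $\TT_\rr$ has the same label set as $\TT$, so the trees $\coll\II(\TT_\rr)$ make sense and form a sequence in the same smaller ambient.

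For each step $(\TT_\rr, \TT_\rrp)$, Lemma~\ref{L:CollPair} yields exactly one of two alternatives: either the pair is $\II$-collapsing and $\coll\II(\TT_\rr) = \coll\II(\TT_\rrp)$ (Case~(i)), or it is not $\II$-collapsing and $(\coll\II(\TT_\rr), \coll\II(\TT_\rrp))$ is itself a base pair (Case~(ii)). Let $\kk$ be the number of indices at which Case~(i) occurs, that is, the number of $\II$-collapsing steps in the chosen geodesic path. By the very definition of $\Dist\II$ as the minimum of this count over all rotation sequences from $\TT$ to $\TTT$, we have $\kk \ge \Dist\II(\TT, \TTT)$.

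The remaining $\ell - \kk$ indices yield genuine base pairs in the associahedron of the collapsed trees. Concatenating these steps in order and skipping the duplicates produced by Case~(i) gives a path from $\coll\II(\TT)$ to $\coll\II(\TTT)$ of length at most $\ell - \kk$; hence $\dist(\coll\II(\TT), \coll\II(\TTT)) \le \ell - \kk$. Adding the two inequalities yields
$$
\ell \ge \kk + \dist(\coll\II(\TT), \coll\II(\TTT)) \ge \Dist\II(\TT, \TTT) + \dist(\coll\II(\TT), \coll\II(\TTT)),
$$
which is precisely~\eqref{E:DistColl}.

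I do not expect any real obstacle: the whole proof is a transport argument made possible by the dichotomy of Lemma~\ref{L:CollPair}. The only minor points to verify are that rotations preserve labels (so $\coll\II$ is applied to a consistent label set throughout), and that concatenating the non-collapsing steps after deleting the duplicates really produces an edge-path in the collapsed associahedron (which is immediate since consecutive non-duplicate trees are linked by a base pair coming from Case~(ii)).
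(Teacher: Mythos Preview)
Your proof is correct and follows essentially the same approach as the paper: apply $\coll\II$ termwise to a path from~$\TT$ to~$\TTT$, use the dichotomy of Lemma~\ref{L:CollPair} to split the steps into $\II$-collapsing ones (counted from below by $\Dist\II(\TT,\TTT)$) and non-collapsing ones (which bound $\dist(\coll\II(\TT),\coll\II(\TTT))$ from above), and add. The only cosmetic difference is that you explicitly start from a geodesic path, whereas the paper shows the inequality for an arbitrary path and then implicitly specializes; both arguments are equivalent.
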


\begin{proof}
Let $(\TT_0, \Ldots, \TT_\ell)$ be a path from~$\TT$ to~$\TTT$
in~$\Ass{\size\TT}$. By Lemma~\ref{L:CollPair}, the sequence
$(\coll\II(\TT_0), ...,
\coll\II(\TT_\ell))$ is a path from $\coll\II(\TT)$ to~$\coll\II(\TTT)$
in~$\Ass\nnb$, and the number of nontrival pairs in this path is the
number of non-$\II$-collapsing pairs in $(\TT_0, \Ldots, \TT_\ell)$.
Therefore, we have $\ell \ge \dist(\coll\II(\TT),
\coll\II(\TTT)) + \Dist\II(\TT, \TTT)$.
\end{proof}

\begin{rema}
By Lemma~\ref{L:CollPair}, the inequality~\eqref{E:DistColl} is an equality for $\dist(\TT, \TTT) \le 1$.
This need not be true in general. For instance, let $\TT = \Sp{1100}$ and $\TTT = \Sp{0011}$. We saw in
Figure~\ref{F:K4Colours} that the distance between~$\TT$ and~$\TTT$
is~$4$. Now, we have
$\coll{\{4,5\}}(\TT) = \coll{\{4,5\}}(\TTT) = \Sp{11}$, hence $\dist(\coll{\{4,5\}}(\TT), \coll{\{4,5\}}(\TTT)) = 0$.
On the other hand, it can be checked on Figure~\ref{F:K4Colours} that
there exists a path from~$\TT$ to~$\TTT$ in~$\Ass4$ that contains only
two $\{4,5\}$-collapsing pairs, namely
$$\begin{CD}
\TT
@>\text{not}>\{4,5\}-\text{coll.}>
\VR(6,3)
\begin{picture}(9,0)(0,3)
\put(0,0){\includegraphics{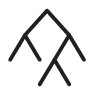}}
\end{picture}
@>>\{4,5\}-\text{coll.}>
\VR(4,3)
\begin{picture}(9,0)(0,3)
\put(0,0){\includegraphics{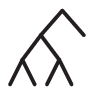}}
\end{picture}
@>>\{4,5\}-\text{coll.}>
\VR(4,3)
\begin{picture}(9,0)(0,3)
\put(0,0){\includegraphics{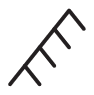}}
\end{picture}
@>\text{not}>\{4,5\}-\text{coll.}>
\TTT\\
@VV\coll{\{4,5\}}V
@VV\coll{\{4,5\}}V
@VV\coll{\{4,5\}}V
@VV\coll{\{4,5\}}V
@V\coll{\{4,5\}}VV\\
\VR(4,3)
\begin{picture}(4,0)(0,1)
\put(0,0){\includegraphics{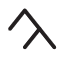}}
\end{picture}
@>>>
\VR(4,3)
\begin{picture}(6,0)(0,1)
\put(0,0){\includegraphics{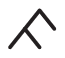}}
\end{picture}
@=
\VR(4,3)
\begin{picture}(6,0)(0,1)
\put(0,0){\includegraphics{Tree5.eps}}
\end{picture}
@=
\VR(4,3)
\begin{picture}(6,0)(0,1)
\put(0,0){\includegraphics{Tree5.eps}}
\end{picture}
@>>>
\VR(4,3)
\begin{picture}(6,0)(0,1)
\put(0,0){\includegraphics{Tree4.eps}}
\end{picture}
\end{CD}
$$
so we have $\Dist{\{4,5\}}(\TT, \TTT) \le 2$ (actually $=2$).
\end{rema}

\subsection{Double collapsing}

Technically, it will be convenient to use two collapsings at a time, with
respect to sets that are strongly disjoint in the following sense.

\begin{lemm}
\label{L:BiColl}
Assume that $\II, \JJ$ satisfy the condition
\begin{equation}
\label{E:BiCollCond}
\forall\ii \in \II \ \forall \jj \in \JJ\ (\ [\ii, \jj] \not\ince \II \cup \JJ\  ).
\end{equation}
Then, for all trees~$\TT, \TTT$, we have
\begin{equation}
\label{E:BiColl}
\dist(\TT, \TTT) \ge \dist(\coll\II(\TT), \coll\II(\TTT) + \Dist\II(\coll\JJ(\TT), \coll\JJ(\TTT)).
\end{equation}
\end{lemm}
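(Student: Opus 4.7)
The plan is to take a shortest path $(\TT_0,\ldots,\TT_\ell)$ from~$\TT$ to~$\TTT$ with $\ell=\dist(\TT,\TTT)$, push it forward under both $\coll\II$ and $\coll\JJ$, and split the $\ell$ original edges into two disjoint groups that charge, respectively, $\dist(\coll\II(\TT),\coll\II(\TTT))$ and $\Dist\II(\coll\JJ(\TT),\coll\JJ(\TTT))$.

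By Lemma~\ref{L:CollPair}, the sequence $(\coll\II(\TT_\rr))_\rr$ is a walk from $\coll\II(\TT)$ to $\coll\II(\TTT)$ whose trivial steps are exactly the $\II$-collapsing pairs of the original path. Writing $N_1$ for the number of non-$\II$-collapsing original pairs, we obtain $N_1\ge\dist(\coll\II(\TT),\coll\II(\TTT))$, exactly as in Lemma~\ref{L:DistColl}. Analogously $(\coll\JJ(\TT_\rr))_\rr$ is a walk from $\coll\JJ(\TT)$ to $\coll\JJ(\TTT)$; writing $N_2$ for the number of original pairs whose $\coll\JJ$-image is a non-trivial $\II$-collapsing base pair, we get $N_2\ge\Dist\II(\coll\JJ(\TT),\coll\JJ(\TTT))$ since $N_2$ counts the $\II$-collapsing pairs along one specific sequence of rotations from $\coll\JJ(\TT)$ to $\coll\JJ(\TTT)$. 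Thus it suffices to show that the sets of original edges counted by $N_1$ and $N_2$ are disjoint, for then $\ell\ge N_1+N_2$ and \eqref{E:BiColl} follows.

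The disjointness reduces to the key claim: if $(\TT_\rr,\TT_\rrp)$ is neither $\II$- nor $\JJ$-collapsing, then its $\coll\JJ$-image is a base pair that is not $\II$-collapsing. Letting the name be $(\aa,\bb,\cc,\dd)^+$ and setting $L_R=R\cap\Lab{\TT_\rr}$ for $R\in\{[\aa,\bbm],[\bb,\cc],[\ccp,\dd]\}$, a direct inspection from the description of the image name in Lemma~\ref{L:CollPair} shows that the (non-trivial) image is $\II$-collapsing exactly when $L_R\ince\II\cup\JJ$ for some~$R$. Suppose this occurs; by hypothesis on the original pair, $L_R$ contains some $l_1\notin\II$ (hence $l_1\in\JJ$) and some $l_2\notin\JJ$ (hence $l_2\in\II$). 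Applying the separation hypothesis~\eqref{E:BiCollCond} to $l_2\in\II$ and $l_1\in\JJ$ produces $z$ strictly between $l_1$ and $l_2$ lying outside $\II\cup\JJ$; since both $l_1, l_2$ belong to the interval~$R$, so does~$z$, giving $z\in L_R\setminus(\II\cup\JJ)$ and contradicting $L_R\ince\II\cup\JJ$.

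The hard part is precisely this last piece of bookkeeping: the separation hypothesis must supply an \emph{actual label} of~$\TT_\rr$, not a mere integer, to complete the contradiction. This is automatic whenever the surviving labels fill the relevant integer range, which is the situation in the intended iterative applications of the lemma, but in the fully general labeled setting one must read~\eqref{E:BiCollCond} as a condition on labels of~$\TT$ rather than on arbitrary integers.
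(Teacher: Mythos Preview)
Your argument is correct and follows essentially the same route as the paper. The paper factors the proof through the intermediate inequality $\Dist\II(\TT,\TTT)\ge\Dist\II(\coll\JJ(\TT),\coll\JJ(\TTT))$ and then invokes Lemma~\ref{L:DistColl}, whereas you take a geodesic and count directly; but the combinatorial core is identical in both, namely that under~\eqref{E:BiCollCond} a pair that is $(\II\cup\JJ)$-collapsing yet not $\JJ$-collapsing must be $\II$-collapsing (equivalently, your key claim that ``neither $\II$- nor $\JJ$-collapsing'' forces the $\coll\JJ$-image to be non-$\II$-collapsing). The paper phrases this as ``every interval contained in $\II\cup\JJ$ lies in $\II$ or in $\JJ$'', which is exactly your separation argument with $l_1,l_2,z$ unpacked. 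Your closing caveat about needing $z$ to be an actual label is a fair technical remark about the general labeled setting, but it does not arise in the paper's applications, where the trees $\TT,\TTT$ carry the default labeling $1,\ldots,\nn{+}1$ so that $L_R=R$ as integer intervals.
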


\begin{proof}
We claim that the inequality
\begin{equation}
\label{E:BiCollBase}
\Dist\II(\TT, \TTT) \ge \Dist\II(\coll\JJ(\TT), \coll\JJ(\TTT))
\end{equation}
holds for all trees~$\TT, \TTT$. Indeed, let $(\TT_0, \Ldots, \TT_\ell)$ be
a path from~$\TT$ to~$\TTT$. Then
$(\coll\JJ(\TT_0), \Ldots,
\coll\JJ(\TT_\ell))$ is a (possibly redundant) path from~$\coll\JJ(\TT)$
to~$\coll\JJ(\TT_\ell)$.  Put $\ell = \Dist\II(\coll\JJ(\TT),
\coll\JJ(\TTT))$. By definition of~$\Dist\II$, there must be at least
$\ell$~pairs
$(\TT_\rr, \TT_\rrp)$ satisfying
\begin{equation}
\label{L:BiColl2}
\coll\JJ(\TT_\rr)) \not= \coll\JJ(\TT_\rrp)
\text{\quad and\quad }
\coll\II(\coll\JJ(\TT_\rr)) = \coll\II (\coll\JJ(\TT_\rrp)).
\end{equation}
As $\coll\II(\coll\JJ(-)) = \coll{\II\cup\JJ}(-)$ always holds,
\eqref{L:BiColl2} means that $(\TT_\rr, \TT_\rrp)$ is not
$\JJ$-collapsing, and is $(\II \cup \JJ)$-collapsing. Now
Condition~\eqref{L:BiColl} implies that every interval that is included
in~$\II \cup \JJ$ is included in~$\II$, or is included in~$\JJ$. Owing to
the criterion of Lemma~\ref{L:CollPair}, we deduce that $(\TT_\rr,
\TT_\rrp)$ is
$\II$-collapsing, and, therefore, we have $\Dist\II(\TT, \TTT) \ge \ell$.

Then, using~\eqref{L:DistColl}, we obtain
\begin{align*}
\dist(\TT, \TTT) 
&\ge \dist(\coll\II(\TT), \coll\II(\TTT)) + \Dist\II(\TT, \TTT)\\ 
&\ge \dist(\coll\II(\TT), \coll\II(\TTT)) + \Dist\II(\coll\JJ(\TT), \coll\JJ(\TTT)),
\end{align*}
which is the expected inequality~\eqref{E:BiColl}.
\end{proof}

\subsection{Application: a lower bound in $5\nn/3 + O(1)$}
\label{S:5/3}

Lemma~\ref{L:DistColl} provides a natural method for establishing a lower bound on the distance~$\dist(\TT,
\TTT)$ in an inductive way: if $\dist(\TTb, \TTTb) \ge \ellb$ is known,
and we can find a set~$\II$ satisfying $\coll\II(\TT) = \TTb$ and
$\coll\II(\TTT) = \TTTb$, then it suffices to show
that the minimal number of
$\II$-collapsing steps from~$\TT$ to~$\TTT$ is at least~$\kk$ to deduce $\dist(\TT, \TTT) \ge \ellb + \kk$.
We shall now apply this principle to deduce from the results of Section~\ref{S:Bicomb}, which provide a family with
distance $3\nn/2 + O(1)$, a new family achieving distance $5\nn/3 + O(1)$. 

\begin{prop}
\label{P:Tricomb}
For $\TT = \Sp{1^\pp0^\pp1^\pp}$, $\TTT = \Sp{0^\pp(10)^\pp}$ with~$\pp \ge 1$, we have
\begin{equation}
\dist(\TT, \TTT) \ge 5\pp- 4.
\end{equation}
\end{prop}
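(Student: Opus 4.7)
The plan is to reduce to the bicomb case of Proposition~\ref{P:BicombBis} via Lemma~\ref{L:DistColl}. Take $\II = \{\pp\plus2, \pp\plus3, \ldots, 2\pp\plus1\}$: these are the $\pp$ labels of the rightmost leaves of the middle subtree $\Sp{1^\pp}$ (sitting at address $1^\pp 0^\pp$) in $\TT$, and simultaneously the $\pp$ labels of the right leaves of the inner zigzag $\Sp{(10)^\pp}$ (sitting at address $0^\pp$) in $\TTT$.

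A routine bottom-up calculation, analogous to those carried out in Section~\ref{S:Collapsing}, shows that $\coll\II$ annihilates precisely these two subtrees up to a single surviving leaf: in $\TT$ the middle $\Sp{1^\pp}$ reduces to its leftmost leaf (the one labeled $\pp\plus1$), so $\coll\II(\TT) = \Sp{1^\pp 0^\pp}$, while in $\TTT$ the inner zigzag loses its right leaves and collapses to the right comb~$\Sp{1^\pp}$, so $\coll\II(\TTT) = \Sp{0^\pp 1^\pp}$. Proposition~\ref{P:BicombBis} applied to these two size-$2\pp$ bicombs gives $\dist(\coll\II(\TT),\coll\II(\TTT)) \ge 3\pp - 2$, and Lemma~\ref{L:DistColl} then yields
\[
\dist(\TT, \TTT) \;\ge\; 3\pp - 2 + \Dist\II(\TT, \TTT).
\]
The whole problem is thus reduced to establishing $\Dist\II(\TT, \TTT) \ge 2\pp - 2$.

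To do the latter I would mimic the strategy of the proof of Proposition~\ref{P:BicombBis}, partitioning $\II$-collapsing pairs into families indexed by elements of $\II$. Since $\II = [\pp\plus2, 2\pp\plus1]$ is a full interval, a pair $\Qpm(\aa,\bb,\cc,\dd)$ is $\II$-collapsing as soon as one of $[\aa,\bbm]$, $[\bb,\cc]$, $[\ccp,\dd]$ sits inside $[\pp\plus2, 2\pp\plus1]$, a rather permissive condition. The essential inputs are the covering changes: each $\aa \in \II$ is stacked along the right spine of the middle comb in $\TT$, where the only coverings of~$\aa$ go through~$2\pp\plus1$ and the outer-right chain, whereas in $\TTT$ the same $\aa$ lies on a right leaf of the zigzag, producing a whole new fan of coverings $\aa \cov{\TTT} \jj$ for intermediate values of~$\jj$. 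Applying the Key Lemma together with its refinements (Lemmas~\ref{L:PPrinciple}, \ref{L:PPPrinciple}, \ref{L:PPrincipleBis}) to these covering changes, and combining with the symmetric co-covering changes at the ``pivot'' label~$\pp\plus1$, should produce two essentially disjoint families of forced $\II$-collapsing pairs, each of size roughly $\pp-1$.

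The main obstacle will be the combinatorial bookkeeping needed to make the count tight: ensuring that the forced pairs in the two families do not coincide, and handling the disjunctive conclusions of the refined lemmas (``a pair of form $\Qp(\ldots)$ \emph{or} of form $\Qm(\ldots)$'') so that $\II$-collapsing is preserved in every branch of the disjunction. As in the proof of Proposition~\ref{P:BicombBis}, this will probably involve separating the subcases where the forced pair stays inside the middle zone from those where it crosses into one of the outer zones $[1, \pp\plus1]$ or $[2\pp\plus2, 3\pp\plus1]$, and the extra ``middle stratum'' here makes this harder than in the bicomb setting. Granted the bound $\Dist\II(\TT, \TTT) \ge 2\pp - 2$, the statement $\dist(\TT, \TTT) \ge (3\pp - 2) + (2\pp - 2) = 5\pp - 4$ follows at once from the displayed inequality.
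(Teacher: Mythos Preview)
Your reduction is exactly the paper's: same collapsing set $\II = [\pp+2, 2\pp+1]$, same identification of $\coll\II(\TT) = \Sp{1^\pp0^\pp}$ and $\coll\II(\TTT) = \Sp{0^\pp1^\pp}$, same appeal to the bicomb bound $3\pp-2$, leaving the task $\Dist\II(\TT,\TTT) \ge 2\pp-2$. What you are missing is the device the paper uses to make that last step tractable: a \emph{second} auxiliary collapse. The paper takes $\JJ = [2\pp+3, 3\pp+1]$, observes that $\II$ and $\JJ$ satisfy the separation condition of Lemma~\ref{L:BiColl}, and uses that lemma to replace $\Dist\II(\TT,\TTT)$ by $\Dist\II(\coll\JJ(\TT),\coll\JJ(\TTT))$. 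The $\JJ$-collapsed trees are $\TTb = \Sp{1^\pp01^\pp}$ and $\TTTb = \Sp{(01)^\pp0}$, of size only $2\pp+1$, and the counting of $\II$-collapsing pairs is then carried out on this smaller pair (Lemma~\ref{L:Tricomb}).

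Your plan to count $\II$-collapsing pairs directly on the size-$3\pp$ trees is not wrong in principle---indeed $\Dist\II(\TT,\TTT) \ge \Dist\II(\TTb,\TTTb)$ is exactly what Lemma~\ref{L:BiColl} asserts---but as written it is only a sketch: you name the relevant lemmas and anticipate the bookkeeping difficulties without resolving them. The paper's double-collapse trick is precisely what dissolves those difficulties, by stripping away the outer-right comb so that the covering analysis happens in a tree where the label $\nnb+1 = (\mm+1)\pp+2$ plays a clean extremal role. Once on $(\TTb,\TTTb)$, the argument does follow the template of Proposition~\ref{P:BicombBis}: one family of forced pairs of type~$\tI_\aa$ coming from $\aam \cov{\TTTb} \aa$ for $\aa \in [\pp+2,2\pp]$, and a second family (types~$\tIIpm$ or~$\tIII$) coming from Lemma~\ref{L:PPPrinciple} applied with $\kk = \ppp$, with a short side argument handling the exceptional case where the fourth parameter hits $\nnb+1$.
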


As $\TT$ and $\TTT$ above have size~$3\pp$, we deduce

\begin{coro}
For $\nn = 3 \pmod 3$, we have $\diam(\nn) \ge \frac53\nn - 4$.
\end{coro}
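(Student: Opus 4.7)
The plan follows the paradigm announced at the start of Section~\ref{S:Collapsing}: take a set~$\II$ of $\pp$ labels whose collapsing reduces $\TT_\pp$ and $\TTT_\pp$ to the matched bicombs of Proposition~\ref{P:BicombBis}, use that proposition to obtain a $3\pp-2$ lower bound on the collapsed pair, and combine it via Lemma~\ref{L:DistColl} with a count of at least $2\pp-2$ forced $\II$-collapsing base pairs, yielding $\dist(\TT_\pp,\TTT_\pp)\ge(3\pp-2)+(2\pp-2)=5\pp-4$.

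The natural choice is $\II=\{\pp+2,\pp+3,\ldots,2\pp+1\}$. In $\TT_\pp$ these are the $\pp$ rightmost leaves of the bottom right comb~$\Sp{1^\pp}$; erasing them shrinks that comb to its unique leftmost leaf (label~$\pp+1$), degenerates the subtree $\Sp{0^\pp 1^\pp}$ at address~$1^\pp$ down to~$\Sp{0^\pp}$, and collapses $\TT_\pp$ itself to the bicomb $\Sp{1^\pp 0^\pp}$. In $\TTT_\pp$, the label $\pp+\ell$ (for $\ell=2,\ldots,\pp+1$) sits at address $0^\pp(10)^{\pp-\ell+1}11$, which makes it the rightmost leaf of the nested zigzag subtree $\Sp{(10)^{\ell-1}}$; an easy induction on the nesting depth then turns $\Sp{(10)^\pp}$ into the right comb $\Sp{1^\pp}$ and $\TTT_\pp$ into the bicomb $\Sp{0^\pp 1^\pp}$. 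Proposition~\ref{P:BicombBis} applied with $\pp=\qq$ gives $\dist(\coll\II(\TT_\pp),\coll\II(\TTT_\pp))\ge 3\pp-2$.

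It remains, by Lemma~\ref{L:DistColl}, to prove $\Dist\II(\TT_\pp,\TTT_\pp)\ge 2\pp-2$, that is, to exhibit $2\pp-2$ pairwise distinct base pairs, each forced in every rotation path from $\TT_\pp$ to $\TTT_\pp$ and each $\II$-collapsing in the sense of Lemma~\ref{L:CollPair}. A first batch of $\pp-1$ forced pairs is easy: for each $\ii\in\{\pp+2,\ldots,2\pp\}$, the relation $\ii\ccov{\TT_\pp}(\ii+1)$ holds (since $\ii$ is the initial leaf of a sub-right-comb of~$\Sp{1^\pp}$) while $\ii\not\ccov{\TTT_\pp}(\ii+1)$ holds (the address of $\ii$ in~$\TTT_\pp$ ends in~$1$, so $\ii$ is initial of no non-trivial subtree), and Lemma~\ref{L:KeyLemma2} then forces a pair with $\bb=\cc=\ii\in\II$, automatically $\II$-collapsing via $[\bb,\cc]=\{\ii\}\ince\II$. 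A second batch of $\pp-1$ pairs is expected from the $\cov{}$-direction: for each $\ell\in\{2,\ldots,\pp\}$, the label $\pp+\ell$ is final of no non-trivial subtree of $\TT_\pp$ but is final of the nested zigzag of depth $\ell-1$ in $\TTT_\pp$, so one applies Lemma~\ref{L:PPPrinciple} with $\ii=\pp-\ell+2$, $\jj=\pp+\ell$, $\kk=\pp+1$, whose hypotheses are met since $\pp-\ell+3\ccov{\TTT_\pp}(\pp+\ell)$.

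The main obstacle is executing the second batch cleanly: the natural application of Lemma~\ref{L:PPPrinciple} produces a disjunction, and while the $\Qm$-alternative $\Qm(\lse\pp-\ell+2,\pp-\ell+2\ls...\lse\pp+1,\gse\pp+1,\pp+\ell)$ is automatically $\II$-collapsing via $[\ccp,\dd]=[\ccp,\pp+\ell]\ince\II$, the $\Qp$-alternative $\Qp(\lse\pp-\ell+2,\pp-\ell+2\ls...\lse\pp+1,\pp+\ell,...)$ is $\II$-collapsing only under the additional constraint $\dd\le 2\pp+1$. Ruling out the bad branch of this disjunction, for instance by chaining with Lemma~\ref{L:PPrincipleBis} or by exploiting the explicit geometry of~$\TT_\pp$ and~$\TTT_\pp$ to force $\dd\le 2\pp+1$, is the combinatorial core of the argument and plays the role of the case-analysis producing the families $\tI$--$\tIV$ in the proof of Proposition~\ref{P:BicombBis}. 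By construction the pairs of the first batch all satisfy $\bb=\cc\ge\pp+2$ while those of the second batch satisfy $\bb\le\pp+1<\pp+\ell\le\dd$, so pairwise distinctness between the batches and within each batch is automatic; combined with Lemma~\ref{L:DistColl}, this finally delivers $\dist(\TT_\pp,\TTT_\pp)\ge 5\pp-4$.
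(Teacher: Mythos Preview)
Your overall strategy matches the paper's: collapse $\II=[\pp+2,2\pp+1]$, invoke Proposition~\ref{P:BicombBis} for the $3\pp-2$ contribution, and then exhibit $2\pp-2$ forced $\II$-collapsing pairs. Your first batch of $\pp-1$ pairs (type~$\tI_\aa$ in the paper's notation, coming from $\ii\ccov{\TT_\pp}\ii{+}1$ but $\ii\not\ccov{\TTT_\pp}\ii{+}1$ for $\ii\in[\pp+2,2\pp]$) is correct and coincides with the paper's.

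However, your second batch is genuinely incomplete, and you say so yourself. Applying Lemma~\ref{L:PPPrinciple} with $\ii=\pp-\ell+2$, $\jj=\pp+\ell$, $\kk=\pp+1$ produces a disjunction whose $\Qp$-branch is $\II$-collapsing only when $\dd\le 2\pp+1$; you identify this obstacle but do not resolve it. The suggested chaining via Lemma~\ref{L:PPrincipleBis} does not obviously close the gap, because in the original trees~$\TT_\pp,\TTT_\pp$ the offending fourth entry~$\dd$ can be any of the $\pp$ values $2\pp+2,\ldots,3\pp+1$, and the auxiliary pair one extracts from $(p+\ell+1)\ccov{}\dd$ is not automatically $\II$-collapsing either.

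The paper circumvents exactly this difficulty by a device you omit: the \emph{double collapsing} of Lemma~\ref{L:BiColl}. Before counting $\II$-collapsing pairs, the paper first collapses the auxiliary set $\JJ=[2\pp+3,3\pp+1]$, which satisfies the separation condition~\eqref{E:BiCollCond} relative to~$\II$. This reduces $(\TT_\pp,\TTT_\pp)$ to the simpler pair $(\TTb,\TTTb)=(\Sp{1^\pp 0 1^\pp},\Sp{(01)^\pp 0})$ of size $2\pp+1$, and Lemma~\ref{L:BiColl} guarantees that $\Dist\II(\TTb,\TTTb)$ still contributes additively. In these collapsed trees the only possible ``bad'' value is $\dd=2\pp+2$, and then Lemma~\ref{L:NameCov}$(iv)$ gives $\aa{+}1\ccov{}2\pp+2$ at that step while $\aa{+}1\not\ccov\TTb 2\pp+2$; a single application of Lemma~\ref{L:KeyLemma2} now yields a substitute pair $\Qm(...,\aa{+}1,...,2\pp+2)$ which \emph{is} $\II$-collapsing because $[\nu_2,\nu_3]\subseteq[\pp+3,2\pp+1]\subseteq\II$. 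This is the paper's type~$\tIII_\aa$ family in the proof of Lemma~\ref{L:Tricomb}, and it is precisely the missing ingredient in your argument.
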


\begin{figure}[htb]
\begin{picture}(116,62)(0,3)
\put(1,3.5){\includegraphics{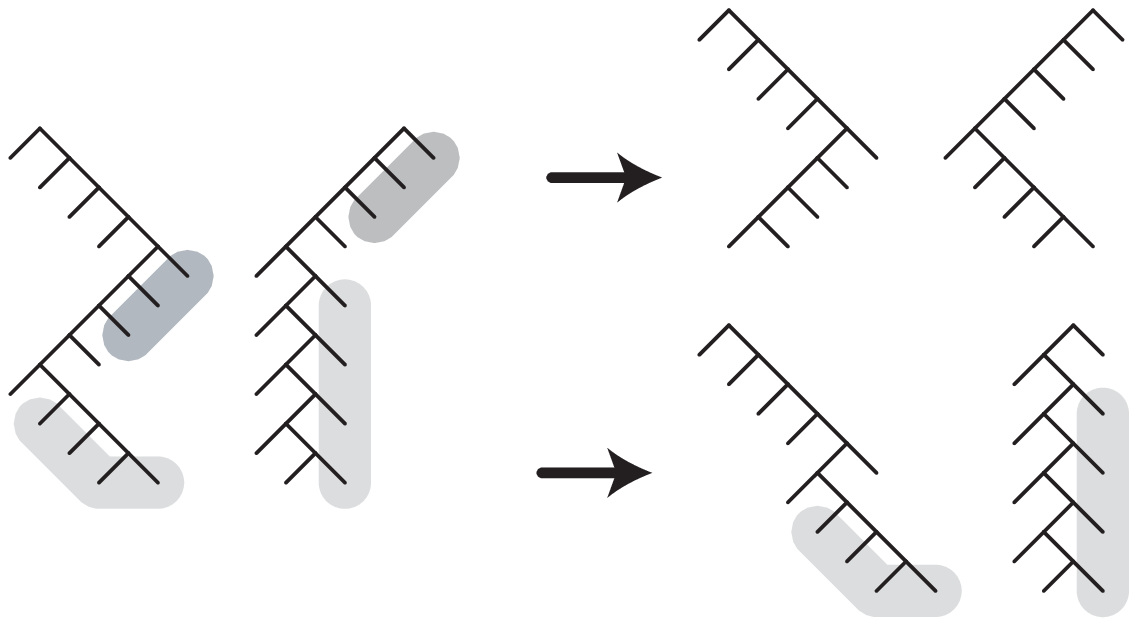}}
\put(8,55){$\TT$}
\put(0,47){$\scriptstyle 1$}
\put(9,39){$\scriptstyle \pp$}
\put(-2,24){$\scriptstyle \pp\!+\!1$}
\put(1,21){$\scriptstyle \pp\!+\!2$}
\put(9,15){$\scriptstyle 2\pp$}
\put(17,15){$\scriptstyle \qq$}
\put(11,27){$\scriptstyle \qq\plus1$}
\put(14,30){$\scriptstyle \qq\plus2$}
\put(20,36){$\scriptstyle \nn\plus1$}

\put(38,55){$\TTT$}
\put(25,36){$\scriptstyle 1$}
\put(25,18){$\scriptstyle \pp$}
\put(27,15){$\scriptstyle \pp\plus1$}
\put(36,15){$\scriptstyle \pp\plus2$}
\put(37,28){$\scriptstyle 2\pp$}
\put(37,34){$\scriptstyle \qq$}
\put(37,40){$\scriptstyle \qq\plus1$}
\put(40,43){$\scriptstyle \qq\plus2$}
\put(46,49){$\scriptstyle \nn\plus1$}

\put(56,51){$\coll\II$}
\put(70,60){$\scriptstyle 1$}
\put(79,51){$\scriptstyle \pp$}
\put(71,39){$\scriptstyle \pp\plus1$}
\put(81,39){$\scriptstyle \qq\plus1$}
\put(84,42){$\scriptstyle \qq\plus2$}
\put(90,48){$\scriptstyle \nn\plus1$}

\put(95,48){$\scriptstyle 1$}
\put(104,39){$\scriptstyle \pp$}
\put(112,39){$\scriptstyle \pp\plus1$}
\put(106,51){$\scriptstyle \qq\plus1$}
\put(109,54){$\scriptstyle \qq\plus1$}
\put(115,60){$\scriptstyle \nn\plus1$}

\put(55,21){$\coll\JJ$}
\put(70,28){$\scriptstyle 1$}
\put(79,19){$\scriptstyle \pp$}
\put(77,13){$\scriptstyle \pp\plus1$}
\put(80,10){$\scriptstyle \pp\plus2$}
\put(88,4){$\scriptstyle2\pp$}
\put(96,4){$\scriptstyle \qq$}
\put(90,16){$\scriptstyle \qq\plus1$}

\put(102,25){$\scriptstyle 1$}
\put(102,7){$\scriptstyle \pp$}
\put(104,4){$\scriptstyle \pp\plus1$}
\put(113,4){$\scriptstyle \pp\plus2$}
\put(113,16){$\scriptstyle 2\pp$}
\put(113,22){$\scriptstyle \qq$}
\put(113,28){$\scriptstyle \nn\plus1$}

\end{picture}
\caption{\sf\smaller The trees of Proposition~\ref{P:Tricomb}---here with
$\pp =\nobreak 4$---and the proof of the latter from
Lemma~\ref{L:Tricomb}: collapsing
$\II$ (light grey labels) leads to $2$-combs, whose distance is known; collapsing~$\JJ$
(dark grey labels) leads to trees that are, up to shifting the labels, those of
Lemma~\ref{L:Tricomb}.}
\label{F:Tricomb}
\end{figure}

To prove Proposition~\ref{P:Tricomb}, we shall use 
Proposition~\ref{P:Bicomb} (or~\ref{P:BicombBis}) and a convenient
collapsing. Fix some~$\pp$, and let $\TTb$ and $\TTTb$ be the
coresponding trees of Proposition~\ref{P:Bicomb}, namely
$\Sp{1^\pp0^\pp}$ and $\Sp{0^\pp1^\pp}$. Collapsing~$\TT$
into~$\TTb$ is easy: $\TT$ is a zigzag
of three alternating length~$\pp$ combs (``tricomb''), whereas $\TTb$ is
a zigzag of two alternating combs (``bicomb''), so that we can
project~$\TT$ to~$\TTb$ by collapsing all labels from~$\pp+2$ to $2\pp
+ 1$. It then turns out that collapsing the same labels in~$\TTT$ leads
to~$\TTTb$. By Proposition~\ref{P:Bicomb}, the distance of~$\TTb$
and~$\TTTb$ is~$3\pp - 2$, so, owing to Lemma~\ref{L:DistColl}, in order
to establish Proposition~\ref{P:Tricomb}, it is enough to prove
\begin{equation}
\label{E:Tricomb}
\Dist{[\pp+2, 2\pp+1]}(\TT, \TTT) \ge 2\pp-2,
\end{equation}
\ie, to prove that each sequence of rotations from~$\TT$ to~$\TTT$ contains at least $\pp-2$ pairs that are $[\pp+2,
2\pp+1]$-collapsing.

Instead of working with the trees of Proposition~\ref{P:Tricomb} themselves, it will be more convenient to use a
second, auxiliary collapsing, and to use Lemma~\ref{L:BiColl}. We shall prove:

\begin{lemm}
\label{L:Tricomb}
For $\TTb = \Sp{1^\pp01^\pp}$, $\TTTb = \Sp{(01)^\pp0}$ with~$\pp \ge 1$, we have
\begin{equation}
\label{E:Tricomb2}
\Dist{[\pp+2, 2\pp+1]}(\TTb, \TTTb) \ge 2\pp-2.
\end{equation}
\end{lemm}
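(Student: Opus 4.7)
The plan is to mimic the type-counting strategy of Proposition~\ref{P:BicombBis}, exhibiting $2\pp-2$ pairwise distinct $\II$-collapsing pairs that any path from~$\TTb$ to~$\TTTb$ in~$\Ass{2\pp+1}$ must contain. I begin by computing the leaf addresses of $\TTb=\Sp{1^\pp 01^\pp}$ and $\TTTb=\Sp{(01)^\pp 0}$ explicitly and extracting four structural facts. \emph{(a)} In~$\TTb$, the $\pp$ labels of $\II=[\pp+2,2\pp+1]$ form the right comb $\Sp{1^{\pp-1}}$ rooted at address~$1^\pp 01$, so each $\ii\in[\pp+2,2\pp]$ is initial in the subtree $\Sp{1^{2\pp+1-\ii}}$ of final label~$2\pp+1$, giving $\ii\ccov{\TTb}2\pp+1$. \emph{(b)} In~$\TTTb$, every $\II$-label sits at an address of the form~$(01)^k 1$, hence as a right child that is never initial in a nontrivial subtree, so $\ii\not\ccov{\TTTb}2\pp+1$. \emph{(c)} The only final labels of subtrees of~$\TTb$ are $2\pp+1$ and $2\pp+2$, so $\pp+1\cov{\TTb}\jj$ holds only for $\jj\in\{2\pp+1,2\pp+2\}$. \emph{(d)} In~$\TTTb$, the final label of the subtree at~$(01)^{2\pp+2-\jj}$ is~$\jj$, so $\pp+1\cov{\TTTb}\jj$ holds for every $\jj\in[\pp+2,2\pp+2]$.

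I then introduce two families of forced $\II$-collapsing pairs. For each $\ii\in[\pp+2,2\pp]$, Lemma~\ref{L:KeyLemma2} applied to the lost co-covering from~\emph{(a)}--\emph{(b)} produces a pair $\tI_\ii$ of name $\Qp(\cdot,\ii,\ls 2\pp+1,\gse 2\pp+1)$; since $\bb=\ii$ and $\cc\le 2\pp$ both lie in~$\II$, the condition $[\bb,\cc]\subseteq\II$ of Lemma~\ref{L:CollPair}$(i)$ holds, and the pair is $\II$-collapsing. For each $\jj\in[\pp+2,2\pp]$, Lemma~\ref{L:KeyLemma1} applied to the new covering from~\emph{(c)}--\emph{(d)} produces a pair $\tII_\jj$ of name $\Qp(\lse \pp+1,\gs \pp+1,\jj,\cdot)$; since $\bb\ge\pp+2$ and $\cc=\jj$ both lie in~$\II$, this pair is again $\II$-collapsing. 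This yields $\pp-1$ forced pairs of each type, i.e.\ $2(\pp-1)$ forced constraints.

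The main obstacle is distinctness. A $\tI_\ii$-pair is characterised by $\bb=\ii$ and a $\tII_\jj$-pair by $\cc=\jj$, so a single pair with $\bb\in[\pp+2,2\pp]$ and $\cc\in[\pp+2,2\pp]$ may discharge one constraint of each type, and a na\"ive count only brings the bound down to $\pp-1$. To reach~$2\pp-2$ I plan to introduce a third family: for each $\ii\in[\pp+2,2\pp]$, Lemma~\ref{L:PPrincipleBis} applied to the consecutive lost co-covering $\ii\ccov{\TTb}\ii+1$, $\ii\not\ccov{\TTTb}\ii+1$ (the hypothesis $\ii\cove{\TTb}\ii$ being trivial and the $\Qm$-alternative $\Qm(\ii,\cdot,\ii,\cdot)$ being ruled out by the chain $\aa<\bb\le\cc$) yields a forced pair of the restrictive shape $\Qp(\cdot,\ii,\ii,\cdot)$, with the sharper constraint $\bb=\cc=\ii$. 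A case analysis modelled on the $\tIII/\tIV$ alternative of Proposition~\ref{P:BicombBis}---distinguishing, for each~$\ii$, whether the $\tII_\ii$-role and the $\tI_\ii$-role of the path can be absorbed into the same $\tIII_\ii$-pair or whether one of the roles forces a genuinely new pair elsewhere---then produces the needed $2\pp-2$ distinct $\II$-collapsing pairs. Carrying out this final bookkeeping, which parallels the tabulated count at the end of Proposition~\ref{P:BicombBis}, is the most delicate step of the proof.
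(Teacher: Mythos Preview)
Your structural facts (a)–(d) are correct, and your third family $\Qp(\cdot,\ii,\ii,\cdot)$ is exactly the paper's type~$\tI_\aa$, obtained there via Lemma~\ref{L:KeyLemma1} from $\aa{-}1\not\cov{\TTb}\aa$ and $\aa{-}1\cov{\TTTb}\aa$. So you have the first $\pp-1$ pairs.

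The genuine gap is the second batch of $\pp-1$ pairs. Your families $\tI_\ii$ and $\tII_\jj$ overlap too heavily with your~$\tIII$: a single pair $\Qp(\aa,\ii,\ii,\dd)$ with $\aa\le\pp{+}1$ and $\dd\ge 2\pp{+}1$ simultaneously satisfies your $\tI_\ii$, $\tII_\ii$, and $\tIII_\ii$ constraints. Nothing in your argument excludes a path carrying exactly one such pair for each $\ii\in[\pp{+}2,2\pp]$, and in that scenario your three families collapse to $\pp-1$ pairs, not $2\pp-2$. The ``case analysis modelled on Proposition~\ref{P:BicombBis}'' you invoke would have to manufacture a new forced pair in precisely this case, and you have not identified what that pair is or why it is forced.

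The paper's fix is to replace your weak $\tII_\jj$ (obtained from $\pp{+}1\cov{\TTTb}\jj$ via Lemma~\ref{L:KeyLemma1}) by a family obtained from the \emph{mirror} label $\aa'=2\pp{+}2-\aa$: one has $\aa'\not\cov{\TTb}\aa$, $\aa'\cov{\TTTb}\aa$, and $\aa'{+}1\ccov{\TTTb}\aa$, so Lemma~\ref{L:PPPrinciple} with $\kk=\pp{+}1$ yields either a positive pair with $\nu_2\le\pp{+}1$ or a negative pair with $\nu_4=\aa$. Both shapes are automatically disjoint from your $\tIII$-pairs (which are positive with $\nu_2\ge\pp{+}2$). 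The only residual case, $\nu_4=2\pp{+}2$ in the positive branch, is then handled by a further appeal to Lemma~\ref{L:KeyLemma2}, producing a negative pair $\Qm(\cdot,\aa{+}1,\cdot,2\pp{+}2)$. The crucial point you are missing is the constraint $\nu_2\le\pp{+}1$, which requires the refined Lemma~\ref{L:PPPrinciple} rather than the basic Lemma~\ref{L:KeyLemma1}.
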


\begin{proof}[Proof of Proposition~\ref{P:Tricomb} from Lemma~\ref{L:Tricomb}]
Let $\II = [\pp+2, 2\pp+1]$ and
\linebreak
 $\JJ = [2\pp+3, 3\pp+1]$. Then the sets~$\II$ and~$\JJ$ satisfy
Condition~\eqref{E:BiColl}, and we have $\TTb = \coll\JJ(\TT)$ and 
$\TTTb = \coll\JJ(\TTT)$. Lemma~\ref{L:BiColl} then gives
$$\dist(\TT, \TTT)  \ge \dist(\coll\II(\TT), \coll\II(\TTT)) + \Dist\II(\TTb, \TTTb).$$
As can be checked on
Figure~\ref{F:Tricomb}, we have $\coll\II(\TT) = \Sp{1^\pp0^\pp}$ and
$\Sp{0^\pp1^\pp}$. Using Proposition~\ref{P:Bicomb} and \eqref{E:Tricomb2}, we deduce
$$\dist(\TT, \TTT)  \ge (3\pp - 2) + (2\pp - 2) = 5\pp - 4,$$
as expected.
\end{proof}

\begin{proof}[Proof of Lemma~\ref{L:Tricomb}]
The argument is similar to the one used for Proposition~\ref{P:BicombBis}, with the
additional difficulty that we need pairs that are $[\pp+2, 2\pp + 1]$-collapsing. Put $\nnb =
2\pp + 1$, $\II = [\pp+2, 2\pp]$, and, for $\pp+2 \le \aa  \le 2\pp$, say that a base pair is
\begin{tabbing}
\quad
\=-\emph{special of type $\tI_\aa$} \hspace{0.5em}
\=if it is $\ \Qp(..., \aa, \aa, ...)$ \\
\>-\emph{special of type $\tIIp_\aa$} 
\>if it is $\ \Qp(..., \lse\ppp, \aa, \lse\nnb)$\\
\>-\emph{special of type $\tIIm_\aa$} 
\>if it is $\ \Qm(\lse\pp, ..., \gse\ppp, \aa)$\\
\>-\emph{special of type $\tIII_\aa$} 
\>if it is $\ \Qm(..., \aap, ..., \nnb+1)$.
\end{tabbing}
It is straightforward that a special pair has a unique type, and that all special pairs are $\II$-collapsing:
with obvious notation, we have $[\nu_2, \nu_3] \ince \II$ for
type~$\tI_\aa$, and $[\nu_3+1, \nu_4] \ince
\II$ for types~$\tIIp_\aa$, $\tIIm_\aa$, and~$\tIII_\aa$, which is enough to conclude using the criterion of
Lemma~\ref{L:CollPair}.

Let $(\TT_0, \Ldots, \TT_\ell)$ be a path from~$\TTb$ to~$\TTTb$
in~$\Ass\nnb$. Choose $\aa$ in~$[\pp+2, 2\pp]$. First, as can be read on
Figure~\ref{F:Tricomb}, we have 
$$\aam \not\cov\TTb \aa
\text{\quad and \quad}
\aam \cov\TTTb \aa.$$
Lemma~\ref{L:KeyLemma1} guarantees that $(\TT_0, \Ldots, \TT_\ell)$
contains a pair $\Qp(\lse\aam,
\gs\aam, \aa, ...)$, \ie, a pair of type~$\tI_\aa$.

Next, let $\aa' = \nnb + 1 - \aa$. Then we have $2 \le \aa' \le \pp$, and we read the relations
$$\aa' \not\cov\TTb \aa, \quad
\aa' \cov\TTb \aa,
\text{\quad and \quad}
\aa'+1 \ccov\TTTb \aa.$$
Applying Lemma~\ref{L:PPPrinciple} with $\ii = \aa'$, $\jj = \aa$, and $\kk = \ppp$
guarantees the existence of~$\rr$ such that
$(\TT_\rr, \TT_\rrp)$ is
$$\Qp(\lse\aa', \aa'\ls...\lse\ppp, \aa,
...)
\mbox{\ or\ }
\Qm(\lse\aa',
\aa'\ls...\lse\ppp, \gse\ppp, \aa).$$In the latter case, we have a special
pair of type~$\tIIm_\aa$. In the former case, we have a special pair of
type~$\tIIp_\aa$ provided the last parameter, namely
$\Name4{\TT_\rr}{\TT_\rrp}$, is at most~$\nnb$. Now assume this is not
the case, \ie, we have $\Name4{\TT_\rr}{\TT_\rrp} = \nnb+1$. By
Lemma~\ref{L:NameCov}$(iv)$, we have $\aap \ccove{\TT_\rr} \nnb+1$,
hence $\aap \ccov{\TT_\rr} \nnb+1$ as $\aap \le \nnb$ holds by
hypothesis. But, by hypothesis, we have $\aap \not\ccov\TT \nnb+1$. By
Lemma~\ref{L:KeyLemma2}, there must exist~$\ss \le \rr$ such that
$(\TT_\ss, \TT_\ssp)$ is $\Qm(..., \aap, ...,
\nnb+1)$, hence special of type~$\tIII_\aa$.

Hence, for  each of the $\ppm$ values $\pp+2, \Ldots, 2\pp$, the path $(\TT_0, \Ldots, \TT_\ell)$ contains a pair
of type~$\tI_\aa$, and a pair of type~$\tIIp_\aa, \tIIm_\aa$, or~$\tIII_\aa$. Hence $(\TT_0, \Ldots, \TT_\ell)$
contains
\begin{tabbing}
\hspace{3em}
\=$\ppm$
\=pairs of type~$\tI$,\\
\>$\ppm$ 
\>pairs of type~$\tIIpm$ or~$\tIII$,
\end{tabbing}
hence at least $2\pp-2$ pairs that are $\II$-collapsing.
\end{proof}

\begin{rema}
The previous result is optimal. It is not difficult to construct an explicit path
of length~$5\pp-4$ from~$\TT$ to~$\TTT$, and to check that its
projection is a path of length~$2\pp - 2$ from~$\TTb$ to~$\TTTb$, all of
which steps are $\II$-collapsing. Also, one can observe that, in the above
proof, the final argument showing the existence of a special pair of
type~$\tIII_\aa$ also shows the existence of a pair $\Qp(..., \aap, ...,
\nnb+1)$. It follows that each path visiting a vertex satisfying $\aa \ccov{}
\nnb+1$ for some~$\aa \ge \pp+3$ contains at least $2\pp -1$ pairs that
are $\II$-collapsing. Hence such a path has length at least~$5\pp - 3$ and
it is not geodesic: moving $\nnb+1$ to the right is never optimal.
\end{rema}

\subsection{Alternative families}
\label{S:Another}

To conclude this section, we mention still another family witnessing a
growth rate of the form $3\nn/2 + O(1)$. The analysis of this
family, which heavily uses collapsing, is easier than that of
Proposition~\ref{P:BicombBis}, but, contrary to the arguments developed
above, it does  not seem to extend for proving stronger results. The
trees we consider are zigzags, with a slight change near the ends---though
seemingly minor, that change modifies distances completely.

\begin{prop}
\label{P:Zigzag}
For $\mm \ge 0$, we have
\begin{equation}
\label{E:Zigzag}
\dist(\Sp{1(10)^\mm0}, \Sp{0(01)^\mm1}) = 3\mm + 1.
\end{equation}
\end{prop}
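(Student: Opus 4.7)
I will proceed by induction on $\mm$, establishing both inequalities simultaneously. The base case $\mm=0$ is immediate: $\TT_0=\Sp{10}$ and $\TTT_0=\Sp{01}$ are the two distinct size-$2$ trees, so $\dist(\TT_0,\TTT_0)=1=3\cdot 0+1$. For the inductive step at $\mm\ge 1$, the key device will be the collapsing set $\II=\{\mm+2,\mm+3\}$. A direct verification along the spines shows that $\coll\II(\TT_\mm)$ is isomorphic (after discarding the collapsed labels) to $\TT_{\mm-1}$, and $\coll\II(\TTT_\mm)$ to $\TTT_{\mm-1}$: in $\TT_\mm$ the labels $\mm+2,\mm+3$ form the sibling pair at address $1(10)^\mm$, so collapsing amputates one inner ``$10$''-block of the spine, and in $\TTT_\mm$ they sit at the bottom of the subtree at address $0(01)^{\mm-1}$, whose collapse analogously removes one ``$01$''-block.

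For the upper bound $\dist(\TT_\mm,\TTT_\mm)\le 3\mm+1$, I would exhibit an explicit sequence of $3\mm+1$ rotations, constructed recursively. Three rotations acting near the deepest caret of $\TT_\mm$ suffice to bring the sibling pair $\{\mm+2,\mm+3\}$ to a configuration compatible with the inductive path on the collapsed trees; the path of length $3(\mm-1)+1=3\mm-2$ supplied by the inductive hypothesis then lifts back, carrying $\{\mm+2,\mm+3\}$ along as a passive unit, to reach $\TTT_\mm$. Adjacency is checked step by step, giving the total count $3+(3\mm-2)=3\mm+1$.

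For the lower bound, Lemma~\ref{L:DistColl} combined with the inductive hypothesis reduces the task to proving $\Dist\II(\TT_\mm,\TTT_\mm)\ge 3$. My plan is to exhibit three forced $\II$-collapsing base pairs with pairwise distinct names. Applying Lemma~\ref{L:KeyLemma1} to the covering transition $\mm+1\not\cov{\TT_\mm}\mm+2$ versus $\mm+1\cov{\TTT_\mm}\mm+2$ (the latter witnessed by the subtree at $0(01)^\mm$ of $\TTT_\mm$, which has leaves $\{\mm+1,\mm+2\}$) forces a pair of name $(\lse\mm+1,\mm+2,\mm+2,\gse\mm+3)^+$, whose interval $[\bb,\cc]=\{\mm+2\}$ lies in $\II$ (type~I). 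Analogously, $\mm+1\not\cov{\TT_\mm}\mm+3$ versus $\mm+1\cov{\TTT_\mm}\mm+3$ forces a pair with $\cc=\mm+3$ and $\bb\in\{\mm+2,\mm+3\}$, again $\II$-collapsing (type~II). Finally, the subtree at $1(10)^{\mm-1}1$ of $\TT_\mm$ has leftmost label $\mm+2$ and contains $\mm+4$, so $\mm+2\ccov{\TT_\mm}\mm+4$, whereas in $\TTT_\mm$ label $\mm+2$ is leftmost only in its own leaf; combined with $\mm+2\cov{\TT_\mm}\mm+3$ (the sibling relation), Lemma~\ref{L:PPrincipleBis} applied with $\ii=\mm+2,\jj=\mm+4$ forces a pair of name $(\ldots,\mm+2,\mm+3,\ldots)^+$ or $(\mm+2,\ldots,\mm+3,\ldots)^-$, $\II$-collapsing via $[\bb,\cc]$ or $[\aa,\bbm]$ inside $\II$ (type~III).

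The main obstacle will be verifying that these three forced pairs correspond to three genuinely distinct edges of any path. Types~I and~II are separated by their $\cc$-coordinate ($\mm+2$ versus $\mm+3$). Types~II and~III both have $\cc=\mm+3$: the negative-sign branch of~III is immediately distinct from the positive type~II, but the positive branch of~III shares the signature $(\bb,\cc)=(\mm+2,\mm+3)$ with one possible instance of~II. I plan to resolve this by a refined forcing: either invoke Lemma~\ref{L:PPPrinciple} suitably to pin the type~II pair to $\bb=\mm+3$, or argue case-by-case---if a single path edge realises both type~II and positive type~III simultaneously, then an additional transition (such as the co-covering change $\mm+2\ccov{\TT_\mm}\mm+3$ versus $\mm+2\not\ccov{\TTT_\mm}\mm+3$) must be charged to a separate edge of distinct signature in order to account for all the net changes in covering data. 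Once this distinctness bookkeeping is settled, the argument runs entirely in the spirit of the proofs of Lemma~\ref{L:Tricomb} and Proposition~\ref{P:BicombBis}.
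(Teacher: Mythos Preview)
Your approach differs from the paper's in a substantive way. The paper collapses \emph{four} labels, $\II_\mm=\{\mm,\mm+1,\mm+3,\mm+4\}$, which sends $(\TT_\mm,\TTT_\mm)$ to $(\TT_{\mm-2},\TTT_{\mm-2})$; it then pairs this with the auxiliary set $\JJ_\mm=[1,\mm-2]\cup[\mm+6,2\mm+3]$ and invokes Lemma~\ref{L:BiColl}, reducing the entire induction step to the single finite statement $\Dist{\II_2}(\TT_2,\TTT_2)=6$, which is checked by brute force on $\Ass6$. No delicate distinctness bookkeeping is needed. Your two-label collapse $\II=\{\mm+2,\mm+3\}$ stepping $\mm\mapsto\mm-1$ is more economical in principle, but shifts all the work onto proving $\Dist\II(\TT_\mm,\TTT_\mm)\ge3$ uniformly in~$\mm$, and that is exactly where your argument is incomplete.

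The gap is real and neither of your proposed repairs works. Concretely, for $\mm=1$ the geodesic
\[
\TT_1\xrightarrow{(2,3,4,5)^+}\cdot\xrightarrow{(2,3,3,4)^+}\cdot\xrightarrow{(1,2,4,5)^+}\cdot\xrightarrow{(1,2,3,4)^+}\TTT_1
\]
has exactly three $\II$-collapsing edges, but your types~I, II, III pin down only two of them: the edge $(2,3,4,5)^+$ satisfies both your type~II and the positive branch of your type~III, while $(1,2,3,4)^+$ matches none of your types. Your first fix, invoking Lemma~\ref{L:PPPrinciple} to force $\bb=\mm+3$ in type~II, fails because its hypothesis $\mm+2\ccove{\TTT_\mm}\mm+3$ is false (in $\TTT_\mm$ the leaf $\mm+2$ is initial only in its own singleton subtree). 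Your second fix, charging the co-covering change $\mm+2\ccov{\TT_\mm}\mm+3\to\mm+2\not\ccov{\TTT_\mm}\mm+3$ to a separate edge, does not help either: by Lemma~\ref{L:KeyLemma2} that change forces a pair $\Qp(...,\mm+2,\ls\mm+3,\gse\mm+3)$, which is $\Qp(...,\mm+2,\mm+2,...)$, i.e.\ exactly your type~I again. So as written you have only two guaranteed distinct $\II$-collapsing edges, not three. The target inequality $\Dist\II\ge3$ does appear to be true (I could not find a path with only two collapsing edges for $\mm=1$), but proving it via the covering/co-covering machinery will require a genuinely new idea beyond the three transitions you list.
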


\begin{figure}[htb]
\begin{picture}(114,51)(0,0)
\put(-0.5,0){\includegraphics{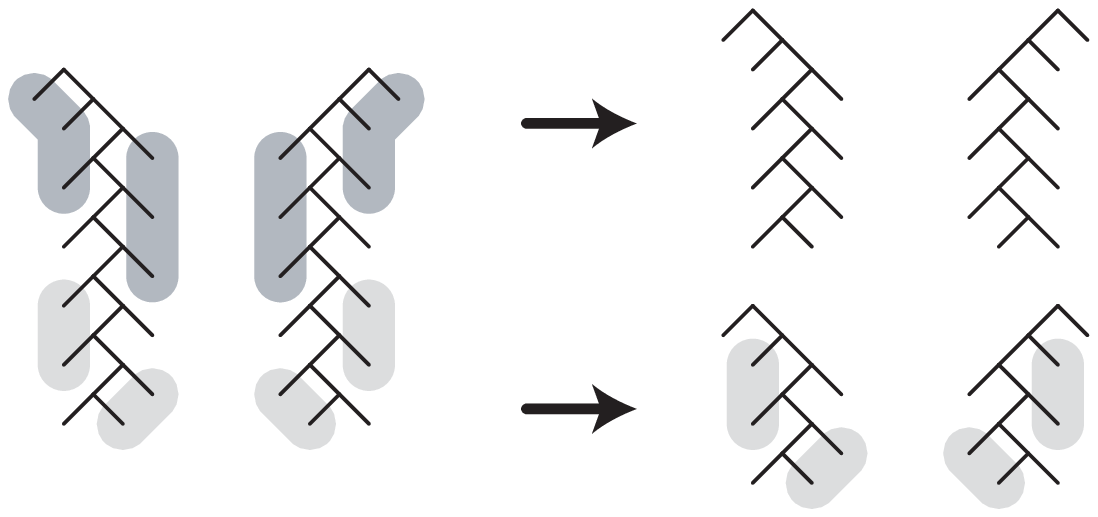}}
\put(8,45){$\TT_\mm$}
\put(1,39){$\scriptstyle 1$}
\put(0,30){$\scriptstyle \mm\minus2$}
\put(0,24){$\scriptstyle \mm\minus1$}
\put(3,18){$\scriptstyle \mm$}
\put(0,12){$\scriptstyle \mm\plus1$}
\put(0,6){$\scriptstyle \mm\plus2$}
\put(11,6){$\scriptstyle \mm\plus3$}
\put(14,9){$\scriptstyle \mm\plus4$}
\put(14,15){$\scriptstyle \mm\plus5$}
\put(14,21){$\scriptstyle \mm\plus6$}
\put(14,33){$\scriptstyle 2\mm\plus3$}

\put(28,45){$\TTT_\mm$}
\put(26,33){$\scriptstyle 1$}
\put(22,21){$\scriptstyle \mm\minus2$}
\put(22,15){$\scriptstyle \mm\minus1$}
\put(25,9){$\scriptstyle \mm$}
\put(25,6){$\scriptstyle \mm\plus1$}
\put(36,6){$\scriptstyle \mm\plus2$}
\put(36,12){$\scriptstyle \mm\plus3$}
\put(36,18){$\scriptstyle \mm\plus4$}
\put(36,24){$\scriptstyle \mm\plus5$}
\put(36,30){$\scriptstyle \mm\plus6$}
\put(39,39){$\scriptstyle 2\mm\plus3$}

\put(51,41){$\coll\II$}
\put(79,49){$\TT_{\mm-2}$}
\put(70,45){$\scriptstyle 1$}
\put(70,36){$\scriptstyle \mm\minus2$}
\put(70,30){$\scriptstyle \mm\minus1$}
\put(70,24){$\scriptstyle \mm\plus2$}
\put(81,24){$\scriptstyle \mm\plus5$}
\put(84,27){$\scriptstyle \mm\plus6$}
\put(84,39){$\scriptstyle 2\mm\plus3$}

\put(94,49){$\TTT_{\mm-2}$}
\put(95,39){$\scriptstyle 1$}
\put(92,27){$\scriptstyle \mm\minus2$}
\put(95,24){$\scriptstyle \mm\minus1$}
\put(106,24){$\scriptstyle \mm\plus2$}
\put(106,30){$\scriptstyle \mm\plus5$}
\put(106,36){$\scriptstyle \mm\plus6$}
\put(109,45){$\scriptstyle 2\mm\plus3$}

\put(51,12){$\coll\JJ$}
\put(79,19){$\TT_2$}
\put(67,15){$\scriptstyle \mm\minus1$}
\put(73,12){$\scriptstyle \mm$}
\put(70,6){$\scriptstyle \mm\plus1$}
\put(70,0){$\scriptstyle \mm\plus2$}
\put(81,0){$\scriptstyle \mm\plus3$}
\put(84,3){$\scriptstyle \mm\plus4$}
\put(84,9){$\scriptstyle \mm\plus5$}

\put(98,19){$\TTT_2$}
\put(92,9){$\scriptstyle \mm\minus1$}
\put(95,3){$\scriptstyle \mm$}
\put(95,0){$\scriptstyle \mm\plus1$}
\put(106,0){$\scriptstyle \mm\plus2$}
\put(106,6){$\scriptstyle \mm\plus3$}
\put(106,12){$\scriptstyle \mm\plus4$}
\put(109,15){$\scriptstyle \mm\plus5$}
\end{picture}
\caption{\sf\smaller The trees of Proposition~\ref{P:Zigzag}---here with
$\mm = 5$: collapsing
$\II$ (light grey labels) from~$\TT_\mm$ and $\TTT_\mm$ leads (up to a relabeling) to~$\TT_{\mm-2}$
and~$\TTT_{\mm-2}$, while collapsing~$\JJ$ (dark grey labels) leads (up to a relabeling) to~$\TT_2$
and~$\TTT_2$.}
\label{F:Zigzag}
\end{figure}

\begin{proof}
Let $\TT_\mm = \Sp{1(10)^\mm0}$ and $\TTT_\mm = \Sp{0(01)^\mm1}$. We use induction on~$\mm$. For
$\mm \le 1$, the result is easily checked by a direct computation. Assume $\mm \ge 2$. Put
$$\II_\mm = \{\mm, \mmp, \mm+3, \mm+4\}
\quad\text{and} \quad
\JJ_\mm = [1, \mm-2] \cup [\mm + 6, 2\mm+3].$$
As can be read on Figure~\ref{F:Zigzag}, we have 
$$\coll{\II_\mm}(\TT_\mm) =  \TT_{\mm-2}
\quad\text{and} \quad
\coll{\II_\mm}(\TTT_\mm) =  \TTT_{\mm-2},$$
as well as
$$\coll{\JJ_\mm}(\TT_\mm) =  \TT_2
\quad\text{and} \quad
\coll{\JJ_\mm}(\TTT_\mm) =  \TTT_2.$$
The sets~$\II_\mm$ and~$\JJ_\mm$ satisfy the disjointness condition~\eqref{E:BiCollCond}. Applying
Lemma~\ref{L:DistColl}, we deduce
$$\dist(\TT_\mm, \TTT_\mm) \ge
\dist(\TT_{\mm-2}, \TTT_{\mm-2}) + \Dist{\II_2}(\TT_2, \TTT_2).$$
A brute force verification---or a proof using the techniques of
Section~\ref{S:Principle}---gives
$$\Dist{\II_2}(\TT_2, \TTT_2) = 6,$$
and $\dist(\TT_\mm, \TTT_\mm) \ge 3\mm + 1$ follows inductively. The other inequality, whence
\eqref{E:Zigzag}, are easily checked by a direct computation.
\end{proof}

The remarkably simple proof of Proposition~\ref{P:Zigzag} relies on the conjunction of two properties. 
First, the two families of trees we consider are stable under two types of
collapsing simultaneously: whether we collapse from the top or from the
bottom, we can manage to remain in the same family. Second, the
involved collapsing are perfect, in the sense that the
inequality~\eqref{E:DistColl} turns out to be an equality, a necessary
condition if we are to obtain an exact value.

It is not difficult to obtain other families satisfying one of the above two properties. In particular, for each~$\pp \ge
1$, the thin trees $\Sp{1^\pp (10)^\mm 0^\pp}$ and $\Sp{0^\pp (01)^\mm 1^\pp}$ are eligible.
Proposition~\ref{P:Zigzag} then corresponds to $\pp = 1$. The choice $\pp = 0$ is uninteresting, but $\pp \ge 2$
gives seemingly large distances. Applying the scheme above leads to looking for the unique parameter
$$\Dist{([\pp+1, 3\pp+3]\setminus\{2\pp+2\})}(\Sp{1^\pp(10)^{\ppp}0^\pp}, \Sp{0^\pp(01)^{\ppp}1^\pp}),$$
which we have seen is~$6$ for $\pp = 1$. For $\pp = 2$, one obtains the value~$10$, but it is then easy to see that
such a value cannot give an equality in~\eqref{E:DistColl}. In this way, one obtains lower bounds for the distances of
the involved trees, but these bounds are not sharp, and it seems hard to
obtain very strong results. For instance, for $\pp = 2$, collapsing six leaves
guarantees ten collapsing steps, and one cannot obtain more than $5\nn/3
+ O(1)$.

\section{A lower bound in $2\nn + O(\sqrt{\nn})$}
\label{S:Sqrt}

It is not hard to repeat the argument of Section~\ref{S:5/3} so as to
construct explicit trees of size~$\nn$ at distance $7\nn/4 + O(1)$.
Unfortunately, a furher iteration seems difficult and, in order to go
farther, we shall have to develop a more intricate argument---yet the
principle always remains the same. The main difference is that, now, we
will collapse a size~$2\pp$ bicomb rather than a size~$\pp$ comb.

\subsection{Starting from an $\mm$-comb}

The main result of this section is analogous to Proposition~\ref{P:Tricomb}, but the source tree is a
$2\mm$-comb rather than a tricomb, \ie, it is a tree obtained by stacking
$2\mm$ left and right combs, alternately. The target tree is again a zigzag
preceded by a short left comb---see Figure~\ref{F:Multicomb}.

\begin{prop}
\label{P:Multicomb}
For~$\mm, \pp \ge 1$, we have 
\begin{equation}
\label{E:Multicomb}
\dist(\Sp{(1^\pp0^\pp)^\mm}, \Sp{0^\pp(10)^{(\mmm)\pp}1^\pp})
\ge 4\mm\pp - 3\mm -\pp + 1.
\end{equation}
\end{prop}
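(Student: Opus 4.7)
The plan is to argue by induction on~$\mm$, following the collapsing strategy of Proposition~\ref{P:Tricomb}: a suitable collapsing reduces the problem from layer~$\mm$ to layer~$\mmm$, and each layer contributes~$4\pp-3$ to the final lower bound. The base case $\mm=1$ is precisely Proposition~\ref{P:Bicomb}: both trees become $\Sp{1^\pp 0^\pp}$ and $\Sp{0^\pp 1^\pp}$, and $\min(\pp,\pp)=\pp$ yields the bound $3\pp-2 = 4\pp-3-\pp+1$, as required.

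For the inductive step, write $\TT_\mm = \Sp{(1^\pp 0^\pp)^\mm}$ and $\TTT_\mm = \Sp{0^\pp (10)^{(\mmm)\pp} 1^\pp}$. First I would identify, by direct inspection of the spines, a set~$\II$ of $2\pp$ consecutive labels occupying one of the ``layers'' in the middle of both trees such that $\coll\II(\TT_\mm) = \TT_\mmm$ and $\coll\II(\TTT_\mm) = \TTT_\mmm$ up to relabeling; this is a picture check analogous to Figures~\ref{F:Tricomb} and~\ref{F:Zigzag}. Lemma~\ref{L:DistColl} then gives
\begin{equation*}
\dist(\TT_\mm, \TTT_\mm) \ge \dist(\TT_\mmm, \TTT_\mmm) + \Dist\II(\TT_\mm, \TTT_\mm),
\end{equation*}
and the inductive hypothesis reduces the task to proving $\Dist\II(\TT_\mm, \TTT_\mm) \ge 4\pp - 3$.

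To establish this last inequality I would mimic the proof of Lemma~\ref{L:Tricomb}. Applying the double collapsing of Lemma~\ref{L:BiColl} with an auxiliary set $\JJ$ disjoint from~$\II$ which removes the labels of all layers except the two combs immediately flanking~$\II$, the problem reduces to the same $\Dist\II$-bound on a pair of small trees $\TTb, \TTTb$ that consist of the collapsed block together with a $\pp$-comb on each side. On this reduced pair I would introduce four families of $\II$-collapsing special base pairs indexed by $\aa$ running over (most of) the range of~$\II$: families $\tI_\aa$ and $\tIII_\aa$ capturing discrepancies of the covering and co-covering relations at the top and bottom boundaries of the collapsed block, together with signed families $\tIIp_\aa$ and $\tIIm_\aa$ recording the internal rearrangement within the block. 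The Key Lemma~\ref{L:KeyLemma1} and its refinements (Lemmas~\ref{L:PPrinciple}, \ref{L:PPPrinciple}, and~\ref{L:PPrincipleBis}), applied to the explicit covering and co-covering discrepancies between $\TTb$ and $\TTTb$, would then force any path from $\TTb$ to $\TTTb$ to cross at least $4\pp-3$ distinct $\II$-collapsing pairs.

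The main obstacle is the bookkeeping. Unlike the Tricomb case, where a single caret sat on each side of the collapsed block, here the block being collapsed is itself a nontrivial $2\pp$-bicomb, so the naming conventions of the four types must be sharpened enough that the parameters $(\aa,\bb,\cc,\dd)$ of any base pair determine at most one family to which the pair belongs. The refined Key Lemmas must then be invoked with separators~$\kk$ chosen to isolate the interior of the block from its two flanking combs, preventing pairs from being counted against several families at once. Once those choices are pinned down, the residual verification should be a routine, if somewhat tedious, variation on the argument of Lemma~\ref{L:Tricomb}.
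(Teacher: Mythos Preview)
Your overall plan is exactly the paper's: induction on~$\mm$ with base case Proposition~\ref{P:Bicomb}, the double-collapsing reduction of Lemma~\ref{L:BiColl} with~$\II$ removing the central $2\pp$-bicomb and~$\JJ$ stripping all but the two flanking combs, and then a special-pair count on the resulting small pair~$(\TTb,\TTTb)$ to obtain $\Dist\II\ge 4\pp-3$.

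Where you underestimate is the final count. Four families do not suffice here. The paper (Lemma~\ref{L:Multicomb}) introduces \emph{eleven} signed types $\tI^\pm,\tII^\pm,\tIII^\pm,\tIV^+,\tV^\pm,\tVI^\pm$ and must handle genuine collisions among them: a pair can be simultaneously of types~$\tIVp$ and~$\tVIp$, or of types~$\tIIm$ and~$\tVIm$, so uniqueness of type fails. The proof then splits into three cases according to whether type~$\tIIIp$ pairs are always available and whether the rightmost label~$\nn{+}1$ ever gets co-covered by~$\qq$; in each case a different quadruple of type-families is invoked so that the interfering pairs are avoided. This three-case analysis, driven by the behaviour of the extreme label, is the substantive new ingredient beyond Lemma~\ref{L:Tricomb}, and it is not the ``routine variation'' you anticipate---though once you accept the need for the extra types and the case split, the individual steps are indeed applications of Lemmas~\ref{L:KeyLemma1}--\ref{L:PPPrinciple} as you say.
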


\begin{figure}[htb]
\begin{picture}(114,80)(0,0)
\put(0,-1.5){\includegraphics{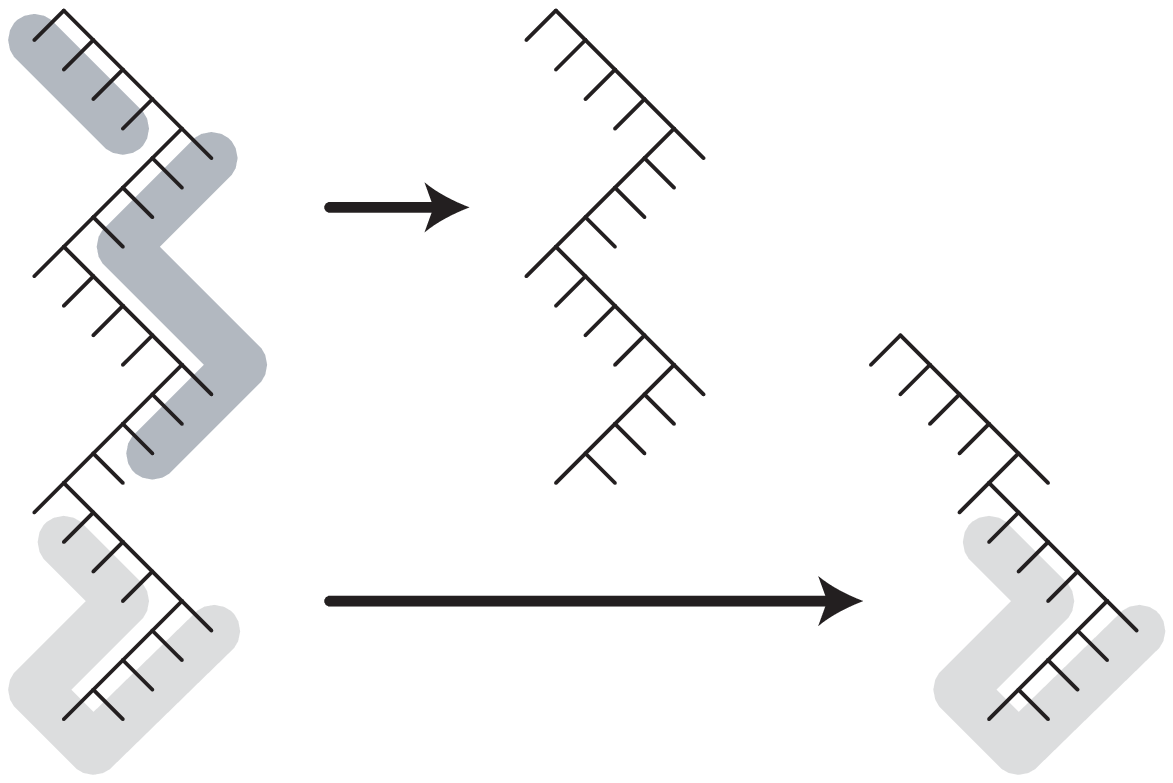}}
\put(10,75){$\TT_{\mm,\pp}$}
\put(1,71){$\scriptstyle 1$}
\put(4,62){$\scriptstyle (\mm\minus2)\pp$}
\put(-7,47){$\scriptstyle (\mm\minus2)\pp\plus1$}
\put(3,38){$\scriptstyle (\mm\minus1)\pp$}
\put(-7,23){$\scriptstyle (\mm\minus1)\pp\plus1$}
\put(-4,20){$\scriptstyle (\mm\minus1)\pp\plus2$}
\put(8,14){$\scriptstyle \mm\pp$}
\put(0,2){$\scriptstyle \mm\pp\plus1$}
\put(10,2){$\scriptstyle \mm\pp\plus2$}
\put(19,11){$\scriptstyle (\mm\plus1)\pp\plus1$}
\put(10,26){$\scriptstyle (\mm\plus1)\pp\plus2$}
\put(13,29){$\scriptstyle (\mm\plus1)\pp\plus3$}
\put(19,35){$\scriptstyle (\mm\plus2)\pp\plus1$}
\put(10,50){$\scriptstyle (2\mm\minus1)\pp\plus2$}
\put(19,59){$\scriptstyle 2\mm\pp\plus1$}

\put(35,59){$\coll\II$}
\put(60,75){$\TT_{\mmm,\pp}$}
\put(51,71){$\scriptstyle 1$}
\put(54,62){$\scriptstyle (\mm\minus2)\pp$}
\put(43,47){$\scriptstyle (\mm\minus2)\pp\plus1$}
\put(53,38){$\scriptstyle (\mm\minus1)\pp$}
\put(46,26){$\scriptstyle (\mm\minus1)\pp\plus1$}
\put(60,26){$\scriptstyle (\mm\plus1)\pp\plus2$}
\put(63,29){$\scriptstyle (\mm\plus1)\pp\plus3$}
\put(69,35){$\scriptstyle (\mm\plus2)\pp\plus1$}
\put(60,50){$\scriptstyle (2\mm\minus1)\pp\plus2$}
\put(69,59){$\scriptstyle 2\mm\pp\plus1$}

\put(35,19){$\coll\JJ$}
\put(95,44){$\TTb_\pp$}
\put(78,38){$\scriptstyle (\mm\minus2)\pp\plus1$}
\put(88,29){$\scriptstyle (\mm\minus1)\pp$}
\put(88,23){$\scriptstyle (\mm\minus1)\pp\plus1$}
\put(91,20){$\scriptstyle (\mm\minus1)\pp\plus2$}
\put(103,14){$\scriptstyle \mm\pp$}
\put(95,2){$\scriptstyle \mm\pp\plus1$}
\put(105,2){$\scriptstyle \mm\pp\plus2$}
\put(114,11){$\scriptstyle (\mm\plus1)\pp\plus1$}
\put(105,26){$\scriptstyle (\mm\plus1)\pp\plus2$}
\end{picture}

\begin{picture}(114,84)(0,0)
\put(0,-1.5){\includegraphics{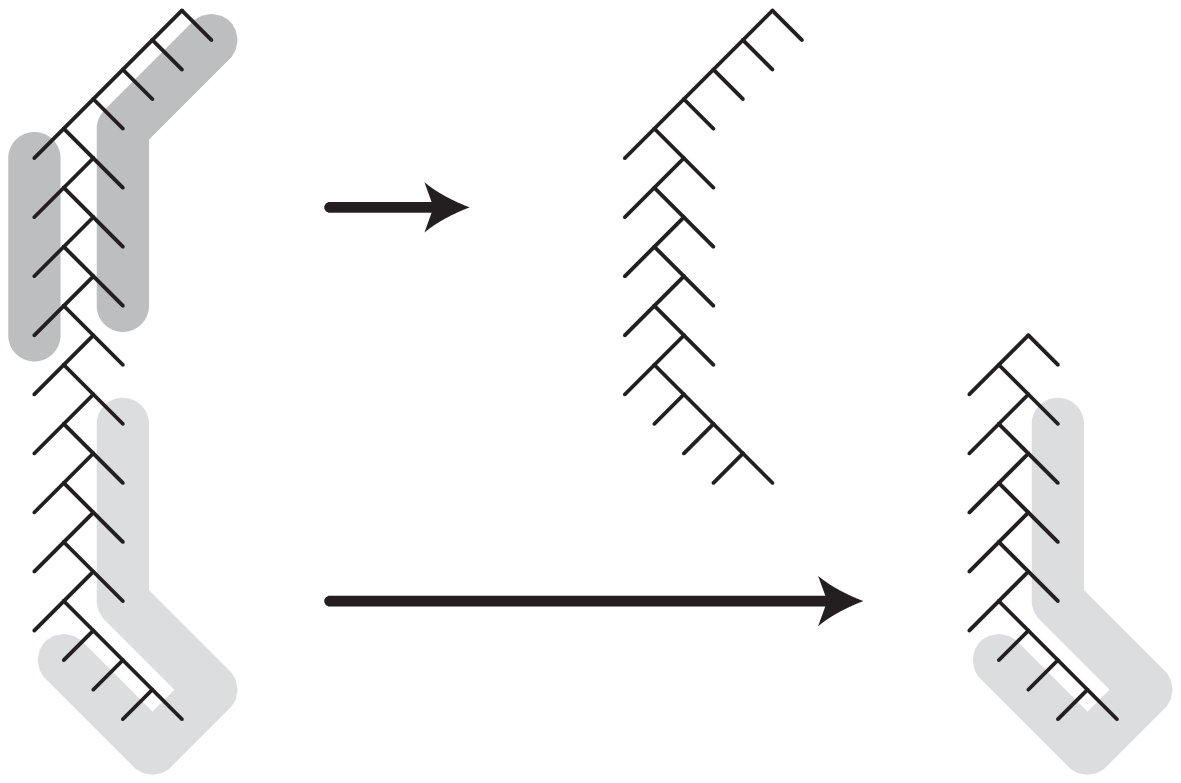}}
\put(5,75){$\TTT_{\mm,\pp}$}
\put(1,60){$\scriptstyle 1$}
\put(-4,41){$\scriptstyle (\mm\minus2)\pp$}
\put(-7,35){$\scriptstyle (\mm\minus2)\pp\plus1$}
\put(-5,17){$\scriptstyle (\mm\minus1)\pp$}
\put(-7,11){$\scriptstyle (\mm\minus1)\pp\plus1$}
\put(-4,8){$\scriptstyle (\mm\minus1)\pp\plus2$}
\put(8,2){$\scriptstyle \mm\pp$}
\put(17,2){$\scriptstyle \mm\pp\plus1$}
\put(11,14){$\scriptstyle \mm\pp\plus2$}
\put(11,32){$\scriptstyle (\mm\plus1)\pp\plus1$}
\put(11,38){$\scriptstyle (\mm\plus1)\pp\plus2$}
\put(11,44){$\scriptstyle (\mm\plus1)\pp\plus3$}
\put(11,56){$\scriptstyle (\mm\plus2)\pp\plus1$}
\put(11,62){$\scriptstyle (2\mm\minus1)\pp\plus2$}
\put(20,71){$\scriptstyle 2\mm\pp\plus1$}

\put(35,59){$\coll\II$}
\put(57,75){$\coll\II(\TTT_{\mm,\pp})$}
\put(5,75){$\TTT_{\mm,\pp}$}
\put(61,60){$\scriptstyle 1$}
\put(56,41){$\scriptstyle (\mm\minus2)\pp$}
\put(53,35){$\scriptstyle (\mm\minus2)\pp\plus1$}
\put(56,32){$\scriptstyle (\mm\minus2)\pp\plus2$}
\put(64,26){$\scriptstyle (\mm\minus1)\pp$}
\put(75,26){$\scriptstyle (\mm\minus1)\pp\plus1$}
\put(71,38){$\scriptstyle (\mm\plus1)\pp\plus2$}
\put(71,44){$\scriptstyle (\mm\plus1)\pp\plus3$}
\put(71,56){$\scriptstyle (\mm\plus2)\pp\plus1$}
\put(71,62){$\scriptstyle (2\mm\minus1)\pp\plus2$}
\put(80,71){$\scriptstyle 2\mm\pp\plus1$}

\put(35,19){$\coll\JJ$}
\put(98,45){$\TTTb_{\pp}$}
\put(87,35){$\scriptstyle (\mm\minus2)\pp+1$}
\put(89,29){$\scriptstyle (\mm\minus1)\pp$}
\put(88,11){$\scriptstyle (\mm\minus1)\pp\plus1$}
\put(91,8){$\scriptstyle (\mm\minus1)\pp\plus2$}
\put(104,2){$\scriptstyle \mm\pp$}
\put(112,2){$\scriptstyle \mm\pp\plus1$}
\put(106,14){$\scriptstyle \mm\pp\plus2$}
\put(106,32){$\scriptstyle (\mm\plus1)\pp\plus1$}
\put(106,38){$\scriptstyle (\mm\plus1)\pp\plus2$}
\end{picture}

\caption{\sf\smaller The trees of  Proposition~\ref{P:Multicomb}---here
with $\mm = 3$ and $\pp =\nobreak 4$---and the proof of the latter from
Lemma~\ref{L:Multicomb}: collapsing
$\II$ (light grey labels) leads from~$(\TT_{\mm,\pp}, \TTT_{\mm,\pp})$
to~$(\TT_{\mmm,\pp}, \TTT_{\mmm,\pp})$; collapsing~$\JJ$ (dark grey labels) leads
from~$(\TT_{\mm,\pp}, \TTT_{\mm,\pp})$ to the pair $(\TTb_\pp, \TTTb_\pp)$, which does not depend
on~$\mm$ and is, up to shifting the labels, the pair of
Lemma~\ref{L:Multicomb}.}
\label{F:Multicomb}
\end{figure}

By letting $\mm$ stay fixed and $\pp$ vary, we obtain size~$\nn$ trees whose 
distance grows at least---actually, one can check that
\eqref{E:Multicomb} is an equality---as
$(4\mm-1)\nn/(2\mm) + O(1)$. By letting~$\mm$ and $\pp$ vary simultaneously, we obtain a lower bound
in $2\nn + O(\sqrt\nn)$:

\begin{coro}
\label{C:Sqrt}
For $\nn$ of the form 2$\mm^2$, we have 
\begin{align}
\label{E:Sqrt1}
\diam(\nn) 
\ge  2\nn + 1  - 2\, \sqrt{2\nn}.
\end{align}
Moreover, $\diam(\nn) \ge 2\nn - \CC \sqrt{\nn}$ holds for each~$\nn$
with $\CC = \sqrt{70}$.
\end{coro}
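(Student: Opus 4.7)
The plan is to derive both inequalities from Proposition~\ref{P:Multicomb}, which, for all integers $\mm, \pp \ge 1$, exhibits trees of size~$2\mm\pp$ whose rotation distance is at least $4\mm\pp - 3\mm - \pp + 1$; in particular
\[
\dd(2\mm\pp) \;\ge\; 4\mm\pp - 3\mm - \pp + 1.
\]
For the first inequality, I substitute $\pp = \mm$: the tree sizes become $\nn = 2\mm^2$, and the right-hand side becomes $4\mm^2 - 4\mm + 1 = (2\mm-1)^2$. Since $2\mm = \sqrt{2\nn}$, this is exactly $2\nn + 1 - 2\sqrt{2\nn}$, which is the first claim.

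For the second inequality (valid for arbitrary~$\nn$), I first establish the monotonicity $\dd(\nn) \ge \dd(\nn')$ whenever $\nn \ge \nn'$, a statement needed to interpolate between the discrete sizes $2\mm\pp$. Given a pair $(\TT', \TTT')$ of size~$\nn'$ realizing $\dd(\nn')$, labeled by $1, \ldots, \nn'+1$, I form $\TT = \TT' \op \Sp{1^{\nn-\nn'-1}}$ and $\TTT = \TTT' \op \Sp{1^{\nn-\nn'-1}}$, both of size~$\nn$, giving the added leaves labels larger than~$\nn'+1$ and calling $\II$ the set of those labels. Then $\coll\II(\TT) = \TT'$ and $\coll\II(\TTT) = \TTT'$, so Lemma~\ref{L:DistColl} yields $\dist(\TT, \TTT) \ge \dist(\TT', \TTT') = \dd(\nn')$, whence $\dd(\nn) \ge \dd(\nn')$.

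Combining monotonicity with Proposition~\ref{P:Multicomb}, for every~$\nn$ and every $\mm, \pp \ge 1$ with $2\mm\pp \le \nn$,
\[
2\nn - \dd(\nn) \;\le\; 2(\nn - 2\mm\pp) + 3\mm + \pp - 1.
\]
It then remains to choose, for each~$\nn$, integers $\mm, \pp$ making this upper bound at most $\sqrt{70\,\nn}$. The continuous optimum (ignoring integrality and the residual) is at $3\mm = \pp = \sqrt{3\nn/2}$, giving the value $\sqrt{6\nn}$; thus the asymptotic constant is in fact $\sqrt{6}$. Rounding to integers, I would take $\mm = \lfloor\sqrt{\nn/6}\rfloor$ and $\pp = \lfloor\nn/(2\mm)\rfloor$, so that the residual term satisfies $\nn - 2\mm\pp < 2\mm$ and $\pp \le 3\mm + O(1)$; an explicit computation then yields the uniform constant $\sqrt{70}$, with considerable slack.

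The main obstacle is purely computational: one has to verify that the integer-rounding losses, combined with the constraint $\mm, \pp \ge 1$ (which forces a suboptimal choice for very small~$\nn$), stay within the comfortable gap between the asymptotic $\sqrt{6}$ and the claimed uniform $\sqrt{70}$; in other words, no new combinatorial ingredient is needed beyond Proposition~\ref{P:Multicomb} and the monotonicity derived from Lemma~\ref{L:DistColl}.
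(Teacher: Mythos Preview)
Your proposal is correct and follows the same route as the paper: both parts rest on Proposition~\ref{P:Multicomb} together with the monotonicity $\dd(\nn)\ge\dd(\nn')$ for $\nn\ge\nn'$. The only differences are that you justify the monotonicity explicitly via Lemma~\ref{L:DistColl} (the paper invokes it silently), and that for general~$\nn$ the paper takes $\mm=\lfloor\sqrt{5\nn/14}\rfloor$, $\pp=\lfloor\sqrt{7\nn/10}\rfloor$, a choice calibrated so that the combined deficit $3\mm+\pp$ plus the rounding loss $4\mm_0+4\pp_0$ equals exactly $\sqrt{70\nn}$ by AM--GM, whereas your choice $\mm\approx\sqrt{\nn/6}$, $\pp=\lfloor\nn/(2\mm)\rfloor$ yields the sharper asymptotic constant $\sqrt{6}$ but leaves a small-$\nn$ verification to be carried out.
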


\begin{proof}[Proof (of Corollary~\ref{C:Sqrt} from Proposition~\ref{P:Multicomb})]
Assume first $\nn = 2\mm^2$. Then, choosing $\pp = \mm$ and using~\eqref{E:Multicomb}, we find
$$\diam(\nn) = \diam(2\mm\pp) \ge
\dist(\Sp{(1^\pp0^\pp)^\mm}, \Sp{0^\pp(10)^{(\mmm)\pp}1^\pp}) = 4\mm^2 - 4\mm +1,$$
whence~\eqref{E:Sqrt1}. For the general case, choose $\mm = \lfloor \sqrt{5\nn/14} \rfloor$, $\pp = \lfloor
\sqrt{7\nn/10} \rfloor$. Then we have $2\mm\pp \le \nn$, whence
$$\diam(\nn) \ge \diam(2\mm\pp) \ge 
\dist(\Sp{(1^\pp0^\pp)^\mm}, \Sp{0^\pp(10)^{(\mmm)\pp}1^\pp}) =4\mm\pp - 3\mm - \pp +1.$$
by~\eqref{E:Multicomb}. Using
$\mm \le \sqrt{5\nn/14} < \mmp$ and $\pp \le \sqrt{7\nn/10} < \ppp$,
one obtains
$$4 \mm\pp  - 3\mm - \pp +1 
> 2\nn - (7\sqrt{5\nn/14} - 5\sqrt{7\nn/10}) + 5
= 2\nn - \sqrt{70\nn} + 5.
\eqno\square
$$
\let\qed\relax
\end{proof}

The proof of Proposition~\ref{P:Multicomb} uses an induction on the parameter~$\mm$, \ie, on the number of
alternations in the zigzag-tree~$\Sp{(1^\pp0^\pp)^\mm}$. It is not hard to find how to collapse~$\TT_{\mm,
\pp}$ to~$\TT_{\mmm, \pp}$ and $\TTT_{\mm,\pp}$ 
to~$\TTT_{\mmm, \pp}$. Then the key point consists in identifying
sufficiently many collapsing pairs, and this is done in the following result.

\begin{lemm}
\label{L:Multicomb}
Let $\TT = \Sp{1^\pp01^\pp0^\pp}$, $\TTT = \Sp{0(10)^\pp1^\pp}$, and
$\II = [\pppp, 2\pp+1]$. Then, for each $\pp \ge 1$, we have
\begin{equation}
\label{E:Tricomb2}
\Dist\II(\TT, \TTT) \ge 4\pp-3.
\end{equation}
\end{lemm}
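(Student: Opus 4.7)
My plan is to follow the same general strategy as the proof of Lemma~\ref{L:Tricomb}: define several families of \emph{special} base pairs, each $\II$-collapsing by design (verified via the criterion of Lemma~\ref{L:CollPair}), and apply the Key Lemmas and their refinements of Sections~\ref{S:Principle}--\ref{S:Refinements} to show that any path from $\TT$ to $\TTT$ in $\Ass{3\pp+1}$ must contain at least $4\pp-3$ of them. The jump from the $2\pp-2$ of Lemma~\ref{L:Tricomb} to $4\pp-3$ here forces us to roughly double the number of forced pair types per value of $\aa$, exploiting the extra structure provided by the final left comb $\Sp{0^\pp}$ in $\TT$ and by the bottom right comb $\Sp{1^\pp}$ in $\TTT$.

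First I would systematically tabulate the relations $\cov\TT$, $\cov\TTT$, $\ccov\TT$, $\ccov\TTT$ restricted to labels in and just outside $\II = [\pp+2, 2\pp+1]$. The crucial structural facts are that in $\TT$, each $\aa \in [\pp+2, 2\pp]$ has address $1^\pp 0 1^{\aa-\pp-1} 0$ and is covered only by the two top labels $3\pp+1, 3\pp+2$, while $2\pp+1$ is the leftmost leaf of the left comb $\Sp{0^\pp}$ and is covered by each of $2\pp+2, \ldots, 3\pp+2$; whereas in $\TTT$, the labels of $[\pp+1, 2\pp+1]$ all sit inside the bottom right comb $\Sp{1^\pp}$ of the zigzag, which creates many new coverings $\aa \cov\TTT \jj$ with $\jj \in [2\pp+1, 3\pp]$ coming from the zigzag vertices, and destroys several co-coverings of the form $\jj \ccov\TT \kk$ with $\jj \in [2\pp+1, 3\pp+1]$.

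Next, for each $\aa \in [\pp+2, 2\pp]$ I would identify four structurally distinct covering or co-covering changes between $\TT$ and $\TTT$ and, from each, force a different type of $\II$-collapsing pair: (a)~a pair of the form $\Qp(\ldots, \aap, 2\pp+1, \ldots)$ forced via Lemma~\ref{L:PPrinciple} applied to the new covering $\aa \cov\TTT 2\pp+1$ combined with the appropriate co-covering $\aap \ccov\TTT 2\pp+1$; (b)~a pair forced via Lemma~\ref{L:PPPrinciple} applied to a new covering $\aa \cov\TTT \jj$ with $\jj \in [2\pp+2, 3\pp]$, where the threshold $\kk$ is chosen so that the pair is $\II$-collapsing; (c)~a pair of the form $\Qm(\ldots, \aap, \ldots, \jj)$ coming from the disappearance of a co-covering $\aap \ccov\TT \jj$ in the left comb, via Lemma~\ref{L:KeyLemma2}; and (d)~a symmetric right-sided co-covering type via Lemma~\ref{L:PPrincipleBis}. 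Summing four pairs per $\aa \in [\pp+2, 2\pp]$ gives $4(\pp-1)$ pairs, and a single additional pair handling the extremal value $\aa = 2\pp+1$---whose very dense covering pattern in $\TT$ contrasts sharply with its zigzag position in $\TTT$---brings the total to $4\pp-3$.

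The main obstacle is the combinatorial bookkeeping: verifying that the four chosen types per $\aa$ all satisfy the hypotheses of the appropriate refined lemma, yield pair names that are $\II$-collapsing, and are pairwise distinct (both across types for a fixed $\aa$ and across different values of $\aa$). Since $\II$ has only $\pp$ elements, the $\II$-collapsing condition of Lemma~\ref{L:CollPair} forces at least one of the three intervals $[\aa, \bbm]$, $[\bb, \cc]$, $[\ccp, \dd]$ to lie entirely inside $[\pp+2, 2\pp+1]$, which pins three of the four name parameters; this is precisely why the basic Key Lemmas~\ref{L:KeyLemma1} and \ref{L:KeyLemma2} are insufficient (they pin only two parameters) and the refined Lemmas~\ref{L:PPrinciple}, \ref{L:PPPrinciple}, \ref{L:PPrincipleBis} are indispensable. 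A secondary delicate point will be the case analysis for $\aa = 2\pp+1$, which must contribute exactly the single extra pair accounting for the $-3$ offset.
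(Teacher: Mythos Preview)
Your overall strategy is the paper's strategy, and your structural reading of~$\TT$ and~$\TTT$ is accurate. But the plan as written has a genuine gap that the paper's proof spends most of its effort on, and which your four items~(a)--(d) do not address.

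The difficulty is the label~$\nn+1 = 3\pp+2$. When you apply Lemma~\ref{L:PPPrinciple} (your item~(b)) with $\ii<\kk<\jj$ and~$\jj\in[\qq+1,\nn-1]$, the positive alternative in its conclusion is a pair~$\Qp(\lse\ii,\,\ii\ls\ldots\lse\kk,\,\jj,\,\ldots)$ whose fourth coordinate~$\nu_4$ is \emph{unconstrained}. If~$\nu_4=\nn+1$, then none of the three intervals~$[\nu_1,\nu_2{-}1]$, $[\nu_2,\nu_3]$, $[\nu_3{+}1,\nu_4]$ is forced to lie inside~$\II=[\pp+2,2\pp+1]$, so the pair need not be $\II$-collapsing. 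The paper handles this by introducing the auxiliary types~$\tVIpm_\aa$ (pairs $\Qpm(\ldots,\aap,\ldots,\nn{+}1)$, which \emph{are} $\II$-collapsing since~$\nu_2>\qq$ forces $[\nu_2,\nu_3]\subseteq\II$) and showing that whenever the bad case~$\nu_4=\nn+1$ occurs, \emph{both} a~$\tVIp_\aa$ and a~$\tVIm_\aa$ must appear. But then~$\tVIp$ can coincide with~$\tIVp$, and~$\tVIm$ can coincide with~$\tIIm$, so one cannot simply add up four disjoint columns. This forces the three-case analysis at the end of the paper's proof, together with an extra nested-collapsing argument (Claim~6, collapsing~$[\qq+1,\nn]$) that supplies the missing pairs in Case~3 when some intermediate tree~$\TTs$ satisfies~$\qq\ccov{\TTs}\nn+1$.

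A second, smaller issue: your item~(c) invokes Lemma~\ref{L:KeyLemma2} for the disappearance of~$\aa{+}1\ccov\TT \jj$; that lemma outputs a \emph{positive} pair $\Qp(\ldots,\aa{+}1,<\jj,\ge\jj)$, not a $\Qm$ pair, and with $\jj\ge 2\pp+2$ unspecified the resulting~$\nu_3$ may exceed~$2\pp+1$, so the pair is again not automatically $\II$-collapsing. Also, for $\nu_3=2\pp+1$ this pair collides with your item~(a). The paper's remedy is to split the index set in two: types~$\tIpm,\tIIIpm$ are indexed by~$\aa\in[\pp+2,\qq]$ (labels \emph{inside}~$\II$), while types~$\tIIpm,\tIVp,\tVpm,\tVIpm$ are indexed by~$\aa\in[\qq+1,\nn-1]$ (labels \emph{above}~$\II$). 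The count is then $\pp+(\ppm)+(\ppm)+(\ppm)=4\pp-3$, with the exceptional overlaps handled by the case split. Your single index range~$[\pp+2,2\pp]$ with four pairs each can in principle be reorganised into this, but not without confronting the~$\nn+1$ issue above.
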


\begin{proof}[Proof of Proposition~\ref{P:Multicomb} from Lemma~\ref{L:Multicomb}]
The argument is exactly similar to the one used to deduce Proposition~\ref{P:Tricomb} from
Lemma~\ref{L:Tricomb}. We use induction on~$\mm \ge 1$. The trees $\TT_{1, \pp}$ and $\TTT_{1, \pp}$ are the
$2$-combs $\Sp{1^\pp0^\pp}$ and $\Sp{0^\pp1^\pp}$, and 
Proposition~\ref{P:Bicomb} (or~\ref{P:BicombBis}) gives $\dist(\TT_{1,
\pp}, \TTT_{1, \pp}) \ge 3\pp-2$, in agreement with~\eqref{P:Multicomb}.

Assume now $\mm \ge 2$, and let
$$\II = [(\mmm)\pp+2, (\mmp)\pp+1]
\quad\text{and}\quad
\JJ= [1, (\mm-2)\pp] \cup
[(\mmp)\pp+3, 2\mm\pp+1].$$
Then $\II$ and $\JJ$ satisfy the disjointness condition of Lemma~\ref{L:BiColl}, and we read on
Figure~\ref{F:Multicomb} that $\II$-collapsing maps~$\TT_{\mm,\pp}$ to~$\TT_{\mmm, \pp}$,
and~$\TTT_{\mm,\pp}$ to~$\TTT_{\mmm, \pp}$. On the other hand, $\JJ$-collapsing maps~$\TT_{\mm,\pp}$
to a tree~$\TTb_\pp$ that is, up to shifting the labels, the tree~$\TT$ of Lemma~\ref{L:Multicomb},
and~$\TTT_{\mm,\pp}$ to a tree~$\TTTb_\pp$ that is, up to shifting the labels, the tree~$\TTT$ of
Lemma~\ref{L:Multicomb}. Applying Lemma~\ref{L:BiColl}, the induction hypothesis for~$\mmm$, and
Lemma~\ref{L:Multicomb}, we obtain
\begin{align*}
\dist(&\TT_{\mm,\pp}, \TTT_{\mm,\pp})\\
&\ge \dist(\coll\II(\TT_{\mm,\pp}), \coll\II(\TTT_{\mm,\pp})) +
\Dist\II(\coll\JJ(\TT_{\mm,\pp}),
\coll\JJ(\TTT_{\mm,\pp}))\\
&= \dist(\TT_{\mmm,\pp}, \TTT_{\mmm,\pp}) + \Dist\II(\TTb_\pp, \TTTb_\pp)\\
&\ge (4(\mm\!-\!1)\pp - 3(\mm\!-\!1) - \pp + 1) + (4\pp-3)
= 4\mm\pp - 3\mm - \pp + 1,
\end{align*}
which is \eqref{E:Multicomb}. 
\end{proof}

\subsection{Special pairs}

The proof of Lemma~\ref{L:Multicomb} will occupy the next sections. It is parallel to the proof
of Lemma~\ref{L:Tricomb}, but it is more involved and requires some care,
mainly because we are to introduce many different types of special pairs.

In the sequel, we put $\nn = 3\pp + 1$ and $\qq = 2\pp + 1$. Then $\TT$ and $\TTT$ are
size~$\nn$ trees, displayed in Figure~\ref{F:Key}---compared with the trees~$\TTb_\pp$ and $\TTTb_\pp$ of
Figure~\ref{F:Multicomb}, the only difference is that the labeling has been
standardized. 

\begin{figure}[htb]
\begin{picture}(70,47)(0,1)
\put(1,-0.5){\includegraphics{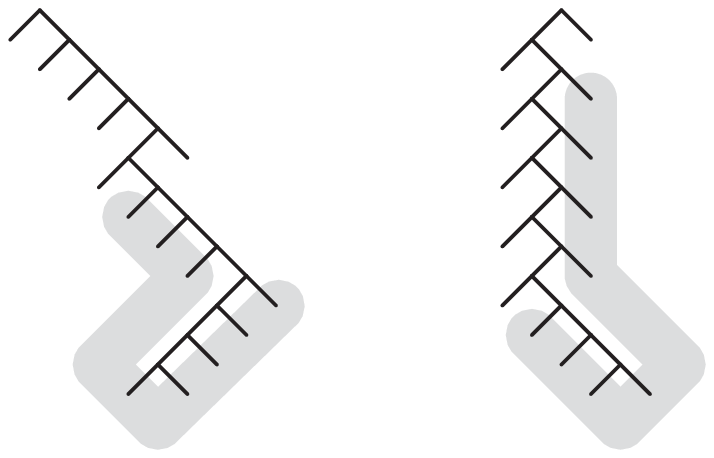}}
\put(10,43){$\TT$}
\put(0,39){$\scriptstyle 1$}
\put(9,30){$\scriptstyle \pp$}
\put(8,24){$\scriptstyle \pp\!+\!1$}
\put(11,21){$\scriptstyle \pp\!+\!2$}
\put(17,15){$\scriptstyle 2\pp$}
\put(12,3){$\scriptstyle \qq$}
\put(19,3){$\scriptstyle \qq\plus1$}
\put(25,9){$\scriptstyle \qq\plus\pp\minus1$}
\put(29,12){$\scriptstyle \nn$}
\put(20,27){$\scriptstyle 2\mm\plus3$}

\put(48,43){$\TTT$}
\put(50,36){$\scriptstyle 1$}
\put(50,18){$\scriptstyle \pp$}
\put(48,12){$\scriptstyle \pp\plus1$}
\put(51,9){$\scriptstyle \pp\plus2$}
\put(59,3){$\scriptstyle 2\pp$}
\put(67,3){$\scriptstyle \qq$}
\put(61,15){$\scriptstyle \qq$}
\put(61,21){$\scriptstyle \qq\plus1$}
\put(61,27){$\scriptstyle \qq\plus\pp-1$}
\put(61,33){$\scriptstyle \nn$}
\put(61,39){$\scriptstyle 2\mm\plus3$}
\end{picture}
\caption{\sf\smaller The trees of Lemma~\ref{L:Multicomb}, here with
$\pp = 4$, and, in grey, the labels of~$\II$, which are those for which we
need to count the $\II$-collapsing pairs.}
\label{F:Key}
\end{figure}

Then we consider the
following eleven {(!)} families of base pairs. We say that a base pair is

\begin{tabbing}
\quad
\=-\emph{special of type $\tIp_\aa$} \hspace{0.5em}
\=if it is $\ \Qp(..., \aa, \qq, ...)$ \hspace{5em}
\=with $\pppp \le \aa \le \qq$,\\
\>-\emph{special of type $\tIm_\aa$} 
\>if it is $\ \Qm(..., \aa, ..., \qq)$ 
\>with $\pppp \le \aa < \qq$,\\
\>-\emph{special of type $\tIIp_\aa$} 
\>if it is $\ \Qp(..., \qq, \aa, ...)$ 
\>with $\qqp \le \aa < \nn$,\\
\>-\emph{special of type $\tIIm_\aa$} 
\>if it is $\ \Qm(\qq, ..., \aa, ...)$ 
\>with $\qqp \le \aa < \nn$,\\
\>-\emph{special of type $\tIIIp_\aa$} 
\>if it is $\ \Qp(..., \aa, \ls\qq, ...)$ 
\>with $\pppp \le \aa < \qq$,\\
\>-\emph{special of type $\tIIIm_\aa$} 
\>if it is $\ \Qm(..., \aa, ..., \nnp)$ 
\>with $\pppp \le \aa \le \qq$,\\
\>-\emph{special of type $\tIVp_\aa$} 
\>if it is $\ \Qp(...,  \gse\pppp \,\&\, {\not=}\qq, \aa, ...)$ 
\>with $\qqp <  \aa \le \nn$,\\
\>-\emph{special of type $\tVp_\aa$} 
\>if it is $\ \Qp(..., \lse\ppp, \aa, \lse\nn)$ 
\>with $\qqp \le \aa < \nn$,\\
\>-\emph{special of type $\tVm_\aa$} 
\>if it is $\ \Qm(..., \lse\ppp, \gs\pp, \aa)$ 
\>with $\qqp \le \aa < \nn$,\\
\>-\emph{special of type $\tVIp_\aa$} 
\>if it is $\ \Qp(..., \aap, ..., \nnp)$ 
\>with $\qqp \le \aa < \nn$,\\
\>-\emph{special of type $\tVIm_\aa$} 
\>if it is $\ \Qm(..., \aap, ..., \nnp)$ 
\>with $\qqp \le \aa < \nn$.\\
\end{tabbing}

\begin{claim}
\label{C:1}
Every special pair is $\II$-collapsing.
\end{claim}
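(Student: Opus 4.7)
By Lemma~\ref{L:CollPair}, whether a base pair is $\II$-collapsing depends only on its name: the pair with name $(\nu_1, \nu_2, \nu_3, \nu_4)^\pm$ is $\II$-collapsing exactly when one of the three intervals $[\nu_1, \nu_2{-}1]$, $[\nu_2, \nu_3]$, $[\nu_3{+}1, \nu_4]$ sits inside $\II = [\pppp, \qq] = [\pp{+}2, 2\pp{+}1]$. So the plan is to run through each of the eleven templates and exhibit, for each, the interval that witnesses the containment.

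\textbf{First,} I would dispatch the three templates handled by the middle or right interval alone. For $\tIp_\aa$, the template gives $\nu_2 = \aa \in [\pppp, \qq]$ and $\nu_3 = \qq$, so the middle interval $[\aa, \qq]$ sits in $\II$. For $\tIIIp_\aa$ we have $\nu_2 = \aa \ge \pppp$ and $\nu_3 \le \qq - 1$, so again $[\nu_2, \nu_3] \subseteq \II$. For $\tIm_\aa$ we have $\nu_4 = \qq$ while $\nu_3 \ge \nu_2 = \aa \ge \pppp$, whence $[\nu_3{+}1, \nu_4] \subseteq [\pppp{+}1, \qq] \subseteq \II$.

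\textbf{Second,} for the remaining eight templates---$\tIIp_\aa$, $\tIIm_\aa$, $\tIIIm_\aa$, $\tIVp_\aa$, $\tVp_\aa$, $\tVm_\aa$, $\tVIp_\aa$, $\tVIm_\aa$---the procedure is the same: each template pins some coordinates at specific thresholds among $\qq, \qqp, \aap, \nn, \nnp$, and these pins combined with the universal inequality $\nu_1 < \nu_2 \le \nu_3 < \nu_4$ force one of the three intervals (typically the left one, since in most of these templates either $\nu_2$ is pinned at $\qq$ or $\aap$, or $\nu_1$ is pinned at $\qq$) into $\II$. I would run through each template in turn and pick the corresponding interval.

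\textbf{The main obstacle} is pure bookkeeping: no single interval works uniformly across all eleven templates, so one must match each template with the correct candidate interval and verify the endpoint inequalities against the boundary values $\pppp = \pp{+}2$ and $\qq = 2\pp{+}1$ of $\II$. The underlying computations are one-line inequality checks, but the tabulation is slightly delicate because the pinned coordinates can lie on either side of $\qq$, so one has to be careful not to pick an interval whose endpoint accidentally slips above $\qq$ or below $\pppp$.
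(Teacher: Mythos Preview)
Your plan has a concrete gap that is not just bookkeeping. You take $\II = [\pp{+}2,\qq] = [\pp{+}2,2\pp{+}1]$, which is what the statement of Lemma~\ref{L:Multicomb} literally says, but with that $\II$ the claim is actually \emph{false}. Look at a pair of type~$\tIIp_\aa$, namely $\Qp(\nu_1,\qq,\aa,\nu_4)$ with $\qq{+}1\le \aa$: the middle interval $[\qq,\aa]$ overshoots~$\qq$, the right interval $[\aa{+}1,\nu_4]$ starts above~$\qq$, and the left interval $[\nu_1,\qq{-}1]$ has $\nu_1$ completely unconstrained, so nothing forces $\nu_1\ge \pp{+}2$. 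The same failure hits $\tIIm$, $\tIIIm$, $\tIVp$, $\tVpm$, $\tVIpm$. So the eight deferred verifications would not go through, and the obstacle you flag as ``pure bookkeeping'' is in fact an impossibility.

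What is going on is a typo in the paper: in Lemma~\ref{L:Multicomb} the set should be $\II=[\pp{+}2,\nn]=[\pp{+}2,3\pp{+}1]$, as one sees by translating the set $[(\mm{-}1)\pp{+}2,(\mm{+}1)\pp{+}1]$ used in the proof of Proposition~\ref{P:Multicomb} back to the normalized labels, and as is confirmed by the paper's own proof of Claim~\ref{C:1}, which uses $\nu_3\le\nn$ as the upper bound. With the correct $\II$, the argument becomes much shorter than your eleven-case plan: for every special type except $\tVpm$ the template forces $\nu_2\ge\pp{+}2$, and since $\nu_3\le\nn$ holds for any base pair on size~$\nn$ trees, the middle interval $[\nu_2,\nu_3]$ lands in~$\II$; for the two $\tV$ types one has $\nu_3\ge\pp{+}1$ and $\nu_4\le\nn$, so the right interval $[\nu_3{+}1,\nu_4]$ lands in~$\II$. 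Two uniform cases, no template-by-template tabulation.
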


\begin{proof}
As we consider size~$\nn$ trees, every base pair satisfies $\nu_4 \le \nnp$, hence $\nu_3 \le \nn$ and,
thereforen $\nu_3 \in \II$. For all special pairs except those of type~$\tV$, we have $\nu_2 \ge \pppp$,
hence $[\nu_2, \nu_3] \ince \II$. For the special pairs of type~$\tV$, we have $\nu_3 \ge \ppp$ and $\nu_4 \le \nn$,
hence $[\nu_3+1, \nu_4] \ince \II$. In all cases, the criterion of Lemma~\ref{L:CollPair} implies that the pair is
$\II$-collapsing. 
\end{proof}

\begin{claim}
\label{C:2}
A special pair has a unique type, except, for $\qqp \le \bb < \aa < \nn$,

- the pairs $\Qp(..., \bbp, \aa, \nnp)$, which have type both~$\tIVp_\aa$ and~$\tVIp_\bb$,

- the pairs $\Qm(\qq, \bbp, \aa, \nnp)$, which have type both~$\tIIm_\aa$ and~$\tVIm_\bb$.
\end{claim}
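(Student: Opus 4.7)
I would establish Claim \ref{C:2} by a finite case analysis, exploiting the numerical chain $\ppp < \pppp \le \qq < \qqpp \le \nn$ valid for $\pp \ge 1$. Since the sign of a special pair (positive or negative) is dictated by its type, cross-sign overlap is impossible, and I need only check overlaps within each sign. Within every type the parameter $\aa$ is read off a single coordinate of the name $(\nu_1, \nu_2, \nu_3, \nu_4)$---$\nu_2$ for $\tIpm, \tIIIpm$; $\nu_3$ for $\tIIpm, \tIVp, \tVp$; $\nu_4$ for $\tVm$; $\nu_2 - 1$ for $\tVIpm$---so uniqueness of $\aa$ inside a given type is automatic once the types themselves are pairwise disjoint up to the two claimed exceptions.

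For the six positive types, the plan is to partition by the value of $\nu_2$: the disjoint ranges $\nu_2 \le \ppp$, $\nu_2 = \qq$, and $\nu_2 \in [\pppp, \nn] \setminus \{\qq\}$ cover all possibilities. The first range isolates $\tVp$, the second allows only $\tIp$ or $\tIIp$, and the third allows only $\tIp, \tIIIp, \tIVp, \tVIp$. A further refinement by $\nu_3$ completes the separation: $\nu_3 = \qq$ picks out $\tIp$, $\nu_3 < \qq$ picks out $\tIIIp$, and $\nu_3 \ge \qqpp$ leaves $\{\tIIp, \tIVp, \tVIp\}$. Among these, $\tIIp$ is isolated by $\nu_2 = \qq$, while $\tIVp_\aa$ and $\tVIp_\bb$ cannot be separated by $\nu_2$ and $\nu_3$ alone: their common pairs are exactly those of the form $\Qp(\nu_1, \bbp, \aa, \nnp)$ with $\qqp \le \bb < \aa < \nn$, which is the first exception.

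The negative case is symmetric. I would partition by $\nu_4$ into $\{\qq\}$ (only $\tIm$), $[\qqp, \nn]$ (admitting $\tIIm$ or $\tVm$), and $\{\nnp\}$ (admitting $\tIIm, \tIIIm, \tVIm$). The subsequent separations are immediate: $\tVm$ is singled out by $\nu_2 \le \ppp$, $\tIIm$ by $\nu_1 = \qq$, and $\tIIIm$ by $\nu_2 \in [\pppp, \qq]$. The unique overlap $\tIIm_\aa \cap \tVIm_\bb$ consists precisely of pairs $\Qm(\qq, \bbp, \aa, \nnp)$ with $\qqp \le \bb < \aa < \nn$, the second exception. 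The only real obstacle is clerical: each disjointness step reduces to comparing a coordinate of the name against one of the landmarks $\pp, \ppp, \pppp, \qq, \qqp, \qqpp, \nn, \nnp$, followed by tracking the few residual combinations that require resolution by a second coordinate.
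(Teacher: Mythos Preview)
Your proposal is correct and follows essentially the same approach as the paper: a direct finite case analysis comparing the coordinate constraints $\nu_1,\nu_2,\nu_3,\nu_4$ imposed by each type. The only difference is organizational---the paper lays out all pairwise comparisons in two tables (one per sign), whereas you structure the same checks as a hierarchical partition first on~$\nu_2$ (resp.~$\nu_4$) and then on~$\nu_3$; both routes identify the same two residual overlaps and the content is identical.
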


\begin{proof}
A positive pair cannot coincide with a negative one, so we can consider positive and negative pairs separately. Next,
for each type~$\tau_\aa$, the value of~$\aa$ can be recovered from one of the parameters~$\nu_\kk$ of the pair,
hence it is impossible that a pair of type~$\tau_\aa$ be of type~$\tau_\bb$ for some $\bb \not=\aa$. So it remains
to check that, for each pair $(\tau_\aa, \tau'_\bb)$ of distinct types (with the same sign), it is impossible that a
special pair be simultaneously of type~$\tau_\aa$ and~$\tau'_\bb$---up to the exceptions mentioned in the
claim. This is done in the two arrays below. As should be clear, ``$\nu_2 \gs \qq/\nu_2 \lse \qq$'' means that a pair
cannot be of the considered two types~$\tau, \tau'$ simultaneously because, for being of type~$\tau$, we must have
$\nu_2 > \qq$ whereas, for being of type~$\tau'$, we must have $\nu_2 \le \qq$.

\begin{center}
\begin{tabular}{c|c|c|c|c|c}
\vline width0em height1em depth0.5em
$\cap$
&$\tIIp_\bb$
&$\tIIIp_\bb$
&$\tIVp_\bb$
&$\tVp_\bb$
&$\tVIp_\bb$\\
\hline
\vline width0em height1em depth0.5em
$\tIp_\aa$
&$\nu_3{=}\qq/\nu_3\gs\qq$
&$\nu_3{=}\qq/\nu_3\ls\qq$
&$\nu_3{=}\qq/\nu_3\gs\qq$
&$\nu_3{=}\qq/\nu_3\gs\qq$
&$\nu_3{=}\qq/\nu_3\gs\qq$\\
\hline
\vline width0em height1em depth0.5em
$\tIIp_\aa$
&&$\nu_2{=}\qq/\nu_2\ls\qq$
&$\nu_2{=}\qq/\nu_2{\not=}\qq$
&$\nu_2{=}\qq/\nu_2\lse\ppp$
&$\nu_2{=}\qq/\nu_2\gs\qq$\\
\hline
\vline width0em height1em depth0.5em
$\tIIIp_\aa$
&&&$\nu_3\ls\qq/\nu_3\gs\qq$
&$\nu_3\ls\qq/\nu_3\gs\qq$
&$\nu_2\ls\qq/\nu_2\gs\qq$\\
\hline
\vline width0em height1em depth0.5em
$\tIVp_\aa$
&&&&$\nu_2\gse\pppp/\nu_2\lse\ppp$
&possible\\
\hline
\vline width0em height1em depth0.5em
$\tVp_\aa$
&&&&&$\nu_4\lse\nn/\nu_4\gs\nn$\\
\end{tabular}

\begin{tabular}{c|c|c|c|c}
\vline width0em height1em depth0.5em
$\cap$
&$\tIIm_\bb$
&$\tIIIm_\bb$
&$\tVm_\bb$
&$\tVIm_\bb$\\
\hline
\vline width0em height1em depth0.5em
$\tIm_\aa$
&$\nu_2\lse\qq/\nu_2\gs\qq$
&$\nu_4\lse\nn/\nu_4{=}\nnp$
&$\nu_2\gse\pppp/\nu_2\lse\ppp$
&$\nu_4{=}\qq/\nu_4{=}\nnp$\\
\hline
\vline width0em height1em depth0.5em
$\tIIm_\aa$
&&$\nu_1{=}\qq/\nu_1\ls\qq$
&$\nu_1{=}\qq/\nu_1\lse\pp$
&possible\\
\hline
\vline width0em height1em depth0.5em
$\tIIIm_\aa$
&&&$\nu_2\gse\pppp/\nu_2\lse\ppp$
&$\nu_2\lse\qq/\nu_2\gs\qq$\\
\hline
\vline width0em height1em depth0.5em
$\tVm_\aa$
&&&&$\nu_4\lse\nn/\nu_4\gs\nn$\\
\end{tabular}
\end{center}

So all cases have been considered.
\end{proof}

We shall now exploit the differences of covering and co-covering
between~$\TT$ and~$\TTT$. First, we use the fact that $\ppp$
to~$\qqm$ are not covered by~$\qq$ in~$\TT$, whereas they are
in~$\TTT$, and, symmetrically, the fact that $\qq$ co-covers $\qqpp$
to~$\nn$ in~$\TT$, whereas it does not in~$\TTT$.

\begin{claim}
\label{C:3}
Every path from~$\TT$ to~$\TTT$ 
\vspace{-0.2em}
\begin{tabbing}
\quad
\=\qquad 
\=- contains a pair of type~$\tIpm_\aa$ \hspace{0.1em}
\=for each~$\aa$ in~$[\pppp, \qq]$, \\
\>and 
\>- contains a pair of type~$\tIIpm_\aa$ 
\>for each~$\aa$ in~$[\qqp, \nnm]$.
\end{tabbing}
\end{claim}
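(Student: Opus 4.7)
Both halves of the claim are symmetric, and both reduce to a single reading of the covering/co-covering profile of the pivot leaf~$\qq$ in~$\TT$ and in~$\TTT$.  I would first write down the addresses of the relevant leaves, then apply Lemma~\ref{L:PPrinciple} for the first half and Lemma~\ref{L:PPrincipleBis} for the second, once for each~$\aa$.

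The structural facts to record are the following; all follow directly from Lemma~\ref{L:CovAddress} and its co-covering counterpart~\eqref{E:CoCovering} applied to the addresses computed in Figure~\ref{F:Key}.  In $\TT = \Sp{1^\pp 0 1^\pp 0^\pp}$, the leaf~$\qq$ has address $1^\pp 0 1^\pp 0^\pp$ (ending in~$0$'s) and is the leftmost leaf of the bottom left comb~$\Sp{0^\pp}$ whose subsequent leaves are $\qq+1, \Ldots, \nn$; hence $\qq$ covers no leaf in~$\TT$, $\qq$ co-covers every~$\aap$ in~$[\qqp, \nn]$, and $\qq$ is itself covered in~$\TT$ by every~$\aa$ in~$[\qqp, \nn+1]$.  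In $\TTT = \Sp{0(10)^\pp 1^\pp}$, the leaf~$\qq$ has address $0(10)^\pp 1^\pp$ (ending in $1$'s) and is the rightmost leaf of a right comb~$\Sp{1^\pp}$ whose other leaves are precisely $\pp+1, \Ldots, 2\pp$; hence $\qq$ covers exactly~$[\pp+1, \qqm]$ in~$\TTT$, each $\aa \in [\pppp, \qqm]$ satisfies $\aa \ccov\TTT \qq$, and $\qq$ co-covers nothing in~$\TTT$.

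For the first half, apply Lemma~\ref{L:PPrinciple} with $\ii = \aam$ and $\jj = \qq$ for each $\aa \in [\pppp, \qq]$: the three hypotheses $\aam \not\cov\TT \qq$, $\aam \cov\TTT \qq$, and $\aa \ccove\TTT \qq$ are exactly the items listed above (the last one being trivial by equality when $\aa = \qq$).  The lemma produces a pair $\Qp(\Ldots, \aa, \qq, \Ldots)$ or $\Qm(\Ldots, \aa, \Ldots, \qq)$, that is, a special pair of type~$\tIpm_\aa$; in the boundary case $\aa = \qq$ the negative alternative is ruled out by $\nu_3 < \nu_4$, so only~$\tIp_\qq$ survives.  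For the second half, apply Lemma~\ref{L:PPrincipleBis} with $\ii = \qq$ and $\jj = \aap$ for each $\aa \in [\qqp, \nnm]$: the hypotheses $\qq \ccov\TT \aap$, $\qq \cove\TT \aa$, and $\qq \not\ccov\TTT \aap$ are again immediate from the list, and the conclusion is a pair $\Qp(\Ldots, \qq, \aa, \Ldots)$ or $\Qm(\qq, \Ldots, \aa, \Ldots)$, of type~$\tIIpm_\aa$.  No real obstacle is expected; the only point worth care is recognising the dual role of~$\qq$---leftmost of a left comb in~$\TT$, rightmost of a right comb in~$\TTT$---which is exactly what makes both applications of the principle lemmas go through in parallel.
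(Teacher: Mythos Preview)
Your proposal is correct and follows exactly the paper's route: for the first half you apply Lemma~\ref{L:PPrinciple} with $\ii=\aam$, $\jj=\qq$, and for the second half Lemma~\ref{L:PPrincipleBis} with $\ii=\qq$, $\jj=\aap$, reading the three required (co-)covering relations off the shape of~$\TT$ and~$\TTT$ just as the paper does from Figure~\ref{F:Key}. Your extra remarks---the explicit addresses of~$\qq$ in both trees and the observation that for $\aa=\qq$ the negative alternative $\Qm(\Ldots,\qq,\Ldots,\qq)$ is impossible because $\nu_3<\nu_4$---are welcome clarifications but not additional arguments.
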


\begin{proof}
Assume $\aa \in [\pppp, \qq]$. Then we have $\ppp \le \aam \le 2\pp$, and, as can be read on
Figure~\ref{F:Key}, we have
$$\aam \not\cov\TT \qq, \quad
\aam \cov\TTT \qq, \quad\text{and}\quad
\aa \ccove\TTT \qq.$$
Applying Lemma~\ref{L:PPrinciple} with $\ii = \aam$ and $\jj = \qq$ shows that every path from~$\TT$
to~$\TTT$ contains a pair $\Qp(..., \aa, \qq, ...)$, of type~$\tIp_\aa$, or a pair $\Qm(..., \aa, ..., \qq)$, of
type~$\tIm_\aa$.

Assume now $\aa \in [\qqp, \nn-1]$. Then we have $\qq+2 \le \aap \le \nn$, and, as can be read on
Figure~\ref{F:Key}, we have
$$\qq \ccov\TT \aap, \quad
\qq \cove\TT \aa, \quad\text{and}\quad
\qq \not\ccov\TTT \aap.$$
Applying Lemma~\ref{L:PPrincipleBis} with $\ii = \qq$ and $\jj = \aap$ shows that every path from~$\TT$
to~$\TTT$ contains a pair $\Qp(..., \qq, \aa, ...)$, of type~$\tIIp_\aa$, or a pair $\Qm(\qq, ..., \aa, ...)$, of
type~$\tIIm_\aa$.
\end{proof}

\begin{rema}
The second argument above also applies to $\aa = \qq$, but then it possibly leads to a pair $\Qp(..., \qq, \qq, ...)$
that would be of type~$\tIp_\qq$ and may have been already considered in the first argument.
\end{rema}

We shall now exploit the fact that $\ppp$ is not covered by any label
from~$\qqp$ to~$\nnm$ in~$\TT$, and is covered by these labels
in~$\TT$.

\begin{claim}
\label{C:4}
Every path from~$\TT$ to~$\TTT$
\vspace{-0.2em}
\begin{tabbing}
\qquad 
\=- contains a pair of type~$\tIIIp_\aa$ \hspace{0.1em}
\=for each~$\aa$ in~$[\pppp, \qqm]$, \\
or 
\>- contains a pair of type~$\tIV_\bb$ 
\>for each~$\bb$ in~$[\qqp, \nnm]$.
\end{tabbing}
\end{claim}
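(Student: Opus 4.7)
The plan is to establish the stronger pointwise dichotomy: for each pair $(\aa,\bb)\in[\pppp,\qqm]\times[\qqp,\nnm]$, every path from~$\TT$ to~$\TTT$ contains a pair of type $\tIIIp_\aa$ or a pair of type $\tIVp_\bb$. The claim then follows by the following dichotomy-at-the-set-level: if case~(B) fails, there exists some $\bb^*\in[\qqp,\nnm]$ admitting no $\tIVp_{\bb^*}$ pair, and applying the pointwise version with this fixed $\bb^*$ for each $\aa\in[\pppp,\qqm]$ forces all the required $\tIIIp_\aa$ pairs, i.e.\ case~(A).

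To prove the pointwise dichotomy, fix $\aa,\bb$ and first apply Lemma~\ref{L:KeyLemma1} to $(\ii,\jj)=(\ppp,\bb)$. Using that the labels covering $\ppp$ in~$\TT$ are exactly $\{\nn,\nnp\}$ whereas those covering $\ppp$ in~$\TTT$ are all of $[\qq,\nnp]$, one gets $\ppp\not\cov\TT\bb$ and $\ppp\cov\TTT\bb$, so the path must contain a pair $\Qp(\lse\ppp,\gs\ppp,\bb,...)$. If its second entry differs from~$\qq$, this pair is of type $\tIVp_\bb$ and we are done. Otherwise it is a pair $\Qp(\nu_1,\qq,\bb,\nu_4)$ occurring at some step~$\rr$, and by the very definition of a positive base pair the tree~$\TT_\rr$ carries a subtree $T_1\op(T_2\op T_3)$ with $T_1$ supporting the labels $[\nu_1,\qqm]$ and having $\qqm$ as its rightmost leaf, with $T_2$ supporting $[\qq,\bb]$, and with $T_3$ supporting $[\bbp,\nu_4]$.

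It remains, in this second case, to force a $\tIIIp_\aa$ pair. The key structural consequence is $\aa\minus1\cov{\TT_\rr}\qqm$ (since $\aa\minus1\in[\nu_1,\qqm\minus1]$ lies in $T_1$ as a non-final leaf), combined with $\aa\minus1\not\cov\TT\qqm$ (because in~$\TT$ any label in $[\ppp,\qqm\minus1]$ is covered only by $\nn$ and $\nnp$). For the extreme case $\aa=\qqm$, the hypothesis $\aa\ccove{\TT_\rr}\qqm$ of Lemma~\ref{L:PPPrinciple} holds trivially, and applying that lemma with $(\ii,\jj,\kk)=(\qqm\minus1,\qqm,\qqm)$ to the sub-path from~$\TT$ to~$\TT_\rr$ directly yields a pair $\Qp(\lse\qqm\minus1,\qqm,\qqm,...)$ (the negative alternative being ruled out by the constraint $\nu_3<\nu_4$), which is of type $\tIIIp_\qqm$ since $\qqm<\qq$. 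For $\aa<\qqm$ the plan is to apply Lemma~\ref{L:PPPrinciple} analogously with $\kk=\aa$, the $\Qp$-case giving a pair $\Qp(\lse\aa\minus1,\aa,\qqm,...)$ of type $\tIIIp_\aa$ and the $\Qm$-case being handled by Lemma~\ref{L:KeyLemma2} via a co-covering argument analogous to the treatment of type-$\tIII_\aa$ pairs in the proof of Lemma~\ref{L:Tricomb}.

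The main obstacle is precisely the verification of $\aa\ccove{\TT_\rr}\qqm$ for $\aa<\qqm$, since this co-covering depends on the shape of~$T_1$, which a priori is arbitrary. I expect the resolution to use the observation that forming a subtree of $\TT_\rr$ carrying exactly the labels $[\nu_1,\qqm]$ out of~$\TT$ (where labels $\{1,\ldots,\pp\}$ are scattered as left leaves along the spine while $[\ppp,\qqm]$ lies deep inside $\sub{1^\pp0}\TT$) already constrains the intermediate sub-path significantly; choosing $\rr$ to be the \emph{first} step at which a pair $\Qp(\lse\ppp,\qq,\bb,...)$ appears, and exploiting the auxiliary co-covering $\nu_1\ccove{\TT_\rr}\ii$ for $\ii\in[\nu_1,\qqm]$ provided by Lemma~\ref{L:NameCov}(ii), should either yield $\aa\ccove{\TT_\rr}\qqm$ directly or produce the required $\tIIIp_\aa$ pair among the rotations preceding step~$\rr$ that forge the co-covering structure.
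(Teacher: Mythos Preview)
Your setup is correct and matches the paper: apply Lemma~\ref{L:KeyLemma1} with $(\ii,\jj)=(\ppp,\bb)$, and if the resulting pair $\Qp(\nu_1,\nu_2,\bb,\nu_4)$ has $\nu_2\ne\qq$ it is of type~$\tIVp_\bb$, while if $\nu_2=\qq$ then Lemma~\ref{L:NameCov}$(ii)$ together with Lemma~\ref{L:CovInterval} gives $\aam\cov{\TT_\rr}\qqm$ for every $\aa\in[\pppp,\qqm]$. Up to this point you and the paper agree.

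The gap is in what follows. You try to feed $\aam\cov{\TT_\rr}\qqm$ into Lemma~\ref{L:PPPrinciple}, which forces you to verify the extra hypothesis $\aa\ccove{\TT_\rr}\qqm$. As you yourself note, this depends on the internal shape of the subtree carrying labels $[\nu_1,\qqm]$ in~$\TT_\rr$, and there is no reason for it to hold; your proposed resolution (``choose $\rr$ minimal and exploit Lemma~\ref{L:NameCov}$(ii)$'') does not give this co-covering, and the sketch remains speculative. Moreover, even granting the hypothesis, Lemma~\ref{L:PPPrinciple} would still leave a negative alternative $\Qm(\lse\aam,\aa,\gse\aa,\qqm)$ to dispose of.

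The paper avoids all of this by applying Lemma~\ref{L:KeyLemma2} (in the form of hypothesis~\eqref{E:PrincipleTer}) to the sub-path from~$\TT$ to~$\TT_\rr$ with $\ii=\aa$ and $\jj=\qq$. The two needed inputs are $\aa\ccov\TT\qq$, which is read directly off the shape of the \emph{initial} tree~$\TT$ (each $\aa\in[\pppp,\qqm]$ is the leftmost label of the subtree $\sub{1^\pp01^{\aa-\pp-1}}\TT$, which contains~$\qq$), and $\aam\cov{\TT_\rr}\qqm$, which you already have. Lemma~\ref{L:KeyLemma2} then yields a pair $\Qp(...,\aa,\ls\qq,\gse\qq)$, which is exactly of type~$\tIIIp_\aa$, with no negative alternative and no condition on~$\TT_\rr$ to check. (The paper's text cites Lemma~\ref{L:PPrincipleBis} here, but the stated conclusion $\Qp(...,\aa,\ls\qq,\gse\qq)$ is precisely that of Lemma~\ref{L:KeyLemma2}; the hypothesis $\aa\cove\TT\qqm$ of Lemma~\ref{L:PPrincipleBis} actually fails for $\aa<\qqm$, so the intended reference is Lemma~\ref{L:KeyLemma2}.)

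In short: you reached the right intermediate fact $\aam\cov{\TT_\rr}\qqm$, but then invoked a lemma whose hypotheses are strictly stronger than needed. Swapping Lemma~\ref{L:PPPrinciple} for Lemma~\ref{L:KeyLemma2} (with~\eqref{E:PrincipleTer}) closes the argument in one line.
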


\begin{proof}
For $\pp = 1$, the result is vacuously true, and we assume $\pp \ge 2$. Let $(\TT_0, \Ldots, \TT_\ell)$ be any path
from~$\TT$ to~$\TTT$. Assume that, for some~$\bb$ in~$[\qqp, \nnm]$, there exists no pair of
type~$\tIV_\bb$ in $(\TT_0, \Ldots, \TT_\ell)$. As we read on Figure~\ref{F:Key} the relations
$$\ppp \not\cov\TT \bb \quad\text{and}\quad \ppp\cov\TTT \bb,$$
applying Lemma~\ref{L:KeyLemma1} with $\ii = \ppp$ and $\jj = \bb$ shows that there must exist~$\rr$
such that $(\TT_\rr, \TT_\rrp)$ is $\Qp(\hh, \gg, \bb, ...)$ for some~$\gg, \hh$ satisfying $\hh \le\ppp$ and $\gg
\ge\pppp$. The hypothesis that this pair is \emph{not} of type~$\tIV_\bb$ implies $\gg = \qq$. As
$\pp \ge 2$ holds, we have $\hh \le \ppp < \qqm$ and, therefore, Lemma~\ref{L:NameCov}$(i)$
implies $\hh \cov{\TT_\rr} \qqm$. Let $\aa$ be any element of~$[\pppp, \qqm]$. By Lemma~\ref{L:CovInterval}, 
$\hh \cov{\TT_\rr} \qqm$ implies
\linebreak
 $\aam \cov{\TT_\rr} \qqm$. On the other hand, we see on
Figure~\ref{F:Key} that $\aa$ co-covers~$\qq$ in~$\TT$. Applying Lemma~\ref{L:PPrincipleBis} with $\ii
= \aa$ and $\jj = \qq$ shows that $(\TT_0, \Ldots, \TT_\rr)$ contains a pair $\Qp(..., \aa, \ls\qq, \gse\qq)$, of
type~$\tIIIp_\aa$.
\end{proof}

The next result uses the fact that each label~$\aa$ between~$2$
and~$\pp$ is not covered by~$\nnp-\aa$ in~$\TT$, whereas it is
in~$\TTT$. The possible interference of the label~$\nnp$ makes the result
slightly more complicated---as was already the case in the proof of
Lemma~\ref{L:Tricomb}. This step is the most delicate one, as it requires
the full power of Lemma~\ref{L:PPPrinciple} and not only
Lemma~\ref{L:KeyLemma1} or Lemma~\ref{L:PPrinciple}.

\begin{claim}
\label{C:5}
For each~$\aa$ in~$[\qqp, \nnm]$, every path from~$\TT$ to~$\TTT$
\vspace{-0.1em}
\begin{tabbing}
\qquad 
\=- contains a pair of type~$\tVpm_\aa$, \\
or 
\>- contains a pair of type~$\tVIp_\aa$ and a pair of type~$\tVIm_\aa$.
\end{tabbing}
\end{claim}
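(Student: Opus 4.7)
My plan is to apply Lemma~\ref{L:PPPrinciple} with $\ii = \pp - k + 1$, $\kk = \ppp$, and $\jj = \aa$, where $k = \aa - \qq \in [1, \pp - 1]$. A short computation from the addresses $\add\TT\aa = 1^\pp 0 1^\pp 0^{\pp-k}1$ and $\add\TTT\aa = 0(10)^{\pp-k}11$ shows that the labels covering $\aa$ in $\TT$ form the interval $[\qq, \aam]$, those covering $\aa$ in $\TTT$ form $[\pp - k + 1, \aam]$, and those cocovering $\aa$ in $\TTT$ form $[1, \pp - k + 2]$. With this choice of $\ii$, all three hypotheses $\ii \not\cov\TT \aa$, $\ii \cov\TTT \aa$, and $\iip \ccov\TTT \aa$ of Lemma~\ref{L:PPPrinciple} hold, $\ii$ being the left endpoint of the covering interval in $\TTT$ and $\iip$ being the right endpoint of the cocovering interval.

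The lemma then delivers a pair $(\TT_\rr, \TT_\rrp)$ whose name is either $(\nu_1, \nu_2, \aa, \nu_4)^+$ with $\nu_2 \in [\pp - k + 2, \ppp]$, or $(\nu_1, \nu_2, \nu_3, \aa)^-$ with $\nu_2 \in [\pp - k + 2, \ppp]$ and $\nu_3 \ge \ppp$. In the negative case we have $\nu_2 \le \ppp$, $\nu_3 > \pp$, and $\nu_4 = \aa$, so the pair is of type $\tVm_\aa$. In the positive case with $\nu_4 \le \nn$, the pair is of type $\tVp_\aa$. The only remaining possibility is a positive pair of the form $(\nu_1, \nu_2, \aa, \nnp)^+$ with $\nu_2 \le \ppp$.

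I handle this residual case by tracking the cocovering $\aap \ccov \nnp$ along the path. Lemma~\ref{L:NameCov}(iv) applied to $(\TT_\rr, \TT_\rrp)$ with $\ii = \nnp$ yields $\aap \ccov{\TT_\rr} \nnp$ (the ``$\ccove$'' reducing to ``$\ccov$'' since $\aap < \nnp$). However, the address of $\aap$ ends in $1$ in both $\TT$ and $\TTT$, so $\aap$ cocovers nothing in either endpoint; in particular $\aap \not\ccov\TT \nnp$ and $\aap \not\ccov\TTT \nnp$. Therefore the cocovering $\aap \ccov \nnp$ is created somewhere before $\TT_\rr$ and destroyed somewhere after $\TT_\rrp$, the step $(\TT_\rr, \TT_\rrp)$ itself preserving it since its $\nu_2 \le \ppp \ne \aap$. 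By Lemma~\ref{L:CovChange}(ii), the creation step must be a negative rotation with name of the form $(\ldots, \aap, \ldots, \nnp)^-$, that is, a pair of type $\tVIm_\aa$, and the destruction step must be a positive rotation of type $\tVIp_\aa$; both pair types thus occur on the path.

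The main difficulty is pinpointing the correct parameters for Lemma~\ref{L:PPPrinciple}: natural choices such as $\ii = \pp$ or $\ii = \ppp$ fail the hypothesis $\iip \ccov\TTT \aa$ once $k \ge 2$, so $\ii$ must depend on $\aa$, and $\ii = \pp - k + 1$ turns out to be the unique value at which all three hypotheses hold simultaneously.
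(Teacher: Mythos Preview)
Your proof is correct and follows essentially the same route as the paper's. Your parameter $\ii = \pp - k + 1$ (with $k = \aa - \qq$) is exactly the paper's $\aa' = \nn + 1 - \aa$, since $\pp - k + 1 = \pp - (\aa - \qq) + 1 = 3\pp + 2 - \aa = \nn + 1 - \aa$; the application of Lemma~\ref{L:PPPrinciple} with $\kk = \ppp$ and the residual-case argument via the co-covering $\aa+1 \ccov{} \nnp$ are the same as in the paper (which invokes Lemma~\ref{L:KeyLemma2} where you cite Lemma~\ref{L:CovChange}(ii) directly). Your extra remark that the step $(\TT_\rr,\TT_\rrp)$ itself preserves $\aa+1 \ccov{} \nnp$ because $\nu_2 \le \ppp \ne \aa+1$ is true but not needed: it suffices that the co-covering holds at~$\TT_\rr$ and fails at both endpoints, which already forces a $\tVIm_\aa$ pair in $(\TT_0,\Ldots,\TT_\rr)$ and a $\tVIp_\aa$ pair in $(\TT_\rr,\Ldots,\TT_\ell)$.
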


\begin{proof}
Let $(\TT_0, \Ldots, \TT_\ell)$ be a path from~$\TT$ to~$\TTT$, and let~$\aa$ be an
element of~$[\qqp, \nnm]$. Put $\aa' = \nnp - \aa$. Then we have $2 \le \aa' \le \pp$, and we read on
Figure~\ref{F:Key} the relations
$$\aa' \not\cov\TT \aa, \quad
\aa' \cov\TTT \aa, \quad\text{and}\quad
\aa'+1 \ccove\TTT \aa.$$
Applying Lemma~\ref{L:PPPrinciple} with $\ii = \aa'$, $\jj = \aa$, and $\kk = \ppp$ shows that there
exists~$\rr$ such that $(\TT_\rr, \TT_\rrp)$ is $\Qp(\lse\aa', \aa'\ls...\lse\ppp, \aa, ...)$ or
$\Qm(\lse\aa', \aa'\ls...\lse\ppp, \gs\pp, \aa)$. The latter pair is of type~$\tVm_\aa$. As for the former one, two
cases are possible: if $\Name4{\TT_\rr}{\TT_\rrp} \le \nn$ holds, then
$(\TT_\rr, \TT_\rrp)$ is of type~$\tVp_\aa$, else we necessarily have
$\Name4{\TT_\rr}{\TT_\rrp} = \nn$. Then $(\TT_\rr, \TT_\rrp)$ is  a
pair $\Qp(..., ..., \aa, \nnp)$, in which case, by
Lemma~\ref{L:NameCov}$(iv)$, we have $\aap \ccove{\TT_\rr} \nnp$,
and even $\aap
\ccov{\TT_\rr} \nnp$ as $\aa < \nn$ is assumed. Now, we read on Figure~\ref{F:Key} that $\aap$
co-covers~$\nnp$ neither in~$\TT$ nor in~$\TTT$. Applying Lemma~\ref{L:KeyLemma2} with $\ii = \aap$
and $\jj = \nnp$ guarantees that $(\TT_0, \Ldots, \TT_\rr)$ contains at least a pair $\Qm(..., \aap, ..., \nnp)$,
of type~$\tVIm_\aa$, and that $(\TT_\rr, \Ldots, \TT_\ell)$ contains at least a pair $\Qp(..., \aap, ..., \nnp)$,
of type~$\tVIp_\aa$.
\end{proof}

The last claim of the series will be used to cope with the possible
interference between types~$\tIIm$ and~$\tVIm$.

\begin{claim}
\label{C:6}
Assume that $\TTs$ is a size~$\nn$ tree and $\qq$ co-covers~$\nnp$ in~$\TTs$. Then every path
from~$\TT$ to~$\TTs$ contains

- a pair of type~$\tIIIpm_\aa$ for each~$\aa$ in~$[\pppp, \qq]$.
\end{claim}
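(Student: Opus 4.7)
The plan is to handle the case $\aa=\qq$ separately, and for $\aa\in[\pppp,\qqm]$ to case-split on whether $\aa$ co-covers $\qq$ in the target tree~$\TTs$.

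Reading the spine $1^\pp01^\pp0^\pp$ of~$\TT$ yields $\add\TT\aa=1^\pp01^{k-1}0$ for $\aa=\pp+k$ with $2\le k\le\pp$, together with $\add\TT\qq=1^\pp01^\pp0^\pp$ and $\add\TT\nnp=1^{\pp+1}$. A direct application of the defining relation~\eqref{E:CoCovering} then yields $\aa\ccov\TT\qq$ for every $\aa\in[\pppp,\qqm]$ (take $\g=1^\pp01^{k-1}$), and $\aa\not\ccov\TT\nnp$ for every $\aa\in[\pppp,\qq]$ (no admissible~$\g$ produces a $\g1$ that is a prefix of $1^{\pp+1}$, because of the isolated~$0$ at position $\pp+1$ in $\add\TT\aa$).

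For $\aa=\qq$, combining $\qq\not\ccov\TT\nnp$ with the hypothesis $\qq\ccov\TTs\nnp$, Lemma~\ref{L:CovChange}$(ii)$ forces the first base pair $(\TT_\rr,\TT_\rrp)$ along the path where this co-covering appears to be negative and of the form $\Qm(\ldots,\qq,\lse\nn,\nnp)$, which is a pair of type~$\tIIIm_\qq$.

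For $\aa\in[\pppp,\qqm]$, the key structural observation is that the hypothesis $\qq\ccov\TTs\nnp$, combined with $\add\TTs\nnp=1^s$ (valid because $\nnp$ is the rightmost leaf of~$\TTs$), pins down $\add\TTs\qq$ to the form~$1^j0^r$ for some $0\le j<s$ and $r\ge1$; consequently the subtree of $\TTs$ rooted at~$1^j$ contains exactly the labels $\qq,\qq+1,\ldots,\nnp$, and $\aa$ lies strictly outside it. If $\aa\not\ccov\TTs\qq$, Lemma~\ref{L:KeyLemma2} applied with $\ii=\aa$, $\jj=\qq$ gives a pair $\Qp(\ldots,\aa,\ls\qq,\gse\qq)$, of type~$\tIIIp_\aa$. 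If instead $\aa\ccov\TTs\qq$, then the same address analysis that pinned~$\add\TTs\qq$ now pins $\add\TTs\aa$ to the form~$1^i0^{r'}$ with $0\le i<j$ and $r'\ge1$; this immediately implies $\aa\ccov\TTs\nnp$ (since $i+1\le j<s$), and combining with $\aa\not\ccov\TT\nnp$ the first-creation argument used in the $\aa=\qq$ case delivers a pair $\Qm(\ldots,\aa,\lse\nn,\nnp)$, of type~$\tIIIm_\aa$.

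The main obstacle is simply disciplined bookkeeping for addresses; once one sees that the single hypothesis $\qq\ccov\TTs\nnp$ forces $\add\TTs\qq$ into the restrictive form $1^j0^r$ and packages $\qq,\ldots,\nnp$ as one subtree of~$\TTs$, the dichotomy is clean and the case analysis closely parallels the arguments already developed for Claims~\ref{C:3}--\ref{C:5}.
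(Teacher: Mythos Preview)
Your proof is correct and takes a genuinely different route from the paper's.

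The paper projects everything down by collapsing the labels~$[\qqp,\nn]$, obtaining smaller trees~$\TTb,\TTsb$ (with remaining labels $1,\ldots,\qq,\nnp$); it then applies Lemma~\ref{L:PPrinciple} once in the collapsed picture (using that $\aam$ is covered by~$\nn$ in~$\TTb$ but, thanks to Lemma~\ref{L:CovShift} and the surviving co-covering $\qq\ccov{\TTsb}\nnp$, not in~$\TTsb$), and finally lifts the resulting pair back via Lemma~\ref{L:CollPair} to identify it as type~$\tIIIp_\aa$ or~$\tIIIm_\aa$. Your argument instead stays in the full tree and splits on whether $\aa\ccov\TTs\qq$; the key step is your observation that $\qq\ccov\TTs\nnp$ together with $\add\TTs\nnp=1^s$ forces $\add\TTs\qq=1^j0^r$, and that $\aa\ccov\TTs\qq$ then forces $\add\TTs\aa=1^i0^{r'}$ with $i<j$, whence $\aa\ccov\TTs\nnp$ and a first-creation argument via Lemma~\ref{L:CovChange}$(ii)$ yields a pair of type~$\tIIIm_\aa$. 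The paper's approach is more uniform (one lemma, then lift), while yours is more self-contained: it avoids the collapsing machinery entirely, at the cost of a bit more explicit address computation and a case split. Both are clean; yours makes the underlying geometry of~$\TTs$ more visible.
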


\begin{proof}
Let $\TTb = \coll{[\qqp, \nn]}(\TT)$ and $\TTsb = \coll{[\qqp, \nn]}(\TTs)$. Then we have $\TTb =
\Sp{01^\pp}$, and, by Lemma~\ref{L:CollCov}, the hypothesis that $\qq$ co-covers~$\nnp$ in~$\TTs$ implies
that $\qq$ co-covers~$\nnp$ in~$\TTsb$. Let~$\aa$ be any element of~$[\pppp, \qq]$. By
Lemma~\ref{L:CovShift}, $\aam$ cannot be covered by~$\nn$ in~$\TTsb$. On the other hand, $\aam$ is
covered by~$\nn$ in~$\TT$, and, moreover, $\aa$ co-covers~$\qq$ in~$\TT$. Applying
Lemma~\ref{L:PPrinciple} to~$(\TTsb, \TTb)$ with $\ii = \aa$ and $\jj = \qq$ shows that every path
from~$\TTb$ to~$\TTsb$ contains a pair $\Qm(..., \aa, \qq, \nnp)$ or $\Qp(..., \aa, ..., \qq)$. It follows that
every path from~$\TT$ to~$\TTs$ contains a pair that projects to a pair of the
previous form when $[\qqp, \nn]$ is collapsed. 

Now, Lemma~\ref{L:CollPair} shows that
$\Qm(\aa', \bb', \cc', \dd')$ projects to $\Qm(..., \aa, \qq, \nnp)$ if and only if we have $\bb' = \aa$, $\sup([\bb',
\cc'] \setminus [\qqp, \nn]) = \qq$, hence $\cc' \ge \qq$, and $\sup([\cc'+1, \dd'] \setminus [\qqp,
\nn]) = \nnp$, hence $\dd' = \nnp$, implying that $\Qm(\aa', \bb', \cc', \dd')$ is special of type~$\tIIIm_\aa$.

Similarly,  $\Qp(\aa', \bb', \cc', \dd')$ projects to $\Qp(..., \aa, ..., \qq)$ for $\bb' = \aa$, $\cc' =
\sup([\bb', \cc'] \setminus [\qqp, \nn]) < \qq$, hence $\cc' < \qq$, and $\sup([\cc'+1, \dd'] \setminus
[\qqp, \nn]) = \qq$, hence $\dd' \le \nn$, so that $\Qp(\aa', \bb', \cc', \dd')$ is special of type~$\tIIIp_\aa$.
\end{proof}

\subsection{Proof of Lemma~\ref{L:Multicomb}}

We are now ready to prove Lemma~\ref{L:Multicomb}. The argument is similar 
to the one used for proving Lemma~\ref{L:Tricomb}, but we have to be
more careful because of the possible interferences between special pairs of
type~$\tVIpm$ and special pairs of other types. It may be noted that such
problems never occur when $\add{}\nn$ remains~$1^\pp01^\ppp$
throughout the considered path from~$\TT$ to~$\TTT$: proving the
result for the trees $\coll{\{\nnp\}}(\TT)$ and
$\coll{\{\nnp\}}(\TTT)$ would be easier.

\begin{proof}[Proof of Lemma~\ref{L:Multicomb}]
Let $(\TT_0, \Ldots, \TT_\ell)$ be any path from~$\TT$ to~$\TTT$. We have to show that this path
contains at least $4\pp-3$ special pairs. The latter can correspond to several combinations of types, and we
consider three cases.

\medskip
\noindent{\bf Case 1:} $(\TT_0, \Ldots, \TT_\ell)$ contains a pair of type~$\tIIIp_\aa$ for each~$\aa$
in~$[\pppp, \qqm]$.

Then $(\TT_0, \Ldots, \TT_\ell)$ contains at least

\begin{tabbing}
\hspace{3em}
\=\quad $\pp$ \quad
\=pairs of type~$\tIpm$\hspace{5em}
\=by Claim~\ref{C:3},\\
\>$\ppm$ 
\>pairs of type~$\tIIpm$
\>by Claim~\ref{C:3},\\
\>$\ppm$ 
\>pairs of type~$\tIIIp$
\>by hypothesis,\\
\>$\ppm$ 
\>pairs of type~$\tVpm$ or~$\tVIp$
\>by Claim~\ref{C:5},
\end{tabbing}
hence at least $4\pp-3$ special pairs, which are $\II$-collapsing by Claim~\ref{C:1}, and are pairwise distinct by
Claim~\ref{C:2}, since we appeal to no pair of type~$\tIVp$ (which could interfer with~$\tVIp$) or~$\tVIm$ (which
could interfer with~$\tIIm$).

\medskip
\noindent{\bf Case 2:} There exists~$\aa$ in~$[\pppp, \qqm]$ such that $(\TT_0, \Ldots, \TT_\ell)$ contains no
pair of type~$\tIIIp_\aa$, and there is no~$\rr$ such that $\qq$ co-covers~$\nnp$ in~$\TT_\rr$.

Then Claim~\ref{C:2} implies that $(\TT_0, \Ldots, \TT_\ell)$ contains no special pair that is simultaneously of
types~$\tIIm$ and~$\tVIm$. Indeed, if a pair is both of types~$\tIIm_\aa$ and $\tVIm_\bb$, it has the form
$\Qm(\qq, ..., ..., \nnp)$, and, therefore, by Lemma~\ref{L:NameCov}$(i)$, $\qq$ co-covers~$\nnp$ in the two trees
of that pair. Then $(\TT_0, \Ldots, \TT_\ell)$ contains

\begin{tabbing}
\hspace{3em}
\=\quad $\pp$ \quad
\=pairs of type~$\tIpm$\hspace{5em}
\=by Claim~\ref{C:3},\\
\>$\ppm$ 
\>pairs of type~$\tIIpm$
\>by Claim~\ref{C:3},\\
\>$\ppm$ 
\>pairs of type~$\tIVp$
\>by Claim~\ref{C:4} and the hypothesis,\\
\>$\ppm$ 
\>pairs of type~$\tVpm$ or~$\tVIm$
\>by Claim~\ref{C:5},
\end{tabbing}
and we have again $4\pp-3$ special pairs, which are $\II$-collapsing by Claim~\ref{C:1}, and are pairwise distinct,
since we appeal to no pair of type~$\tVIp$ (which could interfer with~$\tIVp$), and interferences between
type~$\tIIm$ and~$\tVIm$ are discarded by our hypotheses.

\medskip
\noindent{\bf Case 3:} There exists~$\aa$ in~$[\pppp, \qqm]$ such that
$(\TT_0, \Ldots, \TT_\ell)$ contains no pair of type~$\tIIIp_\aa$, and
there exists~$\rr$ such that $\qq$ co-covers~$\nnp$ in~$\TT_\rr$.

Then $(\TT_0, \Ldots, \TT_\ell)$ contains

\begin{tabbing}
\hspace{3em}
\=\quad $\pp$ \quad
\=pairs of type~$\tIpm$\hspace{5em}
\=by Claim~\ref{C:3},\\
\>$\ppm$ 
\>pairs of type~$\tIIpm$
\>by Claim~\ref{C:3},\\
\>$\ppm$ 
\>pairs of type~$\tIIIpm$
\>by Claim~\ref{C:6},\\
\>$\ppm$ 
\>pairs of type~$\tIVp$
\>by Claim~\ref{C:5} and the hypothesis.
\end{tabbing}
We still have found $4\pp-3$ special pairs, all $\II$-collapsing by Claim~\ref{C:1}, and pairwise distinct
by Claim~\ref{C:2}, since we appeal now to no pair of type~$\tVIpm$.
\end{proof}

So the proofs of Lemma~\ref{L:Multicomb} and, therefore, of
Proposition~\ref{P:Multicomb}, are complete, yielding the exepected lower
bound $\dd(\nn) \ge\nobreak 2\nn - O(\sqrt\nn)$ on the diameter of the
$\nn$th associahedron.

\subsection{Going further}

Proving the conjectured value $\dd(\nn) = 2\nn - 6$ for $\nn >
10$ using the above methods seems feasible, but is likely to require a
more intricate argument. Experiments easily suggest families of trees that
should achieve the maximal distance, namely symmetric zigzag-trees with
small combs attached at each end, on the shape of the
trees~$\TTT_{\mm,3}$ of Proposition~\ref{P:Multicomb}.

\begin{conj}
\label{C:Final}
Define
$$\TT_\nn = 
\begin{cases}
\Sp{111(01)^{\pp-3}00}\\
\Sp{111(01)^{\pp-3}000}
\end{cases}
\TTT_\nn = 
\begin{cases}
\Sp{000(10)^{\pp-3}11}
&\mbox{for $\nn = 2\pp-1$},\\
\Sp{000(10)^{\pp-3}111}
&\mbox{for $\nn = 2\pp$},
\end{cases}
$$
Then one has $\dist(\TT_\nn, \TTT_\nn) = 2\nn-6$ for $\nn \ge 11$.
\end{conj}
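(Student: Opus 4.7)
\textbf{The plan} is to prove the lower bound $\dist(\TT_\nn, \TTT_\nn) \ge 2\nn-6$ by induction on $\nn$ in steps of two; the matching upper bound is the Sleator--Tarjan--Thurston inequality recalled in the introduction, so the two together give the conjectured value. I would handle the parities $\nn=2\pp-1$ and $\nn=2\pp$ as two parallel inductions. The base cases $\nn\in\{11,12\}$ fall within the range $11\le\nn\le 19$ already settled by brute force in the introduction.

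\textbf{The inductive step.} Fix $\nn\ge 13$ and assume the result for $\nn-2$. The tree $\TT_{\nn-2}$ (respectively $\TTT_{\nn-2}$) is obtained from $\TT_\nn$ (respectively $\TTT_\nn$) by excising one $(01)$-block (respectively one $(10)$-block) from the middle of the long zigzag part; in terms of labels, this corresponds to collapsing a two-element set $\II=\{\ii_0,\ii_0+1\}$ of adjacent labels sitting in that middle region, so that $\coll\II(\TT_\nn)=\TT_{\nn-2}$ and $\coll\II(\TTT_\nn)=\TTT_{\nn-2}$ up to a harmless shift of the labels. Lemma~\ref{L:DistColl} and the inductive hypothesis then give
\begin{equation*}
\dist(\TT_\nn,\TTT_\nn)\ge\dist(\TT_{\nn-2},\TTT_{\nn-2})+\Dist\II(\TT_\nn,\TTT_\nn)\ge(2\nn-10)+\Dist\II(\TT_\nn,\TTT_\nn),
\end{equation*}
so the whole task reduces to proving
\begin{equation*}
\Dist\II(\TT_\nn,\TTT_\nn)\ge 4,
\end{equation*}
which is exactly the budget left by the equality-on-the-nose bound.

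\textbf{Counting four inevitable $\II$-collapsing pairs.} Following the pattern of Lemma~\ref{L:Multicomb}, I would introduce four families of \emph{special} base pairs and show that each must occur at least once on any path from $\TT_\nn$ to $\TTT_\nn$. Since $\II$ consists of two adjacent labels, each type is automatically $\II$-collapsing by the criterion of Lemma~\ref{L:CollPair} as soon as one of the intervals $[\nu_2,\nu_3]$ or $[\nu_3+1,\nu_4]$ of the pair coincides with $\II$. The existence of each family would be forced by an application of the Key Lemma~\ref{L:KeyLemma1} or its symmetric Lemma~\ref{L:KeyLemma2}, possibly in the refined form of Lemma~\ref{L:PPrinciple}, Lemma~\ref{L:PPPrinciple}, or Lemma~\ref{L:PPrincipleBis}, applied to four specific label pairs $(\ii,\jj)$ whose covering or co-covering status changes between $\TT_\nn$ and $\TTT_\nn$. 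The natural witnesses come from (a) the interface between the right-comb cap $\Sp{111}$ and the zigzag in $\TT_\nn$ becoming a co-covering in $\TTT_\nn$; (b) the symmetric interface at the other end; and (c), (d) two changes in the middle of the zigzag directly attributable to the missing $(01)$-block at the position of $\II$.

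\textbf{The main obstacle.} The hard part is bookkeeping: the equality $2\nn-6$ leaves no slack, so the four families must be genuinely disjoint on every path, or the unavoidable overlaps must be compensated by a fifth special pair identified in each case. The three-case dispatch at the end of Section~\ref{S:Sqrt} shows that such interferences are real (a pair of type $\tIIm$ can coincide with one of type $\tVIm$, \emph{etc.}) and that coping with them requires a careful subdivision according to the behaviour of auxiliary labels (here typically the labels $1$ and $\nn+1$) along the candidate path. Designing the four types so that this analysis closes cleanly for both parities is precisely the ``few more technical ingredients'' anticipated in the introduction; experiments on small~$\nn$ should guide the exact definitions. A secondary technical point is the parity-dependent asymmetry between $\TT_\nn$ and $\TTT_\nn$, which will likely force a slightly different choice of $\II$ for odd and even $\nn$, but no conceptually new idea.
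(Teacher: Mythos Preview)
The statement you are attempting to prove is labeled \emph{Conjecture}~\ref{C:Final} in the paper, and it is explicitly left open: there is no proof in the paper to compare against. More importantly, the paragraph immediately following the conjecture identifies precisely why your proposed strategy cannot work.

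Your plan reduces everything to establishing $\Dist\II(\TT_\nn,\TTT_\nn)\ge 4$ for a two-element set~$\II$ that collapses $(\TT_\nn,\TTT_\nn)$ onto $(\TT_{\nn-2},\TTT_{\nn-2})$. But the paper states flatly that ``the minimal number of collapsing pairs is then~$3$, and not~$4$, as would be needed to conclude.'' In other words, the inequality you need is \emph{false}: there exist paths from~$\TT_\nn$ to~$\TTT_\nn$ containing only three $\II$-collapsing steps. So this is not a matter of delicate bookkeeping or of handling interferences between special types as in the proof of Lemma~\ref{L:Multicomb}; the obstruction is that no choice of four special families can be simultaneously inevitable and $\II$-collapsing and pairwise distinct on \emph{every} path, because some geodesic genuinely avoids one of them. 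The collapsing method, as formulated in Lemma~\ref{L:DistColl}, loses exactly one unit here, and since the target $2\nn-6$ leaves no slack, that single lost unit is fatal.

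Your ``main obstacle'' paragraph anticipates difficulty but misdiagnoses it as a disjointness issue to be resolved by case analysis. The actual obstacle is structural: the inequality in Lemma~\ref{L:DistColl} is strict for these trees, so an argument based purely on counting $\II$-collapsing pairs along a single collapsing cannot close the gap. The paper hints that ``other arguments'' exploiting the symmetry of~$\TT_\nn$ and~$\TTT_\nn$ would be needed, but does not develop them.
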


The problem for establishing Conjecture~\ref{C:Final} is that counting
$\II$-collapsing pairs cannot suffice here: various solutions exist for
projecting~$(\TT_\nn, \TTT_\nn)$ onto~$(\TT_{\nn-2}, \TTT_{\nn-2})$
by collapsing two labels, but the minimal number of collapsing pairs is
then~$3$, and not~$4$, as would be needed to conclude. On the other
hand, the highly symmetric shape of the  trees~$\TT_\nn$
and~$\TTT_\nn$ allows for other arguments that will not be developped
here. 

\section*{Acknowledgment}

The author thanks Shalom Eliahou for introducing him to the fascinating
problem addressed in this paper.

\end{document}